\numberwithin{equation}{section}
\title[Tangent spaces on the trianguline variety]{The trianguline variety, tangent spaces and the Grothendieck-Springer resolution}
\author{Seginus Mowlavi}
\address{Université Paris-Saclay, CNRS, Laboratoire de mathématiques d’Orsay, 91405, Orsay, France}
\begin{document}

\begin{abstract}
By the work of Breuil-Hellmann-Schraen, we know that the trianguline variety contains crystalline companion points which are parametrised by pairs $w\preceq w_\mathrm{sat}$ of permutations where $\preceq$ is the Bruhat order. We first define and study a certain combinatorial property of a pair $w'\preceq w$ of permutations in the context of Weyl groups of root systems. We call good pairs the pairs satisfying this property (which is the vast majority of pairs) and bad pairs the other ones. We then give an exact formula for the dimension of the tangent space to the trianguline variety at (generic) crystalline companion points such that $w\preceq w_\mathrm{sat}$ is a good pair. The method (due to Breuil-Hellmann-Schraen) is to first compute an analogous dimension for a local model of the trianguline variety built out of Grothendieck's simultaneous resolution. To achieve this, we prove a conjecture of Breuil-Hellmann-Schraen, describing the intersection of the closure of a Schubert cell with another Schubert cell on this local model (in the context of an arbitrary split reductive group), when this pair of cells is parametrised by a good pair of permutations. We give counter-examples to this conjecture for an infinite family of pairs of cells (associated to bad pairs).
\end{abstract}

\maketitle

\tableofcontents

\section{Introduction} \label{secIntro}

Let $k$ be a field of characteristic $0$. In their study \cite{bhs3} of the trianguline variety $\Xtri$ of Hellmann \cite{hellmann}, Breuil-Hellmann-Schraen introduced a variety $X$ over $k$ associated to a split reductive group $G$ over $k$. This variety admits a stratification $X=\coprod_{w\in W}V_w$ indexed by the Weyl group $W$ of $G$, similar to the Schubert decomposition of the flag variety. Denoting by $X_w$ the Zariski-closure in $X$ of the locally closed subvarieties $V_w$, the authors of \cite{bhs3} have linked the dimension of the tangent spaces of $\Xtri$ at certain points of interest to the dimension of $T_{X_w,x}$ at particular points $x\in X_w\inter V_{w'}$ for $w,w'\in W$ such that $w'\preceq w$ where $\preceq$ is the Bruhat order. They also formulate a conjecture describing $X_w\inter V_{w'}$, which if true allows to compute the dimension of $T_{X_w,x}$. This conjecture is the object of the present paper.

A special case of a variant of this conjecture was already known by the work of Ginsburg \cite{ginsburg} using the Beilinson-Bernstein correspondence. By applying this special case to certain Levi subgroups of $G$, we are able to expand this result into a proof of many cases of the conjecture.

These cases are characterised by a combinatorial condition on the pair $(w',w)$ that we call being a ``good pair''. We develop the theory of good pairs in the context of Weyl groups of root systems and establish along the way different criteria for good pairs. We also show that the set of good pairs is related to pattern avoidance.

Finally, by spelling out explicit equations defining $V_w$ as a quasiprojective variety in the case $G=\GL_{n/k}$, we give a family of counter-examples to the conjecture.

We now explain our results in more detail.

\subsubsection*{Good pairs in a Weyl group}

Let $\Phi$ be an abstract root system; we write $W\coloneqq W(\Phi)$ for its Weyl group. For $w\in W$, let $\Gamma_w\coloneqq\sum_{\alpha\in\Phi}\Z(w(\alpha)-\alpha)$, let $E_w\coloneqq\Gamma_w\tens_{\Z}\Q$ and finally let $\Phi_w\coloneqq\Phi\inter E_w$. We call $\Phi_w$ the \emph{minimal generating root subsystem of $w$}. It has a clear interpretation in terms of reductive groups:

\begin{prop}[Proposition \ref{propCycleDecompositionCentraliser}] \label{propIntroCycleDecompositionCentraliser}
Let $(G,T)$ be a connected split reductive algebraic group over a field $k$ of characteristic 0 whose root system is isomorphic to $\Phi$. Then the centraliser $\left(C_G((T^w)^\circ),T\right)$ is a connected split reductive group whose root system is isomorphic to $\Phi_w$.
\end{prop}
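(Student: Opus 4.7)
The plan is to apply the classical structure theorem for centralisers of subtori in a connected reductive group and reduce the proposition to a purely lattice-theoretic identity, which then boils down to a short linear algebra calculation in $V \coloneqq X^*(T)\tens_\Z\Q$. First, I would fix a lift $\dot w \in N_G(T)$ of $w$; the resulting conjugation action on $T$ is independent of the choice since $T$ is commutative, so $T^w$ is well-defined. The structure theorem (see e.g.\ Borel, \emph{Linear Algebraic Groups}, \S13) asserts that the centraliser in $G$ of any subtorus $S \subseteq T$ is a connected split reductive subgroup with maximal torus $T$, whose root system (with respect to $T$) is precisely $\{\alpha \in \Phi : \alpha|_S = 1\}$. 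Applied to $S = (T^w)^\circ$, this reduces the proposition to identifying these roots with $\Phi_w$.

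For this identification, I would use that $T^w$ is cut out in $T$ by the characters $\chi - w^{-1}\chi$ for $\chi \in X^*(T)$ (indeed, $t \in T^w$ iff $\chi(t) = (w^{-1}\chi)(t)$ for all $\chi$), and that the neutral component $(T^w)^\circ$ corresponds to the saturation in $X^*(T)$ of the sublattice these generate. Since their $\Q$-span is $(1-w)V \subseteq V$, a root $\alpha \in \Phi$ vanishes on $(T^w)^\circ$ iff $\alpha \in (1-w)V$. Because $\Phi \cap E_w = \Phi_w$ by definition of $\Phi_w$, the proposition reduces to the equality of $\Q$-subspaces $(1-w)V = E_w$ inside $V$.

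The inclusion $E_w \subseteq (1-w)V$ is tautological. For the reverse, I would invoke two standard facts: (i) $W$ acts trivially on $X^*(T)/\Z\Phi$, since each reflection satisfies $s_\alpha(\chi) - \chi \in \Z\alpha$, whence $(1-w)V \subseteq \Q\Phi$; and (ii) the direct sum decomposition $V = V^w \oplus (1-w)V$, valid because $w$ has finite order and $k$ has characteristic $0$. Combining these, every $v \in V$ writes $v = v_0 + v_1$ with $v_0 \in V^w$ and $v_1 \in (1-w)V \subseteq \Q\Phi$, so $(1-w)v = (1-w)v_1 \in (1-w)(\Q\Phi) = E_w$. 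I do not anticipate any genuine obstacle: the heaviest input is the classical theorem on centralisers of subtori, and the only minor subtlety is that $T^w$ may fail to be connected, which is handled cleanly by saturating before intersecting with $\Phi$.
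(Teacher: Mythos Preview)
Your proof is correct and follows essentially the same architecture as the paper's: invoke the structure theorem for centralisers of subtori to reduce to identifying $\{\alpha\in\Phi:\alpha|_{(T^w)^\circ}=1\}$ with $\Phi_w$, then carry out that identification by a short linear-algebra argument in characteristic $0$. The only difference is in the packaging of that last step: the paper characterises $\alpha|_{(T^w)^\circ}=1$ via the norm map $t\mapsto\prod_k\Ad(w^{-k})t$ (whose connected image lands in $(T^w)^\circ$) to get $\Phi'=\Phi\cap\ker\big(\sum_k w^k\big)$, and then invokes $H^1(\langle w\rangle,E)=0$ to identify this kernel with $\im(w-\id_E)=E_w$; you instead use the saturation description of $X^*((T^w)^\circ)$ to get $\alpha\in(1-w)V$ directly, and then use the Maschke-type splitting $V=V^w\oplus(1-w)V$ together with $(1-w)V\subseteq\Q\Phi$ to conclude $(1-w)V=E_w$. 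These are equivalent reformulations of the same finite-order phenomenon, so neither route buys anything the other doesn't.
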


Fixing a basis $I$ of $\Phi$ gives a length function $\lg$ and a Bruhat order $\prec$ on $W$. The central notion we study in \S\ref{secGoodPairs} and use in \S\ref{secProofGoodPairs} is the following:

\begin{dfn}[Definition \ref{dfnGoodPair}] \label{dfnIntroGoodPair}
Let $(w_1,w_2)\in W$ such that $w_1\preceq w_2$. If there exist roots $\alpha_1,\ldots,\alpha_r\in\Phi_{w_1w_2^{-1}}$ which give rise to an ascending sequence $w_1\prec w_1s_{\alpha_1}\prec\ldots\prec w_1s_{\alpha_1}\ldots s_{\alpha_r}=w_2$, then $(w_1,w_2)$ is called a \emph{good pair}; otherwise, it is called a \emph{bad pair}.
\end{dfn}

In \S\ref{ssecMinGenSys}, we establish a few combinatorial results about minimal generating subsystems $\Phi_w$. In \S\ref{ssecBonnePaireDef} we prove a criterion for good pairs which is motivated by the fact that, for any $w\in W$, the subsystem $\Phi_w$ is conjugated to a standard subsystem, \emph{i.e.\ }a root subsystem $\Phi_J\subseteq\Phi$ generated by some subset $J\subseteq I$ of simple roots. More precisely, for $J\subseteq I$, let $W_J$ be the subgroup of $W$ generated by $\enstq{s_\alpha}{\alpha\in J}$, and let $W^J$ be the set of representatives of minimal length for $W_J\backslash W$. Then one can write $\Phi_w=u^J(\Phi_J)$ for some $J\subseteq I$ and $u^J\in W^J$. The criterion is:

\begin{prop}[Proposition \ref{propGoodPairsMinipermutations}] \label{propIntroGoodPairsMinipermutations}
Let $(w_1,w_2)\in W^2$. Write $\Phi_{w_2w_1^{-1}}=u^J(\Phi_J)$ for some $J\subseteq I$ and $u^J\in W^J$. Then there exist $v^J\in W^J$ and $w_{J,1},w_{J,2}\in W_J$ such that
\[
	w_1=u^Jw_{J,1}(v^J)^{-1} \,, \quad w_2=u^Jw_{J,2}(v^J)^{-1} \,,
\]
and the pair $(w_1,w_2)$ is good if and only if $w_{J,1}\preceq w_{J,2}$.
\end{prop}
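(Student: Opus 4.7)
The plan is to prove the proposition in two steps: first, establish the existence of the factorization; second, characterize good pairs via the inequality $w_{J,1} \preceq w_{J,2}$ in $W_J$.

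For the factorization, since $\Phi_{w_2 w_1^{-1}} = u^J(\Phi_J)$ is the minimal generating root subsystem of $w_2 w_1^{-1}$, this element necessarily lies in the corresponding Weyl group $u^J W_J (u^J)^{-1}$. Therefore $(u^J)^{-1} w_1$ and $(u^J)^{-1} w_2$ lie in the same right $W_J$-coset. Using the parabolic decomposition of $W$ as $W_J \cdot (W^J)^{-1}$ (where $(W^J)^{-1}$ consists of the inverses of elements of $W^J$, i.e., the minimal length representatives of $W_J \backslash W$), I would take $(v^J)^{-1}$ to be the common minimal length representative of the coset, with $v^J \in W^J$. This produces $w_i = u^J w_{J,i} (v^J)^{-1}$ with uniquely determined $w_{J,i} \in W_J$, and standard properties of parabolic decompositions yield length-additivity $\lg(w_i) = \lg(u^J) + \lg(w_{J,i}) + \lg(v^J)$.

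For the characterization, I would write any $\alpha \in u^J(\Phi_J)$ as $\alpha = u^J(\beta)$ with $\beta \in \Phi_J$, so that $s_\alpha = u^J s_\beta (u^J)^{-1}$. In the forward direction, I would transport an ascending chain $w_1 \prec w_1 s_{\alpha_1} \prec \cdots \prec w_2$ to an ascending chain $w_{J,1} \prec \cdots \prec w_{J,2}$ in $W_J$ (using reflections in $\Phi_J$), showing inductively that each intermediate element remains in the Bruhat cell $u^J W_J (v^J)^{-1}$ and that its $W_J$-component evolves by multiplication by a reflection in $\Phi_J$. The reverse direction then uses the standard chain property of Bruhat order in $W_J$: if $w_{J,1} \preceq w_{J,2}$, there exists an ascending chain in $W_J$ by reflections in $\Phi_J$, which lifts via the factorization to an ascending chain in $W$ by reflections in $u^J(\Phi_J)$.

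The main obstacle is in this second step: carefully matching Bruhat covers on both sides. Since right multiplication on $W$ is not Bruhat-preserving in general, one must exploit the specific geometry of the cell $u^J W_J (v^J)^{-1}$ together with the fact that the reflections $s_{\alpha_i}$ all lie in the parabolic Weyl group $u^J W_J (u^J)^{-1}$. The length-additivity of the factorization combined with the root-theoretic criterion $u \prec u s_\alpha \Leftrightarrow u(\alpha) > 0$ should allow one to translate covers $w_{i-1} \prec w_{i-1} s_{\alpha_i}$ in $W$ directly into covers $w_{J,i-1} \prec w_{J,i-1} s_{\beta_i}$ in $W_J$ by comparing lengths on both sides. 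A secondary point is ensuring that the $W_J$-component does not "escape" the cell along the chain, which follows from the minimality of $v^J$ in its coset and the containment of the reflections in the appropriate parabolic subgroup.
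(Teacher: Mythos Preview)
Your factorization step is correct and matches the paper's argument. However, the claimed length-additivity $\lg(w_i) = \lg(u^J) + \lg(w_{J,i}) + \lg(v^J)$ is \emph{false} in general, and this is not a ``standard property of parabolic decompositions''. For a concrete counterexample, take $W=\Scal_3$, $J=\{s_1\}$, $u^J=s_2$, $v^J=s_1s_2$, $w_{J,1}=e$: then $w_1=s_2\cdot e\cdot s_2s_1=s_1$ has length $1$, not $1+0+2=3$. One only has $\lg(u^J w_J)=\lg(u^J)+\lg(w_J)$ and $\lg(w_J(v^J)^{-1})=\lg(w_J)+\lg(v^J)$ separately; the two do not combine. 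The paper never claims or uses this triple additivity.

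That said, the root-theoretic criterion you also mention is by itself sufficient, and gives a clean alternative to the paper's proof of the key equivalence (Lemma~\ref{lemPiecesToWhole}, proved there via the $\eta$-function of \cite[Thm.~1.3.2]{coxeter}). With the reflections acting on the \emph{left} as in Definition~\ref{dfnGoodPair} (your chain is written with right multiplication, which should be fixed), write $\alpha=u^J(\beta)$ with $\beta\in\Phi_J^+$; then $u^J(\beta)>0$ since $u^J\in W^J$, and
\[
s_{u^J\beta}\,u^Jw_J(v^J)^{-1}\succ u^Jw_J(v^J)^{-1}
\iff v^Jw_J^{-1}(\beta)>0
\iff w_J^{-1}(\beta)>0
\iff s_\beta w_J\succ w_J,
\]
where the middle equivalence uses that $w_J^{-1}(\beta)\in\Phi_J$ and $v^J\in W^J$ preserves the sign of roots in $\Phi_J$. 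This directly transports ascending chains in both directions, with no need for length-additivity. Your worry about the $W_J$-component ``escaping'' is also unnecessary: since each $s_{\alpha_i}\in u^JW_J(u^J)^{-1}$, left-multiplying $u^Jw_J(v^J)^{-1}$ by $s_{\alpha_i}$ simply replaces $w_J$ by $s_{\beta_i}w_J\in W_J$.
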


We conclude in \S\ref{ssecPatternAvoidance} with an application of this last criterion in the special case where $\Phi$ is of type $A_{n-1}$ for some $n\in\Z_{>1}$, \emph{i.e.\ }$W=\Scal_n$ is the symmetric group. It is stated in terms of what is called \emph{pattern avoidance}. Here we say that a permutation $w\in\Scal_n$ has a pattern $f\in\Scal_m$ when there exist $1\leq i_1<\ldots<i_m\leq n$ such that the $m$-uples $(w(i_1),\ldots,w(i_m))$ and $(f(1),\ldots,f(m))$ are in the same order; see Definition \ref{dfnPattern}. This can in fact be stated in the general setting of an abstract root system, and many properties such as smoothness of Schubert varieties have been found to be instances of pattern avoidance; see for example \cite[\S2, \S4]{billeyPostnikov}. We prove that, at least for root systems of type $A$, being a good pair is also an instance of pattern avoidance:

\begin{thm}[Theorem \ref{thmBadPairPatterns}] \label{thmIntroBadPairPatterns}
Let $w\in\Scal_n$.
\begin{enumerate}[label=\emph{(\arabic*)},ref=(\arabic*)]
\item
The pair $(w',w)$ is good for any $w'\in\Scal_n$ if and only if $w$ avoids the four patterns $[4231]$, $[42513]$, $[35142]$ and $[351624]$.
\item
The pair $(w,w'')$ is good for any $w''\in\Scal_n$ if and only if $w$ avoids the four patterns $[1324]$, $[24153]$, $[31524]$ and $[426153]$.
\end{enumerate}
\end{thm}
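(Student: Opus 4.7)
The plan is to deduce (2) from (1) by a duality and to prove (1) by applying the parabolic criterion of Proposition~\ref{propIntroGoodPairsMinipermutations}. For the duality, I would use the involution $w \mapsto w w_0$ on $\Scal_n$, where $w_0$ is the longest element: since $\Phi_{w_2 w_1^{-1}}$ is invariant under the substitution $(w_1, w_2) \mapsto (w_1 w_0, w_2 w_0)$ while Bruhat order is reversed, reading an ascending chain from Definition~\ref{dfnIntroGoodPair} from right to left shows that $(w_1, w_2)$ is good if and only if $(w_2 w_0, w_1 w_0)$ is good. Consequently "$(w, w'')$ is good for every $w'' \succeq w$" translates to "$(w', w w_0)$ is good for every $w' \preceq w w_0$", which is condition (1) applied to $w w_0$. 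Since $w \mapsto w w_0$ reverses the one-line notation and each pattern of (2) is the reverse of the corresponding pattern of (1), this reduction is immediate.

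For (1), I would first record the type-$A$ description of $\Phi_\sigma$: a direct computation from the definition of $\Gamma_\sigma$ gives
\[
	\Phi_\sigma = \enstq{e_a - e_b}{a \neq b \text{ lie in a common non-trivial cycle of }\sigma}.
\]
Hence writing $\Phi_{w(w')^{-1}} = u^J(\Phi_J)$ amounts to choosing an order-preserving matching between the cycles of $\sigma = w(w')^{-1}$ and a sequence of consecutive blocks in $\{1, \ldots, n\}$, and $W_J \cong \prod_i \Scal_{n_i}$ is a product of symmetric groups indexed by the cycle sizes. Proposition~\ref{propIntroGoodPairsMinipermutations} then reduces the goodness of $(w', w)$ to a componentwise Bruhat comparison in these factors.

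The implication "$w$ contains a pattern $\Rightarrow$ some $(w', w)$ is bad" I would handle pattern by pattern. For each $p \in \{[4231], [42513], [35142], [351624]\}$, a finite verification in $\Scal_m$ (with $m = |p|$) produces an explicit $w'_p \prec p$ such that $(w'_p, p)$ is a bad pair. Given $w$ containing $p$ at positions $i_1 < \cdots < i_m$, I would build $w'$ by modifying $w$ only on these positions according to $w'_p$; the non-trivial cycles of $w(w')^{-1}$ then sit inside $\{i_1, \ldots, i_m\}$, so the parabolic decomposition restricts to the one in $\Scal_m$ and the Bruhat incomparability is inherited.

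The converse is the technical core. Assuming $w$ avoids all four patterns and that $(w', w)$ is a bad pair, Proposition~\ref{propIntroGoodPairsMinipermutations} yields a cycle $C$ of $\sigma = w(w')^{-1}$ on whose positions the restrictions of $w$ and $w'$ are Bruhat-incomparable (in the $\Scal_{|C|}$-factor of $W_J$) after the twist by $u^J$. The plan is then: (a) replace $w'$ by a minimal bad partner so that the obstruction is localized to a single cycle; (b) bound the length $c = |C|$ by showing that any Bruhat incomparability in $\Scal_c$ already occurs on at most six well-chosen indices, so only finitely many cycle lengths need be considered (this is what ultimately produces the cap $|p| \leq 6$); and (c) for each relevant $c$, enumerate by a finite case analysis the minimal incomparabilities and identify them with the four listed patterns, each pattern arising as a minimal configuration for a specific combination of cycle length and twist $u^J$. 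The main obstacle is step (b): a uniform bound on the size of a minimal witness for Bruhat incomparability that correctly accounts for how $u^J$ records the embedding of $C$ in $\{1, \ldots, n\}$, and explains why exactly these four patterns, and no others, appear as minimal obstructions.
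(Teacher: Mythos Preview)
Your reduction of (2) to (1) via $w\mapsto ww_0$ and your treatment of the direction ``pattern $\Rightarrow$ some bad pair'' are essentially what the paper does (the paper phrases the latter via Corollary~\ref{corGoodPairsFlattening}, but the content is the same).

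The converse, however, is where your proposal has a real gap, and the paper's route is quite different from your plan. Your steps (a)--(c) aim to localise the obstruction to a single cycle $C$ of $\sigma=w(w')^{-1}$ and then bound $|C|$, but this is not how the pattern arises and is unlikely to work: the four patterns are \emph{not} found inside the restriction of $w$ to $w^{-1}(\Omega)$ for a single orbit $\Omega$. In the paper's argument, two of the (up to six) points forming the pattern are deliberately chosen with $w$-value in the \emph{complement} $\Sigma=\{1,\dots,n\}\setminus\Omega$; they are produced by comparing $w[i,j]_\Sigma$ with $w'[i,j]_\Sigma$, using that $w[i,j]_\Omega<w'[i,j]_\Omega$ forces $w[i,j]_\Sigma>w'[i,j]_\Sigma$. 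So no bound on $|C|$ is sought or needed, and ``minimising $w'$'' plays no role.

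Concretely, the paper starts from the box-counting criterion of Proposition~\ref{propGoodPairsGLn}: badness gives an orbit $\Omega$ and indices with $w[i,j]_\Omega<w'[i,j]_\Omega$. One then fixes $\Omega$, selects extremal indices $i_1,j_1,j_2,i_2$ (minimal/maximal witnesses of this inequality), and locates up to six points $A,B,C,D,E,F$ on the graph of $w$, with $A,F$ coming from $\Sigma$ and $B,C,D,E$ coming from $\Omega$ via the extremality of $i_1,j_1,j_2,i_2$. A short case analysis on the relative positions of these points (essentially whether $D,E$ lie above/right of $A,F$ and whether $B,C$ separate them) yields one of $[4231]$, $[42513]$, $[35142]$, $[351624]$ in $w$. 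This is the missing idea in your step~(b): the ``six well-chosen indices'' are not inside the cycle, and their existence comes from extremality arguments on the rank function $w[\,\cdot\,,\,\cdot\,]_\Omega$, not from any Bruhat-theoretic reduction in $\Scal_{|C|}$.
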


Linking these patterns with those deciding smoothness of Schubert varieties, we obtain that, if $(w,w')\in(\Scal_n)^2$ is a bad pair, then the Schubert variety $\overline{BwB/B}$ in the flag variety $G/B$ is singular (but not necessarily at the point $w'B\in\overline{BwB/B}$); see Corollary \ref{corBadPairSingularSchubert}.

\subsubsection*{The conjecture of \cite{bhs3}}

Let $G$ be a connected split reductive group over $k$, $T$ a split maximal torus inside $G$, $B$ a Borel subgroup of $G$ containing $T$ and $U$ the unipotent subgroup of $B$. We write $\gfrak$, $\tfrak$, $\bfrak$ and $\ufrak$ for their respective Lie algebras, and $W\coloneqq N_G(T)/T$ for the Weyl group of $(G,T)$. For $w\in W$, we choose a lifting of $w$ in $N_G(T)$ which we still call $w$. We have the Schubert decomposition $G/B=\coprod_{w\in W}BwB/B$ of $G/B$ into locally closed strata $BwB/B$, where a cell $Bw'B/B$ intersects the Zariski-closure $\overline{BwB/B}$ of $BwB/B$ if and only if $w'\preceq w$ for the Bruhat order on $W$, in which case $Bw'B/B\subseteq\overline{BwB/B}$.

Let $G\times^B\gfrak$ be the quotient of $G\times\gfrak$ by the right-action of $B$ given by $(g,\psi)b=(gb,\Ad(b^{-1})\psi)$. We define the morphisms $q\colon G\times^B\gfrak\to\gfrak$ and $\widetilde{\pi}\colon G\times^B\gfrak\to G/B$ by $q(g,\psi)=\Ad(g)\psi$ and $\widetilde{\pi}(g,\psi)=gB$ ($q$ is called Grothendieck's simultaneous resolution of singularities). We then define the algebraic variety $\widetilde{X}\coloneqq q^{-1}(\bfrak)$. For $w\in W$, we consider the locally closed subvariety $\widetilde{V}_w\coloneqq\widetilde{X}\inter\widetilde{\pi}^{-1}(BwB/B)$ of $\widetilde{X}$ as well as its Zariski-closure $\widetilde{X}_w$ in $\widetilde{X}$; they give a cell decomposition of $\widetilde{X}$.

When $L$ is another split reductive group over $k$, we add the subscript $L$ to $q,\widetilde{\pi},\widetilde{X},\widetilde{V}_w,\widetilde{X}_w$ (for $w\in W(L)$) to mean the same definition with the reductive group $L$ in place of the reductive group $G$. When nothing is specified, we mean $G$.

Similarly to Schubert cells in $G/B$, the intersection $\widetilde{V}_{w'}\inter\widetilde{X}_w$ is non-empty if and only if $w'\preceq w$. The difference is that in this case $\widetilde{V}_{w'}$ is not contained in $\widetilde{X}_w$. More precisely, it is not difficult to show that $\widetilde{V}_{w'}\inter\widetilde{X}_w\subseteq\widetilde{V}_{w'}\inter q^{-1}(\tfrak^{ww'^{-1}}+\ufrak)$ (\cite[Lemma 2.3.4]{bhs3}) where $\tfrak^{ww'^{-1}}\subseteq \tfrak$ is the closed subvariety of fixed points by $ww'^{-1}$ and Breuil-Hellmann-Schraen conjecture that this is an equality:

\begin{conj}[Conj.\ 2.3.7 of \cite{bhs3}] \label{conjIntroLocalModelSimple}
Let $w,w'\in W$. If $w'\preceq w$, then
\[
	\widetilde{V}_{w'}\inter\widetilde{X}_w = \widetilde{V}_{w'}\inter q^{-1}\left(\tfrak^{ww'^{-1}}+\ufrak\right) \,.
\]
\end{conj}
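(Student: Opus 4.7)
The plan is to prove the conjecture for good pairs $(w',w)$ by reducing, via a product decomposition of $\widetilde{X}$ relative to a Levi subgroup, to a special case established by Ginsburg \cite{ginsburg}. Since the inclusion $\widetilde{V}_{w'} \inter \widetilde{X}_w \subseteq \widetilde{V}_{w'} \inter q^{-1}(\tfrak^{ww'^{-1}} + \ufrak)$ is already known from \cite[Lemma 2.3.4]{bhs3}, only the reverse inclusion requires proof.

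Applying Proposition \ref{propIntroGoodPairsMinipermutations} to $(w',w)$, write $\Phi_{ww'^{-1}} = u^J(\Phi_J)$ and $w' = u^J w_{J,1} (v^J)^{-1}$, $w = u^J w_{J,2} (v^J)^{-1}$ with $w_{J,1} \preceq w_{J,2}$ in $W_J$. Let $L \subseteq G$ be the standard Levi subgroup with root system $\Phi_J$, so that $W(L) = W_J$. Using that $W_J$ acts trivially on the orthogonal complement of $\mathrm{span}_\R \Phi_J$ in $X^*(T)_\R$, a direct computation shows that the minimal generating root subsystem of $w_{J,2} w_{J,1}^{-1}$ computed \emph{inside} $L$ equals the full $\Phi_J$. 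This places the pair $(w_{J,1}, w_{J,2})$ within the scope of Ginsburg's special case.

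Next, the parabolic fibration $G/B \to G/P_J$ with fibre $L/B_L$ (where $B_L = B \inter L$) induces a Zariski-local product decomposition of $\widetilde{X}$. Because $u^J, v^J \in W^J$, the expressions $w' = u^J w_{J,1} (v^J)^{-1}$ and $w = u^J w_{J,2} (v^J)^{-1}$ are length-additive, so $\widetilde{V}_{w'}$ and $\widetilde{V}_w$ both sit above the Schubert cell of $G/P_J$ indexed by $u^J$. Over this cell, the local product structure realises $\widetilde{V}_{w'}$ and $\widetilde{X}_w$ as $\widetilde{V}^L_{w_{J,1}}$ and $\widetilde{X}^L_{w_{J,2}}$ (respectively), twisted by affine factors coming from the unipotent radical of $P_J$ and from the ambient cell of $G/P_J$. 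In particular, the Bruhat relation $w' \preceq w$ on $G$ corresponds precisely to $w_{J,1} \preceq w_{J,2}$ on $L$.

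Applying Ginsburg's theorem inside $L$ now yields $\widetilde{V}^L_{w_{J,1}} \inter \widetilde{X}^L_{w_{J,2}} = \widetilde{V}^L_{w_{J,1}} \inter q_L^{-1}(\tfrak^{w_{J,2} w_{J,1}^{-1}} + \ufrak_L)$. Transporting this back through the product decomposition, together with the identity $\tfrak^{ww'^{-1}} + \ufrak = \tfrak^{w_{J,2} w_{J,1}^{-1}} + \ufrak_L + \ufrak_{P_J}$ (a direct consequence of $\Phi_{ww'^{-1}} = u^J(\Phi_J)$ and the trivial $W_J$-action outside $\mathrm{span}_\R \Phi_J$), produces the required equality on $G$. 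I expect the main technical obstacle to be the rigorous construction of the local product decomposition and the verification that Zariski closures commute with it: this amounts to a careful analysis of how the parabolic Bruhat factorisation $w = u^J w_J (v^J)^{-1}$ interacts with the incidence condition $\xi \in \Ad(g)\bfrak$ defining $\widetilde{X}$.
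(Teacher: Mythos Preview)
First, note that the statement you are asked to prove is a \emph{conjecture} that the paper shows to be \emph{false} in general (Theorem~\ref{thmConjIsFalse}). Your restriction to good pairs is therefore the right move and matches Theorem~\ref{thmConjIsTrue}, which is what I compare against below.

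Your high-level strategy --- reduce to a Levi subgroup and invoke Ginsburg there --- agrees with the paper's. But the mechanism you propose for the reduction has concrete errors.

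\textbf{The torus identity is wrong.} You claim $\tfrak^{ww'^{-1}} + \ufrak = \tfrak^{w_{J,2}w_{J,1}^{-1}} + \ufrak_L + \ufrak_{P_J}$. The right-hand side is $\tfrak_J + \ufrak$, but the left-hand side is $\Ad(u^J)\tfrak_J + \ufrak$, and these differ whenever $u^J \neq 1$ (the action of $u^J$ on $\tfrak$ does not in general stabilise $\tfrak_J$). This is not cosmetic: it is precisely this twist by $u^J$ --- together with a companion twist by $v^J$ --- that the paper's morphism $\iota_{t,u^J,v^J}$ is designed to absorb.

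\textbf{The parabolic fibration does not give the product you want.} The factorisation $w = u^J w_J (v^J)^{-1}$ is \emph{not} the parabolic decomposition $w = w^J w_J$ relative to $P_J$: the extra factor $(v^J)^{-1}$ on the right means that $w'W_J = u^J w_{J,1}(v^J)^{-1}W_J$ equals $u^J W_J$ only when $v^J \in W_J$, i.e.\ $v^J = 1$. So $Bw'B/B$ and $BwB/B$ do not in general lie over the Schubert cell of $G/P_J$ indexed by $u^J$, and the local triviality of $G/B \to G/P_J$ cannot be invoked as you describe. Even setting this aside, the incidence condition $\Ad(g^{-1})\psi \in \bfrak$ mixes the Levi and unipotent-radical directions in a way that does not split as a product; the paper spends Lemmas~\ref{lem2Breuil}--\ref{lem3Breuil} disentangling exactly this, and it requires the genericity hypothesis $t \in \tfrak_J^{\mathrm{gen}}$.

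\textbf{What the paper actually does.} Rather than a global product structure, the paper works fibre by fibre over generic $t \in \tfrak_J$. It constructs an explicit closed immersion
\[
\iota_{t,u^J,v^J}\colon L\times^{L\cap B}(\lfrak\cap\bfrak)\hookrightarrow G\times^B\bfrak,\qquad (l,\psi_L)\longmapsto\bigl(u^J l (v^J)^{-1},\,\Ad(v^J)(t+\psi_L)\bigr),
\]
and shows it restricts to an isomorphism $\widetilde{V}_{L,w_J}\cap q_L^{-1}(t+\ufrak_L)\xrightarrow{\sim}\widetilde{V}_{u^Jw_J(v^J)^{-1}}\cap q^{-1}\bigl(\Ad(u^J)(t+\ufrak_L)\bigr)$ (Proposition~\ref{propLeviToG}). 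This only lands in the thin slice $\Ad(u^J)(t+\ufrak_L)$ inside the full fibre $\Ad(u^J)t+\ufrak$; the remaining directions are filled in by a Jordan-decomposition argument producing a surjection $U\times\Ad(u^J)(t+\ufrak_L)\twoheadrightarrow\Ad(u^J)t+\ufrak$ (Lemma~\ref{lemJordan}). These are then assembled into a commutative square whose surjective left edge forces the desired inclusion; genericity of $t$ is removed at the end by a density argument (Lemma~\ref{lemDenseIsEnough}). Your proposal has no analogue of the Jordan-decomposition step, and without it --- or something playing the same role --- the argument cannot close.
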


Conjecture \ref{conjIntroLocalModelSimple} first appeared in the arithmetic context of the study of the rigid analytic \emph{trianguline variety} $\Xtri$ introduced by Hellmann \cite{hellmann} (here $\bar{r}$ is a mod $p$ representation of $\Gal(\overline K/K)$ where $p$ is a prime number and $K$ a finite extension of $\Q_p$). More precisely, Breuil-Hellmann-Schraen proved in \cite{bhs3} that the varieties $\widetilde{X}_w$ are local models of $\Xtri$ at \emph{crystalline regular generic points}, which allow them to derive a number of consequences about the local geometry of $\Xtri$ at these points.

We show in \S\ref{secEquations} that Conj.\ \ref{conjIntroLocalModelSimple} is actually \emph{false} in general! In \S\ref{secProofGoodPairs}, we nonetheless prove it in the case of good pairs:

\begin{thm}[Theorem \ref{thmConjIsTrue}] \label{thmIntroConjIsTrue}
Let $w,w'\in W$ such that $w'\preceq w$ and $(w',w)$ is a good pair. Then, for all $t\in\tfrak$ fixed by $ww'^{-1}$, we have
\begin{equation} \label{eqIntroConjLocalModel}
	\widetilde{V}_{w'}\inter q^{-1}(t+\ufrak) \subseteq \widetilde{X}_w \,.
\end{equation}
Hence Conj.\ \ref{conjIntroLocalModelSimple} for $(w',w)$ is true.
\end{thm}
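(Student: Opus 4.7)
The plan is to reduce the statement to a special case established by Ginsburg \cite{ginsburg} via Beilinson--Bernstein localisation, by descending to a suitable Levi subgroup. Set $v \coloneqq ww'^{-1}$ and $L \coloneqq C_G((T^v)^\circ)$. By Proposition~\ref{propIntroCycleDecompositionCentraliser}, $L$ is a connected split reductive subgroup of $G$ containing $T$, with root system $\Phi_v$. The hypothesis $t \in \tfrak^v$ then says precisely that $t$ lies in the center of $\lfrak$; this centrality is the technical ingredient that enables parabolic descent. Note that the ``hence Conj.\ \ref{conjIntroLocalModelSimple} is true'' part of the statement is then immediate: taking the union of \eqref{eqIntroConjLocalModel} over all $t \in \tfrak^v$ combines with the opposite inclusion \cite[Lem.\ 2.3.4]{bhs3} to yield the conjectural equality.

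Next, apply Proposition~\ref{propIntroGoodPairsMinipermutations} to pick $J \subseteq I$ and $u^J \in W^J$ with $\Phi_v = u^J(\Phi_J)$, together with $v^J \in W^J$ and $w_{J,1} \preceq w_{J,2}$ in $W_J$ (the inequality being provided by the good-pair hypothesis) satisfying $w' = u^J w_{J,1}(v^J)^{-1}$ and $w = u^J w_{J,2}(v^J)^{-1}$. Conjugating by a lift of $u^J$ identifies $L$ with the standard Levi $L_J$ and transports $t$ to a central element $t'$ of $\lfrak_J$. In this translated picture the pair $(w_{J,1}, w_{J,2}) \in W_J^2$ is \emph{full}, in the sense that its minimal generating root system exhausts $\Phi_J$.

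The core step is then a parabolic descent. Let $P_J = L_J U_J$ be the standard parabolic with Levi $L_J$. Using the minimal-length property of $u^J, v^J \in W^J$, one constructs a natural map $L_J/B_{L_J} \hookrightarrow G/B$ carrying the Schubert cell $B_{L_J} w_{J,i} B_{L_J}/B_{L_J}$ into $B w_i B/B$ for $i = 1,2$, compatibly with Schubert closures. Centrality of $t$ extends this cell-matching across the fibers of $q$: the intersection $\widetilde{V}_{w'} \cap q^{-1}(t + \ufrak)$ fibers along the $U_J$-direction over the analogous intersection inside the Grothendieck--Springer picture for $L_J$, in a way that respects the closure taken in $\widetilde{X}$. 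Thus \eqref{eqIntroConjLocalModel} reduces to the analogous inclusion inside the Grothendieck--Springer resolution of $L_J$, where the pair is full and $t'$ is central---precisely the hypothesis of Ginsburg's theorem, which then concludes.

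The main obstacle will be making the parabolic descent rigorous: one must show that $\widetilde{X}_w$ is recovered from the analogous Levi closure by induction through $P_J$, and that no Schubert cell of $G/B$ outside $P_J w' P_J / B$ contributes to $\widetilde{V}_{w'} \cap q^{-1}(t + \ufrak)$. The good-pair hypothesis is exactly what makes this possible: the ascending chain $w' \prec w's_{\alpha_1} \prec \cdots \prec w$ with $\alpha_i \in \Phi_v$ forces the Bruhat interval $[w', w] \subseteq W$ to match, under the $W^J$-bi-translation, the interval $[w_{J,1}, w_{J,2}] \subseteq W_J$. It is precisely this interval matching that breaks for bad pairs, in line with the counter-examples of \S\ref{secEquations}.
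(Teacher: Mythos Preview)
Your overall strategy matches the paper's: reduce to a Levi via the good-pair decomposition of Proposition~\ref{propIntroGoodPairsMinipermutations} and invoke the Ginsburg/Beilinson--Bernstein input as the base case. But two genuine gaps remain, and one claim is off.

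First, the ``parabolic descent'' you sketch is not the mechanism that works. The paper does not show that $\widetilde{V}_{w'}\cap q^{-1}(t+\ufrak)$ fibers along the $U_J$-direction over the Levi picture. What it does instead is (i) build an explicit closed immersion $\iota_{t,u^J,v^J}\colon L\times^{L\cap B}(\lfrak\cap\bfrak)\to G\times^B\bfrak$ inducing an \emph{isomorphism} $\widetilde{V}_{L,w_J}\cap q_L^{-1}(t+\ufrak_L)\isoto\widetilde{V}_{u^Jw_J(v^J)^{-1}}\cap q^{-1}(\Ad(u^J)(t+\ufrak_L))$ (Proposition~\ref{propIntroLeviToG}), and then (ii) construct a \emph{surjection} $U\times(\Ad(u^J)(t+\ufrak_L))\twoheadrightarrow\Ad(u^J)t+\ufrak$ via the Jordan decomposition in $\gfrak$ (Lemma~\ref{lemIntroJordan}): the semisimple part of any $y\in\Ad(u^J)t+\ufrak$ is $U$-conjugate to $\Ad(u^J)t$, and its nilpotent part lands in the right place because $\lfrak=\zfrak_\gfrak(t)$. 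Note it is the full $U$, not $U_J$, that acts, and it is not a fibration. This Jordan step is the technical heart and it \emph{requires $t$ generic} in $\tfrak_J$ (so that $C_G(t)^\circ=L$). Hence a preliminary reduction you omit: one first restricts to a dense open $\tfrak_J^{\gen}\subset\tfrak_J$ and then uses that $\widetilde{V}_{w'}\cap q^{-1}(\tfrak'+\ufrak)$ is dense in $\widetilde{V}_{w'}\cap q^{-1}(\tfrak^{ww'^{-1}}+\ufrak)$ for dense $\tfrak'$ (Lemma~\ref{lemDenseIsEnough}).

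Second, your description of Ginsburg's input is inaccurate. The result used (via \cite[\S2.4]{bhs3}) is that \eqref{eqIntroConjLocalModel} holds at $t=0$ for \emph{every} pair $w'\preceq w$; there is no ``full pair with central $t$'' hypothesis. The passage to central $t'$ in $\lfrak_J$ is a separate translation step. Finally, the Bruhat interval matching $[w',w]\leftrightarrow[w_{J,1},w_{J,2}]$ you invoke at the end is neither needed nor established: the good-pair hypothesis enters exactly once, to guarantee $w_{J,1}\preceq w_{J,2}$ so that Ginsburg applies inside $L$. Nothing about other cells in $[w',w]$ is used.
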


Going back to the motivation of \cite{bhs3}, Theorem \ref{thmIntroConjIsTrue} has the following application:

\begin{cor}[Theorem \ref{thmTrianguline}] \label{corTriangulineIntro}
Let $x\in\Xtri$ be a crystalline regular generic point and $T_{\Xtri,x}$ the tangent space to $\Xtri$ at $x$. Let $w_x, w$ be as in \cite[\S4.1]{bhs3} ($w_x$ and $w$ are elements of a fixed product of Weyl groups of type $A$) and assume that $(w_x,w)$ is a good pair. Then
\begin{equation} \label{eqDimensionIntro}	
\dim_{k(x)}T_{\Xtri,x} = \dim\Xtri - d_{ww_x^{-1}} + \dim_{\Q_p}T_{\overline{(BwB/B)},w_xB} - \length(w_x)
\end{equation}
where $k(x)$ is the residue field of $\Xtri$ at $x$, $\overline{BwB/B}$ is the Schubert variety associated to $w$ and where we refer to Definition \ref{dfnMinGenSys} for the integer $d_{ww_x^{-1}}$.
\end{cor}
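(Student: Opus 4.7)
The plan is to combine Theorem~\ref{thmIntroConjIsTrue} with the local model theorem of Breuil--Hellmann--Schraen \cite{bhs3}. At a crystalline regular generic point $x\in\Xtri$, that theorem provides an isomorphism of completed local rings
\[
\widehat{\mathcal{O}}_{\Xtri,x}\;\cong\;\widehat{\mathcal{O}}_{\widetilde X_w,\tilde x}\,\widehat\otimes_{k(x)}R
\]
for a suitable lifting $\tilde x\in\widetilde V_{w_x}\cap\widetilde X_w$ of $x$ and a regular local $k(x)$-algebra $R$ of dimension $\dim\Xtri-\dim\widetilde X_w$. Taking dimensions of tangent spaces reduces the formula \eqref{eqDimensionIntro} to the purely local identity
\[
\dim T_{\widetilde X_w,\tilde x}\;=\;\dim\widetilde X_w\;-\;d_{ww_x^{-1}}\;-\;\length(w_x)\;+\;\dim T_{\overline{(BwB/B)},w_xB}.
\]

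To prove this identity, I would exploit the proper projection $\widetilde\pi|_{\widetilde X_w}\colon\widetilde X_w\to\overline{(BwB/B)}$ together with the scheme-theoretic description provided by Theorem~\ref{thmIntroConjIsTrue}: because $(w_x,w)$ is a good pair, one has $\widetilde V_{w_x}\cap\widetilde X_w=\widetilde V_{w_x}\cap q^{-1}(\tfrak^{ww_x^{-1}}+\ufrak)$. The fiber of $\widetilde\pi|_{\widetilde X_w}$ over $w_xB$ is then identified with the affine space $\tfrak^{ww_x^{-1}}\oplus(\ufrak\cap\Ad(w_x)\ufrak)$, of dimension $\dim\bfrak-d_{ww_x^{-1}}-\length(w_x)$. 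Combined with the short exact sequence of tangent spaces
\[
0\to T_{(\widetilde\pi|_{\widetilde X_w})^{-1}(w_xB),\tilde x}\to T_{\widetilde X_w,\tilde x}\to T_{\overline{(BwB/B)},w_xB}\to 0
\]
and the equality $\dim\widetilde X_w=\dim\bfrak$, this yields the identity above.

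The main obstacle will be justifying surjectivity in this tangent sequence at the point $\tilde x$, which is singular on $\widetilde X_w$ as soon as $w_x\prec w$. I expect this to follow from a genericity argument reflecting the regularity hypothesis on $x$, combined with applying Theorem~\ref{thmIntroConjIsTrue} uniformly over all intermediate strata $w_x\preceq w'\preceq w$: this makes the fibers of $\widetilde\pi|_{\widetilde X_w}$ equidimensional over each Schubert cell of the base, and guarantees that every infinitesimal deformation of $w_xB$ inside $\overline{(BwB/B)}$ lifts to an infinitesimal deformation of $\tilde x$ inside $\widetilde X_w$.
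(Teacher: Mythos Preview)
Your overall strategy matches the paper's: use the local model theorem of \cite{bhs3} to reduce to computing $\dim T_{\widetilde X_w,\tilde x}$ at the point $\tilde x=(w_xB,0)$, and then compute that dimension using Theorem~\ref{thmIntroConjIsTrue}. The paper carries this out in Proposition~\ref{conjImpliesConj3} (stated for $X_w$ rather than $\widetilde X_w$, but this is the same up to a smooth factor). However, your execution of the tangent-space computation has two gaps.

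First, your ``main obstacle'' is not one. The surjectivity of $d_{\tilde x}\widetilde\pi\colon T_{\widetilde X_w,\tilde x}\to T_{\overline{BwB/B},w_xB}$ follows immediately from the existence of the \emph{zero section} $\overline{BwB/B}\hookrightarrow\widetilde X_w$, $gB\mapsto(gB,0)$: this is a closed immersion (because $\psi=0$ is in every fiber) and it passes through $\tilde x=(w_xB,0)$, hence splits $d_{\tilde x}\widetilde\pi$. This is exactly the map $\iota_1$ in the paper's proof of Proposition~\ref{conjImpliesConj3}. The equidimensionality-of-fibers argument you propose is neither necessary nor sufficient: equidimensional fibers do not in general force surjectivity on tangent spaces at singular points.

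Second, Theorem~\ref{thmIntroConjIsTrue} identifies only the \emph{reduced} fiber $(\widetilde\pi|_{\widetilde X_w})^{-1}(w_xB)^{\mathrm{red}}$ with the affine space $\tfrak^{ww_x^{-1}}\oplus(\ufrak\cap\Ad(w_x)\ufrak)$; for your short exact sequence you need the tangent space of the \emph{scheme-theoretic} fiber, which could a priori be larger. The paper handles this by not invoking a short exact sequence at all: it uses the closed immersion $\widetilde X_w\hookrightarrow G/B\times\bfrak$ and quotes \cite[Proposition~2.5.3]{bhs3} for the upper bound $T_{\widetilde X_w,\tilde x}\subseteq T_{\overline{BwB/B},w_xB}\oplus\tfrak^{ww_x^{-1}}\oplus(\ufrak\cap\Ad(w_x)\ufrak)$, then uses the zero section together with Theorem~\ref{thmIntroConjIsTrue} (via the map $\iota_2$) to prove equality. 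You should do the same: replace your exact sequence by this injection-plus-surjection argument.
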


In \cite{chap3}, we proved Corollary \ref{corTriangulineIntro} under the assumption of modularity lifting conjectures; here we do not need this assumption. Interestingly, while they use fundamentally different methods, both works prove the exact same cases:  the notions of good pair are stated differently (here as in Proposition \ref{propGoodPairsMinipermutations}, and in \cite{chap3} as in Definition \ref{dfnGoodPair}) but in equivalent ways. Note that it was already known that the right hand side in \eqref{eqDimensionIntro} is an upper bound for $\dim_{k(x)}T_{\Xtri,x}$ (even if $(w_x,w)$ is a bad pair), see \cite[Proposition 4.1.5(ii)]{bhs3}. We do not know if this upper bound becomes strict for bad pairs (see however Proposition \ref{propSmoothnessXtri}).

Before outlining the proof of Theorem \ref{thmIntroConjIsTrue}, we remark that it suffices to prove \eqref{eqIntroConjLocalModel} for all ``sufficiently generic'' (closed) points $t\in\tfrak^{ww'^{-1}}$: see Proposition \ref{propGenericTorus} and Lemma \ref{lemDenseIsEnough}. Without loss of generality, we can also assume $k$ algebraically closed; see \S\ref{ssecExtensionLevi}. The proof of Theorem \ref{thmIntroConjIsTrue} then proceeds in three steps.

Firstly, as $(w',w)$ is a good pair, Proposition \ref{propIntroGoodPairsMinipermutations} gives a subset $J\subseteq I$ of simple roots of $\Phi(G,T)$ and elements $u^J,v^J\in W^J$, $w_J,w'_J\in W_J$, such that $w=u^Jw_J(v^J)^{-1}$, $w'=u^Jw'_J(v^J)^{-1}$ and $w'_J\preceq w_J$ in $W_J$. Let $L$ be the Levi subgroup of the standard parabolic subgroup $P_J$ of $G$, then $L=C_G\big(\big((u^J)^{-1}T^{ww'^{-1}}u^J\big)^\circ\big)$ using Proposition \ref{propIntroCycleDecompositionCentraliser}.

It is proved in \cite[\S2.4]{bhs3} using the Beilinson-Bernstein localisation that \eqref{eqIntroConjLocalModel} holds for $t=0$ for all $w'\preceq w$ (even in the case of bad pairs). Since $w'_J\preceq w_J$, we can apply this to the connected split reductive group $L$ with split maximal torus $T$, Borel subgroup $B_L\coloneqq B\inter L$ and unipotent subgroup $U\coloneqq U\inter L$. For any $t\in\tfrak_J=\Ad(u^J)^{-1}\tfrak^{ww'^{-1}}$, we deduce by translating by $t$ that $\widetilde{V}_{L,w'_J}\inter q^{-1}(t+\ufrak_L) \subset \widetilde{X}_{L,w_J}$.

Secondly, by carefully decomposing the adjoint action of $G$ on some elements of $\lfrak\inter\bfrak$ (where $\lfrak$ is the Lie algebra of $L$), we show:

\begin{prop}[Proposition \ref{propLeviToG}] \label{propIntroLeviToG}
Let $u^J,v^J\in W^J$. One can define a morphism $\iota_{t,u^J,v^J}\colon L\times^{L\inter B}\lfrak\inter\bfrak\to G\times^B\bfrak$ by $\iota_{t,u^J,v^J}(l,\psi_L))=\left(u^Jl(v^J)^{-1},\Ad(v^J)\psi_L\right)$. For any $t\in\Ad(u^J)^{-1}\tfrak^{ww'^{-1}}$ sufficiently generic and any $w_J\in W_J$, this morphism $\iota_{t,u^J,v^J}$ induces an isomorphism of algebraic varieties
\[
	\iota_{t,u^J,w_J,v^J} \colon \widetilde{V}_{L,w_J}\inter q_L^{-1}(t+\ufrak_L) \isoto \widetilde{V}_{u^Jw_J(v^J)^{-1}}\inter q^{-1}\left(\Ad(u^J)(t+\ufrak_L)\right) \,.
\]
\end{prop}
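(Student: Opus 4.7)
The plan is to establish the isomorphism in three stages: well-definedness of the map $\iota_{t,u^J,v^J}$, verification that it sends the source into the target, and construction of an inverse (which is where the genericity of $t$ enters and which is the main obstacle).

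First, I verify that the formula for $\iota_{t,u^J,v^J}$ descends from $L \times (\lfrak \inter \bfrak)$ to $L \times^{L \inter B} (\lfrak \inter \bfrak) \to G \times^B \bfrak$. Since $v^J \in W^J$ sends $\Phi_J^+$ into $\Phi^+$, we have both $v^J (L \inter B) (v^J)^{-1} \subseteq B$ and $\Ad(v^J)(\lfrak \inter \bfrak) \subseteq \bfrak$. A direct computation yields, for $b \in L \inter B$,
$$\iota_{t,u^J,v^J}(lb, \Ad(b^{-1}) \psi_L) = \iota_{t,u^J,v^J}(l, \psi_L) \cdot \bigl(v^J b (v^J)^{-1}\bigr),$$
with the right-hand side being the $B$-action on $G \times^B \bfrak$, so the two images coincide in the quotient.

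Next, given $(l, \psi_L) \in \widetilde{V}_{L,w_J} \inter q_L^{-1}(t + \ufrak_L)$, I check the three conditions defining the target. The membership $\Ad(v^J)\psi_L \in \bfrak$ is clear from the first step. The Bruhat condition $u^J l (v^J)^{-1} \in B u^J w_J (v^J)^{-1} B$ follows from $l \in (L \inter B) w_J (L \inter B)$ combined with $u^J(L \inter B)(u^J)^{-1}, v^J(L \inter B)(v^J)^{-1} \subseteq B$. The adjoint constraint is the direct identity $q\bigl(u^J l (v^J)^{-1}, \Ad(v^J) \psi_L\bigr) = \Ad(u^J) \Ad(l) \psi_L \in \Ad(u^J)(t + \ufrak_L)$.

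The main step is the construction of an inverse. Given $(g, \psi)$ in the target, we may write $\Ad(g)\psi = \Ad(u^J)(t + n_L)$ with $n_L \in \ufrak_L$. By Proposition \ref{propIntroCycleDecompositionCentraliser}, $t$ lies in the centre of $\lfrak$, so $\Ad(u^J) t$ commutes with $\Ad(u^J) n_L$ and hence is the semisimple part of $\Ad(g)\psi$. The genericity of $t$ enters crucially here: for $t$ sufficiently generic in $\Ad(u^J)^{-1} \tfrak^{ww'^{-1}}$ (a Zariski-dense open condition), the centraliser $C_G(\Ad(u^J)t)$ is exactly $u^J L (u^J)^{-1}$. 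Combined with the Bruhat constraint on $g$, this allows to absorb a sufficient part of $g$ into the $B$-action and write $g = u^J l (v^J)^{-1}$ for some $l \in L$, uniquely modulo the residual right $(L \inter B)$-action. Setting $\psi_L := \Ad((v^J)^{-1}) \psi$, the identity $\Ad(l) \psi_L = \Ad((u^J)^{-1}) \Ad(g) \psi = t + n_L$ forces $\psi_L \in \lfrak$; the elementary root-system computation $(v^J)^{-1}\Phi^+ \inter \Phi_J = \Phi_J^+$ then yields $\psi_L \in \lfrak \inter \bfrak$. Finally, the Bruhat class of $l$ in $L$ is identified as $(L \inter B) w_J (L \inter B)$ by the uniqueness of the Bruhat cell containing $g = u^J l (v^J)^{-1}$. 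The hard part of the whole argument is this reduction of $g$ to the normal form $u^J l (v^J)^{-1}$, which is the precise point where the genericity hypothesis on $t$ is crucially used.
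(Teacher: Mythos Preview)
Your first two stages (well-definedness and landing in the target) are fine and match the paper. The gap is in the third stage, precisely at the sentence ``Combined with the Bruhat constraint on $g$, this allows to absorb a sufficient part of $g$ into the $B$-action and write $g = u^J l (v^J)^{-1}$.'' Knowing that $C_G(\Ad(u^J)t) = u^J L (u^J)^{-1}$ and that the semisimple part of $\Ad(g)\psi$ is $\Ad(u^J)t$ tells you something about $\Ad(g)\psi$, but it does not by itself constrain the coset $gB$: many $g$'s not of the form $u^J l (v^J)^{-1} b$ can still satisfy $\Ad(g)\psi \in \Ad(u^J)(t+\ufrak_L)$ for \emph{some} $\psi$, and the Bruhat condition alone does not close this gap. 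You have named the obstacle correctly but not actually overcome it.

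The paper's argument is concrete here. One writes $g^{-1} u^J$ via the decomposition $G = \coprod_{w^J \in W^J} B w^J U_P L$ as $b w^J u_P l$, absorbs $b$ into the $B$-action, and is left with showing that the $U_P$-factor $u_P$ can be pushed through $w^J$ into $U$ (i.e.\ $w^J u_P (w^J)^{-1} \in U$) and that $\Ad(l)x \in \ufrak_L$. This is exactly Lemma~\ref{joliCorollaire}, whose nontrivial direction is a careful root-ordering computation (Lemma~\ref{lem3Breuil}) that uses the genericity of $t$ via $\zfrak_\gfrak(t) = \lfrak$. Only after this does the Bruhat condition force $w^J = v^J$ and $w_J' = w_J$. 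Your Jordan-decomposition idea is in fact what the paper uses for a \emph{different} step (Lemma~\ref{lemJordan}), and even there it still relies on Lemma~\ref{lem3Breuil}; it is not a substitute for the root computation needed here.
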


Thirdly, using standard results about Jordan decompositions in $\gfrak$ (that we show in Appendix \ref{appendixJordan} as we couldn't totally find them in the literature), we prove:

\begin{lem}[Lemma \ref{lemJordan}] \label{lemIntroJordan}
Let $t\in\Ad(u^J)^{-1}\tfrak^{ww'^{-1}}$ be sufficiently generic, let $u^J\in W^J$. Then there is a surjective morphism given by
\begin{equation*}
	f \colon
	\begin{array}{ccc}
		U\times\left(\Ad(u^J)(t+\ufrak_L)\right) &\surj &\Ad(u^J)t+\ufrak \\
		(u,x) &\longmapsto &\Ad(u)x \,.
	\end{array}
\end{equation*}
\end{lem}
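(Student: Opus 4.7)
The plan is to set $s \coloneqq \Ad(u^J) t \in \tfrak^{ww'^{-1}}$, so that $f$ becomes $(u,x) \mapsto \Ad(u) x$ from $U \times (s + \Ad(u^J)\ufrak_L)$ to $s + \ufrak$. Well-definedness of the target should be routine: the convention on $W^J$ (the same one used to make $\iota_{t,u^J,v^J}$ well-defined in Proposition \ref{propIntroLeviToG}) forces $\Ad(u^J)\ufrak_L \subseteq \ufrak$, and $\Ad(u)s - s \in \ufrak$ for any $u \in U$ since $s \in \tfrak$.

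The first substantive step is to identify $C_\ufrak(s)$. Since $t$ is sufficiently generic, Proposition \ref{propIntroCycleDecompositionCentraliser} applied to $\Phi_{ww'^{-1}} = u^J(\Phi_J)$ yields $C_\gfrak(s) = \Ad(u^J)\lfrak$. Intersecting with $\ufrak$ via the root space decomposition, and using that $u^J(\Phi_J^+) = u^J(\Phi_J) \inter \Phi^+$, one obtains $C_\ufrak(s) = \Ad(u^J)\ufrak_L$. Thus the fibre direction at $s$ in the source of $f$ coincides exactly with the $\ufrak$-centraliser of $s$.

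The core of the argument is to show that every $y \in s + \ufrak$ can be brought by some $u \in U$ into $s + C_\ufrak(s)$. I would invoke the Jordan decomposition $y = y_s + y_n$ inside $\bfrak$: since $\bfrak$ is algebraic and solvable with nilradical $\ufrak$, both components lie in $\bfrak$ and $y_n \in \ufrak$. The key input from Appendix \ref{appendixJordan} should be that any semisimple element of $\bfrak$ is $U$-conjugate to a unique element of $\tfrak$, so $y_s = \Ad(u) s''$ for some $u \in U$ and $s'' \in \tfrak$. Reducing modulo $\ufrak$, the congruences $y_s \equiv y \equiv s \pmod{\ufrak}$ together with $\Ad(u)s'' \equiv s'' \pmod{\ufrak}$ force $s'' = s$.

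With $y_s = \Ad(u) s$ established, $\Ad(u)^{-1} y_n$ is the nilpotent part of $\Ad(u)^{-1} y$ and commutes with $s$, hence belongs to $C_\gfrak(s) \inter \ufrak = C_\ufrak(s) = \Ad(u^J)\ufrak_L$. Therefore $\Ad(u)^{-1} y \in s + \Ad(u^J)\ufrak_L = \Ad(u^J)(t + \ufrak_L)$, and $f(u, \Ad(u)^{-1} y) = y$. The main obstacle I anticipate is exactly this Jordan-decomposition step: upgrading the standard $G$-conjugacy of semisimple elements into the required $U$-conjugacy that \emph{matches} the prescribed base-point $s$ rather than an arbitrary element of $\tfrak$. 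This is precisely the refinement that Appendix \ref{appendixJordan} is designed to supply.
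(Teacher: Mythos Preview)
Your argument is correct and in fact more streamlined than the paper's. Both proofs hinge on showing that the semisimple part $y_\semisimple$ of an arbitrary $y\in s+\ufrak$ (with $s=\Ad(u^J)t$) satisfies $y_\semisimple=\Ad(u)s$ for some $u\in U$; once this is known, the nilpotent part falls into $C_\ufrak(s)=\Ad(u^J)\ufrak_L$ exactly as you describe. The difference lies in how this $U$-conjugacy is obtained. You invoke directly the (true, standard) fact that every semisimple element of $\bfrak$ is $U$-conjugate to a unique point of $\tfrak$, and then identify that point as $s$ by reducing modulo $\ufrak$. The paper instead uses only what its Appendix~\ref{appendixJordan} actually proves, namely $(G,G)$-conjugacy (Proposition~\ref{propSsDiagConjugates}): it writes $y_\semisimple=\Ad(g)\Ad(u^J)t$ for some $g\in G$, decomposes $gu^J$ via $G=\coprod_{w^J\in W^J}Uw^JU_PL$, applies Lemma~\ref{lem3Breuil} to absorb the $U_P$-factor into $U$, and then runs a short Weyl-group argument to force $w^J=u^J$.

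So your route bypasses both the Bruhat-type decomposition and Lemma~\ref{lem3Breuil}, at the cost of citing a lemma that is \emph{not} in Appendix~\ref{appendixJordan}. You flag this yourself as the ``main obstacle''; be aware that the appendix supplies only $G$-conjugacy of semisimple elements, not $U$-conjugacy into $\tfrak$, so you would need to supply that step separately (e.g.\ via the fact that all maximal tori of $B$ are $U$-conjugate, together with the observation that a semisimple $x\in\bfrak$ lies in the Lie algebra of some maximal torus of $B$). A minor remark: the identification $C_\gfrak(s)=\Ad(u^J)\lfrak$ is really Proposition~\ref{propGenericTorus}\ref{itemGenericTorusLie} (genericity of $t$), not Proposition~\ref{propIntroCycleDecompositionCentraliser}.
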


The surjective morphism $f$ of Lemma \ref{lemIntroJordan} combined with the isomorphism of Proposition \ref{propIntroLeviToG} induces two morphisms of varieties $U\times\left(\widetilde{V}_{L,w_J}\inter q_L^{-1}(t+\ufrak_L)\right)\surj\widetilde{V}_w\inter q^{-1}\left(\Ad(u^J)t+\ufrak\right)$ and $U\times\widetilde{X}_{L,w_J}\to\widetilde{X}_w$, which fit into a commutative diagram
\[
	\begin{tikzcd}
		U\times\left(\widetilde{V}_{L,w'_J}\inter q_L^{-1}(t+\ufrak_L)\right) \arrow[r,"\subseteq"] \arrow[dd,two heads] & U\times\widetilde{X}_{L,w_J} \arrow[d] \\
		 & \widetilde{X}_w \arrow[d,hook,"\subseteq"] \\
		\widetilde{V}_{w'}\inter q^{-1}(\Ad(u^J)t+\ufrak) \arrow[r,"\subseteq"] & \widetilde{X}
	\end{tikzcd}
\]
where the top horizontal arrow is given by the first step. The surjectivity of the left vertical arrow then proves that the image of the bottom horizontal arrow is contained in $\widetilde{X}_w$, which is the statement \eqref{eqIntroConjLocalModel}.

\subsubsection*{Counter-examples}

In the last \S\ref{secEquations}, we give explicit equations and inequations for the projective varieties $\widetilde{X}$ and $\widetilde{V}_w$ in the case $G=\GL_{n/k}$. Even though we do not have such an explicit description for the varieties $\widetilde{X}_w$, this is enough to find a family of counter-examples to Conj.\ \ref{conjIntroLocalModelSimple}.

We use that $\widetilde{X}$ can be seen as the subvariety $\enstq{(gB,\psi)\in G/B\!\times\!\bfrak}{\Ad(g^{-1})\psi\in\bfrak}$ of $G/B\times\bfrak$, see \eqref{defXtilde}, which is easier to handle than a quotient variety. Indeed $\bfrak$ is an affine space, and the flag variety $G/B$ is very well-understood. Specifically, $G/B$ is a subvariety of a product of Grassmannians defined by incidence relations; and there are well-known equations defining the Grassmannian as a subvariety of a projective space \emph{via} the Plücker embedding. Therefore $G/B\times\bfrak$ is well-understood as a subvariety of $\prod_{d=1}^{n-1}\Proj(\bigwedge^d k^n)\times\mathbb{A}^{n(n+1)/2}$. Details are spelled out in \S\ref{ssecFlagEquations}.

We then work out a set of equations defining $\widetilde{X}$ inside $G/B\times\bfrak$; this is Lemma \ref{lemEqStableFlag}. The next step is to use the (in)equations defining each Schubert cell $BwB/B$ inside $G/B$ in order to find a set $\{F_i=0,G_j\neq0\}_{i\in I, j\in J}$ of (in)equations defining $\widetilde{V}_w$ inside $\prod_{d=1}^{n-1}\Proj(\bigwedge^d k^n)\times\mathbb{A}^{n(n+1)/2}$. Doing so by brute force is not too difficult; however the challenge is to have enough equations so that, hopefully, the set of equations $\{F_i=0\}_{i\in I}$ describes the Zariski-closure $\widetilde{X}_w$ of $\widetilde{V}_w$ inside $\prod_{d=1}^{n-1}\Proj(\bigwedge^d k^n)\times\mathbb{A}^{n(n+1)/2}$. The result we achieve in Proposition \ref{allEquations} is quite satisfying, as Conj.\ \ref{conjIntroLocalModelSimple} is equivalent to $\widetilde{X}_w$ being the closed subset $\{F_i=0\}_{i\in I}$; see Proposition \ref{conjImpliesConj1}.

On the other hand, through technical simplifications in \S\ref{ssecZariskiClosureEquations}, we improve on Proposition \ref{allEquations} by finding unexpected additional equations satisfied on $\widetilde{V}_w$. As a consequence, we get for a certain family of couples $(w,w')\in(\Scal_n)^2$, a beautifully simple equation satisfied on $\widetilde{X}_w\inter \widetilde{V}_{w'}$ but not on $\widetilde{V}_{w'}\inter q^{-1}(\tfrak^{ww'^{-1}}+\ufrak)$, hence the following result:

\begin{thm}[Theorem \ref{thmConjIsFalse}, Remark \ref{remConjIsFalseDim}, Remark \ref{remConcreteCounterExamples}] \label{thmIntroConjIsFalse}
For any $n\in\Z_{\geq4}$, there exists a bad pair $(w,w')$ in $\Scal_n$ such that
\[
	\dim\left(\widetilde{X}_w\inter\widetilde{V}_{w'}\right) < \dim\left(\widetilde{V}_{w'}\inter q^{-1}(\tfrak^{ww'^{-1}}+\ufrak)\right) \,.
\]
In particular, Conj.\ \ref{conjIntroLocalModelSimple} is false.
\end{thm}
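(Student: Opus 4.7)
The plan is to exploit the explicit equations of \S\ref{secEquations} and exhibit, for each $n \geq 4$, a bad pair $(w,w')\in(\Scal_n)^2$ together with one ``extra'' polynomial equation valid on $\widetilde{X}_w$ but not on $\widetilde{V}_{w'}\inter q^{-1}(\tfrak^{ww'^{-1}}+\ufrak)$. Throughout, use the realisation of $\widetilde{X}$ inside $G/B\times\bfrak$ as the locus cut out by $\Ad(g^{-1})\psi\in\bfrak$, and the Plücker embedding $G/B\hookrightarrow\prod_{d=1}^{n-1}\Proj(\bigwedge^d k^n)$.

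First, by combining Lemma \ref{lemEqStableFlag} (which encodes the stability condition as polynomial equations in the Plücker and $\bfrak$-coordinates) with Proposition \ref{allEquations}, describe $\widetilde{V}_w$ inside the ambient product as cut out by a list of equations $\{F_i=0\}$ together with the Schubert inequations $\{G_j\ne 0\}$. By Proposition \ref{conjImpliesConj1}, Conjecture \ref{conjIntroLocalModelSimple} for $(w,w')$ is equivalent to the closed locus $\{F_i=0\}$ coinciding with $\widetilde{X}_w$, so it suffices to exhibit a single polynomial $F$ vanishing identically on $\widetilde{V}_w$ (hence on $\widetilde{X}_w$) but not vanishing on $\widetilde{V}_{w'}\inter q^{-1}(\tfrak^{ww'^{-1}}+\ufrak)$.

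Second, through careful manipulation of minor identities as in \S\ref{ssecZariskiClosureEquations}, produce such an ``unexpected'' equation $F$ not appearing in the list $\{F_i\}$. A natural candidate bad pair comes from the forbidden patterns in Theorem \ref{thmIntroBadPairPatterns} (the shortest of which has length $4$), giving the base case $n=4$. For $n\geq 5$, embed this bad pair into $\Scal_n$ by padding with fixed points outside the four distinguished indices; both $\widetilde{V}_w$ and $\widetilde{V}_{w'}\inter q^{-1}(\tfrak^{ww'^{-1}}+\ufrak)$ decompose compatibly with this embedding, so the counter-equation $F$ persists verbatim.

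Third, upgrade the strict set inclusion to the strict dimension inequality of Theorem \ref{thmIntroConjIsFalse}. The variety $\widetilde{V}_{w'}\inter q^{-1}(\tfrak^{ww'^{-1}}+\ufrak)$ is irreducible (it admits an explicit parametrisation as a bundle over an affine subspace of $\tfrak\oplus\ufrak$), so intersecting it with the non-trivial divisor $\{F=0\}$ strictly lowers dimension. The main obstacle is the middle step: the equation $F$ is invisible from the naive Plücker and stability defining equations, and isolating the minor identity that yields it for the chosen bad pair is the technical heart of \S\ref{secEquations}; the rest of the argument, once $F$ is in hand, is a matter of pattern embedding and a dimension count.
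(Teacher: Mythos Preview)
Your proposal is correct and follows essentially the same route as the paper. Two small remarks: the ``unexpected'' equation $F$ the paper actually produces is the remarkably simple linear relation $t_a=t_b$ on the torus coordinates (Proposition~\ref{propAdditionalEq}), and the irreducibility of $\widetilde{V}_{w'}\inter q^{-1}(\tfrak^{ww'^{-1}}+\ufrak)$ is not argued via an explicit bundle parametrisation but is instead imported from \cite[Lemma~2.3.5]{bhs3} through the isomorphism \eqref{eqIsoLocalModels}; also, for general $n$ the paper does not argue by ``compatible decomposition'' but simply observes that the padded pair still satisfies the numerical hypotheses of Proposition~\ref{propAdditionalEq} directly in $\GL_n$.
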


To our knowledge, the question of deciding whether or not Conj.\ \ref{conjIntroLocalModelSimple} is false for \emph{all} bad pairs remains open (we know that this holds at least for $\GL_4$ and $\GL_5$). Another area of further investigation is to find accurate estimates for the difference $\dim\big(\widetilde{V}_{w'}\inter q^{-1}(\tfrak^{ww'^{-1}}+\ufrak)\big)-\dim\big(\widetilde{X}_w\inter\widetilde{V}_{w'}\big)$.

\subsubsection*{Notation} We list here some of the main notation of the paper.

Throughout the paper, $k$ is a field of characteristic $0$ with algebraic closure $k^\mathrm{a}$. We denote by $(G,T)$ a connected split reductive algebraic group over $k$, $B$ a Borel subgroup of $G$ containing $T$ and $U\subset B$ the subgroup of unipotent elements in $B$.

We write $\gfrak$ for the Lie algebra of $G$, by which we mean the affine scheme over $k$ such that $\gfrak(k)$, as a finite-dimensional vector space over $k$, is naturally isomorphic to the tangent space $T_eG$ of $G$ at the neutral element $e\in G$. Similarly, $\tfrak$, $\bfrak$, $\ufrak$, $\ufrak_\alpha$ are the Lie algebras of $T$, $B$, $U$, $U_\alpha$ respectively ($\ufrak_\alpha$ is sometimes also denoted $\gfrak_\alpha$). We write $\Ad\colon G\to\GL(\gfrak)$ and $\ad\colon\gfrak\to\gl(\gfrak)$ for the adjoint representations.

We denote by $X(T)\coloneqq\Hom(T,\Gm)$ the group of algebraic characters of $T$ viewed as a $\Z$-module, and by $\Phi\subset X(T)$ the set of roots of $(G,T)$, \emph{i.e.}\ the set of $\alpha\in X(T)\setminus\{0\}$ such that the eigenspace $\gfrak_\alpha\coloneqq\enstq{x\in\gfrak}{\Ad(t)x=\alpha(t)x\;\forall t\in T}$ is non-zero. For any root $\alpha\in\Phi$, we have $\dim\gfrak_\alpha=1$ (see \cite[Thm.\ 21.11(b)]{milne}). We write $U_\alpha\subset G$ for the root subgroup of $\alpha$ (see \cite[21.10, 21.11]{milne}); its Lie algebra $\ufrak_\alpha$ satisfies $\ufrak_\alpha(k)=\gfrak_\alpha$.

Next, we write $\Gamma\subseteq X(T)$ for the $\Z$-submodule generated by $\Phi$, and $E\coloneqq\Gamma\tens_\Z\Q$. We then write $I$ for the basis of the root system $\Phi$ defined by the Borel $B$ and $\Phi_+$ for the set of positive roots defined by $I$.

We write $W\coloneqq W(\Phi)$ for the Weyl group of $\Phi$. Recall that $W(G,T)\coloneqq N_G(T)/T$ is a finite group with a natural faithful action on $T$ and hence on $X(T)$, which makes $W(G,T)$ equal to $W$ as groups of automorphisms of $X(T)$ (see \cite[Cor.\ 21.38]{milne}). We therefore make the identification $W=N_G(T)/T$. For $w\in W$, we write $\Ad(w)$ for the conjugation morphism on $T$ by any $\dot{w}\in N_G(T)$ which lifts $w$, \emph{i.e.\ }$\Ad(w)t=\dot{w}t\dot{w}^{-1}$ for $t\in T$. Then the action of $W$ on $\Phi$ can be written as $(w\alpha)(t) = \alpha(\Ad(w^{-1}){t})$ for $\alpha\in\Phi$, $w\in W$ and $t\in T$. Finally, $T^w\coloneqq\enstq{t\in T}{\Ad(w)t=t}$ denotes the subgroup of $T$ of invariants under $\Ad(w)$ and $(T^w)^\circ$ denotes its identity component.

In \S\ref{secGoodPairs} (except \S\ref{ssecMinGenSysGroups}), $E$ is simply defined as a vector space over $\Q$ and $\Phi$ as an abstract root system in $E$, independently of any split reductive group $G$. In \S\ref{secEquations}, we restrict to the case $G=\GL_{n/k}$ for some $n\in\Z_{>0}$; then $T$ and $B$ are respectively taken to be the subgroups of diagonal and upper triangular matrices.

From \S\ref{secConjs} onwards, we deal with algebraic varieties over $k$, \emph{i.e.\ }reduced separated schemes of finite type over $k$. Morphisms of varieties are then uniquely determined by their value on $\kalg$-points, and their ``image'' (more precisely, the surjectivity onto a subvariety of the target) can be checked on closed points (see \cite[Ex.\ 7.4.C]{vakil}). Given two varieties $X_1$ and $X_2$ equipped with immersions into a third variety $X$, we write
\[
	X_1\inter X_2 \coloneqq \left(X_1\times_XX_2\right)^\red
\]
when there is no ambiguity. By base change, the natural map $X_1\inter X_2\to X_1$ is a closed (resp.\ locally closed) immersion when $X_2\inj X$ is closed (resp.\ locally closed), and similarly for $X_1\inter X_2\to X_2$.  In the same way, given a morphism of varieties $f\colon X\to Y$ and a closed (resp.\ locally closed) immersion $Y'\inj Y$, we write
\[
	f^{-1}(Y') \coloneqq \left(X\times_YY'\right)^\red \,,
\]
and the natural map $f^{-1}(Y')\to X$ is a closed (resp.\ locally closed) immersion.

\subsubsection*{Acknowledgements}

The results of this paper were obtained during the author's PhD at the Laboratoire de Mathématiques d'Orsay. Much credit is due to his PhD advisor Christophe Breuil for his supervision and for providing key ideas for the proof of Theorem \ref{thmIntroConjIsTrue}. The author would like to thank Ariane Mézard for numerous exchanges which have enlightened the search of counter-examples to Conjecture \ref{conjIntroLocalModelSimple}. The author would also like to thank Benjamin Schraen and Anne Moreau for their help, as well as Gabriel Dospinescu and Ruochuan Liu for their careful review of the author's PhD dissertation.

\numberwithin{thm}{subsection}
\numberwithin{equation}{section}

\section{Some combinatorics in Weyl groups} \label{secGoodPairs}

In this section, we study particular subsystems of a root system $\Phi$ associated with an element of the Weyl group $W(\Phi)$. We then define the notions of \emph{good pairs} and \emph{bad pairs} in $W(\Phi)$. Finally, we explicitly describe the case of systems of type $A$.

\subsection{Minimal generating root subsystems} \label{ssecMinGenSys}

Let $E$ be a finite dimensional a vector space over $\Q$ and let $\Phi$ be a root system inside $E$ (see \cite[Chapter VI, \S1]{bourbakiLie456}). Write the (finite) Weyl group of $\Phi$ as $W\subset\End_\Q(V)$. For a root $\alpha\in\Phi$, we denote by $s_\alpha\in W$ its associated reflection.

\begin{dfn} \label{dfnMinGenSys}
Let $w\in W$, $\Gamma_w$ the $\Z$-submodule of $E$ generated by $w(\alpha)-\alpha$ for $\alpha\in\Phi$, $E_w\coloneqq\Gamma_w\tens_\Z\Q=\im(w-\id_E)$ and $\Phi_w\coloneqq \Phi\inter E_w$. We call $\Phi_w$ the \emph{minimal generating (root) subsystem} of $w$. We also define $d_w\coloneqq\rk_\Z\Gamma_w=\dim_\Q E_w$.
\end{dfn}

Being the intersection of $\Phi$ with a subspace $E_w$ of $E$, the set $\Phi_w$ is a root subsystem in $\Phi$, \emph{i.e.}\ it is a root system in the subspace of $E$ that it generates (see the Corollary to Proposition 4 of \cite[Chapter VI, \S1.1]{bourbakiLie456}). Note that this latter subspace may be strictly smaller than $E_w$; however we will see later in Corollary \ref{corMinGenSysReflections} that this never occurs. It therefore makes sense to consider the Weyl group $W(\Phi_w)$ of $\Phi_w$ as a subgroup of $W$, with the convention $W(\emptyset)=\{1\}$.

\begin{prop} \label{propSelfInWeylSubgroup}
Let $w\in W$. Then $w$ is in the Weyl group $W(\Phi_w)$ of its minimal generating subsystem.
\end{prop}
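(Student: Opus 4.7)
The plan is to invoke Carter's theorem on minimal reflection factorisations, which provides for every $w\in W$ an expression $w=s_{\beta_1}\cdots s_{\beta_r}$ as a product of reflections in $W$ with $r=d_w$ and roots $\beta_1,\ldots,\beta_r\in\Phi$ that are linearly independent. Granting this, the proposition reduces to checking that each $\beta_i$ belongs to $\Phi_w=\Phi\cap E_w$, for then each reflection $s_{\beta_i}$ lies in $W(\Phi_w)$ and so does their product $w$.

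The key tool is the telescoping identity
\[
	w-\id_E=\sum_{i=1}^{r}s_{\beta_1}\cdots s_{\beta_{i-1}}\bigl(s_{\beta_i}-\id_E\bigr).
\]
Since $s_{\beta_i}-\id_E$ has image $\Q\beta_i$, and each reflection $s_{\beta_j}$ stabilises the subspace $\mathrm{span}(\beta_1,\ldots,\beta_m)$ for every $m\geq j$ (because $s_{\beta_j}\beta_k=\beta_k-\langle\beta_k,\beta_j^\vee\rangle\beta_j$ lies in $\mathrm{span}(\beta_j,\beta_k)$), each term $s_{\beta_1}\cdots s_{\beta_{i-1}}(s_{\beta_i}-\id_E)$ has image inside $\mathrm{span}(\beta_1,\ldots,\beta_i)$. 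Summing yields
\[
	E_w=\im(w-\id_E)\subseteq\mathrm{span}(\beta_1,\ldots,\beta_r).
\]
A dimension count now closes the argument: the right-hand side has dimension at most $r$, while $\dim E_w=d_w=r$, so the inclusion is an equality. In particular each $\beta_i$ lies in $E_w\cap\Phi=\Phi_w$, which completes the proof.

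The main obstacle is Carter's theorem itself. One half is immediate, as the telescoping identity above directly shows that the minimal number of reflections required to express $w$ is at least $d_w$. The reverse inequality, that $d_w$ reflections actually suffice, is subtler: a naive induction on $d_w$ would need to produce, for any non-identity $w$, a root $\alpha\in\Phi_w$ with $d_{s_\alpha w}=d_w-1$, and the non-emptiness of $\Phi_w$ when $d_w>0$ is essentially what the proposition asserts. Carter circumvents this circularity through a classification-based analysis of conjugacy classes in finite Weyl groups; in our setting it is cleanest simply to invoke his result as a black box.
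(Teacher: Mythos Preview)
Your argument is correct and takes a genuinely different route from the paper. The paper's proof passes to algebraic groups: it realises $\Phi$ as the root system of a split reductive group $(G,T)$ over $\Q$, then invokes Proposition~\ref{propCycleDecompositionCentraliser} to identify $\Phi_w$ with the root system of the centraliser $C_G((T^w)^\circ)$, and finally observes that any lift of $w$ to $N_G(T)$ commutes with $(T^w)^\circ$, forcing $w\in W(C_G((T^w)^\circ),T)=W(\Phi_w)$. Your approach stays entirely within the combinatorics of root systems, trading the algebraic-group input for Carter's reflection-length lemma; your telescoping computation plus the dimension count is exactly what is needed to pin down $E_w=\mathrm{span}(\beta_1,\ldots,\beta_r)$. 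In effect you have reversed the paper's logical order: the paper proves Proposition~\ref{propSelfInWeylSubgroup} first and then deduces Corollary~\ref{corMinGenSysReflections} (that $d_w$ equals the reflection length) from it, whereas you import the reflection-length statement from Carter and derive the proposition. Each approach has its merits: the paper's route sets up the centraliser description that is reused heavily in \S\ref{secProofGoodPairs}, while yours is self-contained and avoids algebraic groups altogether.

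One small correction: Carter's proof of the relevant lemma (that every $w$ is a product of $d_w$ reflections) does \emph{not} rely on the classification of conjugacy classes. The inductive step only needs that $\Phi_w\neq\emptyset$ whenever $w\neq1$, and this follows directly from chamber geometry: if no root $\alpha$ satisfied $E^w\subseteq\alpha^\perp$, then the fixed space $E^w=(E_w)^\perp$ would meet the interior of some Weyl chamber, and $w$ fixing an interior point forces $w=1$. So your worry about circularity is well placed, but the resolution is elementary rather than classification-based; you could replace the black-box citation by this short argument if you prefer a fully self-contained proof.
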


\begin{proof}
Any root system comes from some connected split reductive group $G$ over $\Q$ (see for instance the discussion in \cite[\S33.6]{humphreys}). Hence the statement follows from Corollary \ref{corSelfInWeylSubgroup} below.
\end{proof}

We equip $E$ with a non-degenerate symmetric bilinear form $(\,\cdot\mid\cdot\,)$ invariant by $W$ (see Proposition 3 of \cite[Chapter VI, \S1.1]{bourbakiLie456}). Then every root $\alpha\in\Phi$ satisfies $(\alpha\mid\alpha)\neq 0$, and $s_\alpha(x)=x-2\frac{(x\mid\alpha)}{(\alpha\mid\alpha)}\alpha$ for all $x\in E$. For any $x\in E$, we also note $x^\perp\coloneqq\ker(\,\cdot\mid x)$ the orthogonal subspace of $x$.

\begin{lem} \label{lemWhenInMinGenSys}
Let $w\in W$ and $x\in E$. Then $x\in E_w$ if and only if $\ker(w-\id_E)\subseteq x^\perp$.
\end{lem}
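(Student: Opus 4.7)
The plan is to identify $E_w$ as the orthogonal complement of $\ker(w-\id_E)$ with respect to the $W$-invariant form, which makes the stated equivalence immediate.

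First I would recall that $E_w$ is by definition $\im(w-\id_E)$, so the statement to prove is that $\im(w-\id_E) = \ker(w-\id_E)^\perp$. This is a purely linear-algebraic fact about an endomorphism of a finite-dimensional $\Q$-vector space equipped with a non-degenerate symmetric bilinear form: for any $\Q$-linear endomorphism $A$ of $E$, one has $\im(A) = \ker(A^\ast)^\perp$, where $A^\ast$ is the adjoint with respect to $(\,\cdot\mid\cdot\,)$. (Non-degeneracy is what is used; positive-definiteness is not needed.)

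Next I would compute the adjoint of $A := w - \id_E$. Since the form is $W$-invariant, $w$ is orthogonal, so $w^\ast = w^{-1}$, and hence $A^\ast = w^{-1} - \id_E$. Moreover $\ker(w^{-1}-\id_E) = \ker(w-\id_E)$, because $wv = v$ is equivalent to $w^{-1}v = v$. Combining these gives $E_w = \im(w-\id_E) = \ker(w-\id_E)^\perp$.

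Finally, by definition of the orthogonal, $x \in \ker(w-\id_E)^\perp$ means $(y\mid x) = 0$ for every $y \in \ker(w-\id_E)$, which is precisely the condition $\ker(w-\id_E) \subseteq x^\perp$. This yields the desired equivalence $x \in E_w \iff \ker(w-\id_E) \subseteq x^\perp$.

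There is no real obstacle here: the only point worth checking carefully is that the identity $\im(A) = \ker(A^\ast)^\perp$ holds for a non-degenerate symmetric bilinear form over $\Q$ (not merely over $\R$ with a scalar product); this follows from the usual rank–nullity argument together with the fact that $x \mapsto (\,\cdot\mid x)$ is an isomorphism $E \isoto E^\vee$ when the form is non-degenerate, which exchanges $\im(A)^\perp$ with $\ker(A^\ast)$.
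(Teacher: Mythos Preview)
Your proof is correct and is essentially the same argument as the paper's, just packaged more abstractly: the paper proves both directions by hand (writing $x=w(y)-y$ and computing inner products for the forward direction, and constructing $y$ via duality and extension for the converse), which amounts exactly to verifying the identity $\im(A)=\ker(A^\ast)^\perp$ in this particular case with $A=w-\id_E$ and $A^\ast=w^{-1}-\id_E$.
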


\begin{proof}
Suppose $x\in E_w$; take $y\in E$ such that $x=w(y)-y$. Then, for all $z\in\ker(w-\id)=\ker(w^{-1}-\id)$, we have
\[
	(z\mid x) = (z\mid w(y)-y) = (z\mid w(y))-(z\mid y)=(w^{-1}(z)-z\mid y)=0
\]
hence $z\in x^\perp$.

Conversely, suppose that $\ker(w-\id)\subseteq x^\perp$. Then the linear form $(\,\cdot\mid x)$ factors through a linear form $l$ on $E/\ker(w-\id)$. The map $w-\id$ also factors through an isomorphism $f\colon E/\ker(w-\id)\isoto\im(w-\id)$. The linear form $l\circ f^{-1}$ on $\im(w-\id)$ can be extended arbitrarily to a linear form $\widetilde{l}$ on $E$. Since $(\,\cdot\mid\cdot\,)$ is non-degenerate, there exists $y\in E$ such that $\widetilde{l}=(\,\cdot\mid y)$. Then, for all $z\in E$,
\[
	(z\mid x) = (w(z)-z\mid y) = (w(z)\mid y)-(z\mid y) = (z\mid w^{-1}(y)-y)
\]
thus, by non-degeneracy, $x=w^{-1}(y)-y=(w-\id)(-w^{-1}(y))\in E_w$.
\end{proof}

\begin{prop} \label{propMinGenSysIncrement}
Let $w\in W$ and $\alpha\in\Phi$. Let $w'$ be either $s_\alpha w$ or $ws_\alpha$.
\begin{enumerate}[label=\emph{(\arabic*)},ref=(\arabic*)]
\item \label{itemRootNotIn}
If $\alpha\notin\Phi_w$, then $E_{w'}=\Q\alpha\oplus E_w$. In particular, $d_{w'}=d_w+1$.
\item \label{itemRootIn}
If $\alpha\in\Phi_w$, then $E_w=\Q\alpha\oplus E_{w'}$. In particular, $d_{w'}=d_w-1$.
\end{enumerate}
\end{prop}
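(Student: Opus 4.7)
The plan is to compute $E_{w'}=\im(w'-\id_E)$ directly from the formula for $w'-\id_E$ and combine it with the orthogonality criterion of Lemma \ref{lemWhenInMinGenSys}. I would first treat $w'=s_\alpha w$ and then reduce the case $w'=ws_\alpha$ to it via the identity $ws_\alpha=s_{w(\alpha)}w$, noting that $w(\alpha)-\alpha\in E_w$ forces $\Q\alpha+E_w=\Q w(\alpha)+E_w$ and $\alpha\in\Phi_w\iff w(\alpha)\in\Phi_w$. Expanding yields
\[
(s_\alpha w-\id_E)(y)=(wy-y)-\tfrac{2(wy\mid\alpha)}{(\alpha\mid\alpha)}\alpha,
\]
so in both cases $E_{w'}\subseteq E_w+\Q\alpha$.

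For \ref{itemRootNotIn}, $\alpha\notin\Phi_w$ means $\alpha\notin E_w$, which by Lemma \ref{lemWhenInMinGenSys} means $V_w\coloneqq\ker(w-\id_E)$ is not contained in $\alpha^\perp$. I would then pick $y_0\in V_w$ with $(y_0\mid\alpha)\neq 0$; the displayed formula gives $(s_\alpha w-\id_E)(y_0)=-\tfrac{2(y_0\mid\alpha)}{(\alpha\mid\alpha)}\alpha$, a nonzero multiple of $\alpha$, so $\alpha\in E_{w'}$. Feeding $\alpha\in E_{w'}$ back into the formula shows $wy-y\in E_{w'}$ for every $y\in E$, whence $E_w\subseteq E_{w'}$, and we conclude $E_{w'}=E_w\oplus\Q\alpha$ (direct because $\alpha\notin E_w$).

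For \ref{itemRootIn}, $\alpha\in E_w$ immediately gives $E_{w'}\subseteq E_w$. The main obstacle is the strict dimension inequality $d_{w'}<d_w$, for which I plan to combine two inputs. First, applying the inclusion $E_{w'}\subseteq E_w+\Q\alpha$ to the pair $(w',s_\alpha)$ (using $s_\alpha w'=w$) yields $E_w\subseteq E_{w'}+\Q\alpha$, hence $d_{w'}\geq d_w-1$. Second, a parity argument: $W$ acts by orthogonal transformations on $E$ with respect to the $W$-invariant positive definite form, and for any $u\in W$ the non-$1$ eigenvalues split into $-1$ eigenvalues and complex conjugate pairs on the unit circle, giving $\det(u)=(-1)^{d_u}$. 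Since $\det(w')=-\det(w)$, we get $d_{w'}\not\equiv d_w\pmod 2$; combined with $d_w-1\leq d_{w'}\leq d_w$, this forces $d_{w'}=d_w-1$.

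To upgrade the dimension equality into the direct sum $E_w=E_{w'}\oplus\Q\alpha$, I still need $\alpha\notin E_{w'}$. If instead $\alpha\in E_{w'}$, then $E_{w'}\subseteq E_w+\Q\alpha$ applied to the pair $(w',s_\alpha)$ would give $E_w=E_{s_\alpha w'}\subseteq E_{w'}$, contradicting $\dim E_{w'}<\dim E_w$. Hence $\alpha\notin E_{w'}$, and the direct sum decomposition follows.
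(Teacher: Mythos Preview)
Your argument is correct and takes a genuinely different route from the paper. The paper works with the orthogonal splitting $E=\Q\alpha\oplus\alpha^\perp$ and, for case~\ref{itemRootIn}, performs an explicit computation: choosing $x$ with $\alpha=w(x)-x$, it shows $w(\alpha)+\alpha\in(w-\id)\alpha^\perp$, which forces $E_{w'}=(w-\id)\alpha^\perp$ and hence $\alpha\notin E_{w'}$ directly. You instead squeeze $d_{w'}$ between $d_w-1$ and $d_w$ and break the tie with the parity identity $\det(u)=(-1)^{d_u}$; this is valid because every $u\in W$ has finite order, hence is diagonalisable over $\C$ with root-of-unity eigenvalues (so you do not actually need the form to be positive definite, only $W$ to be finite). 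Your approach is more conceptual and sidesteps the paper's somewhat delicate calculation, while the paper's approach stays entirely at the level of explicit subspaces and never invokes eigenvalues.

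One small gap in your reduction: replacing $ws_\alpha$ by $s_{w(\alpha)}w$, you only justify $\Q\alpha+E_w=\Q w(\alpha)+E_w$, which handles the conclusion of~\ref{itemRootNotIn} but not that of~\ref{itemRootIn}, where the relevant complement is $E_{w'}$ rather than $E_w$. What you need there is $\Q\alpha+E_{w'}=\Q w(\alpha)+E_{w'}$, which follows from $(ws_\alpha-\id)(\alpha)=-w(\alpha)-\alpha\in E_{w'}$. (The paper's reduction, which goes the other way via $s_\alpha w=ws_{w^{-1}(\alpha)}$, faces the mirror issue and dispatches it with the terse phrase ``and similarly for $\Phi_{w'}$''.)
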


\begin{proof}
Assume that $w'=ws_\alpha$. Using the decomposition $E=\Q\alpha\oplus\alpha^\perp$, one can write
\begin{equation} \label{eqEw}
	E_w = \Q(w(\alpha)-\alpha) + (w-\id)\alpha^\perp
\end{equation}
and $E_w=(w-\id)\alpha^\perp$ if and only if $\ker(w-\id)\not\subseteq\alpha^\perp$. From Lemma \ref{lemWhenInMinGenSys}, this happens if and only if $\alpha\notin E_w$.
Similarly, because $w'(\alpha)=-w(\alpha)$ and $w'|_{\alpha^\perp}=w|_{\alpha^\perp}$,
\begin{equation} \label{eqEw'}
	E_{w'} = \Q(w(\alpha)+\alpha) + (w-\id)\alpha^\perp
\end{equation}
and $E_{w'}=(w-\id)\alpha^\perp$ if and only if $\alpha\notin E_{w'}$.

Suppose first that $\alpha\notin\Phi_w$, or equivalently $\alpha\notin E_w$. Then
\[
	w(\alpha)-\alpha\in E_w=(w-\id)\alpha^\perp\subseteq E_{w'} \,.
\]
Since $w(\alpha)+\alpha\in E_{w'}$, we get $\alpha=\frac{1}{2}(w(\alpha)+\alpha)-
\frac{1}{2}(w(\alpha)-\alpha)\in E_{w'}$. We also know from \eqref{eqEw'} that $E_w=(w-\id)\alpha^\perp\subseteq E_{w'}$ and that $\dim{E_{w'}}\leq\dim{E_w}+1$. Altogether, this yields $E_{w'}=\Q\alpha\oplus E_w$, so \ref{itemRootNotIn} is proved.

Now suppose that $\alpha\in\Phi_w$, or equivalently $\alpha\in E_w$. Take $x\in E$ such that $\alpha=w(x)-x$. Then
\[
	(w(x)+x\mid\alpha) = (w(x)+x\mid w(x)-x) = (w(x)\mid w(x))-(x\mid x) = 0
\]
so $w(x)+x\in\alpha^\perp$, hence $x=\frac{1}{2}(-\alpha+w(x)+x)\in-\frac{1}{2}\alpha+\alpha^\perp$. Therefore
\[
	2\alpha = 2(w(x)-x) \in \alpha-w(\alpha)+(w-\id)\alpha^\perp
\]
which can be rewritten as $w(\alpha)+\alpha\in(w-\id)\alpha^\perp$. From \eqref{eqEw'}, this means $E_{w'}=(w-\id)\alpha^\perp$, so $\alpha\notin E_{w'}$. As before, this also implies, using \eqref{eqEw}, that $E_{w'}\subseteq E_w$ and $\dim{E_w}\leq\dim{E_{w'}}+1$. We deduce that $E_w=\Q\alpha\oplus E_{w'}$, so \ref{itemRootIn} is proved.

If $w'=s_\alpha w$, then one can also write $w'=ww^{-1}s_\alpha w=w s_{w^{-1}(\alpha)}$. Since $\alpha\in\Phi_w$ if and only if $w^{-1}(\alpha)\in\Phi_w$, and similarly for $\Phi_w'$, the problem reduces to the previous case.
\end{proof}

Let $R\coloneqq\enstq{s_\alpha}{\alpha\in\Phi}$ be the set of reflections of $W$, which generates $W$ as a group. We can then view any element $w\in W$ as a word on the alphabet $R$ (in a non-unique way); we call \emph{length of $w$ on $R$} the minimal length of such a word, and \emph{reduced decomposition of $w$ on $R$} any such word of minimal length. The following corollary to Proposition \ref{propMinGenSysIncrement} interprets minimal generating subsystems using reduced decompositions on $R$.

\begin{cor} \label{corMinGenSysReflections}
Let $w\in W$.
\begin{enumerate}[label=\emph{(\arabic*)},ref=(\arabic*)]
\item \label{itemDwEqualsLength}
The number $d_w$ is the length of $w$ on the alphabet $R$.
\item \label{itemReducedFactors}
The subsystem $\Phi_w$ is the set of roots $\alpha\in\Phi$ such that $s_\alpha$ appears in a reduced decomposition of $w$ on the alphabet $R$.
\item \label{itemMinGenSysSpan}
The subsystem $\Phi_w$ spans $E_w$ as a vector space.
\end{enumerate}
\end{cor}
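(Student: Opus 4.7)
The plan is to treat Proposition \ref{propMinGenSysIncrement} as the master tool: every multiplication of $w$ by a reflection $s_\alpha$ moves $d_w$ by exactly $\pm 1$, increasing it when $\alpha\notin\Phi_w$ and decreasing it when $\alpha\in\Phi_w$. Combined with Proposition \ref{propSelfInWeylSubgroup} (which guarantees $\Phi_w\neq\emptyset$ whenever $w\neq \id$, since otherwise $W(\Phi_w)=\{1\}$ would contain $w$), this reduces the whole corollary to an induction on $d_w$.

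For \ref{itemDwEqualsLength}, I will induct on $d_w$. Note that $d_w=\dim\im(w-\id_E)=0$ is equivalent to $w=\id$, which handles the base case. If $d_w>0$ then $w\neq\id$, hence $\Phi_w\neq\emptyset$ by Proposition \ref{propSelfInWeylSubgroup}; pick $\alpha\in\Phi_w$ and apply Proposition \ref{propMinGenSysIncrement}\ref{itemRootIn} to get $d_{ws_\alpha}=d_w-1$, so inductively $ws_\alpha$ has a reduced decomposition on $R$ of length $d_w-1$; appending $s_\alpha$ exhibits $w$ as a product of $d_w$ reflections. Conversely, for any decomposition $w=s_{\alpha_1}\cdots s_{\alpha_\ell}$, setting $w_i\coloneqq s_{\alpha_1}\cdots s_{\alpha_i}$, Proposition \ref{propMinGenSysIncrement} forces $|d_{w_i}-d_{w_{i-1}}|=1$, so $d_w=d_{w_\ell}\leq\ell$. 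Hence $d_w$ is exactly the length of $w$ on $R$.

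For \ref{itemReducedFactors} and \ref{itemMinGenSysSpan}, the key observation is that in any \emph{reduced} decomposition $w=s_{\alpha_1}\cdots s_{\alpha_{d_w}}$, the sequence $0=d_{w_0},d_{w_1},\ldots,d_{w_{d_w}}=d_w$ only takes steps $\pm 1$ and totals $d_w$, hence every step is $+1$. By Proposition \ref{propMinGenSysIncrement}\ref{itemRootNotIn} this gives $E_{w_i}=\Q\alpha_i\oplus E_{w_{i-1}}$, so by induction
\[
    E_w = \sum_{i=1}^{d_w}\Q\alpha_i \,.
\]
In particular every $\alpha_i$ lies in $\Phi\cap E_w=\Phi_w$, which proves simultaneously the inclusion $\supseteq$ of \ref{itemReducedFactors} and the fact that $\Phi_w$ spans $E_w$, giving \ref{itemMinGenSysSpan}. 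For the remaining inclusion $\subseteq$ in \ref{itemReducedFactors}, given $\alpha\in\Phi_w$, the construction used in the inductive proof of \ref{itemDwEqualsLength} (start by multiplying by $s_\alpha$, which decreases $d$ by one, then build a reduced decomposition of $ws_\alpha$ and append $s_\alpha$) produces a reduced decomposition of $w$ in which $s_\alpha$ appears.

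The only mildly delicate point is the non-emptiness of $\Phi_w$ when $w\neq\id$, which is handled by invoking Proposition \ref{propSelfInWeylSubgroup}; everything else is a straightforward bookkeeping exercise with the increment lemma. No further geometric input is required.
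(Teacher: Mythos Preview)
Your proof is correct and follows essentially the same approach as the paper's: both use Proposition \ref{propMinGenSysIncrement} to bound $d_w$ by the length of any $R$-decomposition, invoke Proposition \ref{propSelfInWeylSubgroup} for the non-emptiness of $\Phi_w$ when $w\neq\id$, and then run an induction on $d_w$ (peeling off one reflection $s_\alpha$ with $\alpha\in\Phi_w$) to realise the bound and to produce a reduced decomposition containing any prescribed $s_\alpha$. The only cosmetic difference is that you multiply on the right ($ws_\alpha$) where the paper multiplies on the left ($s_\alpha w$), which is immaterial since the increment lemma covers both.
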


\begin{proof}
If $w=1$, then all claims are true because $E_w=\im(\id_E-\id_E)=\{0\}$. We now suppose that $w\neq 1$.

Let $w=s_{\alpha_r}\ldots s_{\alpha_1}$ be a decomposition of $w$ on $R$, with $\alpha_i\in\Phi$ for all $1\leq i\leq r$. By Proposition \ref{propMinGenSysIncrement}, we know that $d_{s_{\alpha_i}\ldots s_{\alpha_1}}\leq d_{s_{\alpha_{i-1}}\ldots s_{\alpha_1}}+1$ for all $1\leq i\leq d$, with equality if and only if $E_{s_{\alpha_i}\ldots s_{\alpha_1}}=\Q\alpha_i\oplus E_{s_{\alpha_{i-1}}\ldots s_{\alpha_1}}$. An induction on $i$ gives $d_w=d_{s_{\alpha_r}\ldots s_{\alpha_1}}\leq r$, with equality if and only if
\begin{equation} \label{eqReducedFactorsSpan}
	E_w = \Q\alpha_r\oplus\ldots\oplus\Q\alpha_1 \,.
\end{equation}
Therefore the three claims will be proved if we can show that $\Phi_w$ is non-empty and that for any $\alpha\in\Phi_w$, there is a decomposition of $w$ on $R$ of length $d_w$ containing $s_\alpha$.

The non-emptiness of $\Phi_w$ is a consequence of Proposition \ref{propSelfInWeylSubgroup}. Now take $\alpha\in\Phi_w$. The wanted decomposition can be obtained by induction on $d_w$: Proposition \ref{propMinGenSysIncrement} implies that $d_{s_\alpha w}=d_w-1$, and if $s_\alpha w=s_{\alpha_r}\ldots s_{\alpha_1}$ is a decomposition of length $r\coloneqq d_{s_\alpha w}$, then $w=s_\alpha s_{\alpha_r}\ldots s_{\alpha_1}$ is a decomposition of length $d_w$.
\end{proof}

The following corollary explains the terminology of minimal generating subsystem introduced in Definition \ref{dfnMinGenSys}.

\begin{cor} \label{corMinGenSysEtymology}
Let $w\in W$ and let $E'$ be a subspace of $E$. Let $\Phi'\coloneqq\Phi\inter E'$, which is a root system in $E'$ with Weyl group $W(\Phi')\subseteq W$. Then $w\in W(\Phi')$ if and only if $\Phi_w\subseteq\Phi'$, if and only if $E_w\subseteq E'$.
\end{cor}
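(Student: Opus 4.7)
The plan is to prove the three conditions equivalent by establishing the cycle of implications (c) $\Rightarrow$ (b) $\Rightarrow$ (a) $\Rightarrow$ (c), using the earlier results of the subsection as black boxes.

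First, (c) $\Rightarrow$ (b) is immediate from the definitions: if $E_w\subseteq E'$, then $\Phi_w=\Phi\inter E_w\subseteq\Phi\inter E'=\Phi'$.

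Next, (b) $\Rightarrow$ (a) is a direct application of Proposition \ref{propSelfInWeylSubgroup}: that result tells us $w\in W(\Phi_w)$, and the inclusion $\Phi_w\subseteq\Phi'$ yields $W(\Phi_w)\subseteq W(\Phi')$ (every generating reflection of $W(\Phi_w)$ is a generating reflection of $W(\Phi')$), so $w\in W(\Phi')$.

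Finally, for (a) $\Rightarrow$ (c), I would choose any decomposition $w=s_{\alpha_r}\cdots s_{\alpha_1}$ with $\alpha_1,\ldots,\alpha_r\in\Phi'$ (such a decomposition exists because $W(\Phi')$ is generated by the $s_\alpha$ with $\alpha\in\Phi'$) and then argue by induction on $r$ that $E_{s_{\alpha_i}\cdots s_{\alpha_1}}\subseteq \Q\alpha_i+\cdots+\Q\alpha_1$. The inductive step is Proposition \ref{propMinGenSysIncrement}: both of its cases \ref{itemRootNotIn} and \ref{itemRootIn} yield the inclusion $E_{s_\alpha w_0}\subseteq E_{w_0}+\Q\alpha$ (in case \ref{itemRootIn} we even have $E_{s_\alpha w_0}\subseteq E_{w_0}$). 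Thus $E_w\subseteq\sum_{i=1}^r\Q\alpha_i\subseteq E'$, giving (c).

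There is no real obstacle here: the heavy lifting was done in Proposition \ref{propSelfInWeylSubgroup} and Proposition \ref{propMinGenSysIncrement}. The only minor subtlety is that the chosen decomposition of $w$ need not be reduced, so I cannot invoke Corollary \ref{corMinGenSysReflections} directly; however Proposition \ref{propMinGenSysIncrement} applies regardless and suffices for the one-sided inclusion that we need.
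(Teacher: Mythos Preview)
Your proof is correct. The implications (c)$\Rightarrow$(b)$\Rightarrow$(a) are handled exactly as in the paper. For (a)$\Rightarrow$(c), however, you take a slightly different route.

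The paper argues by dimension comparison via Corollary~\ref{corMinGenSysReflections}\ref{itemDwEqualsLength}: defining $E'_w\coloneqq\im(w|_{E'}-\id_{E'})\subseteq E'\cap E_w$, it observes that $w$ has a reduced decomposition on $R'\coloneqq\{s_\alpha:\alpha\in\Phi'\}$ of length $\dim E'_w$ (Corollary~\ref{corMinGenSysReflections}\ref{itemDwEqualsLength} for $\Phi'$), and since $R'\subseteq R$, the same corollary for $\Phi$ gives $\dim E'_w\geq\dim E_w$. From $E'_w\subseteq E'\cap E_w\subseteq E_w$ one then concludes that all three coincide, in particular $E_w\subseteq E'$.

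Your argument instead bypasses Corollary~\ref{corMinGenSysReflections} altogether and inducts directly on Proposition~\ref{propMinGenSysIncrement}, which in either of its cases yields $E_{s_\alpha v}\subseteq\Q\alpha+E_v$. This is arguably more elementary (it is essentially one half of the proof of Corollary~\ref{corMinGenSysReflections} anyway), and it avoids the detour through the auxiliary object $E'_w$. On the other hand, the paper's dimension argument extracts the sharper statement $E'_w=E_w$ along the way. One cosmetic point: you use $w_0$ for the running element in your induction, which clashes with the standard notation for the longest Weyl group element used elsewhere in the paper; a neutral symbol such as $v$ would be safer.
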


\begin{proof}
The implication $E_w\subseteq E'\implies\Phi_w\subseteq\Phi'$ is a mere consequence of the definitions, and the implication $\Phi_w\subseteq\Phi'\implies w\in W(\Phi')$ is Proposition \ref{propSelfInWeylSubgroup}.

Now suppose that $w\in W(\Phi')$. Let $\Phi'_w$ be the minimal generating subsystem of $w$ in $\Phi'$ and let $E'_w\coloneqq\im(w|_{E'}-\id_{E'})\subseteq E'\inter E_w$. From Corollary \ref{corMinGenSysReflections}\ref{itemDwEqualsLength} applied to $\Phi'$, $w$ has a reduced decomposition on $R'\coloneqq\enstq{s_\alpha}{\alpha\in\Phi'}$ of length $\dim{E'_w}$. Since $R'\subseteq R$, the same result applied to $\Phi$ shows that $\dim{E'_w}\geq\dim{E_w}$. Therefore, the chain of inclusions $E'_w\subseteq E'\inter E_w\subseteq E_w$ is a chain of equalities. In particular, $E'\inter E_w=E_w$; or equivalently $E_w\subseteq E'$.
\end{proof}

Fix $I\subset\Phi$ a basis of simple roots of $\Phi$, and write $S\coloneqq\enstq{s_\alpha}{\alpha\in I}\subseteq W$ for the set of simple reflections corresponding to the simple roots. These define a length function $\lg$ on $W$. The following result is a generalisation of \cite[Lemma 2.7]{bhs2} where it was proved for root systems $\Phi$ of type $A$.

\begin{cor} \label{corDwLeqLg}
Let $w\in W$. Then
\[
	d_w \leq \lg(w)
\]
with equality if and only if $w$ is a product of distinct simple reflections.
\end{cor}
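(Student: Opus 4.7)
The plan is to combine Corollary \ref{corMinGenSysReflections}\ref{itemDwEqualsLength} with Proposition \ref{propMinGenSysIncrement}. The inequality $d_w \leq \lg(w)$ is immediate: since $S \subseteq R$, any reduced $S$-expression of $w$ is an $R$-word of length $\lg(w)$, and $d_w$ is the minimum such $R$-length. For the easy direction of the equality case, suppose $w = s_{\beta_1}\cdots s_{\beta_r}$ with the $\beta_i \in I$ pairwise distinct. I will show by induction on $i$ that $E_{s_{\beta_1}\cdots s_{\beta_i}} \subseteq \Q\beta_1 + \cdots + \Q\beta_i$; using that the simple roots are linearly independent in $E$, this inclusion forces $\beta_{i+1} \notin \Phi_{s_{\beta_1}\cdots s_{\beta_i}}$, so Proposition \ref{propMinGenSysIncrement}\ref{itemRootNotIn} applies at each step and $d$ increases by one. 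Hence $d_w = r$, and combined with $d_w \leq \lg(w) \leq r$ this forces $d_w = \lg(w) = r$.

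For the converse, suppose $d_w = \lg(w)$ and fix any reduced $S$-expression $w = s_{\beta_1}\cdots s_{\beta_r}$, so $r = \lg(w) = d_w$. This expression is a word on $R$ of the minimal possible length $d_w$, and the equality criterion from equation \eqref{eqReducedFactorsSpan} in the proof of Corollary \ref{corMinGenSysReflections} then forces $E_w = \Q\beta_r \oplus \cdots \oplus \Q\beta_1$, which makes the $\beta_i$ linearly independent and in particular pairwise distinct. There is no real obstacle beyond the combinatorial bookkeeping already set up in the previous results; the only delicate point is ensuring that the inductive step in the first direction always falls into case \ref{itemRootNotIn} and never into case \ref{itemRootIn} of Proposition \ref{propMinGenSysIncrement}, which is precisely where the linear independence of the simple roots in a basis of $\Phi$ is used.
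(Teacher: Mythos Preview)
Your proof is correct and follows essentially the same approach as the paper. Both arguments derive the inequality from $S\subseteq R$ together with Corollary~\ref{corMinGenSysReflections}\ref{itemDwEqualsLength}, use the equality criterion \eqref{eqReducedFactorsSpan} on a reduced $S$-expression to deduce that the simple reflections appearing must be distinct, and rely on the linear independence of the simple roots (via Proposition~\ref{propMinGenSysIncrement} or, equivalently, the proof of Corollary~\ref{corMinGenSysReflections}) for the converse direction; the only difference is that you spell out the induction in the converse direction explicitly, whereas the paper simply invokes the proof of Corollary~\ref{corMinGenSysReflections}.
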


\begin{proof}
Let $w=s_{\alpha_{\lg(w)}}\ldots s_{\alpha_1}$ be a reduced decomposition of $w$ on $S$, with $\alpha_i\in I$ for all $1\leq i\leq\lg(w)$. Corollary \ref{corMinGenSysReflections}\ref{itemDwEqualsLength} and its proof give $d_w\leq\lg(w)$, with equality if and only if $E_w=\Q\alpha_{\lg(w)}\oplus\ldots\oplus\Q\alpha_1$ (see \eqref{eqReducedFactorsSpan}). Therefore, in the equality case, the $\alpha_i$ must be distinct.

Conversely, suppose that $w$ is a product of distinct simple reflections. Write $w=s_{\alpha_r}\ldots s_{\alpha_1}$ with $\alpha_i$ for $1\leq i\leq r$ being distinct elements of $I$; then $r\geq\lg(w)$ by definition. Also, since the elements of $I$ are linearly independent, the sum $\Q\alpha_r+\ldots+\Q\alpha_1$ is direct. The proof of Corollary \ref{corMinGenSysReflections} implies that $r=d_w$; hence $d_w\geq\lg(w)$, so we have equality.
\end{proof}

\subsection{Interpretation for a split reductive group} \label{ssecMinGenSysGroups}

For $\Phi$ a root system and $(G,T)$ a connected split reductive group giving rise to $\Phi$, we give an interpretation of the minimal generating subsystem $\Phi_w\subseteq\Phi$ for $w\in W(\Phi)$ in terms of Levi groups. We use the notation of the introduction.

We start by recalling a classical lemma.

\begin{lem}
Let $N\geq 1$ be an integer and $M$ be a $\Z[\frac{1}{N}]$-module equipped with a linear action of the group $\Z/N\Z$. Then $H^1(\Z/N\Z,M)=0$.
\end{lem}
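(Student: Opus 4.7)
The plan is to rely on the classical fact that for a finite group $G$ of order $N$ and any $G$-module $M$, multiplication by $N$ annihilates $H^i(G,M)$ for every $i\geq 1$. This follows from the restriction-corestriction identity $\mathrm{Cor}\circ\mathrm{Res}=[G:1]\cdot\mathrm{id}$ applied to the trivial subgroup, together with the vanishing of $H^i(\{1\},M)$ for $i\geq 1$. Once this is in hand, the hypothesis on $M$ finishes the argument immediately: since $M$ is a $\Z[\frac{1}{N}]$-module, multiplication by $N$ is an isomorphism on $M$, and by functoriality of $H^1(\Z/N\Z,-)$ it induces an isomorphism of $H^1(\Z/N\Z,M)$ with itself. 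An abelian group on which multiplication by $N$ is simultaneously zero and invertible is zero. Hence $H^1(\Z/N\Z,M)=0$.

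Since the group here is cyclic, a completely elementary alternative is also available and may be preferable if one wants to avoid invoking restriction-corestriction. Let $\sigma$ denote a generator of $\Z/N\Z$ and $N_\sigma\coloneqq 1+\sigma+\cdots+\sigma^{N-1}$ the norm. The standard periodic resolution of $\Z$ as a $\Z[\Z/N\Z]$-module identifies
\[
	H^1(\Z/N\Z,M) = \ker(N_\sigma\colon M\to M)\,\big/\,(\sigma-1)M \,.
\]
Given a cocycle $m\in\ker N_\sigma$, set $m'\coloneqq\frac{1}{N}\sum_{i=0}^{N-1}i\,\sigma^i m\in M$, which makes sense because $\frac{1}{N}\in\Z[\frac{1}{N}]$. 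A direct computation using $\sigma^N=1$ gives
\[
	(\sigma-1)m' = \frac{1}{N}\Bigl(\sum_{i=0}^{N-1}i\,\sigma^{i+1}m - \sum_{i=0}^{N-1}i\,\sigma^i m\Bigr) = \frac{1}{N}\bigl(N_\sigma m - Nm\bigr) = -m \,,
\]
where we used $N_\sigma m=0$ at the last step. Thus $m=(\sigma-1)(-m')$ is a coboundary, which gives the vanishing.

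There is no genuine obstacle here; the only point to watch is to verify that the $\Z[\frac{1}{N}]$-module structure is indeed compatible with the group action, which is automatic since the action is $\Z$-linear. Either argument is very short, so I would write up the elementary one for self-containedness.
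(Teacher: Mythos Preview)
Your first argument is exactly the paper's proof: the paper simply notes that $H^1(\Z/N\Z,M)$ is a $\Z[\tfrac{1}{N}]$-module killed by $N$ (citing Serre for the latter), hence zero. Your explicit cyclic computation is a fine self-contained alternative; note only that the intermediate step should read $\frac{1}{N}(Nm-N_\sigma m)=m$ rather than $\frac{1}{N}(N_\sigma m-Nm)=-m$, so in fact $m=(\sigma-1)m'$ directly, but this sign slip does not affect the conclusion.
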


\begin{proof} \label{lemGroupCohomology}
The $\Z[\frac{1}{N}]$-module $H^1(\Z/N\Z,M)$ is killed by $N$ (see \cite[\S VII.7, Proposition 6]{serre}), hence it must be $0$.
\end{proof}

\begin{prop} \label{propCycleDecompositionCentraliser}
For $w\in W$, the centraliser $\left(C_G((T^w)^\circ),T\right)$ is a connected split reductive group whose root system is the minimal generating subsystem $\Phi_w$ of $w$.
\end{prop}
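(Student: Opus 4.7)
My plan is as follows. The first assertion, that $(C_G((T^w)^\circ),T)$ is a connected split reductive group with $T$ as a split maximal torus, is standard reductive group theory: for any subtorus $S\subseteq T$ of a connected reductive group $G$ with split maximal torus $T$, the centraliser $C_G(S)$ is a connected reductive subgroup still containing $T$ as a split maximal torus, with root system $\enstq{\alpha\in\Phi}{\alpha|_S=0}$ (see e.g.\ \cite[Cor.\ 17.84]{milne}). Applied with $S=(T^w)^\circ$, this reduces the proposition to the equality
\[
\enstq{\alpha\in\Phi}{\alpha|_{(T^w)^\circ}=0}=\Phi_w.
\]

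The second step is to translate the condition $\alpha|_{(T^w)^\circ}=0$ into a $\Q$-linear one. As $T^w$ is the equaliser of $\id_T$ and $w\colon T\to T$, its character group is $X(T)/(w-1)X(T)$; taking the identity component amounts to quotienting $X(T^w)$ by its torsion, so the kernel $M$ of $X(T)\surj X((T^w)^\circ)$ is the saturation of $(w-1)X(T)$ in $X(T)$. In particular $M\tens_\Z\Q=\im\bigl((w-1)|_{X(T)\tens_\Z\Q}\bigr)$, and a root $\alpha\in\Phi$ vanishes on $(T^w)^\circ$ if and only if $\alpha\in\im\bigl((w-1)|_{X(T)\tens_\Z\Q}\bigr)$. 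Lemma \ref{lemGroupCohomology}, applied with $N$ the order of $w$, is what I would invoke here to rule out any torsion obstruction when identifying $X((T^w)^\circ)$.

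The third and final step is to prove $\im\bigl((w-1)|_{X(T)\tens_\Z\Q}\bigr)=E_w$. For every $\beta\in\Phi$ and $\chi\in X(T)$ we have $s_\beta(\chi)-\chi=-\langle\chi,\beta^\vee\rangle\beta\in\Gamma$, so $W$ acts trivially on $X(T)/\Gamma$, which gives $\im\bigl((w-1)|_{X(T)\tens\Q}\bigr)\subseteq E$. The inclusion $E_w\subseteq\im\bigl((w-1)|_{X(T)\tens\Q}\bigr)$ is immediate. The reverse inclusion I would establish by a dimension count: as $w$ has finite order and $k$ is of characteristic $0$, $w$ acts semisimply on $X(T)\tens\Q$, so $\dim_\Q\im(w-1)=\dim_\Q(X(T)\tens\Q)-\dim_\Q\ker(w-1)$, while $\dim_\Q E_w=\dim_\Q E-\dim_\Q(E\inter\ker(w-1))$; since $\ker(w-1)$ surjects onto the trivial $w$-module $(X(T)\tens\Q)/E$, one has $\dim_\Q\ker(w-1)-\dim_\Q(E\inter\ker(w-1))=\dim_\Q(X(T)\tens\Q)-\dim_\Q E$, forcing $\dim_\Q\im(w-1)=\dim_\Q E_w$ and hence equality.

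The main obstacle I anticipate is the lattice/torsion bookkeeping in the second step: $T^w$ need not be connected, so identifying $X((T^w)^\circ)$ cleanly requires killing the possibly non-trivial torsion of $X(T^w)=X(T)/(w-1)X(T)$, which is precisely what Lemma \ref{lemGroupCohomology} is designed for. Once this is handled, the third step is pure linear algebra and the reduction in the first step is classical.
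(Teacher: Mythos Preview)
Your argument is correct, and the route differs from the paper's in an instructive way. The paper does not go through the character lattice of $(T^w)^\circ$; instead it uses the norm map $t\mapsto\prod_{k=0}^{N-1}\Ad(w^{-k})t$ (whose connected image lies in $(T^w)^\circ$) together with torsion-freeness of $X((T^w)^\circ)$ to show directly that $\alpha|_{(T^w)^\circ}=1$ if and only if $\sum_{k=0}^{N-1}w^k\alpha=0$, and only then invokes Lemma~\ref{lemGroupCohomology} (for the $\Q$-vector space $E$) to identify $\ker\bigl(\sum_k w^k\bigr)\cap E$ with $\im(w-\id_E)=E_w$. Since $\Phi\subset E$, this bypasses your third step entirely. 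Your approach via the saturation of $(w-1)X(T)$ is more structural and arguably cleaner on the group-scheme side; the price is the extra linear-algebra reduction from $X(T)\tens_\Z\Q$ down to $E$.

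One correction to your commentary: Lemma~\ref{lemGroupCohomology} is \emph{not} what handles the torsion in your second step, and indeed cannot be applied there, since $X(T)$ is a $\Z$-module rather than a $\Z[1/N]$-module (and the torsion of $X(T)/(w-1)X(T)$ is genuinely nonzero in general, e.g.\ for $G=\SL_2$). Your saturation argument already deals with this correctly with no extra input: a root $\alpha\in X(T)$ lies in the saturation of $(w-1)X(T)$ if and only if $\alpha\in\im\bigl((w-1)|_{X(T)\tens\Q}\bigr)$, full stop. Where Lemma~\ref{lemGroupCohomology} (equivalently, semisimplicity of the $w$-action on $\Q$-vector spaces) is really doing work is in your third step: the claim that $\ker(w-1)\subset X(T)\tens\Q$ surjects onto the trivial quotient $(X(T)\tens\Q)/E$ is exactly the vanishing of $H^1(\langle w\rangle,E)$, and this is what forces $\dim\im(w-1)=\dim E_w$.
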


\begin{proof}
$(T^w)^\circ$ is a subtorus of $T$ (see the discussion in \cite[Notation 12.29]{milne}; note that $T^w$ is always reduced in characteristic $0$), hence $C_G((T^w)^\circ)$ is connected and split reductive by \cite[Corollary 17.59]{milne}. Note that it always contains $T$, so $\left(C_G((T^w)^\circ),T\right)$ is split reductive.

Let $\alpha\in \Phi$ be a root of $(G,T)$. The eigenspace $\gfrak_\alpha$ has dimension $1$, so $\alpha$ is a root of $\left(C_G((T^w)^\circ),T\right)$ if and only if $\gfrak_\alpha$ is in the Lie group of $C_G((T^w)^\circ)$. By \cite[Thm.\ 21.11(c)]{milne}, we thus have
\[
	\Phi'\coloneqq\Phi\left(C_G((T^w)^\circ),T\right) = \enstq{\alpha\in\Phi}{\alpha|_{(T^w)^\circ}=1} \,.
\]

Let $N$ be the order of $w$ in $W$. The morphism of algebraic varieties $T\to T,\, t\longmapsto\prod_{k=0}^{N-1}\Ad(w^{-k})t$ has a connected image (because $T$ is connected) included in $T^w$. Hence, $\prod_{k=0}^{N-1}\Ad(w^{-k})t\in(T^w)^\circ$ for all $t\in T$. Then, for $\alpha\in \Phi'$ and $t\in T$, we have
\[
	\left(\sum_{k=0}^{N-1}w^k\alpha\right)(t)=\alpha\left(\prod_{k=0}^{N-1}\Ad(w^{-k})t\right)=1
\]
hence $(\sum_{k=0}^{N-1}w^k)\alpha=0$. Conversely, if $\alpha\in\ker(\sum_{k=0}^{N-1}w^k)$, then for all $t\in(T^w)^\circ$,
\[
	\alpha(t^N)=\alpha\left(\prod_{k=0}^{N-1}\Ad(w^{-k})t\right)=1
\]
so, since the group of characters on the torus $(T^w)^\circ$ has no torsion, we necessarily have $\alpha(t)=1$. Hence $\alpha|_{(T^w)^\circ}=1$. Therefore $\Phi'=\Phi\inter\ker(\sum_{k=0}^{N-1}w^k)$. Since $H^1(\langle w\rangle,E)=0$ by Lemma \ref{lemGroupCohomology}, we have $\Phi'=\Phi\inter\im(w-\id_E)=\Phi_w$.
\end{proof}

\begin{cor} \label{corSelfInWeylSubgroup}
Let $w\in W$. Then $w\in W(\Phi_w)$.
\end{cor}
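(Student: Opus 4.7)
The statement is a direct consequence of Proposition \ref{propCycleDecompositionCentraliser}, so the plan is to unravel the definitions and verify that any lift of $w$ in $N_G(T)$ automatically sits inside the centraliser $C_G((T^w)^\circ)$.

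Concretely, set $H\coloneqq C_G((T^w)^\circ)$. By Proposition \ref{propCycleDecompositionCentraliser}, $(H,T)$ is a connected split reductive group whose root system (relative to $T$) is $\Phi_w$. Since $T$ is a maximal torus of $H$ as well, the natural inclusion $N_H(T)\hookrightarrow N_G(T)$ descends to an injection $W(H,T)=N_H(T)/T\hookrightarrow N_G(T)/T=W$, and under the identification of abstract Weyl groups with actions on $X(T)$ this injection identifies $W(H,T)$ with $W(\Phi_w)\subseteq W$. Thus it suffices to exhibit a lift of $w$ in $N_G(T)$ that also lies in $H$.

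For this, pick any lift $\dot w\in N_G(T)$ of $w$. For every $t\in T^w$ one has $\Ad(w)t=\dot w t\dot w^{-1}=t$ by the very definition of $T^w$, so $\dot w$ commutes with every element of $T^w$ and \emph{a fortiori} with every element of $(T^w)^\circ$. Therefore $\dot w\in C_G((T^w)^\circ)\cap N_G(T)=N_H(T)$, and its class modulo $T$ is both $w\in W$ and an element of $W(H,T)=W(\Phi_w)$, which is the claim.

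There is no real obstacle here: the only point that requires a line of care is the identification of $W(H,T)$ with the subgroup $W(\Phi_w)\subseteq W$, which is standard (both act faithfully on $X(T)$ through the same reflections $s_\alpha$ for $\alpha\in\Phi_w$). Everything else is bookkeeping on lifts.
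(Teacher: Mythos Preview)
Your proof is correct and follows exactly the same approach as the paper: lift $w$ to $\dot w\in N_G(T)$, observe that $\dot w$ centralises $(T^w)^\circ$ by definition of $T^w$, hence $\dot w\in N_{C_G((T^w)^\circ)}(T)$, and conclude via Proposition~\ref{propCycleDecompositionCentraliser}. The paper's version is just a one-line compression of what you wrote.
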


\begin{proof}
Seeing $w$ as an element of $N_G(T)/T$, we have $w\in C_G((T^w)^\circ)/T$, so that actually $w\in N_{C_G((T^w)^\circ)}(T)/T=W(C_G((T^w)^\circ),T)=W(\Phi_w)$ where the last equality comes from Proposition \ref{propCycleDecompositionCentraliser}.
\end{proof}

\subsection{Pairs of elements in the Weyl group} \label{ssecBonnePaireDef}

Fix a basis of simple roots $I\subset\Phi$ and write $\Phi_+$ for the subset of positive roots that it generates. Write $S\coloneqq\enstq{s_\alpha}{\alpha\in I}\subseteq W$ for the set of simple reflections corresponding to the simple roots. They define a length function $\lg$ and a Bruhat order $\preceq$ on $W$.

\begin{dfn} \label{dfnGoodPair}
Let $(w_1,w_2)\in W^2$ such that $w_1\preceq w_2$. Let $\Phi_{w_1w_2^{-1}}\subseteq\Phi$ be the minimal generating subsystem of $w_1w_2^{-1}$. We say that $(w_1,w_2)$ is a \emph{good pair} if there exists a sequence $(\alpha_1,\ldots,\alpha_r)$ of roots in $\Phi_{w_1w_2^{-1}}$ such that $w_2=s_{\alpha_r}s_{\alpha_{r-1}}\ldots s_{\alpha_1}w_1$ and that for each $1\leq i\leq r$, the relation $s_{\alpha_i}\ldots s_{\alpha_1}w_1\succ s_{\alpha_{i-1}}\ldots s_{\alpha_1}w_1$ holds. If such a sequence does not exist, we say that $(w_1,w_2)$ is a \emph{bad pair}.
\end{dfn}

\begin{rem}
The set of reflections $R=\enstq{s_\alpha}{\alpha\in\Phi}$ of $W$ can be rewritten as $R=\bigcup_{w\in W}wSw^{-1}$. On the other hand, using \ref{itemReducedFactors} of Corollary \ref{corMinGenSysReflections}, one sees that the subset $\enstq{s_\alpha}{\alpha\in\Phi_{w_1w_2^{-1}}}$ of $R$ is the set of reflections that appear in some reduced decomposition of $w$ on the alphabet $R$. Hence the notion of good pair is intrinsic to the Coxeter system $(W,S)$.
\end{rem}

Let $w_0\in W$ be the maximal element for the Bruhat order. Using that $w\mapsto ww_0$ and $w\mapsto w_0w$ are antiautomorphisms of $W$ for the Bruhat order, one easily proves the following result.

\begin{prop} \label{propGoodPairsInverted}
Let $(w_1,w_2)\in W^2$. Then $(w_1,w_2)$ is a good pair (resp.\ bad pair) if and only if $(w_2w_0,w_1w_0)$ is a good pair (resp.\ bad pair). Similarly, $(w_1,w_2)$ is a good pair (resp.\ bad pair) if and only if $(w_0w_2,w_0w_1)$ is a good pair (resp.\ bad pair).
\end{prop}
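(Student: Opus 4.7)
The plan is to exploit two well-known facts: (a) right multiplication by $w_0$ and left multiplication by $w_0$ are both order-reversing involutions of $(W,\preceq)$, and (b) the minimal generating subsystems on both sides agree (up to the action of $w_0$). Both equivalences in the Proposition follow from translating the ``ascending sequence'' of Definition \ref{dfnGoodPair} through these involutions.

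First I would observe the algebraic identities $(w_2w_0)(w_1w_0)^{-1}=w_2w_1^{-1}$ and $(w_0w_2)(w_0w_1)^{-1}=w_0(w_2w_1^{-1})w_0^{-1}$, and compare minimal generating subsystems. Applying Lemma \ref{lemWhenInMinGenSys} to $w$ and to $w^{-1}$, using that $\ker(w-\id_E)=\ker(w^{-1}-\id_E)$, gives $E_w=E_{w^{-1}}$ and hence $\Phi_w=\Phi_{w^{-1}}$, so $\Phi_{w_2w_1^{-1}}=\Phi_{w_1w_2^{-1}}$. On the other hand, for the conjugate one checks directly that $E_{w_0vw_0^{-1}}=w_0E_v$, hence $\Phi_{(w_0w_2)(w_0w_1)^{-1}}=w_0\Phi_{w_1w_2^{-1}}$.

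For the first equivalence, suppose $(w_1,w_2)$ is a good pair with witnessing ascending chain $w_1\prec s_{\alpha_1}w_1\prec\cdots\prec s_{\alpha_r}\cdots s_{\alpha_1}w_1=w_2$ and $\alpha_i\in\Phi_{w_1w_2^{-1}}$. Right-multiplying each term by $w_0$ and using that $(\cdot)w_0$ reverses Bruhat order yields the \emph{descending} chain $w_1w_0\succ s_{\alpha_1}w_1w_0\succ\cdots\succ w_2w_0$. Setting $\beta_i:=\alpha_{r+1-i}$ and $y_i:=s_{\beta_i}\cdots s_{\beta_1}w_2w_0$, a direct telescoping computation gives $y_i=s_{\alpha_{r-i}}\cdots s_{\alpha_1}w_1w_0$, so the $y_i$ form exactly the reverse of the descending chain and therefore an ascending chain from $w_2w_0$ to $w_1w_0$. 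Since $\beta_i\in\Phi_{w_1w_2^{-1}}=\Phi_{(w_2w_0)(w_1w_0)^{-1}}$, this certifies $(w_2w_0,w_1w_0)$ as a good pair. The converse is the same argument using $w_0^2=1$.

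The second equivalence is handled in parallel, using instead that $w_0(\cdot)$ is order-reversing and the commutation rule $w_0s_\alpha=s_{w_0\alpha}w_0$. Starting from the same chain and left-multiplying by $w_0$, one inserts $w_0$ past each reflection to obtain $w_0s_{\alpha_i}\cdots s_{\alpha_1}w_1=s_{w_0\alpha_i}\cdots s_{w_0\alpha_1}w_0w_1$; reversing the indexing via $\gamma_i:=w_0\alpha_{r+1-i}$ gives an ascending chain from $w_0w_2$ to $w_0w_1$, and the key point is that the $\gamma_i$ lie in $w_0\Phi_{w_1w_2^{-1}}$, which by the conjugation computation above equals $\Phi_{(w_0w_2)(w_0w_1)^{-1}}$. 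Hence $(w_0w_2,w_0w_1)$ is good, and the converse is again immediate. The only subtle point — the reason I single it out as the ``main obstacle'' — is verifying that the transformed roots still belong to the correct minimal generating subsystem; once one has the identities $\Phi_w=\Phi_{w^{-1}}$ and $\Phi_{w_0vw_0^{-1}}=w_0\Phi_v$, the rest is purely combinatorial bookkeeping.
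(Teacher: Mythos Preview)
Your proposal is correct and follows the same approach sketched in the paper: the paper's proof is the single sentence ``Using that $w\mapsto ww_0$ and $w\mapsto w_0w$ are antiautomorphisms of $W$ for the Bruhat order, one easily proves the following result,'' and you have simply unpacked this, including the verification that the transformed roots lie in the correct minimal generating subsystem via $\Phi_w=\Phi_{w^{-1}}$ and $\Phi_{w_0vw_0^{-1}}=w_0\Phi_v$.
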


\begin{prop} \label{propEqualityImpliesGoodPair}
Let $w_1,w_2\in W$ with $w_1\preceq w_2$. Then
\[
	d_{w_2w_1^{-1}} \leq \lg(w_2)-\lg(w_1)
\]
and if there is equality, then the pair $(w_1,w_2)$ is good.
\end{prop}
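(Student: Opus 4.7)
The plan is to invoke the standard subword/chain property of the Bruhat order to produce an explicit length-increasing chain of reflections from $w_1$ to $w_2$ of length exactly $\lg(w_2)-\lg(w_1)$. This single chain will encode both the inequality (via the reflection-length interpretation of $d_w$) and, in the equality case, the good pair property.

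More precisely, since $w_1\preceq w_2$, I will apply the classical fact (Dyer's strong exchange / Björner--Brenti Thm.\ 2.2.2) that one can find reflections $s_{\beta_1},\ldots,s_{\beta_r}\in R$ with $r=\lg(w_2)-\lg(w_1)$ such that
\[
	w_1 \prec s_{\beta_1}w_1 \prec s_{\beta_2}s_{\beta_1}w_1 \prec \cdots \prec s_{\beta_r}\cdots s_{\beta_1}w_1 = w_2 \,.
\]
Setting $w \coloneqq w_2w_1^{-1}=s_{\beta_r}\cdots s_{\beta_1}$ exhibits a decomposition of $w$ on the alphabet $R$ of length $r$. By Corollary \ref{corMinGenSysReflections}\ref{itemDwEqualsLength}, $d_{w_2w_1^{-1}}$ is the minimal length of such a decomposition, whence
\[
	d_{w_2w_1^{-1}} \leq r = \lg(w_2)-\lg(w_1) \,,
\]
which is the first claim.

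For the equality case, suppose $d_{w_2w_1^{-1}}=r$. Then the decomposition $w_2w_1^{-1}=s_{\beta_r}\cdots s_{\beta_1}$ is already reduced on $R$, so Corollary \ref{corMinGenSysReflections}\ref{itemReducedFactors} ensures that each $\beta_i$ lies in $\Phi_{w_2w_1^{-1}}$. Combined with the length-strict chain displayed above, the sequence $(\beta_1,\ldots,\beta_r)$ satisfies exactly the conditions of Definition \ref{dfnGoodPair}, and hence $(w_1,w_2)$ is a good pair.

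The only nontrivial input is the chain characterisation of $\preceq$ by reflections with strictly increasing lengths (not just the weaker subword property in simple reflections); this is the step I expect to be the main obstacle to state cleanly, since it must be cited in the right form. Once it is available, everything else is an immediate application of the structural results on $d_w$ and $\Phi_w$ already proved in \S\ref{ssecMinGenSys}.
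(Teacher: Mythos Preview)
Your proof is correct and follows essentially the same approach as the paper: invoke the chain property of the Bruhat order (the paper cites \cite[Thm.\ 2.2.6]{coxeter}) to get a reflection chain of length $r=\lg(w_2)-\lg(w_1)$, then apply Corollary \ref{corMinGenSysReflections}\ref{itemDwEqualsLength} for the inequality and Corollary \ref{corMinGenSysReflections}\ref{itemReducedFactors} in the equality case. The only cosmetic difference is that the paper phrases each step as ``length increases by one'' rather than ``$\prec$'', but with $r=\lg(w_2)-\lg(w_1)$ these are equivalent.
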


\begin{proof}
By the chain property in $W$ (see \cite[Thm.\ 2.2.6]{coxeter}), there is a sequence of size $r=\lg(w_2)-\lg(w_1)$ of roots $(\alpha_1,\ldots,\alpha_r)$ in $\Phi$ with $w_2=s_{\alpha_r}s_{\alpha_{r-1}}\ldots s_{\alpha_1}w_1$ and such that for each $1\leq i\leq r$, we have $\lg(s_{\alpha_i}\ldots s_{\alpha_1}w_1)=\lg(s_{\alpha_{i-1}}\ldots s_{\alpha_1}w_1)+1$ (hence automatically $s_{\alpha_i}\ldots s_{\alpha_1}w_1\succ s_{\alpha_{i-1}}\ldots s_{\alpha_1}w_1$). Then $w_2w_1^{-1}=s_{\alpha_r}s_{\alpha_{r-1}}\ldots s_{\alpha_1}$ is a decomposition of $w_2w_1^{-1}$ on $R\coloneqq\enstq{s_\alpha}{\alpha\in\Phi}$. Corollary \ref{corMinGenSysReflections}\ref{itemDwEqualsLength} then yields $d_{w_2w_1^{-1}} \leq r = \lg(w_2)-\lg(w_1)$. If there is equality, Corollary \ref{corMinGenSysReflections}\ref{itemReducedFactors} gives $\alpha_i\in\Phi_{w_2w_1^{-1}}$ for all $1\leq i\leq r$, which implies that $(w_1,w_2)$ is a good pair.
\end{proof}

Let $J\subseteq I$ be a set of simple roots. We write $S_J\coloneqq\enstq{s_\alpha}{\alpha\in J}$ for the set of simple reflections associated to the elements of $J$, and $W_J\coloneqq\engendre{s}{s\in S_J}$ for the subgroup of $W$ it spans. Note that $W_J=W(\Phi_J)$, where $\Phi_J$ is the root subsystem of $\Phi$ generated by $J$, and the Bruhat order on $W_J$ is the same as the restriction of the Bruhat order on $W$. We also write $W^J\coloneqq\enstq{w\in W}{ws\succ w\mathrel{}\forall s\in S_J}$. It is known that $W^J$ is the set of representatives of minimal length for the left cosets representatives $W/W_J$, and that for all $w\in W$, the decomposition $w=w^Jw_J$ with $w^J\in W^J$ and $w_J\in W_J$ satisfies $\lg(w)=\lg(w^J)+\lg(w_J)$ (see \cite[\S2.4]{coxeter}).

\begin{lem} \label{lemPiecesToWhole}
Let $J\subseteq I$ be a subset of the simple roots, let $u^J$ and $v^J$ be elements of $W^J$, let $w_J$ be an element of $W_J$ and let $s\in W_J$ be a reflection, \emph{i.e.}\ an element of $\bigcup_{w\in W_J}wS_Jw^{-1}$. Then $sw_J\succ w_J$ if and only if $u^Jsw_J(v^J)^{-1}\succ u^Jw_J(v^J)^{-1}$.
\end{lem}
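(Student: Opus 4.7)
The plan is to reduce both Bruhat comparisons to a root-positivity statement, and then observe that $v^J\in W^J$ preserves the sign of roots in $\Phi_J$.

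I will use the standard characterisation of the Bruhat order via reflections: for any reflection $t=s_\beta\in W$ (with $\beta\in\Phi_+$) and any $w\in W$, one has $tw\succ w$ if and only if $w^{-1}(\beta)\in\Phi_+$ (the ``strong exchange'' consequence, see \cite[Prop.~2.4.4]{coxeter}). Writing $s=s_\gamma$ for some $\gamma\in\Phi_J$ (taken positive without loss of generality), the left-hand side condition $sw_J\succ w_J$ becomes
\[
	w_J^{-1}(\gamma)\in\Phi_+ \,.
\]
For the right-hand side, I first rewrite
\[
	u^Jsw_J(v^J)^{-1} = \bigl(u^Js(u^J)^{-1}\bigr)\cdot u^Jw_J(v^J)^{-1} \,,
\]
noting that $u^Js(u^J)^{-1}=s_{u^J(\gamma)}$ is a reflection. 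Applying the same reflection criterion, the condition $u^Jsw_J(v^J)^{-1}\succ u^Jw_J(v^J)^{-1}$ translates to
\[
	\bigl(u^Jw_J(v^J)^{-1}\bigr)^{-1}\bigl(u^J(\gamma)\bigr)
	= v^J w_J^{-1}(\gamma) \in \Phi_+ \,.
\]

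Thus the whole lemma reduces to the following claim: setting $\beta\coloneqq w_J^{-1}(\gamma)$, which lies in $\Phi_J$ because $w_J\in W(\Phi_J)$ and $\gamma\in\Phi_J$, one has $\beta\in\Phi_+$ if and only if $v^J(\beta)\in\Phi_+$. To prove this claim, I will use the characterisation
\[
	W^J \;=\; \enstq{w\in W}{w(\alpha)\in\Phi_+\;\forall\,\alpha\in J} \,,
\]
which follows from the definition of $W^J$ together with the fact that $\ell(ws_\alpha)>\ell(w)\iff w(\alpha)\in\Phi_+$. Since every positive root of $\Phi_J$ is a non-negative integral combination of the simple roots in $J$, linearity of $v^J$ gives that $v^J$ sends positive roots of $\Phi_J$ to positive roots of $\Phi$; applying the same argument to $-\beta$ handles the negative case. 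Hence $v^J$ preserves the sign of roots in $\Phi_J$, which concludes the proof.

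The only slightly delicate point is being careful that the reflection $s\in W_J$ is written as $s_\gamma$ with $\gamma\in\Phi_J$, so that $w_J^{-1}(\gamma)$ stays in $\Phi_J$ and the sign-preservation argument applies. Everything else is bookkeeping with the reflection-criterion for Bruhat covers.
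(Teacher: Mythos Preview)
Your argument is correct and takes a genuinely different route from the paper's. The paper works purely at the Coxeter-group level, using the function $\eta\colon W\times R\to\{\pm1\}$ and the cocycle identity $\eta(vw,t)=\eta(v,t)\,\eta(w,v^{-1}tv)$ coming from the representation $\pi$ of \cite[Thm.~1.3.2]{coxeter}; it then computes each factor via the length additivity $\lg(u^Jw_J)=\lg(u^J)+\lg(w_J)$ for $u^J\in W^J$, $w_J\in W_J$. Your approach is root-theoretic: you translate both Bruhat comparisons into positivity conditions on a single root $\beta=w_J^{-1}(\gamma)\in\Phi_J$, and then invoke the fact that elements of $W^J$ preserve the sign of roots in $\Phi_J$. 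This is shorter and more transparent geometrically; the paper's approach has the mild advantage of staying entirely within the Coxeter formalism (no root system needed).

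One small point you leave implicit: when you apply the reflection criterion on the right-hand side with $u^Js(u^J)^{-1}=s_{u^J(\gamma)}$, the criterion in the form ``$s_\beta w\succ w\iff w^{-1}(\beta)\in\Phi_+$'' requires $\beta\in\Phi_+$. You should note that $u^J(\gamma)\in\Phi_+$ because $u^J\in W^J$ and $\gamma\in\Phi_J\cap\Phi_+$ --- the very same sign-preservation fact you later spell out for $v^J$. With that one sentence added, the proof is complete.
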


\begin{proof}
Let $R\coloneqq\enstq{wSw^{-1}}{w\in W}$ be the set of reflections of $W$ and consider the map
\[
	\appl{\eta}{W\times R}{\{-1,1\}}{(w,t)}{
	\begin{cases}
	1 & \mathrm{if}\; tw\succ w \\
	-1 & \mathrm{if}\; tw \prec w
	\end{cases}
	}
\]
(see Theorem 1.4.3 of \cite{coxeter} and its proof). Our goal is then to prove that
\[
	\eta(u^Jw_J(v^J)^{-1},u^Js(u^J)^{-1})=\eta(w_J,s) .
\]

For any $w\in W$, $t\in R$ and $\varepsilon\in\{-1,1\}$, define
\[
	\pi_w(t,\varepsilon) \coloneqq (wtw^{-1},\varepsilon\eta(w^{-1},t)) \in R\times\{-1,1\} \,.
\]
This defines a map $\pi$ from $W$ to the finite group of bijections from $R \times\{-1,1\}$ to itself, and $\pi$ is a group morphism (\emph{loc.\,cit.}, Theorem 1.3.2(i)). Therefore, for any $v,w\in W$ and $t\in R$, we can write $\pi_{w^{-1}v^{-1}}(t,1)=\pi_{w^{-1}}(\pi_{v^{-1}}(t,1))$ and get the formula
\[
	\eta(vw,t) = \eta(v,t)\eta(w,v^{-1}tv) \,.
\]
First apply this formula to $v=u^J$, $w=w_J(v^J)^{-1}$ and $t=u^Js(u^J)^{-1}$:
\begin{equation} \label{eqEta}
	\eta(u^Jw_J(v^J)^{-1},u^Js(u^J)^{-1}) = \eta(u^J,u^Js(u^J)^{-1})\eta(w_J(v^J)^{-1},s) \,.
\end{equation}
Now, since $u^J\in W^J$ and $w_J^{-1},s\in W_J$ and using the fact that $w\mapsto w^{-1}$ preserves the length on $W$, we have
\begin{eqnarray*}
\lg(sw_J(v^J)^{-1})&=&\lg(v^Jw_J^{-1}s)=\lg(v^J)+\lg(w_J^{-1}s)=\lg(v^J)+\lg(sw_J)\\
\lg(w_J(v^J)^{-1})&=&\lg(v^Jw_J^{-1})=\lg(v^J)+\lg(w_J^{-1})=\lg(v^J)+\lg(w_J)
\end{eqnarray*}
from which we deduce $\eta(w_J(v^J)^{-1},s)=\eta(w_J,s)$. Similarly, $u^J\in W^J$ and $s\in W_J$, therefore
\[
	\lg(u^Js(u^J)^{-1}u^J)=\lg(u^Js)=\lg(u^J)+\lg(s)>\lg(u^J) \,,
\]
which means that $\eta(u^J,u^Js(u^J)^{-1})=1$. By \eqref{eqEta}, this completes the proof.
\end{proof}

\begin{lem} \label{lemConjugateToStandard}
Let $w\in W$. There is $u\in W$ and $J\subseteq I$ such that $\Phi_w=u(\Phi_J)$.
\end{lem}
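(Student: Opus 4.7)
The plan is to observe that $E_w$ admits a clean description as the orthogonal complement of the fixed subspace $V \coloneqq \ker(w - \id_E)$ of $w$, and then to apply the standard theory of point stabilisers in finite reflection groups to a generic vector in $V$. Because $w$ is an orthogonal transformation for the $W$-invariant form $(\,\cdot\mid\cdot\,)$, the adjoint of $w - \id_E$ is $w^{-1} - \id_E$, whose kernel equals $\ker(w - \id_E) = V$; thus $E_w = \im(w - \id_E) = V^\perp$, and consequently $\Phi_w = \Phi \cap V^\perp = \{\alpha \in \Phi : \alpha \perp V\}$.

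Next I would pick a vector $v_0 \in V$ that is \emph{generic in $V$}, meaning $(\alpha \mid v_0) \neq 0$ for every $\alpha \in \Phi \setminus \Phi_w$; this is possible because for any such $\alpha$ the intersection $\alpha^\perp \cap V$ is a proper subspace of $V$ (otherwise $\alpha \in V^\perp = E_w$, whence $\alpha \in \Phi_w$), so one only needs to avoid a finite union of proper subspaces of $V$. By construction, the set of roots orthogonal to $v_0$ is exactly $\Phi_w$. The classical fact that the stabiliser of any point of $E$ under a finite reflection group is generated by the reflections it contains then yields
\[
    \mathrm{Stab}_W(v_0) \;=\; \langle s_\alpha : \alpha \in \Phi,\ (\alpha \mid v_0) = 0 \rangle \;=\; W(\Phi_w) \,.
\]

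Now choose $u \in W$ such that $u(v_0)$ lies in the closure $\overline{C_0}$ of the fundamental Weyl chamber associated to $I$; this is possible since $\overline{C_0}$ is a fundamental domain for $W$ acting on $E$. Set $J \coloneqq \{\alpha \in I : (\alpha \mid u(v_0)) = 0\}$. Specialising the same stabiliser theorem to points of $\overline{C_0}$ gives $\mathrm{Stab}_W(u(v_0)) = W_J = W(\Phi_J)$. On the other hand, conjugation gives $\mathrm{Stab}_W(u(v_0)) = u\,\mathrm{Stab}_W(v_0)\,u^{-1} = u\,W(\Phi_w)\,u^{-1}$, and using $us_\alpha u^{-1} = s_{u(\alpha)}$ this equals $W(u(\Phi_w))$. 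Hence $W(u(\Phi_w)) = W(\Phi_J)$.

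The final step---which I expect to be the only subtle point---is to upgrade this equality of Weyl groups to an equality $u(\Phi_w) = \Phi_J$ of subsystems. Both sides are closed (``parabolic'') subsystems of the reduced root system $\Phi$, since $u(\Phi_w) = \Phi \cap u(E_w)$ and $\Phi_J = \Phi \cap \mathrm{Span}_\Q(J)$, so each is determined by its set of reflection hyperplanes. Equality of Weyl groups thus gives equality of the hyperplane arrangements, and since $\Phi$ is reduced each hyperplane determines a unique pair $\pm\alpha$ of roots; stability under negation then forces $u(\Phi_w) = \Phi_J$. Replacing $u$ by $u^{-1}$ yields the statement of the lemma.
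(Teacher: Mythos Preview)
Your proof is correct but takes a genuinely different route from the paper. The paper's argument is a two-liner: pick any basis $J'$ of $\Phi_w$, extend it to a basis $I'$ of $\Phi$ using Bourbaki (Chapter~VI, \S1.8, Proposition~24), and then use the transitivity of $W$ on bases to find $u\in W$ with $u(I)=I'$; then $J\coloneqq u^{-1}(J')\subseteq I$ works. Your approach instead goes through the chamber geometry and stabiliser theory, which is more hands-on but longer. One simplification you could make: the passage through Weyl groups in your final step is unnecessary. You have already shown that $u(\Phi_w)=\{\alpha\in\Phi:(\alpha\mid u(v_0))=0\}$ (this is just the genericity of $v_0$ transported by $u$), and the standard computation for a point $u(v_0)$ in $\overline{C_0}$ shows directly that $\{\alpha\in\Phi:(\alpha\mid u(v_0))=0\}=\Phi_J$ (expand a positive root in simple roots and use nonnegativity of the coefficients). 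This gives $u(\Phi_w)=\Phi_J$ immediately, bypassing the equality of Weyl groups and the reduced-root-system argument. What your approach buys is that it makes the geometric meaning of $J$ explicit (the wall type of a generic fixed point of $w$), whereas the paper's proof hides this behind the Bourbaki citation.
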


\begin{proof}
Given any fixed basis $J'$ of $\Phi_w$, there is a basis $I'$ of $\Phi$ containing $J'$ (see Proposition 24 in \S1.8 of \cite[Chapter VI]{bourbakiLie456}). The sets $I$ and $I'$ are in the same $W$-orbit (see Remark 4 in \S1.5 of \emph{loc. cit.}), so we get $u\in W$ such that $I'=u(I)$ and $J\coloneqq u^{-1}(J')$ is the subset of $I$ we are looking for.
\end{proof}

The following proposition provides a direct way of seeing if a given pair is good.

\begin{prop} \label{propGoodPairsMinipermutations}
Let $(w_1,w_2)\in W^2$. Let $\Phi_{w_1w_2^{-1}}\subseteq\Phi$ be the minimal generating subsystem of $w_1w_2^{-1}$ and $J\subseteq I$ be such that $\Phi_{w_1w_2^{-1}}$ is in the $W$-orbit of the root subsystem $\Phi_J\subseteq\Phi$ (see Lemma \ref{lemConjugateToStandard}). Then we can write
\[
	w_1=u^Jw_{J,1}(v^J)^{-1} \,, \quad w_2=u^Jw_{J,2}(v^J)^{-1}
\]
for some $u^J,v^J\in W^J$ and $w_{J,1},w_{J,2}\in W_J$. Such a decomposition of $w_1$ and $w_2$, as well as the choice of $J$, may not be unique. Furthermore, for one such (or equivalently any such) decomposition of $w_1$ and $w_2$, we have $w_{J,1}\preceq w_{J,2}$ if and only if the pair $(w_1,w_2)$ is good.
\end{prop}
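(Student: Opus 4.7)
The plan is to prove the existence of the decomposition first, then reduce the good-pair property for $(w_1,w_2)$ in $W$ to an analogous property for $(w_{J,1},w_{J,2})$ inside the parabolic subgroup $W_J$, where, because $\Phi_{w_{J,1}w_{J,2}^{-1}}=\Phi_J$ contains all reflections of $W_J$, the good-pair condition collapses to plain Bruhat comparability.

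First, I would construct the decomposition. By Lemma \ref{lemConjugateToStandard} pick $u\in W$ with $\Phi_{w_1w_2^{-1}}=u(\Phi_J)$, decompose $u=u^Ju_J$ with $u^J\in W^J$, $u_J\in W_J$, and observe that $u_J(\Phi_J)=\Phi_J$, so $\Phi_{w_1w_2^{-1}}=u^J(\Phi_J)$. Next, apply the decomposition $W=W^J\cdot W_J$ to $w_2^{-1}u^J$ to write $w_2^{-1}u^J=v^Jw_{J,2}^{-1}$ with $v^J\in W^J$ and $w_{J,2}\in W_J$, i.e.\ $w_2=u^Jw_{J,2}(v^J)^{-1}$. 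Finally, by Proposition~\ref{propSelfInWeylSubgroup}, $w_1w_2^{-1}\in W(\Phi_{w_1w_2^{-1}})=u^JW_J(u^J)^{-1}$, so $w_1w_2^{-1}=u^Jw(u^J)^{-1}$ for some $w\in W_J$; multiplying on the right by $w_2$ gives $w_1=u^Jw_{J,1}(v^J)^{-1}$ with $w_{J,1}\coloneqq ww_{J,2}\in W_J$.

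For the equivalence, assume first that $(w_1,w_2)$ is good, with a witnessing sequence $\alpha_1,\dots,\alpha_r\in\Phi_{w_1w_2^{-1}}=u^J(\Phi_J)$. Write $\alpha_i=u^J(\beta_i)$ with $\beta_i\in\Phi_J$, so that $s_{\alpha_i}=u^Js_{\beta_i}(u^J)^{-1}$. Since $w_1=u^Jw_{J,1}(v^J)^{-1}$, an immediate induction yields
\[
s_{\alpha_i}s_{\alpha_{i-1}}\cdots s_{\alpha_1}w_1=u^J\bigl(s_{\beta_i}s_{\beta_{i-1}}\cdots s_{\beta_1}w_{J,1}\bigr)(v^J)^{-1},
\]
and in particular $w_{J,2}=s_{\beta_r}\cdots s_{\beta_1}w_{J,1}$. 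Each $s_{\beta_i}$ is a reflection in $W_J$, so Lemma~\ref{lemPiecesToWhole} (applied with the reflection $s_{\beta_i}$ and the element $s_{\beta_{i-1}}\cdots s_{\beta_1}w_{J,1}\in W_J$) transports the Bruhat-increase at step $i$ in $W$ to an increase in $W_J$. Hence $w_{J,1}\prec s_{\beta_1}w_{J,1}\prec\cdots\prec w_{J,2}$ in $W_J$, giving $w_{J,1}\preceq w_{J,2}$.

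Conversely, if $w_{J,1}\preceq w_{J,2}$ in $W_J$, the chain property of the Bruhat order on $W_J$ (as invoked in the proof of Proposition~\ref{propEqualityImpliesGoodPair}) supplies reflections $s_{\beta_1},\dots,s_{\beta_r}$ with $\beta_i\in\Phi_J$ and an ascending chain $w_{J,1}\prec s_{\beta_1}w_{J,1}\prec\cdots\prec s_{\beta_r}\cdots s_{\beta_1}w_{J,1}=w_{J,2}$. Setting $\alpha_i\coloneqq u^J(\beta_i)\in u^J(\Phi_J)=\Phi_{w_1w_2^{-1}}$, the same identity $s_{\alpha_i}\cdots s_{\alpha_1}w_1=u^J(s_{\beta_i}\cdots s_{\beta_1}w_{J,1})(v^J)^{-1}$ together with Lemma~\ref{lemPiecesToWhole} turns the $W_J$-chain into a chain in $W$ with $s_{\alpha_r}\cdots s_{\alpha_1}w_1=w_2$ and each step Bruhat-ascending, exhibiting $(w_1,w_2)$ as a good pair. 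Since the good-pair property is intrinsic to $(w_1,w_2)$, the equivalence is independent of the choices made, as asserted. The one delicate point I expect to be careful about is making Lemma~\ref{lemPiecesToWhole} apply at every step: it requires the successive reflections to lie in $W_J$, which is exactly the content of the observation $\Phi_{w_1w_2^{-1}}=u^J(\Phi_J)$ combined with $s_{u^J(\beta)}=u^Js_\beta(u^J)^{-1}$.
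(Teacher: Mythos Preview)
Your proposal is correct and follows essentially the same approach as the paper: both construct $u^J$ from Lemma~\ref{lemConjugateToStandard} via the $W^J W_J$ decomposition, use Proposition~\ref{propSelfInWeylSubgroup} to see that $w_1^{-1}u^J$ and $w_2^{-1}u^J$ lie in the same coset $v^J W_J$, and then transfer Bruhat chains back and forth between $W$ and $W_J$ via Lemma~\ref{lemPiecesToWhole} together with the chain property in $W_J$. The only cosmetic difference is the order of steps in the existence part (you decompose $w_2$ first and then obtain $w_1$ from $w_1w_2^{-1}\in u^JW_J(u^J)^{-1}$, whereas the paper observes directly that the two cosets $w_i^{-1}u^JW_J$ coincide).
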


\begin{proof}
Take $u\in W$ such that $\Phi_{w_1w_2^{-1}}=u(\Phi_J)$. Let $u^J\in W^J$ be the representative of $uW_J$, then $\Phi_{w_1w_2^{-1}}=u^J(\Phi_J)$. We know from Proposition \ref{propSelfInWeylSubgroup} that $w_1w_2^{-1}\in W(\Phi_{w_1w_2^{-1}})=u^JW_J(u^J)^{-1}$. Therefore the cosets $w_1^{-1}u^JW_J$ and $w_2^{-1}u^JW_J$ are the same. Let $v^J\in W^J$ be the representative for that coset; we then have elements $w_{J,1}$ and $w_{J,2}$ of $W_J$ such that $w_1^{-1}u^J=v^Jw_{J,1}^{-1}$ and $w_2^{-1}u^J=v^Jw_{J,2}^{-1}$, \emph{i.e.\ }$w_1=u^Jw_{J,1}(v^J)^{-1}$, $w_2=u^Jw_{J,2}(v^J)^{-1}$.

Now assume that $w_{J,1}\preceq w_{J,2}$. Then, by the chain property in $W_J$ (see \cite[Thm.\ 2.2.6]{coxeter}) there is a sequence $(\alpha_1,\ldots,\alpha_r)$ of roots in $\Phi_J$ with $w_{J,2}=s_{\alpha_r}s_{\alpha_{r-1}}\ldots s_{\alpha_1}w_{J,1}$ such that $s_{\alpha_i}\ldots s_{\alpha_1}w_1\succ s_{\alpha_{i-1}}\ldots s_{\alpha_1}w_{J,1}$ for each $1\leq i\leq r$. Let $\beta_i=u^J(\alpha_i)\in u^J(\Phi_J)=\Phi_{w_1w_2^{-1}}$ for each $i$, then $s_{\beta_i}=u^Js_{\alpha_i}(u^J)^{-1}$ and we have
\[
	w_2=u^Jw_{J,2}(v^J)^{-1}=s_{\beta_r}\ldots s_{\beta_1}u^Jw_{J,1}(v^J)^{-1}=s_{\beta_r}\ldots s_{\beta_1}w_1 \,,
\]
and for each $i$ the relation by Lemma \ref{lemPiecesToWhole}:
\[
	s_{\beta_i}\ldots s_{\beta_1}w_1=u^Js_{\alpha_i}\ldots s_{\alpha_1}w_{J,1}(v^J)^{-1}\succ u^Js_{\alpha_{i-1}}\ldots s_{\alpha_1}w_{J,1}(v^J)^{-1}=s_{\beta_{i-1}}\ldots s_{\beta_1}w_1.
\]
Since this chain implies in particular that $w_1\preceq w_2$, we have shown all the conditions for the pair $(w_1,w_2)$ to be good. Conversely, going through all the steps backwards and using Lemma \ref{lemPiecesToWhole}, we see that if $(w_1,w_2)$ is a good pair then $w_{J,1}\preceq w_{J,2}$.
\end{proof}

\subsection{Systems of type $A$ and pattern avoidance} \label{ssecPatternAvoidance}

We provide a concrete description of minimal generating subsystems (Proposition \ref{propCycleDecompositionTypeA}) and of good pairs (Proposition \ref{propGoodPairsGLn} and Corollary \ref{corGoodPairsFlattening}) for root systems of type $A$.

Let \ $n>1$ \ an \ integer, \ $(e_i)_{1\leq i\leq n}$ \ the \ standard \ basis \ of \ $\Q^n$ \ and \ $\Phi=\enstq{e_i-e_j}{1\leq i,j\leq n\;,\;i\neq j}\subset \Q^n$ a root system of type $A_{n-1}$. The Weyl group of $\Phi$ is the symmetric group $\Scal_n$ which acts on $\Q^n$, and on $\Phi$, by $w(e_i)=e_{w(i)}$ for $w\in\Scal_n$ and $1\leq i\leq n$. We write any $w\in\Scal_n$ as $[w(1),\ldots,w(n)]$. Given a root $\alpha=e_i-e_j$ with $i\neq j$, the associated reflection $s_\alpha=(i,j)$ is given by $s_\alpha(i)=j$, $s_\alpha(j)=i$, $s_\alpha(k)=k$ $\forall \ k\neq i,j$. We set the standard basis $I\coloneqq\enstq{e_i-e_{i+1}}{1\leq i\leq n-1}$. It generates the set $\Phi_+\coloneqq\enstq{e_i-e_j}{1\leq i<j\leq n}$ of positive roots, and the set of simple reflections is $S=\enstq{[1\ldots(i-1)(i+1)i(i+2)\ldots n]}{1\leq i\leq n-1}$.

\begin{prop} \label{propCycleDecompositionTypeA}
Let $w\in\Scal_n$. Write $\coprod_{1\leq k\leq r}\Omega_k$ for the partition of $\{1,\ldots,n\}$ into orbits under the action of $w$. For $1\leq k\leq r$, let
\[
	\appl{f_k}{\Q^n}{\Q}{(x_i)_{1\leq i\leq n}}{\sum_{i\in\Omega_k}x_i} \,.
\]
Then $E_w=\bigcap_{1\leq k\leq r}\ker(f_k)$ and $\Phi_w=\bigcup_{1\leq k\leq r}\enstq{e_i-e_j}{i,j\in\Omega_k\;,\;i\neq j}$.
\end{prop}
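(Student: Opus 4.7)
The plan is to compute $E_w=\im(w-\id_E)$ directly from the orbit decomposition: establish the inclusion $E_w\subseteq\bigcap_k\ker(f_k)$ by a straightforward calculation, then match dimensions to get equality, and finally read off $\Phi_w=\Phi\cap E_w$.

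First I would observe that for any $x=(x_i)\in E$ one has $(wx-x)_i=x_{w^{-1}(i)}-x_i$, and since $w^{-1}$ permutes each orbit $\Omega_k$, the two sums $\sum_{i\in\Omega_k}x_{w^{-1}(i)}$ and $\sum_{i\in\Omega_k}x_i$ coincide, so $f_k(wx-x)=0$ for every $k$. This yields $E_w\subseteq\bigcap_{1\leq k\leq r}\ker(f_k)$.

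Next I would compare dimensions. The linear forms $f_1,\ldots,f_r$ are linearly independent because their supports $\Omega_1,\ldots,\Omega_r$ are disjoint and nonempty, so $\bigcap_k\ker(f_k)$ has codimension $r$ in $\Q^n$; moreover, since $\sum_k f_k$ is exactly the defining form of $E$, this intersection is already contained in $E$ and therefore has dimension $n-r$. On the other hand, the $w$-fixed subspace of $\Q^n$ is spanned by the $r$ indicator vectors $\sum_{i\in\Omega_k}e_i$, so it has dimension $r$; since the all-ones vector $\sum_k\sum_{i\in\Omega_k}e_i$ lies in this fixed subspace but not in $E$, intersecting with $E$ drops the dimension by exactly one, giving $\dim\ker(w-\id_E)=r-1$. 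Hence $\dim E_w=(n-1)-(r-1)=n-r$, which matches the dimension of $\bigcap_k\ker(f_k)$; combined with the inclusion above, this forces equality.

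Finally, for $\Phi_w=\Phi\cap E_w$, a root $e_i-e_j$ lies in $\bigcap_k\ker(f_k)$ if and only if $f_k(e_i-e_j)=0$ for every $k$, which (since $i\neq j$) is equivalent to $i$ and $j$ belonging to the same orbit $\Omega_k$; conversely every such $e_i-e_j$ visibly lies in every $\ker(f_k)$. This gives the claimed description of $\Phi_w$. No serious obstacle is expected; the only subtle point is keeping track of the distinction between $\Q^n$ and $E=\ker(\sum x_i)$ in the dimension count, which is precisely where the $r-1$ (rather than $r$) appears.
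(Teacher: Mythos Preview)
Your proof is correct and takes a genuinely different route from the paper's. The paper also begins with the easy inclusion $E_w\subseteq\bigcap_k\ker(f_k)$, but for the reverse direction it invokes Proposition~\ref{propCycleDecompositionCentraliser} (the centraliser interpretation of $\Phi_w$) applied to $\GL_n$: one computes $(T^w)^\circ$ and its centraliser directly, reads off $\Phi_w=\bigcup_k\{e_i-e_j:i,j\in\Omega_k,\,i\neq j\}$, and then observes that these roots span $\bigcap_k\ker(f_k)$, forcing $\bigcap_k\ker(f_k)\subseteq E_w$. Your argument instead stays entirely within elementary linear algebra, matching $\dim E_w$ and $\dim\bigcap_k\ker(f_k)$ via the count of $w$-fixed vectors, and only afterwards reads off $\Phi_w$ from the definition $\Phi_w=\Phi\cap E_w$. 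The paper's approach is shorter because it recycles the general reductive-group machinery already developed in \S\ref{ssecMinGenSysGroups}; yours is self-contained and avoids any appeal to algebraic groups, which makes it more portable if one wants a purely combinatorial treatment of type $A$.
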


\begin{proof}
The inclusion $E_w\subseteq\bigcap_{1\leq k\leq r}\ker(f_k)$ is straightforward. Conversely, Proposition \ref{propCycleDecompositionCentraliser} gives $\bigcup_{1\leq k\leq r}\enstq{e_i-e_j}{i,j\in\Omega_k\;,\;i\neq j}=\Phi(C_{\GL_n}((T^w)^\circ))=\Phi_w$, hence $\bigcap_{1\leq k\leq r}\ker(f_k)\subseteq E_w$ by taking the generated $\Q$-vector space.
\end{proof}

\begin{dfn} \label{dfnPartialOrder}
Let $d$ be an integer such that $1\leq d\leq n$. We put a partial order on the set of size $d$ subsets of $\{1,\ldots,n\}$ as follows: for any such subsets $A$ and $A'$, we say that $A\preceq A'$ if and only if $\abs{A\inter\{1,\ldots,m\}}\geq\abs{A'\inter\{1,\ldots,m\}}$ for all $1\leq m\leq n$. Equivalently, $A\preceq A'$ if and only if $\abs{A\inter\{m,\ldots,n\}}\leq\abs{A'\inter\{m,\ldots,n\}}$ for all $1\leq m\leq n$.
\end{dfn}

The relation $A\preceq A'$ can be seen concretely as follows: if $A=\{a_1<\ldots<a_d\}$ and $A'=\{a'_1<\ldots<a'_d\}$, then $A\preceq A'$ if and only if $a_k\leq a'_k$ for all $k\in\{1,\ldots,d\}$.

\begin{dfn} \label{dfnBox}
Let $w\in\Scal_n$. For $1\leq i,j\leq n$, we write
\[
	w[i,j] \coloneqq \Abs{ w(\{1,\ldots,i\})\inter\{j,\ldots,n\} } \,.
\]
For a subset $\Sigma$ of $\{1,\ldots,n\}$, we write
\[
	w[i,j]_\Sigma \coloneqq \Abs{ w(\{1,\ldots,i\})\inter\{j,\ldots,n\}\inter\Sigma } \,.
\]
\end{dfn}

\begin{ex} \label{exBruhatLineNotation}
For any $w,w'\in\Scal_n$, we have $w'\preceq w$ if and only if $w'[i,j]\leq w[i,j]$ for all $1\leq i,j\leq n$ (see for example \cite[Thm.\ 2.1.5]{coxeter}). Equivalently, $w'\preceq w$ if and only if $\{w(1),\ldots,w(d)\}\preceq\{w'(1),\ldots,w'(d)\}$ for all $1\leq d\leq n$.
\end{ex}

The following proposition is an application of Proposition \ref{propGoodPairsMinipermutations} to the type $A_{n-1}$.

\begin{prop} \label{propGoodPairsGLn}
Let $w_1$, $w_2$ be elements of $\Scal_n$. Then the pair $(w_1,w_2)$ is good if and only if, for any orbit $\Omega$ of $w_1w_2^{-1}$ under the action of $\Scal_n$ on $\{1,\ldots,n\}$ and any $1\leq d\leq n$, we have $\Omega\inter w_1(\{1,\ldots,d\})\succeq \Omega\inter w_2(\{1,\ldots,d\})$ (as subsets of $\{1,\ldots,n\}$ in the sense of Definition \ref{dfnPartialOrder}). Equivalently, the pair $(w_1,w_2)$ is good if and only if, for any orbit $\Omega$ of $w_1w_2^{-1}$ and any $1\leq i,j\leq n$, we have $w_1[i,j]_\Omega\leq w_2[i,j]_\Omega$.
\end{prop}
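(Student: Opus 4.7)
The plan is to apply Proposition \ref{propGoodPairsMinipermutations} and make its Bruhat-order condition explicit in type $A_{n-1}$. Pick $J\subseteq I$ with $\Phi_{w_1w_2^{-1}}=u^J(\Phi_J)$ and write $w_r=u^Jw_{J,r}(v^J)^{-1}$ with $u^J,v^J\in W^J$ and $w_{J,r}\in W_J$ for $r\in\{1,2\}$; by Proposition \ref{propGoodPairsMinipermutations}, the pair $(w_1,w_2)$ is good if and only if $w_{J,1}\preceq w_{J,2}$ in $W_J$. In type $A_{n-1}$, $J$ corresponds to a partition $\coprod_k I_k$ of $\{1,\ldots,n\}$ into consecutive intervals, $W_J=\prod_k\Scal(I_k)$ with componentwise Bruhat order, and the condition $u^J,v^J\in W^J$ amounts to $u^J$ and $v^J$ being increasing on each $I_k$. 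By Proposition \ref{propCycleDecompositionTypeA}, $\Phi_{w_1w_2^{-1}}$ is the union of the full $A$-subsystems supported on the $w_1w_2^{-1}$-orbits $\Omega_k$, hence (for a canonical matching of the $I_k$'s with the $\Omega_k$'s) $u^J$ restricts to the unique order-preserving bijection $I_k\isoto\Omega_k$.

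The combinatorial core is an analysis of $A_d\coloneqq(v^J)^{-1}(\{1,\ldots,d\})$: since $v^J$ is increasing on each $I_k$, each intersection $A_d\inter I_k$ is an initial segment of $I_k$, and as $d$ ranges over $\{0,1,\ldots,n\}$ every size $d_k\in\{0,1,\ldots,|I_k|\}$ of such an initial segment is realised. Since $w_{J,r}$ preserves each $I_k$ setwise, one computes
\[
	w_r(\{1,\ldots,d\})\inter\Omega_k=u^J\bigl(w_{J,r}(A_d\inter I_k)\bigr)\,,
\]
and because $u^J|_{I_k}$ is an order-preserving bijection $I_k\isoto\Omega_k$, it carries the partial order of Definition \ref{dfnPartialOrder} on size-$d_k$ subsets of $I_k$ bijectively to that on size-$d_k$ subsets of $\Omega_k$. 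Thus the $\preceq$-comparison in $\Omega_k$ of the sets $w_r(\{1,\ldots,d\})\inter\Omega_k$ ($r=1,2$) is equivalent to the $\preceq$-comparison in $I_k$ of the sets $w_{J,r}(A_d\inter I_k)$.

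To conclude, I would apply the tableau criterion of Example \ref{exBruhatLineNotation} inside each factor $\Scal(I_k)$: the restriction $w_{J,1}|_{I_k}\preceq w_{J,2}|_{I_k}$ holds if and only if for every initial segment $S\subseteq I_k$ the subsets $w_{J,r}|_{I_k}(S)$ of $I_k$ satisfy the required relation. Combining this with the surjectivity of $d\mapsto A_d\inter I_k$ onto the set of initial segments of $I_k$ and with the componentwise nature of the Bruhat order on $W_J=\prod_k\Scal(I_k)$ yields the first characterisation. The equivalence with the condition $w_1[i,j]_\Omega\leq w_2[i,j]_\Omega$ then follows immediately from the ``Equivalently'' clause of Definition \ref{dfnPartialOrder}. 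The hard part will be carefully verifying the order-preservation of $u^J|_{I_k}$ and $v^J|_{I_k}$ (which ultimately comes from $u^J,v^J\in W^J$) together with the surjectivity of $d\mapsto A_d\inter I_k$ onto initial segments; the rest is bookkeeping.
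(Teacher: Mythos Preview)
Your proposal is correct and follows exactly the route the paper intends: the paper's own proof is the single line ``This is a consequence of Proposition \ref{propGoodPairsMinipermutations} and Example \ref{exBruhatLineNotation}'', and what you have written is a careful unpacking of precisely that implication in type $A_{n-1}$. The key steps you identify---that $u^J,v^J\in W^J$ forces them to be increasing on each interval $I_k$, that $u^J|_{I_k}\colon I_k\to\Omega_k$ is therefore order-preserving and carries the partial order of Definition \ref{dfnPartialOrder} across, and that $d\mapsto A_d\cap I_k$ hits every initial segment of $I_k$---are exactly the bookkeeping needed to translate the tableau criterion from $W_J=\prod_k\Scal(I_k)$ to the orbitwise condition in the statement.
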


\begin{proof}
This is a consequence of Proposition \ref{propGoodPairsMinipermutations} and Example \ref{exBruhatLineNotation}.
\end{proof}

This result gives a way to visually recognise good and bad pairs through flattenings, which we now define.

\begin{dfn} \label{dfnFlattening}
Let $m,n$ be integers with $0<m\leq n$, and let $\Sigma=\{i_1<\ldots<i_m\}$ be a subset of $\{1,\ldots,n\}$. We define the \emph{flattening map} $\mathrm{fl}_\Sigma\colon\Scal_n\to\Scal_m$ as follows. For any $w\in\Scal_n$, we take $\mathrm{fl}_\Sigma(w)$ as the unique $f\in\Scal_m$ such that $(w(i_1).\ldots,w(i_m))$ is in the same relative order as $(f(1),\ldots,f(m))$. This means that $w\in\Scal_n$ flattens to $f\in\Scal_m$ through $\Sigma$ if and only if for any $1\leq k<l\leq m$, we have $w(i_k)<w(i_l)$ if and only if $f(k)<f(l)$.
\end{dfn}

\begin{cor} \label{corGoodPairsFlattening}
Let $w_1,w_2\in\Scal_n$. Then the pair $(w_1,w_2)$ is good if and only if for any orbit $\Omega$ of $w_1^{-1}w_2$, we have $\mathrm{fl}_\Omega(w_1)\preceq\mathrm{fl}_\Omega(w_2)$.
\end{cor}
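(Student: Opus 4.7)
The plan is to derive this directly from Proposition~\ref{propGoodPairsGLn}, translating the inequalities there---formulated on orbits of $w_1w_2^{-1}$---into statements about the flattenings along orbits of $w_1^{-1}w_2$.

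The first step is to set up a canonical bijection between orbits of $w_1w_2^{-1}$ and orbits of $w_1^{-1}w_2$. Given an orbit $\Omega'$ of $w_1w_2^{-1}$, its stability under $w_1w_2^{-1}$ yields $w_2^{-1}(\Omega')=w_1^{-1}(\Omega')$; writing $\Omega$ for this common set, a short computation shows that $\Omega$ is a single orbit of $(w_1^{-1}w_2)^{-1}=w_2^{-1}w_1$, hence of $w_1^{-1}w_2$, and that $\Omega'=w_1(\Omega)=w_2(\Omega)$. Thus $\Omega\mapsto\Omega'$ is a cardinality-preserving bijection between the two families of orbits.

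Next, write $\Omega=\{i_1<\dots<i_m\}$, $\Omega'=\{j_1<\dots<j_m\}$, and let $f_a\coloneqq\mathrm{fl}_\Omega(w_a)$ for $a\in\{1,2\}$; the definition of flattening says exactly that $w_a(i_k)=j_{f_a(k)}$ for all $k$. For each $1\le d\le n$, setting $k\coloneqq|\Omega\inter\{1,\dots,d\}|$ (so that $\Omega\inter\{1,\dots,d\}=\{i_1,\dots,i_k\}$) gives
\[
  \Omega'\inter w_a(\{1,\dots,d\}) \;=\; w_a\bigl(\Omega\inter\{1,\dots,d\}\bigr) \;=\; \{j_{f_a(1)},\dots,j_{f_a(k)}\}.
\]
The map $l\mapsto j_l$ is an order-preserving bijection of $\{1,\dots,m\}$ onto $\Omega'\subseteq\{1,\dots,n\}$, so the partial order $\preceq$ of Definition~\ref{dfnPartialOrder} on size-$k$ subsets transfers unchanged. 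Hence the inequality between $\Omega'\inter w_1(\{1,\dots,d\})$ and $\Omega'\inter w_2(\{1,\dots,d\})$ supplied by Proposition~\ref{propGoodPairsGLn} becomes the corresponding inequality between the initial segments $\{f_1(1),\dots,f_1(k)\}$ and $\{f_2(1),\dots,f_2(k)\}$ inside $\{1,\dots,m\}$.

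Finally, as $d$ ranges over $\{1,\dots,n\}$, the value $k=|\Omega\inter\{1,\dots,d\}|$ sweeps through $\{0,1,\dots,m\}$, so for each fixed orbit the full family of inequalities in Proposition~\ref{propGoodPairsGLn} is equivalent to the analogous family for the sets $\{f_a(1),\dots,f_a(k)\}$ with $k=1,\dots,m$. By Example~\ref{exBruhatLineNotation} applied inside $\Scal_m$, this is exactly the Bruhat comparison between $f_1=\mathrm{fl}_\Omega(w_1)$ and $f_2=\mathrm{fl}_\Omega(w_2)$, which proves the corollary. The argument is essentially bookkeeping, and the only point demanding care is keeping track of orientations when passing between intersections on the source side (with $\{1,\dots,d\}$) and intersections on the target side (with the images $w_a(\{1,\dots,d\})$).
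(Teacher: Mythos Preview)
Your argument is correct and follows the same route the paper indicates (``consequence of Proposition~\ref{propGoodPairsGLn} and Example~\ref{exBruhatLineNotation}''), only with the details spelled out. In particular, you handle explicitly the passage from orbits of $w_1w_2^{-1}$ (as in Proposition~\ref{propGoodPairsGLn}) to orbits of $w_1^{-1}w_2$ (as in the corollary) via the bijection $\Omega'\mapsto w_1^{-1}(\Omega')=w_2^{-1}(\Omega')$, a point the paper's one-line proof leaves to the reader.
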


\begin{proof}
This is a consequence of Proposition \ref{propGoodPairsGLn} and Example \ref{exBruhatLineNotation}).
\end{proof}

We now give a criterion that relates good pairs to pattern avoidance for root systems of type $A$.

\begin{dfn} \label{dfnPattern}
Let $m,n\leq 1$ be integers. For $w\in\Scal_n$ and $f\in\Scal_m$, we say that \emph{w has the pattern f} if and only if $w$ flattens to $f$ (through some $\Sigma\subseteq\{1,\ldots,n\}$ of size $m$), \emph{i.e.}\ there exists a flattening map $\mathrm{fl}_\Sigma\colon\Scal_n\to\Scal_m$ (see Definition \ref{dfnFlattening}) such that $f=\mathrm{fl}_\Sigma(w)$.
\end{dfn}

\begin{ex}
If $f=[3412]\in\Scal_4$ (resp.\ $f=[4231]$), the permutation $w\in\Scal_n$ has the pattern $f$ if and only if there exist $1\leq a<b<c<d\leq n$ such that $w(c)<w(d)<w(a)<w(b)$ (resp.\ $w(d)<w(b)<w(c)<w(a)$).
\end{ex}

\begin{thm} \label{thmBadPairPatterns}
Let $w\in\Scal_n$.
\begin{enumerate}[label=\emph{(\arabic*)},ref=(\arabic*)]
\item
There exists $w'\in\Scal_n$ such that $(w',w)$ is a bad pair if and only if $w$ has at least one of the four patterns $[4231]$, $[42513]$, $[35142]$ or $[351624]$.
\item
There exists $w''\in\Scal_n$ such that $(w,w'')$ is a bad pair if and only if $w$ has at least one of the four patterns $[1324]$, $[24153]$, $[31524]$ or $[426153]$.
\end{enumerate}
\end{thm}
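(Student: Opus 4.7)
My plan is as follows. By Proposition \ref{propGoodPairsInverted}, $(w_1,w_2)$ is a good (resp.\ bad) pair if and only if $(w_2w_0,w_1w_0)$ is. Since $w\mapsto ww_0$ reverses the one-line notation of a permutation, and a permutation contains a pattern $f$ if and only if its reversal contains the reverse of $f$, one observes that reversal interchanges the two quadruples of patterns: $[4231]\leftrightarrow[1324]$, $[42513]\leftrightarrow[31524]$, $[35142]\leftrightarrow[24153]$, $[351624]\leftrightarrow[426153]$. Hence (2) for $w$ is equivalent to (1) for $ww_0$, so it suffices to prove (1), which I do in both directions by means of Corollary \ref{corGoodPairsFlattening}.

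For the ``if'' direction, I construct an explicit bad pair for each of the four patterns. I illustrate with $[4231]$: if positions $a<b<c<d$ in $w$ satisfy $w(d)<w(b)<w(c)<w(a)$, set $w':=w\circ(a\,d)(b\,c)$. Comparing $w[i,j]$ and $w'[i,j]$---which differ only for $(i,j)$-ranges meeting $\{a,b,c,d\}$---yields $w'\preceq w$. The orbits of $w'^{-1}w$ are $\{a,d\}$, $\{b,c\}$, and singletons; on $\Omega:=\{b,c\}$ one has $\mathrm{fl}_\Omega(w)=[12]$ (since $w(b)<w(c)$) but $\mathrm{fl}_\Omega(w')=[21]$, an obstruction in $\Scal_2$, so Corollary \ref{corGoodPairsFlattening} makes $(w',w)$ a bad pair. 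For each of the three longer patterns, an analogous product of disjoint transpositions (dictated by the cycle shape of the pattern) produces a flattening obstruction on a well-chosen orbit of $w'^{-1}w$, while the other transpositions provide the descents needed for $w'\preceq w$ to hold globally.

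For the ``only if'' direction, suppose $(w',w)$ is a bad pair. By Corollary \ref{corGoodPairsFlattening}, fix an orbit $\Omega$ of $w'^{-1}w$ with $\mathrm{fl}_\Omega(w')\not\preceq\mathrm{fl}_\Omega(w)$; set $m:=|\Omega|$ and denote the flattenings by $\bar w,\bar w'\in\Scal_m$, so $\bar w'^{-1}\bar w$ is an $m$-cycle. I then locate one of the four patterns in $w$ using positions from $\Omega$ together with, where needed, positions from other non-trivial orbits of $w'^{-1}w$, which must compensate for the local obstruction to preserve $w'\preceq w$ globally. The argument splits by $m$. For $m=2$, the obstruction is a single ascent of $w$ on $\Omega=\{i,j\}$ reversed to a descent in $w'$; the compensating orbits provide descents of $w$ outside $\Omega$ which, together with $\Omega$, assemble a $[4231]$ subpattern. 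For $m\in\{3,4,5,6\}$, a finite enumeration of the $\bar w\in\Scal_m$ admitting an obstructing $\bar w'$ (with $\bar w'^{-1}\bar w$ an $m$-cycle and $\bar w'\not\preceq\bar w$), combined with the Bruhat constraints on the compensating positions, shows that one of the four listed patterns is always realized in $w$. For $m\geq 7$, a pigeonhole argument on the $m$-cycle structure shows that $\bar w$ alone already contains one of the four patterns, which is then also a pattern of $w$.

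The main obstacle is the case analysis for $m\in\{3,4,5,6\}$: for each such $m$, I must enumerate the finitely many obstructing flattenings in $\Scal_m$ and verify, for each, that the global Bruhat condition $w'\preceq w$ constrains the remaining positions so as to always complete one of the four patterns in $w$.
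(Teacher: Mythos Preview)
Your reduction of (2) to (1) via Proposition~\ref{propGoodPairsInverted} and pattern reversal matches the paper exactly. Your ``if'' direction is also essentially the paper's argument, made explicit: for $[4231]$ the construction $w'=w\circ(a\,d)(b\,c)$ works, and $w'\preceq w$ because the sole positive contribution to $w'[p,q]-w[p,q]$ (from the $(b,c)$-swap when $b\le p<c$ and $w(b)<q\le w(c)$) is always cancelled by the $(a,d)$-swap, since then $a\le p<d$ and $w(d)<q\le w(a)$. One small slip: for the longer patterns the base bad pair is not given by a product of disjoint transpositions (for instance $[13245]^{-1}[42513]$ has cycle type $(2,3)$), so ``product of disjoint transpositions'' should simply read ``the permutation of the pattern positions dictated by the base bad pair''.

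The ``only if'' plan has a genuine gap. The case split on $m=|\Omega|$ does not close as you describe. Your assertion that for $m\ge 7$ the flattening $\bar w$ \emph{alone} contains one of the four patterns fails: nothing in the local data $(\bar w,\bar w')$ excludes $\bar w=\mathrm{id}_m$ (take $\bar w'$ any $m$-cycle), and $\mathrm{id}_m$ contains none of the four patterns. What is missing is precisely the global constraint $w'\preceq w$, which forces compensating positions \emph{outside} $\Omega$ for every $m$, not just for $m\le 6$; the pattern in $w$ may well need those outside positions regardless of how large $\Omega$ is. For the same reason your $m=2$ case is not guaranteed to yield $[4231]$ specifically: the longer patterns can occur there as well. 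The paper sidesteps all of this by never splitting on $m$. It works directly on the plot graph of $w$: starting from an extremal box where $w[i,j]_\Omega<w'[i,j]_\Omega$, the global inequality $w'\preceq w$ forces $w[i,j]_\Sigma>w'[i,j]_\Sigma$ with $\Sigma=\{1,\ldots,n\}\setminus\Omega$, which pins down two graph points $A,F$ with values in $\Sigma$; the extremality of the box pins down up to four further graph points $B,C,D,E$ with values in $\Omega$; and a short case analysis on the relative positions of these six points reads off one of the four patterns directly.
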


\begin{proof}
The second statement is equivalent to the first by applying Proposition \ref{propGoodPairsInverted}. Also, if $w$ has one of the four patterns $[4231]$, $[42513]$, $[35142]$ and $[351624]$, then we can easily construct a $w'$ such that $(w',w)$ is a bad pair, using Corollary \ref{corGoodPairsFlattening} and the fact that $([1324],[4231]), ([13245],[42513]), ([12435],[35142]), ([124356],[351624])$ are bad pairs. We thus only have to prove that for any bad pair $(w',w)$, the permutation $w$ has one of the four patterns.

Let $(w',w)$ be such a bad pair. From Corollary \ref{corGoodPairsFlattening}, there exist an orbit $\Omega$ of $ww'^{-1}$ and $(i,j)\in\{1,\ldots,n\}^2$ such that $w[i,j]_\Omega<w'[i,j]_\Omega$; we fix one such $\Omega$ for the rest of the proof, and set $\Sigma=\{1,\ldots,n\}\setminus\Omega$. The strategy will be to find the patterns on the plot graph of $w_1$ using this fact. The different elements of the proof are represented in Figure \ref{figPatternsProof}.

The first step is to divide the graph into a few regions. First consider $i_1$ minimal with the property that there exists some $j$ such that $w[i_1,j]_\Omega<w'[i_1,j]_\Omega$. Then take $j_1$ maximal such that $w[i_1,j_1]_\Omega<w'[i_1,j_1]_\Omega$, and $j_2$ minimal such that $w[i_1,j_2]_\Omega<w'[i_1,j_2]_\Omega$. Finally take $i_2$ maximal such that $w[i_2,j_2]_\Omega<w'[i_2,j_2]_\Omega$.

The second step is to identify up to six points of interest on the graph of $w$. There exists $A=(i_A,j_A)$ such that $i_A\leq i_1$, $j_A\geq j_1$ and $w(i_A)=j_A\in\Sigma$. Indeed, $w[i_1,j_1]=w[i_1,j_1]_\Omega+w[i_1,j_1]_\Sigma$ and similarly with $w'$, hence $w[i_1,j_1]_\Sigma>w'[i_1,j_1]_\Sigma$ and in particular $w[i_1,j_1]_\Sigma\geq 1$. Similarly, there exists $F=(i_F,j_F)$ such that $i_F>i_2$, $j_F<j_2$ and $w(i_F)=j_F\in\Sigma$. Indeed, $w[i_2,j_2]_\Sigma>w'[i_2,j_2]_\Sigma$, hence there are strictly more (resp.\ strictly fewer) points $(i,j)\in\{1,\ldots,i_2\}\times\{1,\ldots,j_2-1\}$ (resp.\ $\in\{i_2+1,\ldots,n\}\times\{1,\ldots,j_2-1\}$) with $j=w(i)$ than with $j=w'(i)$. Also define $B=(i_B,j_B)$, $C=(i_C,j_C)$, $D=(i_D,j_D)$, $E=(i_E,j_E)$ respectively by $i_B=w^{-1}(j_2-1)$ and $j_B=j_2-1$; $i_C=i_1$ and $j_C=w(i_1)$; $i_D=i_2+1$ and $j_D=w(i_2+1)$; $i_E=w^{-1}(j_1)$ and $j_E=j_1$. Note that we indeed have $A,B,C,D,E,F\in\enstq{(i,j)}{w(i)=j}$.

By definition of $i_1,j_1,i_2,j_2$ have $w(i_1),j_1,w(i_2+1),j_2-1\in\Omega$: for instance, $w(i_1)\notin\Omega$ implies $w[i_1-1,j_1]_\Omega=w[i_1,j_1]_\Omega$ and $w'[i_1-1,j_1]_\Omega=w'[i_1,j_1]_\Omega$, which contradicts the minimality of $i_1$. We then deduce $i_B\leq i_1$, $j_C<j_2$, $j_D\geq j_2$, $i_E>i_1$ by definition of $j_2$, $j_1$, $i_2$ and $j_1$ respectively: for instance, $i_B>i_1$ implies $w[i_1,j_2-1]_\Omega=w[i_1,j_2]_\Omega<w'[i_1,j_2]_\Omega\leq w'[i_1,j_2-1]_\Omega$ which contradicts the minimality of $j_2$. Also, $i_A<i_1$ and $j_A>j_1$ since we already know $i_A\leq i_1$ and $j_A\geq j_1$ and $A\in w^{-1}(\Sigma)\times\Sigma$ while $(i,j)\in w^{-1}(\Omega)\times\Omega$. Similarly, $i_F>i_2+1$ and $j_F>j_2+1$.

Note that the sets of points $\{A\}$, $\{B,C\}$, $\{D,E\}$, $\{F\}$ lie in distinct quadrants delimited by $i_1$ and $j_2$. Hence $B=C$ and $D=E$ are the only equalities that may happen between those six points. Also note that since those points are all on the graph of $w$, unless two of those are equal, both of their coordinates are distinct.

\begin{figure}[h]
	\centering
	\begin{tikzpicture}[scale=0.3]
		\draw (9,0) node {$i_1$} ;
		\draw (14,0) node {$i_2$} ;
		\draw (0,10) node {$j_2$} ;
		\draw (0,16) node {$j_1$} ;
		\clip (0.6,0.6) rectangle (24.4,24.4) ;
		\draw[very thin, gray] (0,0) grid (25,25) ;
		\filldraw[gray, opacity=0.3] (8.6,0) rectangle (9.4,25) ;
		\filldraw[gray, opacity=0.3] (14.6,0) rectangle (15.4,25) ;
		\filldraw[gray, opacity=0.3] (0,8.6) rectangle (25,9.4) ;
		\filldraw[gray, opacity=0.3] (0,15.6) rectangle (25,16.4) ;
		\draw[thin] (9,0) -- (9,25) ;
		\draw[thin] (14,0) -- (14,25) ;
		\draw[thin] (0,10) -- (25,10) ;
		\draw[thin] (0,16) -- (25,16) ;
		\filldraw (4,19) circle (0.2) ;
		\draw (4,19) node[above left] {$A$} ;
		\filldraw (2,9) circle (0.2) ;
		\draw (2,9) node[below right] {$B_{iii}$} ;
		\filldraw (7,9) circle (0.2) ;
		\draw (6,9) node[below right] {$B_{ii}$} ;
		\draw[>=latex,<->] (2.2,9) -- (6.8,9) ;
		\filldraw (9,4) circle (0.2) ;
		\draw (9,4) node[below left] {$C$} ;
		\filldraw (15,13) circle (0.2) ;
		\draw (15,13) node[above right] {$D_2$} ;
		\filldraw (15,21) circle (0.2) ;
		\draw (15,21) node[above right] {$D_1$} ;
		\draw[>=latex,<->] (15,13.2) -- (15,20.8) ;
		\filldraw (12,16) circle (0.2) ;
		\draw (12,16) node[above right] {$E_1$} ;
		\filldraw (21,16) circle (0.2) ;
		\draw (21,16) node[above right] {$E_2$} ;
		\draw[>=latex,<->] (12.2,16) -- (20.8,16) ;
		\filldraw (18,2) circle (0.2) ;
		\draw (18,2) node[above right] {$F_i$} ;
		\filldraw (18,6) circle (0.2) ;
		\draw (18,6) node[above right] {$F_{iii}$} ;
		\draw[>=latex,<->] (18,2.2) -- (18,5.8) ;
	\end{tikzpicture}
	\caption{The black dots are on the graph $j=w(i)$; different possibilities for the position of a given point are connected by an arrow. Gray bands are either of $i$-coordinate in $w^{-1}(\Omega)$ or of $j$-coordinate in $\Omega$.}
	\label{figPatternsProof}
\end{figure}
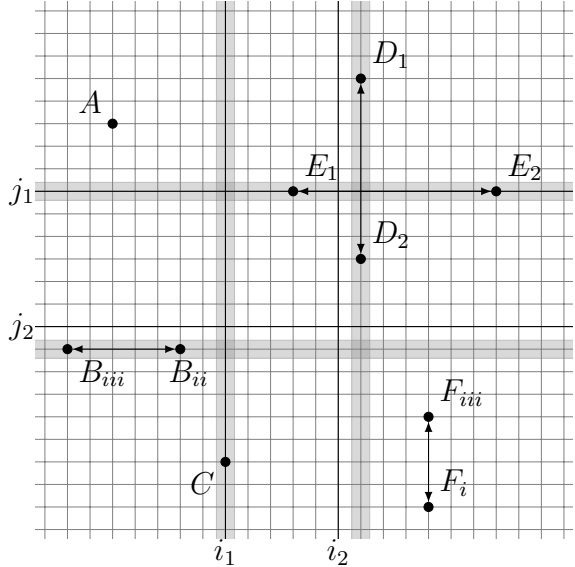

The third and final step is to distinguish cases according to the relative positions of the points.
\begin{description}[font=\normalfont\itshape]
\item[Case 1: $j_D>j_A$ and $i_E>i_F$]
Necessarily $D\neq E$ since $j_D>j_A=j_E$.

\item[Case 1.i: $j_F<j_C$]
$ACDFE$ gives the pattern $[42513]$; that is to say, $w$ flattens to $[42513]$ through $\{i_A,i_C,i_D,i_F,i_E\}$.

\item[Case 1.ii: $i_B>i_A$]
$ABDFE$ gives the pattern $[42513]$.

\item[Case 1.iii: $j_F>j_C$ and $i_B<i_A$]
In this case $B\neq C$ because $i_B<i_A<i_1=i_C$, and $BACDFE$ gives the pattern $[351624]$.

\item[Case 2: $j_D<j_A$ or $i_E<i_F$] Then, $D$ in the case $j_D<j_A$, or $E$ in the case $i_E<i_F$, have the same relative position with respect to the other four points: that is to say, between $C$ and $F$ along the $i$-axis and between $B$ and $A$ along the $j$-axis. We can therefore just assume that $j_D<j_A$.

\item[Case 2.i: $j_F<j_C$]
$ACDF$ gives the pattern $[4231]$.

\item[Case 2.ii: $i_B>i_A$]
$ABDF$ gives the pattern $[4231]$.

\item[Case 2.iii: $j_F>j_C$ and $i_B<i_A$]
As in case 1.iii, we have $B\neq C$; and here $BACDF$ gives the pattern $[35142]$. \qedhere

\end{description}
\end{proof}

\begin{cor} \label{corBadPairSingularSchubert}
Let $w,w'\in\Scal_n$ such that $(w',w)$ is a bad pair. Then the Schubert variety $\overline{BwB/B}$ is singular (where $B$ is the subgroup of upper triangular matrices and each element of $W=\Scal_n$ can be lifted to $G$).
\end{cor}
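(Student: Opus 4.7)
The plan is to reduce immediately to the classical theorem of Lakshmibai--Sandhya, which states that for $w \in \Scal_n$, the Schubert variety $\overline{BwB/B}$ in $\mathrm{GL}_n/B$ is smooth if and only if $w$ avoids both patterns $[3412]$ and $[4231]$. Given this input, the corollary becomes a purely combinatorial statement about pattern containment.

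By Theorem \ref{thmBadPairPatterns}(1), the existence of $w' \in \Scal_n$ making $(w',w)$ a bad pair forces $w$ to contain at least one of the four patterns $[4231]$, $[42513]$, $[35142]$, $[351624]$. The key observation is that pattern containment is transitive: if $w$ contains $p$ and $p$ contains $q$, then $w$ contains $q$ (this is immediate from Definition \ref{dfnPattern}, since the composition of flattenings indexed by nested subsets is still a flattening). So it suffices to check by inspection that each of the four patterns above itself contains $[3412]$ or $[4231]$:
\begin{itemize}
\item $[4231]$ trivially contains $[4231]$.
\item $[42513]$: at positions $1,3,4,5$, the values $4,5,1,3$ are in the relative order $[3412]$.
\item $[35142]$: at positions $1,2,3,5$, the values $3,5,1,2$ are in the relative order $[3412]$.
\item $[351624]$: at positions $1,2,3,5$, the values $3,5,1,2$ are in the relative order $[3412]$.
\end{itemize}
In each case, $w$ inherits the pattern $[3412]$ or $[4231]$ by composition of flattenings, so by Lakshmibai--Sandhya the Schubert variety $\overline{BwB/B}$ is singular.

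There is essentially no obstacle here beyond the elementary pattern-matching verification above; the whole point of Theorem \ref{thmBadPairPatterns} is that it casts badness as a pattern-avoidance condition, at which point linking with singularity of Schubert varieties is a matter of comparing two finite lists of forbidden patterns. One might alternatively observe that the patterns $[4231]$, $[42513]$, $[35142]$, $[351624]$ are in fact the four \emph{minimal} patterns (in the pattern-containment order) among the bad-generating patterns, so that the implication ``contains one of them $\Rightarrow$ contains $[3412]$ or $[4231]$'' could equivalently be phrased as a comparison of the lower sets in the pattern poset generated respectively by $\{[4231],[42513],[35142],[351624]\}$ and $\{[3412],[4231]\}$; but for the corollary the explicit exhibits given above are enough. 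Note that the statement says nothing about singularity at the specific point $w'B$, and indeed our argument does not locate the singular locus --- it only asserts its non-emptiness via Lakshmibai--Sandhya.
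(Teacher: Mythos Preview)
Your proof is correct and follows essentially the same approach as the paper: invoke Theorem \ref{thmBadPairPatterns}(1), observe that each of $[42513]$, $[35142]$, $[351624]$ contains the pattern $[3412]$ (while $[4231]$ is already one of the Lakshmibai--Sandhya patterns), and conclude via the Lakshmibai--Sandhya criterion. The paper's proof is a one-line version of exactly this; your explicit exhibits of the $[3412]$ subpatterns are correct and simply make the verification more transparent.
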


\begin{proof}
Since the patterns $[42513]$, $[35142]$ and $[351624]$ all contain the pattern $[3412]$, this is a direct consequence of Theorem \ref{thmBadPairPatterns} and of \cite[Thm.\ 1]{lakshmibaiSandhya}.
\end{proof}

\begin{rem}
Given a bad pair $(w',w)$ in $\Scal_n$, the point $w'B/B$ has no reason to be in the singular locus of $\overline{BwB/B}$. For instance, when $n=4$, the pair $([1324],[4231])$ is bad but the point $[1324]B/B$ is smooth in $\overline{B[4231]B/B}$; in fact the singular locus of $\overline{B[4231]B/B}$ is $\overline{B[2143]B/B}$ (see \cite[\S3]{lakshmibaiSandhya}).
\end{rem}

\section{A conjecture on a Schubert-like decomposition} \label{secConjs}

We recall Conjecture 2.3.7 of \cite{bhs3}, whose study is the main topic of this paper.

\subsection{A scheme related to the Springer resolution} \label{ssecSpringerScheme}

We recall the construction of a reduced scheme $X$ associated to the split reductive group $G$ and equipped with a stratification similar to the Schubert decomposition.

First recall the Schubert decomposition on the flag variety $G/B$. Let $w\in W$ and $\dot{w}\in N_G(T)(k)$ be any closed point lifting $w$. The left-action of $B$ on $G$ by multiplication induces an action on $G/B$, write $BwB/B$ for the orbit of $\dot{w}B/B$ under this action. Then $BwB/B$ is a smooth locally closed subvariety of $G/B$ called a \emph{Schubert cell}, and we have the decomposition
\begin{equation} \label{eqStratification}
	G/B=\coprod_{w\in W}BwB/B
\end{equation}
(see \cite[Thm.\ 21.73]{milne}). The cells $BwB/B$ are stable under the left action of $B$, thus their Zariski closure $\overline{BwB/B}$ too, and this action is transitive on $BwB/B$. Thus, a closed cell $\overline{BwB/B}$ intersects a cell $Bw'B/B$ if and only if $Bw'B/B\subseteq\overline{BwB/B}$, which happens if and only if $w'\preceq w$ for the Bruhat order (see \cite[Thm.\ 2.11]{schubertBGG}). Therefore \eqref{eqStratification} gives a good stratification of $G/B$, and we have a more general decomposition into locally closed strata for all $w\in W$
\[
	\overline{BwB/B}=\coprod_{w'\preceq w}Bw'B/B \,.
\]
One can get a similar stratification on $G/B\times G/B$: consider the isomorphism
\begin{equation} \label{equivalentCells}
	f \colon
	\begin{array}{ccc}
		G\times^BG/B &\isoto &G/B\times G/B \\
		(g_1,g_2B) &\longmapsto &(g_1B,g_1g_2B)
	\end{array}
\end{equation}
where $G\times^BG/B$ is the quotient of $G\times G/B$ by the right $B$-action defined by $(g_1,g_2B)b\coloneqq (g_1b,b^{-1}g_2B)$ (see \cite[\S3.7]{slodowy}). For $w\in W$, the right $B$-action leaves $G\times(BwB/B)$ invariant, so we can write
\[
	U_w \coloneqq f(G\times^B(BwB/B)) \subseteq G/B\times G/B \,.
\]
Equivalently, $U_w$ is the orbit of $(B/B,\dot{w}B/B)$ under the left $G$-action on $G/B\times G/B$ given by diagonal left multiplication. Then $G/B\times G/B=\coprod_{w\in W}U_w$ and for any $w\in W$, the decomposition into locally closed strata
\[
	\overline{U_w}=\coprod_{w'\preceq w}U_{w'}
\]
is a consequence of the analogous one in $G/B$.

As in \cite{bhs3}, let $X$ be the following algebraic variety:
\[
	X\coloneqq\enstq{(g_1B,g_2B,\psi)\in G/B\times G/B\times\gfrak}{\Ad(g_1^{-1})\psi\in\bfrak\,,\,\Ad(g_2^{-1})\psi\in\bfrak} \,.
\]
Let us write $\pi$ for the composition $X\inj G/B\times G/B\times\bfrak\surj G/B\times G/B$. For $w\in W$, define
\[
	V_w  \coloneqq \pi^{-1}(U_w) \subset X \,, \quad X_w \coloneqq \overline{V_w} \,.
\]
Note that $V_w$ is open in $X_w$ but not in $X$, since it is only locally closed. We also define two morphisms
\[
	\appl{\kappa_1}{X}{\tfrak}{(g_1B,g_2B,\psi)}{\overline{\Ad(g_1^{-1})\psi}} \,, \quad \appl{\kappa_2}{X}{\tfrak}{(g_1B,g_2B,\psi)}{\overline{\Ad(g_2^{-1})\psi}}
\]
where $\overline{\Ad(g_1^{-1})\psi}$ (resp.\ $\overline{\Ad(g_2^{-1})\psi}$) denotes the image of $\Ad(g_1^{-1})\psi$ (resp.\ $\Ad(g_2^{-1})\psi$) under the projection $\bfrak\surj\bfrak/\ufrak=\tfrak$.

We have the left action of $G$ on $G/B\times G/B$ by diagonal left multiplication, as well as the action of $G$ on $\gfrak$ by adjunction. This gives a diagonal action on $G/B\times G/B\times\gfrak$ by $g\cdot(g_1B,g_2B,\psi)=(gg_1B,gg_2B,\Ad(g)\psi)$. For this action, $X$ is stable and $\pi\colon X\to G/B\times G/B$ is $G$-equivariant. It therefore induces an action on $V_w$ for any given $w\in W$.

The cell decompositions $\overline{U_w}=\coprod_{w'\preceq w}U_{w'}$ for $w\in W$ imply that $X_w\inter V_{w'}=\emptyset$ unless $w'\preceq w$, and that (set theoretically)
\[
	X_w=\coprod_{w'\preceq w}X_w\inter V_{w'} \,.
\]

\subsection{Statement of the conjecture} \label{ssecMainResult}

\begin{conj}[Conjecture 2.3.7 of \cite{bhs3}] \label{conjLocalModel}
Let $w,w'\in W$ such that $w'\preceq w$. Then
\[
	X_w\inter V_{w'} = V_{w'}\inter{\kappa_1}^{-1}(\tfrak^{ww'^{-1}})
\]
as reduced closed subschemes of $X$.
\end{conj}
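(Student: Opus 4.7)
The inclusion $\subseteq$ in the conjecture is already known from \cite[Lemma 2.3.4]{bhs3}, so the task is to prove
\[
V_{w'}\inter\kappa_1^{-1}(\tfrak^{ww'^{-1}})\subseteq X_w.
\]
I would pass to the equivalent ``$\widetilde{X}$-model'' of the introduction, in which the desired inclusion reads $\widetilde{V}_{w'}\inter q^{-1}(\tfrak^{ww'^{-1}}+\ufrak)\subseteq\widetilde{X}_w$. Since $\widetilde{X}_w$ is closed in $\widetilde{X}$, a density/constructibility argument (of the type of Proposition \ref{propGenericTorus} and Lemma \ref{lemDenseIsEnough}) reduces matters to proving that for every sufficiently generic closed point $t\in\tfrak^{ww'^{-1}}$ the fibre $\widetilde{V}_{w'}\inter q^{-1}(t+\ufrak)$ is contained in $\widetilde{X}_w$; one may further assume $k$ algebraically closed.

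The next move is to extract a Levi structure from the hypothesis via Proposition \ref{propIntroGoodPairsMinipermutations}: write $\Phi_{ww'^{-1}}=u^J(\Phi_J)$ with $J\subseteq I$, $u^J\in W^J$, and decompose $w=u^Jw_J(v^J)^{-1}$, $w'=u^Jw'_J(v^J)^{-1}$ with $w'_J\preceq w_J$ in $W_J$. Let $L$ be the standard Levi of root system $\Phi_J$; by Proposition \ref{propIntroCycleDecompositionCentraliser} one has $\Ad(u^J)L=C_G((T^{ww'^{-1}})^\circ)$, which is the structural reason that translation by $t$ behaves well on $L$. The Beilinson--Bernstein localisation argument of \cite[\S2.4]{bhs3} unconditionally yields the conjecture on $L$ for $t=0$, \emph{i.e.}\ $\widetilde{V}_{L,w'_J}\inter q_L^{-1}(\ufrak_L)\subseteq\widetilde{X}_{L,w_J}$; translating by a generic $t\in\tfrak_L=\Ad(u^J)^{-1}\tfrak^{ww'^{-1}}$ then gives
\[
\widetilde{V}_{L,w'_J}\inter q_L^{-1}(t+\ufrak_L)\subseteq\widetilde{X}_{L,w_J}.
\]

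To transport this inclusion from $L$ to $G$, I would combine two geometric ingredients. The first is the morphism $\iota_{t,u^J,v^J}\colon L\times^{L\inter B}(\lfrak\inter\bfrak)\to G\times^B\bfrak$ of Proposition \ref{propIntroLeviToG}, for which I would verify by unwinding the $B$-action that, for generic $t$, it restricts to an isomorphism $\widetilde{V}_{L,w_J}\inter q_L^{-1}(t+\ufrak_L)\isoto\widetilde{V}_w\inter q^{-1}(\Ad(u^J)(t+\ufrak_L))$. The second is the Jordan-type surjection $U\times(\Ad(u^J)(t+\ufrak_L))\surj\Ad(u^J)t+\ufrak$ of Lemma \ref{lemIntroJordan}, which writes every element of $\Ad(u^J)t+\ufrak$ as a $U$-conjugate of a canonical representative living on the Levi slice. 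Composing the two produces a surjection
\[
U\times\bigl(\widetilde{V}_{L,w'_J}\inter q_L^{-1}(t+\ufrak_L)\bigr)\surj\widetilde{V}_{w'}\inter q^{-1}(\Ad(u^J)t+\ufrak)
\]
that fits into a commutative square with $U\times\widetilde{X}_{L,w_J}\to\widetilde{X}_w$; the Levi inclusion of the previous paragraph makes the top row land in $\widetilde{X}_w$, and the surjectivity of the left vertical arrow forces the image of the bottom row into $\widetilde{X}_w$. Translating back to the $(X,\kappa_1)$-model then yields the conjecture.

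The two delicate points are the construction and the isomorphism property of $\iota_{t,u^J,v^J}$ (reconciling the Levi stratification with the right slice of the global one is a genuine compatibility between the Bruhat orders of $W_J$ and $W$) and the $U$-surjectivity in the Jordan decomposition lemma (where genericity of $t$ is essential in order to conjugate any nilpotent perturbation of $\Ad(u^J)t$ back into the Levi slice). Everything else is routine Bruhat combinatorics or closure arguments. Tellingly, the argument needs $w'_J\preceq w_J$ in order to invoke the $t=0$ statement on $L$, and by Proposition \ref{propIntroGoodPairsMinipermutations} this is exactly the condition that $(w',w)$ is a good pair; this strongly suggests that the conjecture should hold in the good-pair regime but may well fail for bad pairs, where no such Levi reduction is available.
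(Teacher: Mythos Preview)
The statement you were given is not a theorem but a \emph{conjecture}, and the paper does not prove it; on the contrary, \S\ref{secEquations} (Theorem \ref{thmConjIsFalse}) exhibits an infinite family of bad pairs $(w',w)$ for which the inclusion $V_{w'}\inter\kappa_1^{-1}(\tfrak^{ww'^{-1}})\subseteq X_w$ fails. So there is no ``paper's own proof'' of this statement to compare against, and any purported proof must break somewhere.

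That said, your proposal is exactly the paper's proof of Theorem \ref{thmConjIsTrue}, which is the conjecture restricted to good pairs. Every step you outline---the reduction to the $\widetilde{X}$-model (Proposition \ref{conjImpliesConj2}), the density reduction to generic $t$ (Lemma \ref{lemDenseIsEnough}), the Levi decomposition via Proposition \ref{propGoodPairsMinipermutations}, the Beilinson--Bernstein input at $t=0$ on $L$ (Theorem \ref{thmInclusionFiber0}), the transport isomorphism $\iota_{t,u^J,v^J}$ (Proposition \ref{propLeviToG}), the Jordan surjection (Lemma \ref{lemJordan}), and the final commutative square---matches the paper's Step~1 through Step~4 in \S\ref{ssecExtensionLevi}. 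You also correctly locate the point where the argument needs an extra hypothesis: the inclusion $\widetilde{V}_{L,w'_J}\inter q_L^{-1}(\ufrak_L)\subseteq\widetilde{X}_{L,w_J}$ from the Beilinson--Bernstein step requires $w'_J\preceq w_J$, which by Proposition \ref{propGoodPairsMinipermutations} is precisely the good-pair condition. Your closing remark that the conjecture ``may well fail for bad pairs'' is vindicated by the paper's counterexamples.

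In short: there is no gap in your reasoning, only in the conjecture itself. What you have written is a correct sketch of the good-pair theorem, not of the (false) conjecture.
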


A consequence of this conjecture is the following proposition, that is mentioned in \cite[Remark 2.5.4]{bhs3} at least when $w=w_0$ is the element of maximal length in $W$. We provide here a proof for the general case.

\begin{prop} \label{conjImpliesConj3}
Let $w,w'\in W$ and $x=(\pi(x),0)\in G/B\times G/B\times\bfrak$ be a closed point of $X_w\inter V_{w'}$. Let $T_{X_w,x}$ (resp.\ $T_{\overline{U_w},\pi(x)}$) be the tangent space of $X_w$ at $x$ (resp.\ of $\overline{U_w}$ at $\pi(x)$) which is a vector space over the residue field $k(x)$ of $X_w$ at $x$ (resp.\ the residue field $k(\pi(x))$ of $\overline{U_w}$ at $\pi(x)$). If Conjecture \ref{conjLocalModel} is true for $(w',w)$, then 
\begin{equation} \label{prop2.5.3}
	\dim_{k(x)}T_{X_w,x} = \dim_{k(\pi(x))}T_{\overline{U_w},\pi(x)} + \dim_{k(x)}\tfrak^{ww'^{-1}}(k(x)) + \length(w'w_0) \,.
\end{equation}
\end{prop}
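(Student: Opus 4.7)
The plan is to produce a $G$-equivariant section of $\pi\colon X_w\to\overline{U_w}$, use it to split the resulting tangent-space short exact sequence at $x$, and invoke Conjecture \ref{conjLocalModel} to compute the fiber.

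\smallskip
\emph{Step 1 (Section).} The morphism $s_0\colon G/B\times G/B\to X$, $(g_1B,g_2B)\mapsto(g_1B,g_2B,0)$, is a $G$-equivariant closed immersion and a section of $\pi\colon X\to G/B\times G/B$ (it is well-defined since $\Ad(g_i^{-1})\cdot 0=0\in\bfrak$). Since $s_0(U_w)\subseteq\pi^{-1}(U_w)=V_w\subseteq X_w$ and $X_w$ is closed in $X$, taking closures yields $s_0(\overline{U_w})\subseteq X_w$, so $s_0$ restricts to a section $s\colon\overline{U_w}\to X_w$ of $\pi|_{X_w}$. In particular $s(\pi(x))=x$ and $k(x)=k(\pi(x))$.

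\smallskip
\emph{Step 2 (Tangent sequence).} Differentiating $\pi\circ s=\mathrm{id}_{\overline{U_w}}$ at $\pi(x)$ shows that $d\pi_x\colon T_{X_w,x}\to T_{\overline{U_w},\pi(x)}$ is surjective (split by $ds_{\pi(x)}$). Writing $F_x$ for the scheme-theoretic fiber of $\pi|_{X_w}$ at $\pi(x)$, one obtains a split short exact sequence of $k(x)$-vector spaces
\[
    0\to T_{F_x,x}\to T_{X_w,x}\xrightarrow{d\pi_x}T_{\overline{U_w},\pi(x)}\to 0,
\]
so $\dim_{k(x)}T_{X_w,x}=\dim_{k(x)}T_{F_x,x}+\dim_{k(\pi(x))}T_{\overline{U_w},\pi(x)}$. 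It thus suffices to show that $\dim_{k(x)}T_{F_x,x}=\dim_{k(x)}\tfrak^{ww'^{-1}}(k(x))+\length(w'w_0)$.

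\smallskip
\emph{Step 3 (Computing $F_x$).} Since $\pi(x)\in U_{w'}$ and $V_{w'}=\pi^{-1}(U_{w'})$, the fiber $F_x$ factors through $X_w\inter V_{w'}$, which by Conjecture \ref{conjLocalModel} equals $V_{w'}\inter\kappa_1^{-1}(\tfrak^{ww'^{-1}})$. The map $\pi\colon V_{w'}\to U_{w'}$ is a $G$-equivariant vector bundle with fiber $\Ad(g_1)\bfrak\inter\Ad(g_2)\bfrak$ over $(g_1B,g_2B)=\pi(x)$, and the restriction of $\kappa_1$ to this fiber is $k(x)$-linear. Hence $F_x$ is the $k(x)$-linear subspace cut out by the condition $\kappa_1\in\tfrak^{ww'^{-1}}$; translating by the $G$-action to $(g_1,g_2)=(1,\dot w')$, the ambient fiber identifies with $\bfrak\inter\Ad(w')\bfrak=\tfrak\oplus(\ufrak\inter\Ad(w')\ufrak)$ and $F_x$ with $\tfrak^{ww'^{-1}}(k(x))\oplus(\ufrak\inter\Ad(w')\ufrak)(k(x))$. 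Since $F_x$ is then an affine space, $\dim_{k(x)}T_{F_x,x}=\dim_{k(x)}F_x=\dim_{k(x)}\tfrak^{ww'^{-1}}(k(x))+\#(\Phi_+\inter w'(\Phi_+))$, and $\#(\Phi_+\inter w'(\Phi_+))=\#\Phi_+-\length(w')=\length(w'w_0)$.

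\smallskip
\emph{Main obstacle.} The crux is Step 3: Conjecture \ref{conjLocalModel} is essential precisely because it gives a \emph{scheme-theoretic} description of $X_w\inter V_{w'}$ as a sub-vector-bundle of $V_{w'}$, which ensures that $F_x$ is a reduced affine space of the expected dimension rather than a possibly non-reduced subscheme with a strictly larger tangent space. Without the conjecture, only the inclusion $F_x\subseteq\tfrak^{ww'^{-1}}(k(x))\oplus(\ufrak\inter\Ad(w')\ufrak)(k(x))$ is available (cf.\ \cite[Lem.\ 2.3.4]{bhs3}), and the same argument then yields merely the inequality ``$\leq$'' in \eqref{prop2.5.3}, recovering the bound of \cite[Prop.\ 4.1.5(ii)]{bhs3}.
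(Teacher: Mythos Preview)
Your approach is essentially the paper's, reframed: both use the zero section $\overline{U_w}\hookrightarrow X_w$ for the base direction and the conjecture for the fiber direction. The paper phrases this via the closed immersion $\iota\colon X_w\hookrightarrow\overline{U_w}\times\gfrak$ and shows $d_x\iota$ is an isomorphism onto $T_{\overline{U_w},\pi(x)}\oplus\tfrak^{ww'^{-1}}\oplus(\ufrak\cap\Ad(w')\ufrak)$; you phrase it via the fiber $F_x$ of $\pi|_{X_w}$. Since $T_{F_x,x}=\ker(d\pi_x)$ is exactly the $\gfrak$-component of the image of $d_x\iota$, the two viewpoints are equivalent.

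There is, however, a genuine gap in Step~3. In the paper's conventions $X_w\cap V_{w'}$ denotes the \emph{reduced} fiber product, and Conjecture~\ref{conjLocalModel} is an equality of reduced subschemes. So the conjecture identifies $F_x^{\mathrm{red}}$ with the affine space $\tfrak^{ww'^{-1}}\oplus(\ufrak\cap\Ad(w')\ufrak)$; it does not tell you that the scheme-theoretic fiber $F_x$ is reduced, and a priori $\dim T_{F_x,x}$ could be strictly larger. What the conjecture genuinely buys you is the \emph{lower} bound: the affine space sits inside $X_w$ as the smooth scheme $Y=(B,w'B)\times(\tfrak^{ww'^{-1}}\oplus(\ufrak\cap\Ad(w')\ufrak))$ over $\pi(x)$, so its tangent space embeds in $T_{F_x,x}$. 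The matching \emph{upper} bound $T_{F_x,x}\subseteq\tfrak^{ww'^{-1}}\oplus(\ufrak\cap\Ad(w')\ufrak)$ is not a consequence of the conjecture; it requires \cite[Prop.~2.5.3]{bhs3}, which the paper invokes explicitly as the injection~\eqref{injectionVectorSpaces}. Your ``Main obstacle'' paragraph cites \cite[Lem.~2.3.4]{bhs3}, but that lemma is only a reduced-scheme inclusion, not a tangent-space statement, and in any case this input belongs in Step~3 itself rather than in a remark about what happens without the conjecture. Once you insert the upper bound from \cite[Prop.~2.5.3]{bhs3} into Step~3, your argument is complete and coincides with the paper's.
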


\begin{proof}
Since the action of $G$ on $U_{w'}$ is transitive and $\pi$ is $G$-equivariant, we can assume $\pi(x)=(B,w'B)$ (in particular the residue field $k(x)$ is $k$). Consider the composition $X_w\inj X\inj G/B\times G/B\times\gfrak$; it factors through a closed immersion $\iota\coloneqq X_w\inj\overline{U_w}\times\gfrak$. Because $\pi(x)=(B,w'B)$, the proof of \cite[Proposition 2.5.3]{bhs3} shows that the differential $\diff_x\iota\colon T_{X_w,x}\inj T_{\overline{U_w},\pi(x)}\oplus T_{\gfrak,0}=T_{\overline{U_w},\pi(x)}\oplus\gfrak(k)$ factors through an injection of $k$-vector spaces
\begin{equation} \label{injectionVectorSpaces}
	\diff_x\iota \colon T_{X_w,x} \inj T_{\overline{U_w},\pi(x)} \oplus \tfrak^{ww'^{-1}}(k) \oplus (\ufrak(k)\inter\Ad(w')\ufrak(k)) \,.
\end{equation}
Assuming Conjecture \ref{conjLocalModel}, we wish to prove the surjectivity of this map; comparing dimensions would then give the desired equality \eqref{prop2.5.3}.

Consider the closed immersion $G/B\times G/B\inj X$ given by $(g_1B,g_2B)\longmapsto(g_1B,g_2B,0)$. Its restriction to $\overline{U_w}$ factors through a closed immersion $\iota_1\coloneqq\overline{U_w}\inj X_w$. Since $\iota\circ\iota_1$ is the closed immersion $\overline{U_w}\to\overline{U_w}\times\gfrak$ given by $u\to(u,0)$, the composition $\diff_x\iota\circ\diff_{\pi(x)}\iota_1$ is the inclusion $T_{\overline{U_w},\pi(x)}\inj T_{\overline{U_w},\pi(x)}\oplus\gfrak(k)$. Hence, in \eqref{injectionVectorSpaces}, the first summand of the right-hand side is contained in the image of $\diff_x\iota$.

Now, we know from \cite[Proposition 2.2.1]{bhs3} that $\pi\colon V_{w'}\to U_{w'}$ is a geometric vector bundle, and the fiber of $\pi(x)$ in $X$ is
\begin{equation} \label{eqFibreSpringer}
	\pi^{-1}(\pi(x)) = (B,w'B) \times (\tfrak\oplus(\ufrak\inter\Ad(w')\ufrak)) \inj U_{w'}\times\gfrak \inj G/B\times G/B\times\gfrak \,.
\end{equation}
Take $Y\coloneqq(B,w'B) \times (\tfrak^{ww'^{-1}}\oplus(\ufrak\inter\Ad(w')\ufrak))$. The closed immersion of reduced schemes $Y\inj X$ factors through $Y\inj V_{w'}$ and $Y\inj\kappa_1^{-1}(\tfrak^{ww'^{-1}})$. This gives a closed immersion
\[
	Y \inj \left(V_{w'}\times_X{\kappa_1}^{-1}(\tfrak^{ww'^{-1}})\right)^\red = V_{w'}\inter{\kappa_1}^{-1}(\tfrak^{ww'^{-1}}) \,.
\]
Conjecture \ref{conjLocalModel} of \cite{bhs3} gives a closed immersion $V_{w'}\inter{\kappa_1}^{-1}(\tfrak^{ww'^{-1}})\inj X_w$. The composition $\iota_2\coloneqq Y\inj X_w$ is such that $\iota\circ\iota_2$ is the composition $Y\inj U_{w'}\times\gfrak\inj\overline{U_w}\times\gfrak$. Therefore, by definition of $Y$, the differential $\diff_x\iota\circ\diff_x\iota_2$ composed with the projection $T_{\overline{U_w}\times\gfrak,x}\surj T_{\gfrak,0}$ has image $\tfrak^{ww'^{-1}}\oplus(\ufrak\inter\Ad(w')\ufrak)$. This gives the second and third summands of \eqref{injectionVectorSpaces}.
\end{proof}

We now formulate a conjecture equivalent to Conjecture \ref{conjLocalModel} using a slightly simpler scheme than $X$. Recall from \cite[\S4.7]{slodowy} or \cite[\S VI.8]{kiehlWeissauer} \emph{Grothendieck's simultaneous resolution of singularities}
\begin{equation}
	\appl{q}{G\times^B\bfrak}{\gfrak}{(g,\psi)}{\Ad(g)\psi}
\end{equation}
where $G\times^B\bfrak$ is the quotient of $G\times\bfrak$ by the right $B$-action given by $(g,\psi)b\coloneqq(gb,\Ad(b^{-1})\psi)$. There is also a map $\widetilde\pi\colon G\times^B\bfrak\to G/B$ given by $\widetilde\pi(g,\psi)=gB$.

Define, for $w\in W$:
\[
	\widetilde{X}\coloneqq q^{-1}(\bfrak) \,,\quad \widetilde{V}_w\coloneqq \widetilde{X}\inter{\widetilde\pi}^{-1}(BwB/B) \,,\quad \widetilde{X}_w\coloneqq\overline{\widetilde{V}_w} \,.
\]
As for $X$, we have the decomposition $\widetilde{X}_w=\coprod_{w'\preceq w}\widetilde{X}_w\inter\widetilde{V}_{w'}$ for any $w\in W$, and $\widetilde{X}_w\inter\widetilde{V}_{w'}\neq\emptyset$ implies that $w'\preceq w$.

\begin{conj}[Breuil-Hellmann-Schraen] \label{conjLocalModelSimple}
Let $w,w'\in W$. If $w'\preceq w$, then the locally closed immersion $\widetilde{V}_{w'}\inter q^{-1}(\tfrak^{ww'^{-1}}+\ufrak)\inj\widetilde{X}$ factors into a locally closed immersion
\[
	\widetilde{V}_{w'}\inter q^{-1}(\tfrak^{ww'^{-1}}+\ufrak) \inj \widetilde{X}_w \inj\widetilde{X} \,.
\]
\end{conj}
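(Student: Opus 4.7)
My plan is to combine the good pair structure from Proposition \ref{propGoodPairsMinipermutations} with the $t = 0$ case of the conjecture — which holds for \emph{all} pairs $w' \preceq w$ by the Beilinson-Bernstein localisation argument of \cite[\S2.4]{bhs3} — by bootstrapping from a standard Levi subgroup back to $G$. First I would make preliminary reductions: a density argument on the affine bundle $\tfrak^{ww'^{-1}} + \ufrak$ shows that it suffices to verify the inclusion $\widetilde{V}_{w'} \inter q^{-1}(t + \ufrak) \subseteq \widetilde{X}_w$ for a dense set of closed points $t \in \tfrak^{ww'^{-1}}$, and base change allows me to assume $k$ algebraically closed. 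I do not expect this method to treat bad pairs (consistent with the counter-examples promised in \S\ref{secEquations}), so I assume from now on that $(w', w)$ is a good pair.

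By Proposition \ref{propGoodPairsMinipermutations}, write $w = u^J w_J (v^J)^{-1}$ and $w' = u^J w'_J (v^J)^{-1}$ with $u^J, v^J \in W^J$, $w_J, w'_J \in W_J$, and $w'_J \preceq w_J$ in $W_J$. Let $L$ denote the standard Levi subgroup attached to $J$; by Proposition \ref{propCycleDecompositionCentraliser}, $L$ may be identified with the centraliser of $((u^J)^{-1} T^{ww'^{-1}} u^J)^\circ$, its root system is $\Phi_J$, and $\tfrak_L = \Ad((u^J)^{-1}) \tfrak^{ww'^{-1}}$. Applying the known $t = 0$ case of the conjecture to $L$ (equipped with Borel $B_L \coloneqq B \inter L$ and unipotent radical $U_L \coloneqq U \inter L$), using $w'_J \preceq w_J$ in $W(L,T) = W_J$, yields $\widetilde{V}_{L, w'_J} \inter q_L^{-1}(\ufrak_L) \subseteq \widetilde{X}_{L, w_J}$, and the same inclusion after translating the fiber by any $t \in \tfrak_L$.

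Next I would transport the Levi inclusion back to $G$ through the morphism $\iota_{t, u^J, v^J}$ of Proposition \ref{propIntroLeviToG}, defined on representatives by $(l, \psi_L) \longmapsto (u^J l (v^J)^{-1}, \Ad(v^J) \psi_L)$. The key claim to establish is that, for generic $t \in \tfrak_L$, $\iota_{t, u^J, v^J}$ restricts to an isomorphism
\[
\widetilde{V}_{L, w_J} \inter q_L^{-1}(t + \ufrak_L) \isoto \widetilde{V}_w \inter q^{-1}(\Ad(u^J)(t + \ufrak_L)),
\]
and similarly with $w_J$ replaced by $w'_J$. In parallel, standard Jordan decomposition in $\gfrak$ should provide a surjection $f \colon U \times \Ad(u^J)(t + \ufrak_L) \surj \Ad(u^J) t + \ufrak$, $(u, x) \longmapsto \Ad(u) x$ (Lemma \ref{lemIntroJordan}), relying on the genericity of $t$ to ensure $C_\gfrak(\Ad(u^J) t) = \Ad(u^J) \lfrak$. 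Composing: the Levi inclusion, pulled up through $\mathrm{id}_U \times \iota_{t, u^J, v^J}$, sits above $U \times \widetilde{X}_{L, w_J} \to \widetilde{X}_w$, while its left-hand side maps surjectively onto $\widetilde{V}_{w'} \inter q^{-1}(\Ad(u^J) t + \ufrak)$ via $f$. Surjectivity of the latter then forces $\widetilde{V}_{w'} \inter q^{-1}(\Ad(u^J) t + \ufrak) \subseteq \widetilde{X}_w$, which, as $\Ad(u^J) t$ ranges over a dense subset of $\tfrak^{ww'^{-1}}$, yields Conjecture \ref{conjLocalModelSimple}.

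The principal technical obstacle I anticipate is verifying the isomorphism claim for $\iota_{t, u^J, v^J}$: I must match Schubert cell indices across the parametrisation $w = u^J w_J (v^J)^{-1}$, check that $\iota$ descends cleanly from the $B_L$-quotient to the $B$-quotient, and confirm it preserves the fibers of $q$ versus $q_L$ after shifting by $t$ — all of which rely on the genericity of $t$. The construction of $f$ and its surjectivity demand a careful Jordan-decomposition statement in $\gfrak$ for elements with prescribed generic semisimple part, which I expect to isolate into a separate appendix. Everything else — density reductions, functorial behaviour of the $\widetilde{X}_\bullet$ construction under change of reductive group — should be routine once these two ingredients are in place.
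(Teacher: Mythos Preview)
Note first that the statement you are asked to prove is a \emph{conjecture}: the paper does not prove it in general, and in fact Theorem \ref{thmConjIsFalse} exhibits bad pairs for which it fails. You correctly anticipate this by restricting to good pairs; what you are really outlining is a proof of Theorem \ref{thmConjIsTrue}, not of the conjecture itself.

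For the good pair case, your plan is essentially identical to the paper's four-step proof in \S\ref{secProofGoodPairs}: the density and base-change reductions (Lemma \ref{lemDenseIsEnough} and the discussion opening \S\ref{ssecExtensionLevi}), the $t=0$ input from Beilinson--Bernstein/Ginsburg applied to the Levi $L$ (Theorem \ref{thmInclusionFiber0}), the transport isomorphism $\iota_{t,u^J,v^J}$ (Proposition \ref{propLeviToG}), the Jordan-decomposition surjection $f$ (Lemma \ref{lemJordan}, with the supporting material in Appendix \ref{appendixJordan}), and the final commutative-diagram assembly. One small notational slip: what you call $\tfrak_L$ should be $\tfrak_J$, the Lie algebra of the central torus $T_J$ of $L$, not the full Cartan of $L$ (which is just $\tfrak$).
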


\begin{prop} \label{conjImpliesConj2}
Conjecture \ref{conjLocalModel} and Conjecture \ref{conjLocalModelSimple} are equivalent.
\end{prop}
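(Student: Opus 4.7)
\emph{Proof plan.} The approach is to construct a $G$-equivariant isomorphism $\alpha\colon G\times^B\widetilde X\isoto X$ and check that it carries the constituents of Conjecture \ref{conjLocalModel} to those of Conjecture \ref{conjLocalModelSimple}; the remaining gap between the one-sided statement of the latter and the equality in the former is then filled in by the ``easy'' companion inclusion quoted in the introduction.

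First I would construct $\alpha$. The first projection $p_1\colon X\to G/B$, $(g_1B,g_2B,\phi)\mapsto g_1B$, is $G$-equivariant for the diagonal action, and its fibre over $B/B$ is $\{(B,g_2B,\phi)\mid\phi\in\bfrak,\ \Ad(g_2^{-1})\phi\in\bfrak\}$. Identifying $G\times^B\bfrak$ with its image $\{(gB,\phi)\in G/B\times\bfrak\mid\Ad(g^{-1})\phi\in\bfrak\}$ under the closed immersion $[g,\psi]\mapsto(gB,\Ad(g)\psi)$, this fibre is precisely $\widetilde X$. Since $G\to G/B$ is a Zariski-locally trivial principal $B$-bundle, $G$-equivariance and descent produce a canonical isomorphism
\[
\alpha\colon G\times^B\widetilde X\isoto X,\qquad[g,(hB,\phi)]\longmapsto(gB,ghB,\Ad(g)\phi).
\]

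Next I would transport the relevant subvarieties across $\alpha$. Because $(gB,ghB)\in U_w$ iff $hB\in BwB/B$, one gets $\alpha^{-1}(V_w)=G\times^B\widetilde V_w$; since $G\times^B(-)$ commutes with Zariski closure on $B$-stable locally closed subvarieties (by the local triviality of $G\to G/B$), this also gives $\alpha^{-1}(X_w)=G\times^B\widetilde X_w$. A direct computation yields $\kappa_1\circ\alpha([g,(hB,\phi)])=\overline\phi$, the image of $\phi$ under $\bfrak\twoheadrightarrow\tfrak$; identifying $\phi$ on $\widetilde X$ with $q(h,\Ad(h^{-1})\phi)$, one deduces
\[
\alpha^{-1}\bigl(\kappa_1^{-1}(\tfrak^{ww'^{-1}})\bigr)=G\times^Bq^{-1}(\tfrak^{ww'^{-1}}+\ufrak).
\]

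Finally, since all subvarieties in sight are $B$-stable in $\widetilde X$ and $G\times^B(-)$ detects equality among such subvarieties, Conjecture \ref{conjLocalModel} is equivalent to
\[
\widetilde X_w\cap\widetilde V_{w'}=\widetilde V_{w'}\cap q^{-1}(\tfrak^{ww'^{-1}}+\ufrak)
\]
inside $\widetilde X$. The inclusion ``$\subseteq$'' is the unconditional \cite[Lemma 2.3.4]{bhs3}; the reverse inclusion, after intersecting with $\widetilde V_{w'}$, is exactly Conjecture \ref{conjLocalModelSimple}. This proves the equivalence. The main part requiring care is the construction and properties of $\alpha$—in particular the fact that $G\times^B(-)$ commutes with Zariski closures—both of which ultimately rest on the Zariski-local triviality of the principal bundle $G\to G/B$.
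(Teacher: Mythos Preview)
Your proof is correct and follows essentially the same approach as the paper: the paper constructs the same isomorphism $G\times^B\widetilde X\isoto X$ (written there as $(g_1,(g_2,\psi))\mapsto(g_1B,g_1g_2B,\Ad(g_1g_2)\psi)$, which agrees with your formula after identifying $\widetilde X\subset G\times^B\bfrak$ with its image in $G/B\times\bfrak$), checks that it carries $\widetilde V_w$, $\widetilde X_w$, and $q^{-1}(\tfrak^{ww'^{-1}}+\ufrak)$ to $V_w$, $X_w$, and $\kappa_1^{-1}(\tfrak^{ww'^{-1}})$, and then invokes \cite[Lemma 2.3.4]{bhs3} for the unconditional inclusion. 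Your proposal is slightly more careful in justifying why $G\times^B(-)$ commutes with Zariski closure via local triviality of $G\to G/B$, which the paper leaves implicit.
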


\begin{proof}
We have an isomorphism
\begin{equation} \label{eqIsoLocalModels}
	G\times^B\widetilde{X} \isoto X ,\quad (g_1,(g_2,\psi))\longmapsto(g_1B,g_1g_2B,\Ad(g_1g_2)\psi)
\end{equation}
where $B$ acts on the right on $G\times\widetilde{X}$ by $(g_1,(g_2,\psi))b\coloneqq (g_1b,(b^{-1}g_2,\psi))$. This isomorphism commutes with $\kappa_1$ (on the right) and with the composition of $(g_1,(g_2,\psi))\mapsto q(g_2,\psi)\in \bfrak$ with $\bfrak\to\bfrak/\ufrak=\tfrak$ (on the left), identifies $G\times^B\widetilde{V}_{w'}$ with $V_{w'}$ and identifies $G\times^B\widetilde{X}_w$ with $X_w$. Thus everything translates from $\widetilde{X}$ to $X$. Therefore Conjecture \ref{conjLocalModelSimple} is equivalent to having a locally closed immersion $V_{w'}\inter\kappa_1^{-1}(\tfrak^{ww'^{-1}})\inj X_w\inter V_{w'}$, and \cite[Lemma 2.3.4]{bhs3} shows that it has to be an isomorphism.
\end{proof}

Using the formulation of Conjecture \ref{conjLocalModelSimple}, we prove in \S\ref{secProofGoodPairs} that Conjecture \ref{conjLocalModel} is true in the case of good pairs (in the sense of Definition \ref{dfnGoodPair}), and we exhibit in \S\ref{secEquations} counter-examples showing that it is false in general (for bad pairs).

\subsection{Application to tangent spaces on the trianguline variety} \label{ssecTrianguline}

We recall the setting in which the variety $X$ was originally introduced by Breuil-Hellmann-Schraen \cite{bhs3} who were motivated by the study of the \emph{trianguline variety} from $p$-adic Hodge theory.

Let $p$ be a prime number and $K,L$ be finite extensions of $\Q_p$ such that $\Sigma\coloneqq\Hom_{\Q_p}(K,L)$ has $[K:\Q_p]$ elements. Let $K_0$ be the maximal unramified extension of $\Q_p$ inside $K$. We write $\G_K$ for the absolute Galois group of $K$ and $k_L$ for the residual field of $L$. We fix an integer $n\in\Z_{>0}$ and a continuous representation $\bar{r}\colon\G_K\to\GL_n(k_L)$. We set $G$ to be the split reductive group $G\coloneqq\left(\Res_{K/\Q_p}G'_K\right)\times_{\Q_p}L\iso\prod_{\tau\in\Sigma}G'_L$ over $L$, $T\coloneqq\prod_{\tau\in\Sigma}T'_L$ and $B\coloneqq\prod_{\tau\in\Sigma}B'_L$, where $G'$ is the split reductive group $\GL_{n,\Q_p}$ over $\Q_p$, $T'$ is the maximal torus in $G'$ of diagonal matrices and $B'$ is the Borel subgroup of $G'$ of upper-triangular matrices. Note that the Weyl group of $G$ is $(\Scal_n)^\Sigma$.

Let $\Rr$ be the framed deformation ring of $\bar{r}$ constructed by Mazur \cite[§1.2]{mazur} and Kisin \cite{kisin09}. Taking its formal spectrum $\Spf(\Rr)$ and then applying Berthelot's rigidification functor \cite[§0.2]{berthelot} gives a rigid analytic space $\Xfrakr\coloneqq\Spf(\Rr)^\rig$ over $L$. Let $\Tcal_L\coloneqq\widehat{K^\times}\times_{\Q_p}L$ be the rigid analytic space over $L$ parametrising the continuous characters of $K^\times$. For any $\kbold=(k_\tau)\in\Z^\Sigma$, we write $z^\kbold\in\Tcal_L(L)$ for the character $z\longmapsto\prod_{\tau\in\Sigma}\tau(z)^{k_\tau}$, and $\abs{z}_K\in\Tcal_L(L)$ for the $p$-adic absolute value normalised by $\abs{p}_K=p^{-[K:\Q_p]}$. A character $\underline{\delta}=(\delta_i)\in\Tcal_L^n$ is called regular if, for all $1\leq i\neq j\leq n$, the character $\delta_i\delta_j^{-1}$ is not of the form $z^{-\kbold}$ or $N_{K/\Q_p}(z)z^{\kbold}\abs{z}_K$ for some $\kbold\in\Z_{\geq0}^\Sigma$, where $N_{K/\Q_p}$ is the norm map. We write $\Tnreg\subset\Tcal_L^n$ for the subset of regular characters, which is Zariski-open.

\begin{dfn} \label{dfnTrianguline}
Let $\Utri$ be the set of points $x=(r,\underline{\delta})\in\Xfrakr\times\Tnreg$ such that the representation $r\colon\G_K\to\GL_n(k(x))$ is trianguline in the sense of Colmez with parameter $\underline{\delta}\colon(K^\times)^n\to k(x)^\times$ (see \emph{e.g.\ }\cite[Definition 3.3.8]{bhs3}). The \emph{trianguline variety} $\Xtri$ is defined as the Zariski-closure of $\Utri$ in $\Xfrakr\times\Tcal_L^n$.
\end{dfn}

The geometry of $\Utri$ is well understood: $\Utri$ is a Zariski-dense and Zariski-open subset of $\Xtri$ which is smooth of dimension $n^2+[K:\Q_p]\frac{n(n+1)}{2}$; see \cite[Thm.\ 2.6]{bhs1}. In particular, $\Xtri$ is also equidimensional of dimension $n^2+[K:\Q_p]\frac{n(n+1)}{2}$.

Now let $x=(r,\underline{\delta})\in\Xtri$ be a crystalline point, \emph{i.e.\ }a point such that $r$ is a crystalline representation in the sense of Fontaine \cite{fontaine94III}. Recall that there is a module $\DdR(r)$ over $K\tens_{\Q_p}L\simeq\prod_{\tau\in\Sigma}L$ canonically associated to $r$, equipped with a (canonical) decreasing filtration $\Fil^\bullet\DdR(r)$; for each $\tau\in\Sigma$, there are exactly $n$ integers $h$ such that $\Fil^{-h}\DdR(r)\tens_{K,\tau}L\neq\Fil^{-h+1}\DdR(r)\tens_{K,\tau}L$ (with multiplicities), called the $\tau$-Hodge-Tate weights of $r$ and denoted $h_{\tau,1}\leq\ldots\leq h_{\tau,n}$. Also recall that there is a $K_0\tens L$-module $\Dcris(r)$ canonically associated to $r$, equipped with a canonical endomorphism $\varphi$ which is semilinear with respect to the Frobenius of $K_0$. We assume that $r$ is generic in the sense of \cite{bhs3}, which means that for each $\tau\in\Sigma$, the $\tau$-Hodge-Tate weights of $r$ are all distinct, and that $\varphi/\varphi'\notin\{1,p^{[K_0:\Q_p]}\}$ for any two eigenvalues $\varphi,\varphi'$ of the linearised Frobenius $\Phi\coloneqq\varphi^{[K_0:\Q_p]}$ on $\Dcris(r)$ (with multiplicities).

From \cite[Lemma 2.1]{bhs2}, we know that $\underline{\delta}$ is determined by the data of (i) an ordering $\underline{\varphi}=(\varphi_1,\ldots,\varphi_n)$ of the eigenvalues of $\Phi$ and (ii) for each $\tau\in\Sigma$, a permutation $w_\tau$ in $\Scal_n$. More precisely, one can write $\underline{\delta}=z^{w(\hbold)}\nr(\underline{\varphi})$, where the character $\nr(\underline{\varphi})=(\nr(\varphi_i))_i$ of $(K^\times)^n$ is defined by $\nr(\varphi_i)|_{\intring_K^\times}=1$ and $\nr(\varphi_i)(\varpi)=\varphi_i$ for a uniformiser $\varpi$ of $K$, and where $w\coloneqq(w_\tau)\in(\Scal_n)^\Sigma$ and $w(\hbold)\coloneqq\big(h_{\tau,w_\tau^{-1}(i)}\big)_{\tau,i}\in(\Z^n)^\Sigma$. Moreover, there is a unique $w_\sat\in(\Scal_n)^\Sigma$ such that the point $x_\sat\coloneqq\left(r,z^{w_\sat(\hbold)}\nr(\underline{\varphi})\right)\in\Xfrakr\times\Tcal_L^n$ lies in $\Utri$.

Breuil-Hellmann-Schraen have proved that $X_w$ at a certain point $x_\mathrm{pdR}\in X_w\inter V_{w_\sat}$ is a \emph{local model} of $\Xtri$ at the point $x$, see \cite[Thm.\ 1.6]{bhs3} for more details. As a consequence, they find that the pair $(w_\sat,w)\in\left((\Scal_n)^\Sigma\right)^2$ attached to $x=(r,\underline{\delta})\in\Xtri$ as above satisfies $w_\sat\preceq w$ for the product Bruhat order on $(\Scal_n)^\Sigma$ (\emph{loc.\ cit.}, Theorem 1.8). They also find an upper bound for $\dim T_{\Xtri,x}$ (\emph{loc.\ cit.}, Proposition 4.1.5(ii)). Conjecture \ref{conjLocalModel} has the following consequence:

\begin{thm} \label{thmTrianguline}
If Conjecture \ref{conjLocalModel} is true for the pair $(w_\sat\preceq w)$, then
\begin{equation} \label{eqDimensionEspaceTangent}
	\dim_{k(x)}T_{\Xtri,x} = \dim\Xtri - d_{ww_\sat^{-1}} + \dim_{\Q_p}T_{\overline{(BwB/B)},w_\sat B} - \length(w_\sat)
\end{equation}
where $k(x)$ is the residue field of $\Xtri$ at $x$.
\end{thm}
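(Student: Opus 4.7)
\emph{Proof plan.} The plan is to combine Proposition \ref{conjImpliesConj3} with the local model theorem of Breuil-Hellmann-Schraen \cite[Thm.\ 1.6]{bhs3}. The latter identifies the completions of $\Xtri$ at $x$ and of $X_w$ at $x_{\mathrm{pdR}}$ up to smooth factors, which in particular gives the tangent-space relation
\[
	\dim_{k(x)} T_{\Xtri, x} - \dim T_{X_w, x_{\mathrm{pdR}}} = \dim \Xtri - \dim X_w \,.
\]

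First, a direct dimension count gives $\dim X_w = \dim G$ for every $w \in W$: since $V_w$ is open and dense in $X_w$, and since $\pi\colon V_w \to U_w$ is by \cite[Proposition 2.2.1]{bhs3} a vector bundle whose fiber at $(B, wB)$ equals $\tfrak \oplus (\ufrak \inter \Ad(w)\ufrak)$, we have $\dim V_w = \dim U_w + \dim\tfrak + \length(w_0) - \length(w)= 2\length(w_0) + \dim\tfrak = \dim G$, using $\dim U_w = \length(w_0) + \length(w)$.

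Next, one applies Proposition \ref{conjImpliesConj3} at $x_{\mathrm{pdR}} \in V_{w_\sat} \inter X_w$ under the assumption that Conjecture \ref{conjLocalModel} holds for $(w_\sat, w)$. The proposition is stated at a point of the form $(\pi(x), 0)$ with zero $\psi$-coordinate, whereas $x_{\mathrm{pdR}}$ has a regular semisimple $\psi \in \tfrak$ coming from the Hodge-Tate weights of $r$; however the proof carries over verbatim, since $\kappa_1(x_{\mathrm{pdR}}) \in \tfrak^{ww_\sat^{-1}}$ by \cite[Lemma 2.3.4]{bhs3}, so Conjecture \ref{conjLocalModel} still supplies the closed immersion $V_{w_\sat} \inter \kappa_1^{-1}(\tfrak^{ww_\sat^{-1}}) \inj X_w$ passing through $x_{\mathrm{pdR}}$, which is all that is needed in the argument. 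One obtains
\[
	\dim T_{X_w, x_{\mathrm{pdR}}} = \dim T_{\overline{U_w}, \pi(x_{\mathrm{pdR}})} + \dim \tfrak^{ww_\sat^{-1}} + \length(w_\sat w_0) \,.
\]

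It remains to combine these pieces. Under the isomorphism $f$ of \eqref{equivalentCells}, $\overline{U_w}$ is identified with the smooth fiber bundle $G \times^B \overline{BwB/B}$ over $G/B$ with fiber $\overline{BwB/B}$, hence $\dim T_{\overline{U_w}, \pi(x_{\mathrm{pdR}})} = \length(w_0) + \dim T_{\overline{BwB/B}, w_\sat B}$. Substituting moreover $\dim \tfrak^{ww_\sat^{-1}} = \dim \tfrak - d_{ww_\sat^{-1}}$, $\length(w_\sat w_0) = \length(w_0) - \length(w_\sat)$ and $\dim G = \dim \tfrak + 2 \length(w_0)$, the auxiliary terms cancel and \eqref{eqDimensionEspaceTangent} drops out. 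The only genuine subtlety is the mild extension of Proposition \ref{conjImpliesConj3} to a point with nonzero semisimple $\psi$-component indicated above; the rest of the proof is bookkeeping.
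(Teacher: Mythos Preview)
Your overall strategy matches the paper's proof, which is a two-line appeal to Proposition \ref{conjImpliesConj3} together with the proof of \cite[Proposition 4.1.5]{bhs3} and the identification $\dim T_{\overline{U_w},\pi(x_{\mathrm{pdR}})}=\dim T_{\overline{BwB/B},w_\sat B}+\dim G/B$ coming from \eqref{equivalentCells}.

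There is, however, a genuine gap in your handling of the nonzero $\psi$-component of $x_{\mathrm{pdR}}$. You assert that the proof of Proposition \ref{conjImpliesConj3} ``carries over verbatim'', justifying this by the fact that Conjecture \ref{conjLocalModel} still supplies the immersion $V_{w_\sat}\cap\kappa_1^{-1}(\tfrak^{ww_\sat^{-1}})\hookrightarrow X_w$ through $x_{\mathrm{pdR}}$. But that immersion is only the $\iota_2$ half of the argument, responsible for the second and third summands in \eqref{injectionVectorSpaces}. The first summand $T_{\overline{U_w},\pi(x)}$ is produced by the zero section $\iota_1\colon\overline{U_w}\hookrightarrow X_w$, $(g_1B,g_2B)\mapsto(g_1B,g_2B,0)$, and this section does not pass through $x_{\mathrm{pdR}}$ when $\psi\neq 0$. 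There is no evident substitute: the map $(g_1B,g_2B)\mapsto(g_1B,g_2B,\psi_0)$ need not land in $X$, and since $\overline{U_w}$ may well be singular at $\pi(x_{\mathrm{pdR}})$ the $G$-orbit of $x_{\mathrm{pdR}}$ only recovers $T_{U_{w_\sat},\pi(x_{\mathrm{pdR}})}$, not all of $T_{\overline{U_w},\pi(x_{\mathrm{pdR}})}$. The paper does not attempt to extend Proposition \ref{conjImpliesConj3} to this situation either; it simply invokes ``the proof of \cite[Proposition 4.1.5]{bhs3}'', where the tangent-space computation is carried out directly at $x_{\mathrm{pdR}}$ with its actual $\psi$-component. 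You should either cite that argument as the paper does, or supply a genuine reduction from $x_{\mathrm{pdR}}$ to the point with $\psi=0$ rather than asserting that the proof transfers.
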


\begin{proof}
From \eqref{equivalentCells} and the definition of $U_w$ in \S\ref{ssecSpringerScheme}, one derives $\dim_{\Q_p}T_{\overline{(BwB/B)},w_\sat B}=\dim_{\Q_p}T_{\overline{U_w},(B,w_\sat B)}-[K:\Q_p]\frac{n(n-1)}{2}=\dim_{\Q_p}T_{\overline{U_w},\pi(x_\mathrm{pdR})}-[K:\Q_p]\frac{n(n-1)}{2}$. Hence \eqref{eqDimensionEspaceTangent} is a consequence of Proposition \ref{conjImpliesConj3} and the proof of \cite[Proposition 4.1.5]{bhs3}.
\end{proof}

We prove in \S\ref{secProofGoodPairs} below that the hypothesis of Theorem \ref{thmTrianguline} (and hence \eqref{eqDimensionEspaceTangent}) is always satisfied if $(w_\sat,w)$ is a good pair in $(\Scal_n)^\Sigma=\Phi(G,T)$ in the sense of Definition \ref{dfnGoodPair}. The following result gives a very partial converse.

\begin{prop} \label{propSmoothnessXtri}
If \eqref{eqDimensionEspaceTangent} is true and $\Xtri$ is smooth at $x$, then $(w_\sat,w)$ is a good pair in $(\Scal_n)^\Sigma$ and $\overline{(BwB/B)}$ is smooth at $w_\sat B$.
\end{prop}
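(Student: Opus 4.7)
The plan is to force two natural inequalities to become equalities by combining the identity \eqref{eqDimensionEspaceTangent} with the smoothness hypothesis. Concretely, I would substitute the smoothness hypothesis into \eqref{eqDimensionEspaceTangent} and then sandwich the resulting quantity between one geometric bound on $\overline{BwB/B}$ and one combinatorial bound from Proposition \ref{propEqualityImpliesGoodPair}.

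First, assuming $\Xtri$ is smooth at $x$, we have $\dim_{k(x)}T_{\Xtri,x}=\dim\Xtri$. Substituted into \eqref{eqDimensionEspaceTangent}, the $\dim\Xtri$ terms cancel to leave the identity
\[
\dim_{\Q_p}T_{\overline{(BwB/B)},w_\sat B}=d_{ww_\sat^{-1}}+\length(w_\sat).
\]

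Next, I would invoke the two bounds that govern the two sides of this identity. Since $w_\sat\preceq w$, the point $w_\sat B$ lies on the irreducible Schubert variety $\overline{BwB/B}$, whose local dimension at $w_\sat B$ equals $\length(w)$; hence its tangent space at $w_\sat B$ has dimension at least $\length(w)$, with equality if and only if $\overline{BwB/B}$ is smooth at $w_\sat B$. On the other hand, Proposition \ref{propEqualityImpliesGoodPair} applied to $w_\sat\preceq w$ gives $d_{ww_\sat^{-1}}\le\length(w)-\length(w_\sat)$, with equality implying that $(w_\sat,w)$ is a good pair. Combined with the identity above, these two bounds yield the chain
\[
\length(w)\;\leq\;\dim T_{\overline{(BwB/B)},w_\sat B}\;=\;d_{ww_\sat^{-1}}+\length(w_\sat)\;\leq\;\length(w),
\]
which forces all terms to be equal. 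The right-hand equality gives $d_{ww_\sat^{-1}}=\length(w)-\length(w_\sat)$, hence $(w_\sat,w)$ is a good pair by Proposition \ref{propEqualityImpliesGoodPair}; the left-hand equality gives $\dim T_{\overline{(BwB/B)},w_\sat B}=\length(w)$, hence $\overline{BwB/B}$ is smooth at $w_\sat B$.

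There is no real obstacle here: the argument is a one-line dimension count. The only mild subtlety is bookkeeping — the geometric bound $\dim T\geq\length(w)$ must be written in the same dimensional convention used for $\dim_{\Q_p}T$ in \eqref{eqDimensionEspaceTangent} (the same convention already fixed in Theorem \ref{thmTrianguline}) so that the chain of inequalities is well-posed; once this is settled, the sandwich closes immediately.
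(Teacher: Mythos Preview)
Your proposal is correct and is precisely the argument the paper has in mind: the paper's one-line proof simply says that since $\dim\overline{BwB/B}=\lg(w)$, the result is a direct consequence of Proposition \ref{propEqualityImpliesGoodPair}, and you have spelled out exactly that sandwich of inequalities. Your remark about the dimensional convention is a fair caveat but not an obstruction, as the same convention is fixed throughout Theorem \ref{thmTrianguline}.
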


\begin{proof}
Since $\dim{\overline{(BwB/B)}}=\lg(w)$, this is a direct consequence of Proposition \ref{propEqualityImpliesGoodPair}.
\end{proof}

\section{The case of good pairs} \label{secProofGoodPairs}

This section is devoted to proving the following theorem.

\numberwithin{thm}{section}
\begin{thm} \label{thmConjIsTrue}
Let $(w',w)$ be a good pair in $W$ in the sense of Definition \ref{dfnGoodPair}. Then Conjecture \ref{conjLocalModel} and Conjecture \ref{conjLocalModelSimple} are true for the pair $(w',w)$, \emph{i.e.\ }the locally closed immersion $\widetilde{V}_{w'}\inj\widetilde{X}$ induces
\begin{equation}
	\widetilde{V}_{w'}\inter q^{-1}(\tfrak^{ww'^{-1}}+\ufrak) \inj \widetilde{X}_w \,.
\end{equation}
\end{thm}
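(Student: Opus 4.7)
The plan is to follow the three-step strategy outlined in the introduction. By Proposition~\ref{propGenericTorus} and Lemma~\ref{lemDenseIsEnough}, it suffices to prove the set-theoretic inclusion $\widetilde{V}_{w'}\inter q^{-1}(t+\ufrak)\subseteq\widetilde{X}_w$ for every sufficiently generic closed point $t\in\tfrak^{ww'^{-1}}$; and by \S\ref{ssecExtensionLevi} we may assume $k$ algebraically closed. Apply Proposition~\ref{propIntroGoodPairsMinipermutations} to obtain $J\subseteq I$, $u^J,v^J\in W^J$ and $w_J,w'_J\in W_J$ with $w'_J\preceq w_J$, $w=u^Jw_J(v^J)^{-1}$, $w'=u^Jw'_J(v^J)^{-1}$, and $\Phi_{ww'^{-1}}=u^J(\Phi_J)$. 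Let $L$ be the standard Levi attached to $J$, with Borel $B_L\coloneqq B\inter L$ and unipotent radical $U_L\coloneqq U\inter L$; by Proposition~\ref{propIntroCycleDecompositionCentraliser}, $\Ad(u^J)L=C_G((T^{ww'^{-1}})^\circ)$. Setting $s\coloneqq\Ad(u^J)^{-1}t\in\tfrak$, the element $s$ ranges over a dense open of $\Ad(u^J)^{-1}\tfrak^{ww'^{-1}}\subseteq\tfrak_L$ as $t$ varies.

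Since $w'_J\preceq w_J$ holds in the Weyl group of $L$, the $t=0$ case of Conjecture~\ref{conjIntroLocalModelSimple}---proved for arbitrary pairs in \cite[\S2.4]{bhs3} via Beilinson--Bernstein localisation---applied to $L$ with Borel $B_L$ yields
\[
	\widetilde{V}_{L,w'_J}\inter q_L^{-1}(\ufrak_L)\subseteq\widetilde{X}_{L,w_J}.
\]
The adjoint $T$-action preserves every stratum $\widetilde{V}_{L,\bullet}$ and $\widetilde{X}_{L,\bullet}$ and translates $\ufrak_L$ to $s+\ufrak_L$; thus $\widetilde{V}_{L,w'_J}\inter q_L^{-1}(s+\ufrak_L)\subseteq\widetilde{X}_{L,w_J}$ for every $s\in\tfrak_L$.

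Third, combine the isomorphism $\iota_{s,u^J,w'_J,v^J}$ of Proposition~\ref{propIntroLeviToG} with the surjection $f\colon U\times\Ad(u^J)(s+\ufrak_L)\surj\Ad(u^J)s+\ufrak=t+\ufrak$ of Lemma~\ref{lemIntroJordan} (valid for generic $s$). Together they assemble into the commutative diagram
\[
	\begin{tikzcd}
		U\times\left(\widetilde{V}_{L,w'_J}\inter q_L^{-1}(s+\ufrak_L)\right) \arrow[r,"\subseteq"] \arrow[dd,two heads] & U\times\widetilde{X}_{L,w_J} \arrow[d] \\
		 & \widetilde{X}_w \arrow[d,hook,"\subseteq"] \\
		\widetilde{V}_{w'}\inter q^{-1}(t+\ufrak) \arrow[r,"\subseteq"] & \widetilde{X}
	\end{tikzcd}
\]
whose top inclusion is the previous step and whose right-hand column is built by composing $\iota_{s,u^J,w_J,v^J}$ with the adjoint $U$-action (which preserves the Schubert stratification, since $U\subseteq B$). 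The left vertical arrow is surjective: given $z=(gB,\psi)\in\widetilde{V}_{w'}\inter q^{-1}(t+\ufrak)$, writing $\Ad(g)\psi=\Ad(u)x$ for $u\in U$ and $x\in\Ad(u^J)(s+\ufrak_L)$ via $f$, one checks that $u^{-1}\cdot z\in\widetilde{V}_{w'}\inter q^{-1}(\Ad(u^J)(s+\ufrak_L))$, which is the image of $\iota$. Consequently the image of the bottom horizontal arrow lies in $\widetilde{X}_w$, which is exactly the required inclusion.

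The main technical weight of the argument sits in the two supporting results. Proposition~\ref{propIntroLeviToG} demands that $\iota_{s,u^J,v^J}$ identify $\widetilde{V}_{L,w_J}\inter q_L^{-1}(s+\ufrak_L)$ with the full fiber $\widetilde{V}_{w}\inter q^{-1}(\Ad(u^J)(s+\ufrak_L))$; this forces $\Ad(u^J)s$ to be semisimple regular in $\gfrak$ with centralizer exactly $\lfrak$, which is the ``sufficiently generic'' condition, and the proof depends on carefully tracking $B$-orbits using that $u^J,v^J\in W^J$ to avoid any double-counting between left and right cosets. Lemma~\ref{lemIntroJordan} is a Jordan-decomposition statement: each element of $\Ad(u^J)s+\ufrak\subseteq\bfrak$ is $U$-conjugate to an element of $\Ad(u^J)(s+\ufrak_L)$, extracted from its decomposition in $\bfrak$ via the tools of Appendix~\ref{appendixJordan}. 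Once both ingredients are in hand, the diagram reduces the good-pair case on $G$ to the already-known $t=0$ case on the Levi $L$, which is the conceptual heart of the argument.
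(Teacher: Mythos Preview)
Your argument is correct and follows essentially the same route as the paper's own proof in \S\ref{secProofGoodPairs}: reduce to generic $t$ via Lemma~\ref{lemDenseIsEnough}, pass to the standard Levi $L$ using the good-pair decomposition of Proposition~\ref{propGoodPairsMinipermutations}, invoke the Beilinson--Bernstein/Ginsburg input (Theorem~\ref{thmInclusionFiber0}) on $L$, and then promote this to $G$ via Proposition~\ref{propLeviToG} and Lemma~\ref{lemJordan}. One small slip: the passage from $q_L^{-1}(\ufrak_L)$ to $q_L^{-1}(s+\ufrak_L)$ is \emph{not} given by the adjoint $T$-action (which is linear and does not translate), but by the translation $(l,\psi_L)\mapsto(l,\psi_L+s)$, well-defined on $L\times^{L\cap B}(\lfrak\cap\bfrak)$ precisely because $s\in\tfrak_J$ is central in $\lfrak$; this is what the paper means by ``translating by $t$'' in the introduction, and in the body of the proof the shift is instead absorbed into the definition of $\iota_{t,u^J,v^J}$ itself.
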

\numberwithin{thm}{subsection}

\subsection{Preliminaries: Levi subgroups and the adjoint action} \label{ssecLeviAdjoint}

We establish a few technical lemmas about Levi subgroups and the derived adjoint action, which are used in the proof of Theorem \ref{thmConjIsTrue}.

Let $J\subseteq I$ be a subset of simple roots of $G$, write $\Phi_J\subseteq\Phi$ for the root subsystem generated by $J$ and denote by $T_J\coloneqq \left(\bigcap_{\alpha\in\Phi_J}(\ker\alpha)\right)^\circ\subseteq T$ the largest subtorus of $T$ such that $\alpha(T_J)=1$ for all $\alpha\in\Phi_J$ (or equivalently $\alpha\in J$). It is indeed a torus, as it is a reduced (the characteristic is $0$) and connected diagonalisable group; see the discussion in \cite[Notation 12.29]{milne}.

Let $L\coloneqq C_G(T_J)$ be the standard Levi subgroup of $G$ associated to $J$. According to \cite[Proposition 21.90]{milne}, $L$ is a connected reductive group with $T$ as a maximal torus and root system $\Phi_J$. Therefore $L=\engendre{T,U_\alpha}{\alpha\in\Phi_J}$, \emph{i.e.\ }$L$ is the subgroup of $G$ generated by $T$ and the $U_\alpha$ for $\alpha\in\Phi_J$. Write $U_L\coloneqq\engendre{U_\alpha}{\alpha\in\Phi_J\inter\Phi_+}$ for the unipotent radical of $L\inter B$ and $\tfrak_J$, $\lfrak$, $\ufrak_L$ for the Lie algebras of $T_J$, $L$, $U_L$ respectively.

For a root $\alpha\colon T\to\Gm$, consider its differential $\diff_e\alpha\colon\tfrak\to k$; we can see it as $\mathrm{Lie}(\alpha)$, where $\mathrm{Lie}$ is the Lie algebra functor from algebraic groups over $k$ to Lie algebras over $k$. Since $\mathrm{Lie}$ commutes with finite limits, and in particular with kernels and fibred products (see \cite[10.14]{milne}), we have
\[
	\tfrak_J = \bigcap_{\alpha\in\Phi_J}(\ker\diff_e\alpha) \,.
\]

\begin{prop} \label{propGenericTorus}
There is a Zariski-dense open subscheme $\tfrak_J^\gen$ of the affine scheme $\tfrak_J$ such that, for any closed point $t\in\tfrak_J^\gen$:
\begin{enumerate}[label=\emph{(\arabic*)},ref=(\arabic*)]
\item \label{itemGenericTorus}
$L = C_G(t)^\circ \coloneqq \enstq{g\in G}{\Ad(g)t=t}^\circ$;
\item \label{itemGenericTorusLie}
$\lfrak = \zfrak_\gfrak(t) \coloneqq \enstq{x\in\gfrak}{[x,t]=0}$.
\end{enumerate}
\end{prop}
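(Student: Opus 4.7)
The plan is to construct $\tfrak_J^\gen$ explicitly as the complement of finitely many hyperplanes in $\tfrak_J$ and then read off both statements from the root space decomposition of $\gfrak$.

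First I would describe $\tfrak_J$ concretely in $\tfrak^*$. In characteristic $0$, the differential $\alpha\mapsto\diff_e\alpha$ embeds $X(T)\otimes_\Z\Q$ into $\tfrak^*$, and the annihilator of $\tfrak_J$ is exactly the subspace spanned by $\{\diff_e\alpha:\alpha\in J\}$, which corresponds to $E_J\coloneqq\Q\cdot J\subseteq E$. For any root $\alpha\in\Phi$, we therefore have $\diff_e\alpha|_{\tfrak_J}=0$ if and only if $\alpha\in E_J$, if and only if $\alpha\in\Phi\cap E_J=\Phi_J$. Consequently, for each $\alpha\in\Phi\setminus\Phi_J$, the set $H_\alpha\coloneqq\ker(\diff_e\alpha|_{\tfrak_J})$ is a proper hyperplane in the affine scheme $\tfrak_J$. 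Define
\[
    \tfrak_J^\gen \coloneqq \tfrak_J\setminus\bigcup_{\alpha\in\Phi\setminus\Phi_J}H_\alpha \,,
\]
which is a Zariski-open and Zariski-dense subscheme of $\tfrak_J$ (complement of a finite union of proper hyperplanes in an affine space, using that $k$ has characteristic $0$, hence is infinite).

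Next I would use the root space decomposition $\gfrak=\tfrak\oplus\bigoplus_{\alpha\in\Phi}\gfrak_\alpha$ to get \ref{itemGenericTorusLie}. For any $t\in\tfrak$, one has $[t,x_\alpha]=(\diff_e\alpha)(t)\,x_\alpha$ for $x_\alpha\in\gfrak_\alpha$, so
\[
    \zfrak_\gfrak(t) = \tfrak \oplus \bigoplus_{\alpha\in\Phi,\,(\diff_e\alpha)(t)=0}\gfrak_\alpha \,.
\]
For a closed point $t\in\tfrak_J^\gen$, the condition $(\diff_e\alpha)(t)=0$ holds exactly for $\alpha\in\Phi_J$ by the previous paragraph. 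Comparing with $\lfrak=\tfrak\oplus\bigoplus_{\alpha\in\Phi_J}\gfrak_\alpha$ (since $L$ is the reductive group with maximal torus $T$ and root system $\Phi_J$), this yields $\zfrak_\gfrak(t)=\lfrak$, which is \ref{itemGenericTorusLie}.

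Finally, for \ref{itemGenericTorus} I would combine \ref{itemGenericTorusLie} with a dimension argument. Since $t\in\tfrak_J=\mathrm{Lie}(T_J)$ and $L=C_G(T_J)$ centralises $T_J$, the adjoint action of $L$ fixes $t$; so $L\subseteq C_G(t)$, and by connectedness $L\subseteq C_G(t)^\circ$. On the other hand, the centraliser $C_G(t)$ is the stabiliser of $t$ under the adjoint action of $G$ on $\gfrak$; in characteristic $0$, its Lie algebra equals the infinitesimal stabiliser, which is $\zfrak_\gfrak(t)=\lfrak$ by \ref{itemGenericTorusLie}. Hence $C_G(t)^\circ$ is a connected algebraic group with Lie algebra $\lfrak$ containing the connected closed subgroup $L$ of the same Lie algebra, forcing $L=C_G(t)^\circ$.

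The argument is essentially a direct calculation once one knows that the annihilator of $\tfrak_J$ in $\tfrak^*$ is $\diff_e(\Q J)$; the only mild subtlety is checking that for $\alpha\notin\Phi_J$ the restriction $\diff_e\alpha|_{\tfrak_J}$ is really non-zero (so that the $H_\alpha$ are proper), and invoking characteristic $0$ in the identification of $\mathrm{Lie}(C_G(t))$ with $\zfrak_\gfrak(t)$.
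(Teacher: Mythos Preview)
Your proof is correct and defines the same open set $\tfrak_J^\gen$ as the paper, but the logical order is reversed. The paper first proves \ref{itemGenericTorus} at the group level: it uses that $C_G(t)^\circ$ is generated by $T$ together with those root subgroups $U_\alpha$ contained in $C_G(t)$ (via \cite[21.66]{milne}), shows that $U_\alpha\subseteq C_G(t)$ forces $\diff_e\alpha(t)=0$, and checks that $\ker(\diff_e\alpha)\cap\tfrak_J$ is proper for $\alpha\notin\Phi_J$ by lifting from $T_J$. Only then is \ref{itemGenericTorusLie} deduced from \ref{itemGenericTorus} via \cite[Proposition 17.76]{milne}. You instead compute $\zfrak_\gfrak(t)$ directly from the root-space decomposition to get \ref{itemGenericTorusLie}, and then recover \ref{itemGenericTorus} by comparing Lie algebras of the connected groups $L\subseteq C_G(t)^\circ$ in characteristic $0$. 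Your route is slightly more elementary (no appeal to the generation statement \cite[21.66]{milne}), while the paper's argument makes the role of the root subgroups $U_\alpha$ explicit, which is closer in spirit to how $C_G(t)$ is actually used later in Lemmas~\ref{lem3Breuil} and~\ref{lemJordan}. Your justification that $H_\alpha$ is a proper hyperplane (via the identification of the annihilator of $\tfrak_J$ with $\diff_e(\Q J)$ and $\Phi\cap\Q J=\Phi_J$) is correct but compressed; the paper obtains the same fact by a short detour through $T_J$ and the $\mathrm{Lie}$ functor.
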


\begin{proof}
For any closed point $t\in\tfrak$, the centraliser $C_G(t)$ contains $T$ and is normalised by $T$, therefore the same is true of the identity component $C_G(t)^\circ$ since $T$ is connected. By \cite[21.66]{milne}, we then have
\begin{equation} \label{eqCentraliser}
	C_G(t)^\circ = \engendre{T,U_\alpha}{\alpha\in\Phi,\,U_\alpha\subseteq C_G(t)^\circ} = \engendre{T,U_\alpha}{\alpha\in\Phi,\,U_\alpha\subseteq C_G(t)}
\end{equation}
since each $U_\alpha$ is connected.

Let $\alpha\in\Phi$ be a root and $t\in\tfrak(\kalg)$ be a $\kalg$-point such that $U_\alpha\subseteq C_G(t)$. Then the morphism
\[
	\phi\colon U_\alpha\to\gfrak \,, \quad u\longmapsto\Ad(u)t
\]
is constant, so its differential
\[
	\diff_e\colon\ufrak_\alpha\to\gfrak \,, \quad u_\alpha\longmapsto\ad(u_\alpha)t
\]
is zero. But for any $u_\alpha\in\ufrak_\alpha(\kalg)$, we have $\ad(u_\alpha)t=[u_\alpha,t]=-\diff_e\alpha(t)u_\alpha$. This shows that $\diff_e\alpha(t)=0$ and hence $t\in\ker\diff_e\alpha$.

Not let $\alpha\in\Phi\setminus\Phi_J$. Since $\Phi_J$ is the root system of $L$, necessarily $U_\alpha\not\subseteq L=C_G(T_J)$. Therefore there exist $u_\alpha\in U_\alpha(\kalg)$ and $t\in T_J(\kalg)$ such that $u_\alpha t\neq tu_\alpha$, which means $u_\alpha\neq tu_\alpha t^{-1}$. This implies $\alpha(t)\neq1$ (see \cite[21.19]{milne}). Hence $\ker(\alpha)\inter T_J\neq T_J$. Applying the $\mathrm{Lie}$ functor (which preserves finite limits), we get $\ker(\diff_e\alpha)\inter\tfrak_J\neq\tfrak_J$, so $\tfrak_J\setminus(\ker(\diff_e\alpha)\inter\tfrak_J)$ is non-empty hence dense open in $\tfrak_J$. We thus define
\[
	\tfrak_J^\gen \coloneqq \tfrak_J\setminus\bigg(\bigcup_{\alpha\in\Phi\setminus\Phi_J}\ker(\diff_e\alpha)\inter\tfrak_J\bigg) = \bigcap_{\alpha\in\Phi\setminus\Phi_J}\left(\tfrak_J\setminus\left(\ker(\diff_e\alpha)\inter\tfrak_J\right)\right)
\]
which is dense open in $\tfrak_J$.

Now, fix a closed point $t\in\tfrak_J^\gen$. For any root $\alpha\in\Phi$, the previous discussion shows that $U_\alpha\subseteq C_G(t)$ implies $\alpha\in\Phi_J$. By \eqref{eqCentraliser}, we have
\[
	C_G(t) \subseteq \engendre{T,U_\alpha}{\alpha\in\Phi_J} = L \,.
\]
The converse $L=C_G(T_J)\subseteq C_G(t)$ is true for any $t\in\tfrak_J$, so \ref{itemGenericTorus} is proved.

Statement \ref{itemGenericTorusLie} is a consequence of \ref{itemGenericTorus} and \cite[Proposition 17.76]{milne}.
\end{proof}

The following lemma explains the relevance of Proposition \ref{propGenericTorus}.

\begin{lem} \label{lemDenseIsEnough}
Let $\tfrak'\subseteq\tfrak^{ww'^{-1}}$ be a Zariski-dense subscheme. Then the algebraic variety (over $k$)
\[
	\widetilde{V}(w',\tfrak') \coloneqq \widetilde{V}_{w'}\inter q^{-1}(\tfrak'+\ufrak)
\]
is a dense subvariety of
\[
	\widetilde{V}(w',\tfrak^{ww'^{-1}}) \coloneqq \widetilde{V}_{w'}\inter q^{-1}(\tfrak^{ww'^{-1}}+\ufrak) \,.
\]
\end{lem}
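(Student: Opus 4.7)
The plan is to explicitly parametrize $\widetilde{V}_{w'}$ and identify the composition $\kappa\colon\widetilde{V}_{w'}\xrightarrow{q}\bfrak\to\tfrak$ as (essentially) a projection onto one factor, which reduces the density question to one about $\tfrak$ itself.

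First I would use the Bruhat parametrization $u\longmapsto uw'B$, which gives an isomorphism $U/(U\inter w'Uw'^{-1})\iso Bw'B/B$, together with the observation that $(g,\psi)\in\widetilde{V}_{w'}$ written as $g=uw'$ (with $u\in U$) forces $\psi\in\bfrak\inter\Ad(w'^{-1})\bfrak$, which equals $\tfrak\oplus(\ufrak\inter\Ad(w'^{-1})\ufrak)$ since both sides decompose as $\tfrak$ plus sums of root spaces. After quotienting by the residual right $B$-action, this writes $\widetilde{V}_{w'}$ as a twisted product $U\times^P F$, where $P\coloneqq U\inter w'Uw'^{-1}$ and $F\coloneqq\tfrak\oplus(\ufrak\inter\Ad(w'^{-1})\ufrak)$.

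Next I would compute $\kappa$ under this parametrization. For $(u,t+u')$ with $t\in\tfrak$ and $u'\in\ufrak\inter\Ad(w'^{-1})\ufrak$, one has $q(uw',t+u')=\Ad(u)\Ad(w')t+\Ad(u)\Ad(w')u'$; the second summand lies in $\ufrak$, and $\Ad(u)$ acts trivially on $\bfrak/\ufrak=\tfrak$ as $U$ is unipotent. Hence $\kappa(u,t+u')=\Ad(w')t$. Because $P\subseteq U$ also acts trivially on the $\tfrak$-component, this descends to a well-defined morphism $\overline{\kappa}\colon U\times^P F\to\tfrak$ which, up to the automorphism $\Ad(w')$ of $\tfrak$, is just the projection onto the $\tfrak$-factor of $F$; in particular it is a trivial fibration with geometrically integral fibers of constant dimension.

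Finally, for any subscheme $\tfrak''\subseteq\tfrak$, one has $\widetilde{V}(w',\tfrak'')=\kappa^{-1}(\tfrak'')$, which under the trivialization corresponds to $U\times^P\bigl(\Ad(w')^{-1}(\tfrak'')\oplus(\ufrak\inter\Ad(w'^{-1})\ufrak)\bigr)$. Applying this to $\tfrak''=\tfrak'$ and $\tfrak''=\tfrak^{ww'^{-1}}$, density of $\widetilde{V}(w',\tfrak')$ in $\widetilde{V}(w',\tfrak^{ww'^{-1}})$ reduces to density of $\Ad(w')^{-1}(\tfrak')$ in $\Ad(w')^{-1}(\tfrak^{ww'^{-1}})$, which follows from the hypothesis since $\Ad(w')$ is an automorphism of $\tfrak$. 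The only mildly delicate step is verifying the trivialization and the $P$-equivariance of the $\tfrak$-projection; I expect no serious obstacle there.
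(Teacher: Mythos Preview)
Your proposal is correct and follows essentially the same approach as the paper: both reduce the question to the explicit description of the fibres of $\widetilde{V}_{w'}\to Bw'B/B$ as $\tfrak\oplus(\ufrak\cap\Ad(w'^{-1})\ufrak)$ and then to density in $\tfrak$, the only difference being that the paper argues fibre-by-fibre via the transitive $B$-action on $Bw'B/B$ rather than writing down your global twisted-product parametrisation. One small remark: your claim that $\overline{\kappa}$ is a trivial fibration is justified by the splitting $U\simeq (U\cap w'U^-w'^{-1})\times P$ as varieties, which untwists $U\times^P F$ into an honest product.
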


\begin{proof}
Recall the morphism $\widetilde\pi\colon G\times^B\bfrak\to G/B$, which restricts to two morphisms
\begin{equation*}
	\widetilde\pi_1 \colon q^{-1}(\tfrak^{ww'^{-1}}+\ufrak) \to G/B\ ,\quad
	\widetilde\pi_1' \colon  q^{-1}(\tfrak'+\ufrak) \to G/B \,.
\end{equation*}
Then $\widetilde{V}(w',\tfrak^{ww'^{-1}})$ (resp. $\widetilde{V}(w',\tfrak')$) is just the fibre of $\widetilde\pi_1$ (resp. $\widetilde\pi_1'$) above $Bw'B/B$. It is thus enough to check density on fibres above closed points of $Bw'B/B$: let $gB\in Bw'B/B\subset G/B$ with $g\in G$, we want to show that $\widetilde\pi_1'^{-1}(gB)$ is dense in $\widetilde\pi_1^{-1}(gB)$. Now the left-action of $B$ on $G\times\bfrak$ given by $b(g,\psi)=(bg,\psi)$ induces a left-action on $G\times^B\bfrak$. This action stabilises $q^{-1}(\tfrak^{ww'^{-1}}+\ufrak)$ and $q^{-1}(\tfrak'+\ufrak)$ because $\Ad(b)$ stabilises $t+\ufrak$ for any $b\in B(\kalg)$ and $t\in\tfrak(\kalg)$. Moreover, this action induces an action on the set of fibres of $\widetilde\pi_1$ which is continuous in the sense that, for any $b\in B$ and $gB\in G/B$, we have an isomorphism of algebraic varieties $\widetilde\pi_1^{-1}(gB)\to\widetilde\pi_1^{-1}(bgB),\,x\longmapsto bx$; and similarly for $\widetilde\pi_1'$. Also, this action on the fibres is obviously transitive on the subset of fibres at points of $Bw'B/B$.

Therefore it suffices to consider the fibre at $w'B\in G/B$: we want to check that $\widetilde\pi_1'^{-1}(ww'B)$ is dense in $\widetilde\pi_1^{-1}(w'B)$. Combining the formula \eqref{eqFibreSpringer} from \cite[Proposition 2.2.1]{bhs3} with the isomorphism \eqref{eqIsoLocalModels} gives
\[
	\widetilde\pi_1'^{-1}(ww'B) = \enstq{(w',\psi)\in G\times^B\bfrak}{\psi\in\tfrak'\oplus(\ufrak\inter\Ad(w')\ufrak)}
\]
and
\[
	\widetilde\pi_1^{-1}(ww'B) = \enstq{(w',\psi)\in G\times^B\bfrak}{\psi\in\tfrak^{ww'^{-1}}\oplus(\ufrak\inter\Ad(w')\ufrak)} \,,
\]
from which we easily deduce the desired density statement.
\end{proof}

We introduce other objects associated to $J$. Let $P\coloneqq P_J=\engendre{U_\alpha}{\alpha\in\Phi_J\cup\Phi_+}$ be the parabolic subgroup associated to $J$ and $U_P\coloneqq\engendre{U_\alpha}{\alpha\in\Phi_+\setminus\Phi_J}$ be the unipotent radical of $P$. Write $W_J\coloneqq W(\Phi_J)\subseteq W$ for the Weyl group of $\Phi_J$; therefore $W_J=N_L(T)/T$ is also the Weyl group of $L$. Write $W^J\subset W$ for the set of $w\in W$ such that $ws_\alpha\succ w$ for all $\alpha\in\Phi_J$, \emph{i.e.\ }$w(\alpha)\in\Phi_+$ for all $\alpha\in\Phi_J\inter\Phi_+$. The reductive group $L$ is the Levi factor of $P$, and we have the decomposition $P=LU_P$ (see \cite[Thm.\ 21.91]{milne}). Also recall from \S\ref{ssecBonnePaireDef} the decomposition $W=W^JW_J$.

\begin{lem} \label{lemNoNegativeComponent}
Let $l\in L$ and $w^J\in W^J$ be such that $w^Jl(w^J)^{-1}\in B$. Then $l\in L\inter B$.
\end{lem}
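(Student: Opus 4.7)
The plan is to exploit the Bruhat decomposition of $L$ with respect to its Borel $B_L = L \cap B$ and then reduce the question to the Bruhat decomposition of $G$.

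First, I would decompose $l$ using the Bruhat decomposition of the reductive group $L$: write $l = b_1 \dot{w}_L b_2$ with $b_1, b_2 \in B_L$ and $w_L \in W_J$, where $\dot{w}_L \in N_L(T)$ is a lift of $w_L$. Next, I would check that conjugation by (a fixed lift of) $w^J$ sends $B_L$ into $B$. For this, use that $B_L = T \cdot U_L$ with $U_L = \prod_{\alpha \in \Phi_J \cap \Phi_+} U_\alpha$; conjugation by $w^J$ sends $T$ to $T$ and sends $U_\alpha$ to $U_{w^J(\alpha)}$, and the defining property $w^J \in W^J$ gives $w^J(\alpha) \in \Phi_+$ for every $\alpha \in \Phi_J \cap \Phi_+$, so $w^J U_L (w^J)^{-1} \subseteq U$ and hence $w^J B_L (w^J)^{-1} \subseteq B$.

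Applying this to $b_1$ and $b_2$ and setting $b_i' \coloneqq w^J b_i (w^J)^{-1} \in B$, the hypothesis $w^J l (w^J)^{-1} \in B$ becomes
\[
	b_1' \cdot w^J \dot{w}_L (w^J)^{-1} \cdot b_2' \in B \,,
\]
so $w^J \dot{w}_L (w^J)^{-1}$ lies in $B \cdot w^J \dot{w}_L (w^J)^{-1} \cdot B \cap B$. But $w^J \dot{w}_L (w^J)^{-1} \in N_G(T)$ represents the Weyl element $w^J w_L (w^J)^{-1}$, so the Bruhat decomposition $G = \coprod_{w \in W} B \dot{w} B$ forces $w^J w_L (w^J)^{-1} = 1$ in $W$, hence $w_L = 1$. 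Then $\dot{w}_L \in T \subseteq B_L$, and thus $l = b_1 \dot{w}_L b_2 \in B_L = L \cap B$.

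The only subtlety is the bookkeeping with lifts and the verification that $w^J$ conjugates $B_L$ into $B$; once those are set up, invoking Bruhat is immediate. I expect no real obstacle beyond carefully separating the roles of the parabolic's unipotent radical $U_P$ (which we do not use here) and the Levi's own unipotent radical $U_L$.
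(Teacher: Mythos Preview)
Your proof is correct and essentially identical to the paper's: both use the Bruhat decomposition of $L$ to write $l \in B_L w_J B_L$, use $w^J \in W^J$ to conclude $w^J B_L (w^J)^{-1} \subseteq B$, and then invoke the Bruhat decomposition of $G$ to force $w^J w_J (w^J)^{-1} = 1$. Your version is just slightly more explicit about lifts and the verification that conjugation by $w^J$ sends $B_L$ into $B$.
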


\begin{proof}
From the Bruhat decomposition in $L$, let $w_J\in W_J$ be such that $l\in(L\inter B)w_J(L\inter B)$. The hypothesis $w^J\in W^J$ can be reformulated as $w^J(L\inter B)(w^J)^{-1}\subseteq B$. Thus we have $w^Jl(w^J)^{-1}\in Bw^Jw_J(w^J)^{-1}B$. The Bruhat decomposition in $G$ and our hypothesis then give $w^Jw_J(w^J)^{-1}=1$, \emph{i.e.\ }$w_J=1$, hence $l\in B$.
\end{proof}

\begin{lem} \label{lem2Breuil}
Let $t\in\tfrak_J^\gen(\kalg)$ be a $\kalg$-point. Let $w^J\in W^J$, $u_P\in U_P(\kalg)$, $l\in L(\kalg)$ and $x\in\ufrak_L(\kalg)$ be such that $\Ad(w^Ju_Pl)(t+x)\in\bfrak(\kalg)$. Then $\Ad(l)x\in\ufrak_L(\kalg)$.
\end{lem}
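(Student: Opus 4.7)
The plan is to exploit the Jordan decomposition of $t + x$ in $\gfrak$. First I would observe that $t$ and $x$ commute: every $\alpha \in \Phi_J$ vanishes on $\tfrak_J$, and $x$ is a sum of vectors in root spaces $\ufrak_\alpha$ with $\alpha \in \Phi_J \cap \Phi_+$, so $[t,x] = 0$. Thus $t$ (semisimple) and $x$ (nilpotent) constitute the Jordan decomposition of $t + x$, and applying the Lie algebra automorphism $\Ad(w^J u_P l)$ one sees that $\Ad(w^J u_P l)\,t$ and $\Ad(w^J u_P l)\,x$ are respectively the semisimple and nilpotent parts of $\Ad(w^J u_P l)(t+x)$. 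The key input is then the general fact that, for any element of $\bfrak$, both parts of its Jordan decomposition lie in $\bfrak$ and the nilpotent part in fact lies in $\ufrak$ (established in Appendix~\ref{appendixJordan}); combined with the hypothesis, this upgrades our assumption to the sharper statement $\Ad(w^J u_P l)\,x \in \ufrak$.

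Setting $y \coloneqq \Ad(l)\,x \in \lfrak$, and using $\Ad(l)\,t = t$ (valid since $L = C_G(T_J)$ acts trivially on $\tfrak_J$) together with $[\ufrak_P, \lfrak] \subseteq \ufrak_P$ (which yields $\Ad(u_P)\,y = y + \epsilon$ for some $\epsilon \in \ufrak_P$), the sharper hypothesis becomes $\Ad(w^J)(y+\epsilon) \in \ufrak$, equivalently
\[
	y + \epsilon \;\in\; \bigoplus_{\alpha \,:\, w^J(\alpha) \in \Phi_+}\ufrak_\alpha.
\]

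To conclude I would decompose $y = y_0 + y_+ + y_-$ along $\lfrak = \tfrak \oplus \ufrak_L \oplus \bigoplus_{\alpha \in \Phi_J,\,-\alpha \in \Phi_+}\ufrak_\alpha$. The root supports of $y$ lie in $\Phi_J$ and those of $\epsilon$ in $\Phi_+ \setminus \Phi_J$; since these are disjoint subsets of $\Phi$ and $w^J$ acts injectively, the displayed inclusion splits into independent conditions on the $\tfrak$-part and on each individual root-space component. Vanishing of the $\tfrak$-part forces $y_0 = 0$; vanishing of the negative-root part, combined with the property of $w^J \in W^J$ that it sends every $\alpha$ with $-\alpha \in \Phi_J \cap \Phi_+$ into $-\Phi_+$, forces $y_- = 0$. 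Hence $\Ad(l)\,x = y_+ \in \ufrak_L$, as required.

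The main obstacle is spotting that the ostensibly weak hypothesis $\Ad(w^J u_P l)(t+x) \in \bfrak$ secretly encodes the stronger nilpotent statement $\Ad(w^J u_P l)\,x \in \ufrak$: without this upgrade, the Cartan component $y_0$ of $\Ad(l)\,x$ cannot be ruled out by root-space bookkeeping on the $\bfrak$-condition alone, since a nilpotent element of $\lfrak$ can perfectly well carry a nonzero $\tfrak$-component.
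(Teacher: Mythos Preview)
Your argument is correct and takes a genuinely different route from the paper's. The paper proceeds by a direct rewriting: it sets $u_P'\coloneqq l^{-1}u_Pl\in U_P$, uses that $U_P$ acts trivially on $\bfrak/\ufrak_P$ to write $\Ad(w^Ju_Pl)(t+x)=\Ad(w^J)(t+\Ad(l)x+z)$ with $z\in\ufrak_P$, and then does root-space bookkeeping on the $\bfrak$-condition, separating the $\Phi_J$-supported piece $\Ad(l)x$ from the $(\Phi_+\setminus\Phi_J)$-supported piece $z$. Your approach instead invokes the Jordan decomposition to upgrade the hypothesis to $\Ad(w^Ju_Pl)x\in\ufrak$ before doing any root bookkeeping.

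What your route buys is precisely the point you flag in your last paragraph: the vanishing of the $\tfrak$-component $y_0$ of $\Ad(l)x$. The paper's proof in fact \emph{asserts} ``$\Ad(l)x\in\bigoplus_{\alpha\in\Phi_J}\ufrak_\alpha$'' (i.e.\ no Cartan part) as an input to the disjoint-support argument, but this is not automatic---a nilpotent element of $\lfrak$ may well have nonzero $\tfrak$-component (e.g.\ conjugate $e_{12}$ by a lower-unipotent in $\GL_2$). So on this specific point your argument is more complete than the paper's as written; the paper's version can of course be repaired (once one has $\Ad(l)x\in\lfrak\cap\bfrak$, nilpotency forces it into $\ufrak_L$), but your Jordan-decomposition upgrade handles it in one stroke.

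One small remark: the fact that the nilpotent part of an element of $\bfrak$ lies in $\ufrak$ is not stated as a standalone proposition in Appendix~\ref{appendixJordan}, though the paper uses it tacitly in \eqref{eqYsemisimple}. It follows from Proposition~\ref{propSsDiagConjugates}\ref{itemComponentsInBorel} together with functoriality of the Jordan decomposition under the quotient $B\to B/U=T$ (a nilpotent element of $\tfrak$ is zero). You might make that derivation explicit rather than citing the appendix directly.
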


\begin{proof}
Since $U_P$ is normal in $P$, the action of $L$ on $P$ by conjugation (resp.\ on the Lie algebra of $P$ by adjunction) keeps $U_P$ (resp.\ $\ufrak_P$) stable. Also, the action of $U_P$ by conjugation on $B$ induces a trivial action on $B/U_P$, hence on $\bfrak/\ufrak_P(\kalg)$ as well. Therefore, setting $u_P'\coloneqq l^{-1}u_Pl\in U_P(\kalg)$, we have using $t+x\in\bfrak(\kalg)$:
\[
	y \coloneqq \Ad(u_P')(t+x)-(t+x) \in\ufrak_P(\kalg) \,.
\]
Hence, setting $z\coloneqq\Ad(l)y\in\ufrak_P(\kalg)$, we have using $\Ad(l)t=t$:
\[
	\Ad(w^Ju_Pl)(t+x)=\Ad(w^Jlu_P')(t+x)=\Ad(w^J)(t+\Ad(l)x+z)
\]
which is in $\bfrak(\kalg)=\tfrak(\kalg)\oplus\bigoplus_{\alpha\in\Phi_+}\ufrak_\alpha(\kalg)$ by assumption.

Now we have $\Ad(l)x\in\bigoplus_{\alpha\in\Phi_J}\ufrak_\alpha(\kalg)$ and $z\in\ufrak_P(\kalg)\subset\bigoplus_{\alpha\in\Phi\setminus\Phi_J}\ufrak_\alpha(\kalg)$, therefore $\Ad(w^J)\Ad(l)x$ and $\Ad(w^J)z$ have disjoint components in the sum $\bigoplus_{\alpha\in\Phi}\ufrak_\alpha(\kalg)$. Thus they each cannot have a non-zero component in $\ufrak_\alpha(\kalg)$ for $\alpha\notin\Phi_+$, so
\[
	\Ad(w^J)\Ad(l)x\in\ufrak(\kalg) \,.
\]
However, $w^J\in W^J$ sends negative roots in $\Phi_J$ to negative roots in $\Phi$. Because $\Ad(l)x\in\bigoplus_{\alpha\in\Phi_J}\ufrak_\alpha(\kalg)$, this necessarily implies $\Ad(l)x\in\bigoplus_{\alpha\in\Phi_J\inter\Phi_+}\ufrak_\alpha(\kalg)=\ufrak_L(\kalg)$.
\end{proof}

\begin{lem} \label{lem3Breuil}
With the setting of Lemma \ref{lem2Breuil}, we have $w^Ju_P(w^J)^{-1}\in U(\kalg)$.
\end{lem}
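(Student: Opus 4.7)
The plan is to translate the claim $w^J u_P (w^J)^{-1} \in U$ into a statement that certain root-space components of $\log u_P$ must vanish, then squeeze an extra piece of information out of the computation already done in the proof of Lemma \ref{lem2Breuil}, and finally conclude by a downward induction on root height. Concretely, write $\log u_P = \sum_{\alpha \in \Phi_+ \setminus \Phi_J} \ell_\alpha$ with $\ell_\alpha \in \ufrak_\alpha$, and partition $\Phi_+ \setminus \Phi_J = A \sqcup B$ into
\[
    A := \{\alpha : w^J(\alpha) \in \Phi_+\} \,, \qquad B := \{\alpha : w^J(\alpha) \in -\Phi_+\} \,.
\]
Since $\Ad(w^J)\ufrak_\alpha = \ufrak_{w^J(\alpha)}$, the membership $w^J u_P (w^J)^{-1} \in U$ is equivalent to $\log u_P \in \ufrak_A := \bigoplus_{\alpha \in A} \ufrak_\alpha$.

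The first step is to observe that the disjoint-root-support argument used in the proof of Lemma \ref{lem2Breuil} symmetrically yields $\Ad(w^J)z \in \ufrak$, where $z := \Ad(u_P)(t+x')-(t+x')$ is the element denoted $\Ad(l)y$ there and $x' := \Ad(l)x$. Since $z \in \ufrak_P$, decomposing on root spaces forces the $B$-components of $z$ to vanish, i.e.\ $z \in \ufrak_A$.

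The heart of the proof is then to deduce $\log u_P \in \ufrak_A$ from $z \in \ufrak_A$. Expanding
\[
    z = \sum_{n \geq 1} \frac{1}{n!} \ad(\log u_P)^n(t + x') \,,
\]
the $n=1$, $t$-part contributes $-\sum_\alpha \diff_e\alpha(t) \ell_\alpha$, and by Proposition \ref{propGenericTorus} each coefficient $\diff_e\alpha(t)$ is non-zero for $\alpha \in \Phi_+ \setminus \Phi_J$. Assume for contradiction that $B \cap \mathrm{supp}(\log u_P) \neq \emptyset$ and pick $\alpha_0$ of minimal height in that intersection. Besides the main term $-\diff_e\alpha_0(t)\ell_{\alpha_0}$, every contribution to the $\ufrak_{\alpha_0}$-component of $z$ comes from brackets of root vectors $\ell_{\beta_1}, \ldots, \ell_{\beta_n}$ (with $\beta_i \in \mathrm{supp}(\log u_P)$) and possibly a root vector of $x'$ of root $\gamma \in \Phi_J \cap \Phi_+$, satisfying $\beta_1 + \cdots + \beta_n + \epsilon = \alpha_0$ for some $\epsilon \in \{0, \gamma\}$ with either $n \geq 2$ or $\epsilon = \gamma$. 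Since $w^J$ sends both $A$ and $\Phi_J \cap \Phi_+$ into $\Phi_+$, if all $\beta_i$ lie in $A$ then $w^J(\alpha_0) = \sum_i w^J(\beta_i) + w^J(\epsilon)$ would be a sum of positive roots and hence itself positive, contradicting $\alpha_0 \in B$; whereas if some $\beta_j \in B$ then minimality gives $h(\beta_j) \geq h(\alpha_0)$, so that $\sum_{i \neq j} h(\beta_i) + h(\epsilon) \leq 0$, which is impossible under either $n \geq 2$ (which forces $\sum_{i \neq j} h(\beta_i) \geq 1$) or $\epsilon = \gamma$ (which forces $h(\epsilon) \geq 1$). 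Hence $z_{\alpha_0} = -\diff_e\alpha_0(t)\ell_{\alpha_0} \neq 0$, contradicting $z \in \ufrak_A$, so $\log u_P \in \ufrak_A$ as wanted.

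The main technical obstacle is the combinatorial bookkeeping in this last step — verifying that all higher-order bracket contributions to $z_{\alpha_0}$ really vanish — but it is a clean consequence of the characteristic property $w^J(\Phi_J \cap \Phi_+) \subseteq \Phi_+$ of elements of $W^J$ together with the minimality of $\alpha_0$.
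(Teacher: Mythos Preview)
Your proof is correct, and the combinatorial core — isolate a minimal ``bad'' root $\alpha_0$ and show that the $\ufrak_{\alpha_0}$-component of $\Ad(u_P)(t+x')-(t+x')$ reduces to the nonvanishing first-order term $-\diff_e\alpha_0(t)\ell_{\alpha_0}$ — is the same as the paper's. The execution, however, is genuinely different. The paper works at the group level: it writes $u_P$ as an ordered product $\prod u_\alpha$ via \cite[Thm.~21.68]{milne}, first strips off the factors with $\alpha\in\Phi_1$ (your $A$) by conjugating them into $U$, and then, assuming $u_P$ is supported entirely in $\Phi_2$ (your $B$), takes $\alpha_0$ minimal for a total order refining the root partial order and uses iterated Chevalley-type commutator estimates $\Ad(u_\alpha)x-x\in\bigoplus_{\gamma>\alpha}\ufrak_\gamma$. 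You instead pass to the Lie algebra via $\log u_P$ and the finite series $\Ad(u_P)=\exp(\ad\log u_P)$, choose $\alpha_0$ of minimal \emph{height} in $B\cap\mathrm{supp}(\log u_P)$, and handle the $A$- and $B$-supported bracket factors simultaneously by your two-case argument. This buys you a cleaner proof: no preliminary reduction to $u_P$ supported in $B$, no choice of ordering of root subgroups, and the key input $z\in\ufrak_A$ is read off directly from the symmetric half of the disjoint-support argument in Lemma~\ref{lem2Breuil}. Two small expository points: the phrase ``a sum of positive roots and hence itself positive'' should perhaps be ``lies in the positive cone, hence — being a root — is a positive root''; and the equivalence $w^Ju_P(w^J)^{-1}\in U\iff\log u_P\in\ufrak_A$ tacitly uses that in characteristic~$0$ the exponential $\ufrak\to U$ is an isomorphism of varieties compatible with conjugation, which is standard but worth a word.
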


\begin{proof}
We know that $\Ad(l)x\in\ufrak_L(\kalg)$ and that $\Ad(l)t=t$, so replacing $x$ by $\Ad(l)x$ if necessary we can assume that $\Ad(w^Ju_P)(t+x)\in\bfrak(\kalg)$.

Partition $\Phi_+\setminus(\Phi_J\inter\Phi_+)$ into the two sets
\begin{align*}
	\Phi_1 &\coloneqq \enstq{\alpha\in\Phi_+\setminus(\Phi_J\inter\Phi_+)}{w^J(\alpha)>0} \,, \\
	\Phi_2 &\coloneqq \enstq{\alpha\in\Phi_+\setminus(\Phi_J\inter\Phi_+)}{w^J(\alpha)<0}
\end{align*}
and put a total order $\prec$ on $\Phi_2$ such that $\alpha\preceq\beta$ (with $\alpha,\beta\in\Phi_2$) whenever $\alpha>\beta$ (recall that by definition $\alpha>\beta$ if and only if $\alpha-\beta$ is in the $\Z_{\geq0}$-span of $I$). Then, for any total order on $\Phi_1$, there is an isomorphism of affine schemes
\begin{equation*}
	\begin{array}{ccc}
		\left(\prod_{\alpha\in\Phi_1}U_\alpha\right)\times\left(\prod_{\alpha\in\Phi_2}U_\alpha\right) &\isoto &U_P \\
		\left(u_{\alpha_1},\ldots,u_{\alpha_{\abs{\Phi_1}+\abs{\Phi_2}}}\right) &\longmapsto &u_{\alpha_1}\ldots u_{\alpha_{\abs{\Phi_1}+\abs{\Phi_2}}}
	\end{array} \,;
\end{equation*}
this can be seen by applying \cite[Thm.\ 21.68]{milne} to $U$ and to $U_L$, and using the Levi decomposition $U=U_LU_P$. Let $u_{P,1}$ be the component of $u_P$ in $\prod_{\alpha\in\Phi_1}U_\alpha(\kalg)$; by definition of $\Phi_1$, we have $w^Ju_{P,1}(w^J)^{-1}\in U(\kalg)$. In particular,
\[
	\Ad\left(w^Ju_{P,1}^{-1}u_P\right)(t+x) = \Ad\left(\left(w^Ju_{P,1}(w^J)^{-1}\right)^{-1}\right) \Ad\left(w^Ju_P\right)(t+x)\in\bfrak(\kalg) \,.
\]
This means that replacing $u_P$ by $u_{P,1}^{-1}u_P$ changes neither our hypothesis nor our aim.

Therefore, we can assume that $u_P=\prod_{\alpha\in\Phi_2}u_\alpha$ for some uniquely determined $u_\alpha\in U_\alpha(\kalg)$. Write also $x=\sum_{\beta\in\Phi_J\inter\Phi_+}x_\beta$, for uniquely determined $x_\beta\in\ufrak_\beta(\kalg)$. Fix some $\alpha\in\Phi_2$. For any root $\beta\in\Phi_J\inter\Phi_+$, we have $\alpha+\beta\neq 0$ (because both are positive roots), so \cite[Proposition 8.2.3]{springer} implies that
\[
	\Ad(u_\alpha)x_\beta-x_\beta \in \bigoplus_{i,j>0}\ufrak_{i\alpha+j\beta}(\kalg) \,.
\]
This is indeed the case with $\alpha=\beta$ as well since $\Ad(u_\alpha)x_\alpha-x_\alpha=0$. As $i\alpha+j\beta>\alpha$ for $i,j>0$, we can sum over the $x_\beta$ and get
\begin{equation} \label{eqRootsAugment}
	\Ad(u_\alpha)x-x \in \bigoplus_{\gamma>\alpha}\ufrak_\gamma(\kalg) \,.
\end{equation}
Now suppose that $u_P\neq 1$; write $u_P=u_{\alpha_r}u_{\alpha_{r-1}}\ldots u_{\alpha_0}$ for uniquely determined $r\geq 0$ and $u_{\alpha_i}\neq 1$ in $\ufrak_{\alpha_i}$. Remember that the ordering on $\Phi_2$ forbids (in particular) to have $\alpha_0>\alpha_i$ for $i>0$. Applying \eqref{eqRootsAugment}, we get, for each $1\leq i\leq r$
\[
	\Ad(u_{\alpha_i})\Ad(u_{\alpha_{i-1}}\ldots u_{\alpha_0})x - \Ad(u_{\alpha_{i-1}}\ldots u_{\alpha_0})x \in \bigoplus_{\gamma>\alpha_i}u_\gamma(\kalg)
\]
and summing over $i$ yields
\begin{equation} \label{eqNoAlphaComponent}
	\Ad(u_P)x-x \in \bigoplus_{\gamma>\alpha_r}\ufrak_\gamma(\kalg)\oplus\ldots\oplus\bigoplus_{\gamma>\alpha_0}\ufrak_\gamma(\kalg) \subseteq \bigoplus_{\gamma\neq\alpha_0}\ufrak_\gamma(\kalg) \,.
\end{equation}
Since $(u_{\alpha_0}t'u_{\alpha_0}^{-1})t'^{-1}=u_{\alpha_0}(t'u_{\alpha_0}^{-1}t'^{-1})\in U_{\alpha_0}(\kalg)$ for $t'\in T(\kalg)$, we have $\Ad(u_{\alpha_0})t-t\in\ufrak_{\alpha_0}(\kalg)$, which is non-zero because $u_{\alpha_0}\notin\zfrak_\gfrak(t)(\kalg)$ by Proposition \ref{propGenericTorus}\ref{itemGenericTorusLie}. Applying again \eqref{eqRootsAugment} repeatedly and summing as in \eqref{eqNoAlphaComponent} gives
\begin{equation} \label{eqOneAlphaComponent}
	\Ad(u_P)t-t \in \left(\ufrak_{\alpha_0}(\kalg)\setminus\{0\}\right) \oplus \bigoplus_{\gamma\neq\alpha_0}\ufrak_\gamma(\kalg) \,.
\end{equation}
As $x\in\ufrak_L(\kalg)$, we have in particular obviously $t+x\in\tfrak(\kalg)\oplus\bigoplus_{\gamma\neq\alpha_0}\ufrak_\gamma(\kalg)$. Combining with \eqref{eqNoAlphaComponent} and \eqref{eqOneAlphaComponent}, we see that $\Ad(u_P)(t+x)$ has a non-zero component in $\ufrak_{\alpha_0}(\kalg)$. Therefore, $\Ad(w^J)(u_P)(t+x)$ has a non-zero component in $\ufrak_{w^J(\alpha_0)}(\kalg)$. This contradicts the hypothesis that $\Ad(w^J)(u_P)(t+x)\in\bfrak(\kalg)$ because $\alpha_0\in\Phi_2$ means that $w^J(\alpha_0)<0$. As a conclusion, $u_P=1$ so our claim is true.
\end{proof}

The two lemmas \ref{lem2Breuil} and \ref{lem3Breuil} can be nicely summarised as:

\begin{lem} \label{joliCorollaire}
Let $t\in\tfrak_J^\gen(\kalg)$, $w^J\in W^J$, $u_P\in U_P(\kalg)$, $l\in L(\kalg)$ and $x\in\ufrak_L(\kalg)$. Then $\Ad(w^Ju_Pl)(t+x)\in\bfrak(\kalg)$ if and only if $w^Ju_P(w^J)^{-1}\in U(\kalg)$ and $\Ad(l)x\in\ufrak_L(\kalg)$.
\end{lem}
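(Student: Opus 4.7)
The plan is to treat the two implications of the equivalence separately, since one direction is essentially a restatement of the two preceding lemmas and the other is a direct calculation.

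The forward implication $(\Rightarrow)$ is nothing new: assuming $\Ad(w^Ju_Pl)(t+x)\in\bfrak(\kalg)$, Lemma \ref{lem2Breuil} yields $\Ad(l)x\in\ufrak_L(\kalg)$, and with that conclusion in hand, Lemma \ref{lem3Breuil} then gives $w^Ju_P(w^J)^{-1}\in U(\kalg)$. So this half is a direct quotation.

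For the backward implication $(\Leftarrow)$, I would compute $\Ad(w^Ju_Pl)(t+x)$ directly. Setting $u':=w^Ju_P(w^J)^{-1}\in U(\kalg)$, rewrite
\[
	\Ad(w^Ju_Pl)(t+x) = \Ad(u')\,\Ad(w^J)\,\Ad(l)(t+x).
\]
Three elementary observations then finish the argument: \emph{(i)} since $L=C_G(T_J)$, the adjoint action of $L$ fixes $\tfrak_J$ pointwise, so $\Ad(l)(t+x)=t+y$ with $y:=\Ad(l)x\in\ufrak_L(\kalg)$ by hypothesis; \emph{(ii)} $\Ad(w^J)t\in\tfrak(\kalg)\subseteq\bfrak(\kalg)$ since $w^J$ normalises $T$, while $\Ad(w^J)y\in\ufrak(\kalg)$ because the defining property of $W^J$ says that $w^J$ sends each simple root in $J$ to a positive root of $\Phi$, hence (by taking non-negative integer combinations) every positive root in $\Phi_J$ to a positive root in $\Phi$, which is exactly what is needed to push $\ufrak_L=\bigoplus_{\alpha\in\Phi_J\cap\Phi_+}\ufrak_\alpha$ into $\ufrak$; \emph{(iii)} $\Ad(u')$ preserves $\bfrak$ since $u'\in U(\kalg)\subseteq B(\kalg)$. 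Combining (i)--(iii) gives $\Ad(w^Ju_Pl)(t+x)\in\bfrak(\kalg)$.

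There is no real obstacle here. In particular, the genericity of $t\in\tfrak_J^\gen$ is not used in $(\Leftarrow)$: it was only needed in the delicate implication $(\Rightarrow)$, where Proposition \ref{propGenericTorus}\ref{itemGenericTorusLie} enters through Lemmas \ref{lem2Breuil} and \ref{lem3Breuil}. The content of the corollary is therefore just to package the two previous technical lemmas into a clean equivalence statement that will be convenient in \S\ref{secProofGoodPairs}.
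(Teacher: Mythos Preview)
Your proof is correct and follows essentially the same approach as the paper's: the forward direction is the combination of Lemmas \ref{lem2Breuil} and \ref{lem3Breuil}, and the backward direction is the same direct computation using $\Ad(l)t=t$, $\Ad(w^J)(\ufrak_L)\subseteq\ufrak$ from $w^J\in W^J$, and $\Ad(u')\bfrak\subseteq\bfrak$. Your additional remark that genericity of $t$ is only needed for the forward implication is accurate and worth noting.
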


\begin{proof}
The ``only'' way is the combination of the two previous lemmas. For the ``if'' way, assume that $w^Ju_P(w^J)^{-1}\in U(\kalg)$ and $\Ad(l)x\in\ufrak_L(\kalg)$. Then since $w^J\in W^J$, we have $\Ad(w^J)\Ad(l)x\in\bfrak(\kalg)$, so $\Ad(w^Jl)(t+x)=\Ad(w^J)(t+\Ad(l)x)\in\bfrak(\kalg)$, hence $\Ad(w^Ju_P(w^J)^{-1})\Ad(w^Jl)(t+x)\in\bfrak(\kalg)$ which is what we want to prove.
\end{proof}

\subsection{Proof of Theorem \ref{thmConjIsTrue}} \label{ssecExtensionLevi}

This part contains the heart of the proof of Theorem \ref{thmConjIsTrue}. Before beginning the proof, we make the following two observations about base change to the algebraic closure $\kalg$.
\begin{enumerate}[1.]
\item
If $S$ is a scheme over $k$, then $(S_\kalg)^\red=(S^\red)_\kalg$; indeed $(S^\red)_\kalg$ is reduced by \cite[\href{https://stacks.math.columbia.edu/tag/020I}{Lemma 020I}]{stacks}, hence it satisfies the universal property defining $(S_\kalg)^\red$. Therefore our definition of intersections of varieties and inverse image of morphism of varieties commute with base change. In particular, starting from the reductive group $G_\kalg$ over $\kalg$ with Borel subgroup $B_\kalg$ and respective Lie algebras $\gfrak_\kalg$ and $\bfrak_\kalg$, one can define the morphism $q_\kalg\colon G_\kalg\times^{B_\kalg}\bfrak_\kalg\to\gfrak_\kalg$ and the varieties $\widetilde{X}_\kalg$, $\widetilde{V}_{w,\kalg}$ and $\widetilde{X}_{w,\kalg}$ for $w\in W$, similarly as for the split reductive group $G$ over $k$. We then easily see that these constructions commute with base change, and that there is a natural isomorphism for any $w'\in W$
\[
	\widetilde{V}_{w',\kalg}\inter q_\kalg^{-1}(\tfrak^{ww'^{-1}}_\kalg+\ufrak_\kalg) \isoto \left(\widetilde{V}_{w'}\inter q^{-1}(\tfrak^{ww'^{-1}}+\ufrak)\right)\times_k\kalg .
\]
\item
If $Z\inj S$ is a closed immersion of $k$-schemes, then a morphism $S'\to S$ of $k$-schemes factors through $Z$ if and only if the base change $S'_\kalg\to S_\kalg$ factors through $Z_\kalg$. Indeed, it suffices to check this for affine schemes, \emph{i.e.\ }to check that if $I$ is an ideal of a $k$-algebra $A$, then any morphism of $k$-algebras $A\to B$ factors through $A/I$ if and only if the extension of scalars $A\tens_k\kalg\to B\tens_k\kalg$ factors through $(A/I)\tens_k\kalg=(A\tens_k\kalg)/(I\tens_k\kalg)$.
\end{enumerate}

Therefore we can, and do, assume that $k$ is algebraically closed for the rest of \S\ref{secProofGoodPairs}. In particular, the set of closed points of any $k$-scheme $S$ is in natural bijection with the set $S(k)$ of $k$-points of $S$.

We use the following notation. Let $L$ be the Levi subgroup of a standard parabolic subgroup $P$ of $G$ (\emph{i.e.\ }such that $B\subseteq P$); there exists $J\subseteq I$ such that $P=P_J$. Since $L$ is a connected reductive closed subgroup of $G$ and $L\inter B$ is a Borel subgroup of $L$, one can define $q_L\colon L\times^{L\inter B}\lfrak\inter\bfrak\to\lfrak$ and $\widetilde{X}_L$ in the same way as for $G$, as well as $\widetilde{V}_{L,w}$ and $\widetilde{X}_{L,w}$ for any $w\in W_J$. These objects are always assumed to be defined for $G$ by default when $L$ is not written in subscript.

\subsubsection*{Preview of the proof}

Theorem \ref{thmConjIsTrue} can be stated as follows: for all good pairs $(w',w)$ in $W$ and closed points $t\in\tfrak^{ww'^{-1}}$, we have the inclusion
\begin{equation} \label{eqInclusionForFiber}
	\widetilde{V}_{w'}\inter q^{-1}(t+\ufrak) \inj \widetilde{X}_w \,.
\end{equation}
The proof of \eqref{eqInclusionForFiber} will proceed in four steps.

The first step is a result of \cite{bhs3} stating that \eqref{eqInclusionForFiber} is true when $t=0$, regardless of whether $(w',w)$ is a good pair or not.  We then apply this to a conjugate $L$ of the Levi subgroup $C_G((T^{ww'^{-1}})^\circ)$ of $G$, to get the inclusion $\widetilde{V}_{L,w'_L}\inter q_L^{-1}(\ufrak_L)\inj\widetilde{X}_{L,w_L}$ for any $w'_L,w_L$ in the Weyl group of $L$. It is of particular interest because we can then translate by any suitable closed point $t\in\tfrak^{ww'^{-1}}$ when such a $t$ is centralised by $L$; therefore we have, in fact, $\widetilde{V}_{L,w'_L}\inter q_L^{-1}(t+\ufrak_L)\inj\widetilde{X}_{L,w_L}$ for such a $t$.

The second step establishes a connection between $X_L$ and $X$. More precisely, we construct an isomorphism between $\widetilde{V}_{L,w_L}\inter q_L^{-1}(\ufrak_L)$ (or equivalently, by the previous paragraph, $\widetilde{V}_{L,w_L}\inter q_L^{-1}(t'+\ufrak_L)$ for any $t'$ centralised by $L$ and conjugate to some $t\in\tfrak^{ww'^{-1}}$) and a closed subvariety of $\widetilde{V}_w\inter q^{-1}(t+\ufrak)$ for some $w_L$ in the Weyl group of $L$ and $w\in W$. We momentarily call $\widetilde{V}'_w$ this subvariety of $\widetilde{V}_w\inter q^{-1}(t+\ufrak)$.

The third step bridges the last gap from $\widetilde{V}'_w$ to $\widetilde{V}_w\inter q^{-1}(t+\ufrak)$: an action of $U$ on $\widetilde{V}_w\inter q^{-1}(t+\ufrak)$ is defined such that all the orbits intersect $\widetilde{V}'_w$.

The fourth and last step articulates the previous steps into a proof: the good pair condition on some given $(w'\preceq w)$ allows us to find a standard Levi subgroup $L$ of $G$ and an ordered pair $(w'_L\preceq w_L)$ of elements in the Weyl group of $L$, such that the second and third steps can reduce the problem to the situation of the first.

\subsubsection*{Step 1}

\begin{thm}[Beilinson-Bernstein, Ginsburg] \label{thmInclusionFiber0}
Let $w,w'\in W$ such that $w'\preceq w$. Then the locally closed immersion $\widetilde{V}_{w'}\inj\widetilde{X}$ induces $\widetilde{V}_{w'}\inter q^{-1}(\ufrak) \inj \widetilde{X}_w$.
\end{thm}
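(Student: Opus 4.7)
The plan is to reduce this statement to a classical closure relation on the Steinberg variety. First, I would observe that any $\psi\in\bfrak$ with $\Ad(g)\psi\in\ufrak$ is necessarily nilpotent (nilpotence being preserved by conjugation) and hence already lies in $\ufrak$. Consequently $q^{-1}(\ufrak)\subseteq\widetilde{X}$ coincides with the closed subvariety $\{(g,\psi)\in G\times^B\ufrak : \Ad(g)\psi\in\ufrak\}$, a slice of the Springer resolution $\widetilde{\mathcal{N}}\coloneqq G\times^B\ufrak\to\mathcal{N}$ of the nilpotent cone $\mathcal{N}\subseteq\gfrak$.

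Next, through the isomorphism $G\times^B\widetilde{X}\isoto X$ of \eqref{eqIsoLocalModels}, the $G$-translates of $q^{-1}(\ufrak)$ inside $X$ fill out the classical Steinberg variety $Z\coloneqq\widetilde{\mathcal{N}}\times_{\mathcal{N}}\widetilde{\mathcal{N}}$. Under this identification $\widetilde{V}_{w'}\inter q^{-1}(\ufrak)$ corresponds to the $G$-orbit $Z_{w'}\subseteq Z$ of triples $(g_1B,g_2B,\psi)$ with $(g_1B,g_2B)\in U_{w'}$ and $\psi\in\Ad(g_1)\ufrak\cap\Ad(g_2)\ufrak$, while $\widetilde{X}_w\inter q^{-1}(\ufrak)$ corresponds to the Zariski closure $\overline{Z_w}$. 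The theorem then becomes equivalent to the closure relation $Z_{w'}\subseteq\overline{Z_w}$ whenever $w'\preceq w$.

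Finally, to establish this closure relation, I would project $Z$ onto $G/B\times G/B$. Over the open stratum $U_w$ the fibres $\Ad(g_1)\ufrak\cap\Ad(g_2)\ufrak$ are of constant dimension, so $Z_w$ is an irreducible vector bundle over $U_w$, and its closure surjects onto $\overline{U_w}=\coprod_{w'\preceq w}U_{w'}$ by the Bruhat closure relations in $G/B\times G/B$. The main obstacle is that the fibres of $Z\to G/B\times G/B$ jump in dimension when passing to smaller strata, so it is not immediate that the entirety of $Z_{w'}$ sits inside $\overline{Z_w}\cap\pi^{-1}(U_{w'})$ rather than merely a proper closed subvariety of it. Resolving this requires either Beilinson--Bernstein's $\mathcal{D}$-module localisation on $G/B$ applied to Harish-Chandra modules, or Ginsburg's direct construction of deformations of pairs $(g_1B, g_2B)$ together with compatible $\psi$; both arguments show that a point in $Z_{w'}$ can be deformed to a point in $Z_w$, yielding the desired inclusion.
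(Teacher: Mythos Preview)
Your reduction contains a genuine error. The identification of $\widetilde{X}_w\inter q^{-1}(\ufrak)$ (equivalently $X_w\inter\kappa_1^{-1}(0)$) with the closure $\overline{Z_w}$ of the Steinberg stratum is false. One inclusion $\overline{Z_w}\subseteq X_w\inter Z$ is immediate, but the reverse fails: $\widetilde{X}_w$ is the closure of $\widetilde{V}_w$, whose fibres over $U_w$ are the full intersections $\Ad(g_1)\bfrak\cap\Ad(g_2)\bfrak$ (of dimension $\dim\tfrak+\lg(w_0w)$), not just the nilpotent parts $\Ad(g_1)\ufrak\cap\Ad(g_2)\ufrak$. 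The extra $\tfrak$-direction is precisely what allows $\widetilde{X}_w$, upon closure, to meet deeper Steinberg strata.

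More fatally, the closure relation $Z_{w'}\subseteq\overline{Z_w}$ you aim to prove is \emph{false} for every $w'\prec w$ with $w'\neq w$. Each $Z_v$ is the conormal bundle $T^*_{U_v}(G/B\times G/B)$, hence irreducible of dimension $2\dim(G/B)$; the closures $\overline{Z_v}$ for $v\in W$ are exactly the $\lvert W\rvert$ distinct irreducible components of the Steinberg variety. Thus $Z_{w'}\subseteq\overline{Z_w}$ would force $\overline{Z_{w'}}=\overline{Z_w}$, a contradiction. So no appeal to Beilinson--Bernstein or Ginsburg can rescue the statement as you have formulated it.

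The paper's route (via \cite[\S2.4]{bhs3}) does use Beilinson--Bernstein localisation, but applied differently: one identifies $X_w\inter\kappa_1^{-1}(0)$ with the characteristic variety of a suitable $\mathcal D$-module on $G/B\times G/B$ (essentially the $!$-extension of $\mathcal O_{U_w}$), and then reads off the inclusion $Z_{w'}\subseteq X_w$ from the fact that the simple constituents of this $\mathcal D$-module are the intersection cohomology modules $\mathrm{IC}_{w'}$ for $w'\preceq w$, each of which has $\overline{Z_{w'}}$ in its characteristic variety. The key conceptual point you are missing is that the target of the inclusion is $X_w$, not $\overline{Z_w}$.
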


\begin{proof}
It follows from the proof of Proposition \ref{conjImpliesConj2} that the theorem is equivalent to $V_{w'}\inter\kappa_1^{-1}(0)\inj X_w$ for all $w'\preceq w$. This statement is a consequence of the Beilinson-Bernstein correspondence (see \cite[\S11]{DModules}) and a result of Ginsburg \cite{ginsburg}; the whole argument is laid out in \cite[\S2.4]{bhs3} (see in particular the proof of \cite[Thm.\ 2.4.7]{bhs3}).
\end{proof}

\subsubsection*{Step 2}

\begin{prop} \label{propLeviToG}
Let $t\in\tfrak^\gen_{J}$ be a closed point, and let $u^J,v^J\in W^J$.
\begin{enumerate}[label=\emph{(\arabic*)},ref=(\arabic*)]
\item \label{itemLeviImmersion}
There is a morphism $\iota_{t,u^J,v^J}$ given by
\[
	\appl{\iota_{t,u^J,v^J}} {L\times^{L\inter B}\lfrak\inter\bfrak}{G\times^B\bfrak} {(l,\psi_L)} {\left(u^Jl(v^J)^{-1},\Ad(v^J)(t+\psi_L)\right)}
\]
and it is a closed immersion.
\item \label{itemLeviIsomorphism}
The closed immersion $\iota_{t,u^J,v^J}$ induces, for any $w_J\in W_J$, an isomorphism
\[
	\iota_{t,u^J,w_J,v^J} \colon \widetilde{V}_{L,w_J}\inter q_L^{-1}(\ufrak_L) \isoto \widetilde{V}_{u^Jw_J(v^J)^{-1}}\inter q^{-1}\left(\Ad(u^J)(t+\ufrak_L)\right) \,.
\]
\end{enumerate}
\end{prop}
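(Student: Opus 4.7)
For part (1), the key input for well-definedness as a map into $G\times^B\bfrak$ is that $v^J\in W^J$: this gives both $\Ad(v^J)(\lfrak\inter\bfrak)\subseteq\bfrak$ (so $\Ad(v^J)(t+\psi_L)\in\bfrak$, since $t\in\tfrak\subseteq\bfrak$) and $v^J(L\inter B)(v^J)^{-1}\subseteq B$, providing the compatibility between the right $(L\inter B)$-action on the source and the right $B$-action on the target, using in addition that $\Ad(b)t=t$ for $b\in L\inter B$ (since $L=C_G(T_J)$ fixes $t\in\tfrak_J$). For the closed-immersion property, I would factor $\iota_{t,u^J,v^J}$ through its induced map on flag varieties $L/(L\inter B)\to G/B$, $l(L\inter B)\mapsto u^Jl(v^J)^{-1}B$: the image is the $L$-orbit of $u^J(v^J)^{-1}B$ under left multiplication, with stabilizer $L\inter (v^J)^{-1}Bv^J=L\inter B$ (both are Borels of $L$ whose positive system is $\Phi_J\inter\Phi_+$, by the $W^J$-property of $v^J$), so this orbit is isomorphic to the projective variety $L/(L\inter B)$ and hence closed in $G/B$. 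The map $\iota_{t,u^J,v^J}$ is then a closed immersion of affine bundles, fibrewise given by the closed embedding $\psi_L\mapsto\Ad(v^J)(t+\psi_L)$ of an affine subspace into $\bfrak$.

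For the forward containment in part (2), a direct computation shows that for $(l,\psi_L)$ with $l\in(L\inter B)w_J(L\inter B)$ and $\Ad(l)\psi_L\in\ufrak_L$, the element $u^Jl(v^J)^{-1}$ lies in $Bu^Jw_J(v^J)^{-1}B$ (using $u^J(L\inter B)\subseteq Bu^J$ and $(L\inter B)(v^J)^{-1}\subseteq (v^J)^{-1}B$), and
\[
	q\bigl(\iota_{t,u^J,v^J}(l,\psi_L)\bigr)=\Ad(u^J)\bigl(t+\Ad(l)\psi_L\bigr)\in\Ad(u^J)(t+\ufrak_L)
\]
using $\Ad(l)t=t$. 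Injectivity on closed points reduces to the well-definedness argument: any $b\in B$ relating two images must lie in $v^J(L\inter B)(v^J)^{-1}$ (by the same stabilizer identification), and the source-side equivalence then follows from the $\Ad(L)$-invariance of $t$.

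The main obstacle is surjectivity. Given $[g,\phi]$ in the target, my plan is to construct an explicit preimage. By Bruhat, write $g=au^Jw_J(v^J)^{-1}b$ with $a,b\in B$; absorbing $b$ into the equivalence class $[g,\phi]$ reduces to $g=au^Jw_J(v^J)^{-1}$, and decomposing $a=u_Pl_1$ with $u_P\in U_P$ and $l_1\in L\inter B$ via $B=U_P(L\inter B)$ puts $g$ in a tractable form. The combined conditions $\phi\in\bfrak$ and $q([g,\phi])\in\Ad(u^J)(t+\ufrak_L)$ translate to $\Ad(g^{-1}u^J)(t+x)\in\bfrak$ for some $x\in\ufrak_L$; after rearranging this expression into the form required by Lemma \ref{joliCorollaire} (the essential use of the $W^J$-property of $u^J$, and the heart of the whole argument), the genericity hypothesis $t\in\tfrak_J^\gen$ forces $u_P=1$. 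Once $g=l_1u^Jw_J(v^J)^{-1}$, a further right-$B$-equivalence yields a representative $g=u^Jl(v^J)^{-1}b'$ with $l\in L$ and $b'\in B$, and setting $\psi_L=\Ad(l)^{-1}x$ produces the desired preimage; the verification that $(l,\psi_L)\in\widetilde{V}_{L,w_J}\inter q_L^{-1}(\ufrak_L)$ parallels the forward direction. A more conceptual alternative for this final step is a dimension argument: both source and target are irreducible of dimension $|\Phi_J\inter\Phi_+|$, so the closed immersion $\iota_{t,u^J,w_J,v^J}$, whose image already contains the target by the forward containment applied to all $w_J$, must be an isomorphism.
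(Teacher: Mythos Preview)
Your treatment of part \ref{itemLeviImmersion} is essentially correct, though your closed-immersion argument via the orbit map on flag varieties is different from the paper's. The paper instead sets up a cartesian square
\[
\begin{tikzcd}
L\times^{L\inter B}\lfrak\inter\bfrak \arrow[r,"\iota_{t,u^J,v^J}"] \arrow[d] & G\times^B\bfrak \arrow[d] \\
(u^JL(v^J)^{-1}B)/B\times\lfrak \arrow[r,hook] & G/B\times\gfrak
\end{tikzcd}
\]
and pulls back the closed immersion on the bottom row. Your orbit argument is cleaner conceptually but needs a bit more care to conclude that a bijection of total spaces of affine bundles over a closed immersion of bases is itself a closed immersion; the paper's cartesian-square route sidesteps this.

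The real problem is in your surjectivity argument for part \ref{itemLeviIsomorphism}. Starting from the Bruhat decomposition $g=au^Jw_J(v^J)^{-1}$ with $a=u_Pl_1\in U_P(L\inter B)$ does not put $g^{-1}u^J$ in the form $w^Ju_P'l'$ required by Lemma \ref{joliCorollaire}: you get
\[
g^{-1}u^J = v^Jw_J^{-1}\cdot(u^J)^{-1}l_1^{-1}u_P^{-1}u^J,
\]
and the factor $(u^J)^{-1}l_1^{-1}u_P^{-1}u^J$ lies in the $u^J$-conjugates of $L$ and $U_P$, not in $L$ and $U_P$ themselves. The ``rearranging'' you allude to is therefore not a routine manipulation. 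Moreover, even when the lemma does apply, its conclusion is $w^Ju_P(w^J)^{-1}\in U$, not $u_P=1$; one then uses the right-$B$-equivalence in $G\times^B\bfrak$ to absorb this factor. The paper avoids both issues by decomposing $g^{-1}u^J$ directly via $G=\coprod_{w^J\in W^J}Bw^JU_PL$, obtaining $g^{-1}u^J=bw^Ju_Pl$ with an \emph{a priori} unknown $w^J\in W^J$; Lemma \ref{joliCorollaire} then applies verbatim, and only afterwards does the Schubert-cell constraint $g\in Bu^Jw_J(v^J)^{-1}B$ force $w^J=v^J$.

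Your dimension alternative is also not quite right: the forward containment gives $\mathrm{image}\subseteq\mathrm{target}$, not the reverse, so to conclude equality you would need to know independently that the target is irreducible of dimension $|\Phi_J\inter\Phi_+|$. Irreducibility and the upper bound on $\dim(\mathrm{target})$ are not obvious without the explicit surjectivity, so this route does not shortcut the argument.
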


\begin{proof}
We first prove statement \ref{itemLeviImmersion}. We begin by checking that $\iota_{t,u^J,v^J}$ is well defined. Let $b\in L\inter B$. Since $L\inter B=T\prod_{\alpha_\in\Phi_J\inter\Phi_+}U_\alpha$ and $v^J\in W^J$, we have
\[
	b'\coloneqq v^Jb(v^J)^{-1}\in B \,.
\]
Thus, for any $l\in L$ and $\psi_L\in\lfrak\inter\bfrak$, we have
\[
	\left(u^Jlb(v^J)^{-1},\Ad(v^J)(t+\Ad(b)\psi_L)\right) = \left(u^Jl(v^J)^{-1}b',\Ad(b')\Ad(v^J)(t+\psi_L)\right)
\]
(note that $b$ fixes $t$ as $b\in L$), which shows that $\iota_{t,u^J,v^J}$ is well defined.

Now we check that $\iota_{t,u^J,v^J}$ is a closed immersion. First consider the morphism
\begin{equation} \label{eqMapForImmersion}
	\begin{array}{ccc}
		L\times\lfrak\inter\bfrak &\to &(u^JL(v^J)^{-1}B)/B\times\lfrak \\
		(l,\psi_L) &\longmapsto &(u^Jl(v^J)^{-1}B,\Ad(l)\psi_L)
	\end{array} \,.
\end{equation}
From $u^Jlb(v^J)^{-1}=u^Jl(v^J)^{-1}b'$, we know that for any $b\in L\inter B$ and $(l,\psi_L)\in L\times\lfrak\inter\bfrak$, the images of $(l,\psi_L)$ and $(lb,\Ad(b^{-1})\psi_L)$ under \eqref{eqMapForImmersion} are the same. Conversely, assume that $(l,\psi_L)$ and $(l',\psi_L')$ have the same image. This means that
\[
	\Ad(l)\psi_L=\Ad(l')\psi_L'
\]
and that there is $b\in B$ such that
\[
	u^Jl(v^J)^{-1}=u^Jl'(v^J)^{-1}b \,.
\]
Write $b'=l'^{-1}l$. Then $v^Jb'(v^J)^{-1}=b$, so $b'\in L\inter B$ according to Lemma \ref{lemNoNegativeComponent}. We also have $l=l'b'$, so $\psi_L=\Ad({b'}^{-1})\psi_L'$ and $(l,\psi_L)=(l',\psi_L')$ in $L\times^{L\inter B}\lfrak\inter\bfrak$. Therefore, by definition of the quotient space $L\times^{L\inter B}\lfrak\inter\bfrak$, the morphism \eqref{eqMapForImmersion} factorises as
\[
	f_L \colon
	\begin{array}{ccc}
		L\times^{L\inter B}\lfrak\inter\bfrak &\to &\left(u^JL(v^J)^{-1}B\right)/B\times\lfrak \\
		\left(l,\psi_L\right) &\longmapsto &\left(u^Jl(v^J)^{-1}B,\Ad(l)\psi_L\right) \,.
	\end{array}
\]
Together with the morphism
\[
	f_G \colon
	\begin{array}{ccc}
		G\times^B\bfrak &\to &G/B\times\gfrak \\
		(g,\psi) &\longmapsto &\left(gB,\Ad(g)\psi\right) \,,
	\end{array}
\]
the morphism $\iota_{t,u^J,v^J}$ and the closed immersion
\[
	\iota \colon
	\begin{array}{ccc}
		(u^JL(v^J)^{-1}B)/B\times\lfrak &\inj &G/B\times\gfrak \\
		(gB,\psi_L) &\longmapsto &\left(gB,\Ad(u^J)(t+\psi_L)\right) ,
	\end{array}
\]
we get a commutative diagram
\begin{equation} \label{eqDiagramImmersion}
	\begin{tikzcd}
		L\times^{L\inter B}\lfrak\inter\bfrak \arrow[d,"f_L"] \arrow[r,"\iota_{t,u^J,v^J}"] & G\times^B\bfrak \arrow[d,"f_G"] \\
		(u^JL(v^J)^{-1}B)/B\times\lfrak \arrow[r,"\iota",hook] & G/B\times\gfrak \,.
	\end{tikzcd}
\end{equation}
This diagram is a cartesian square. Indeed, let $(g,\psi)\in G\times^B\bfrak$ be such that $(gB,\Ad(g)\psi)$ is in the image of $\iota$. Then, modifying $(g,\psi)\in G\times\bfrak$ if necessary (so that its image in $G\times^B\bfrak$ is the same), there is $l\in L$ such that $g=u^Jl(v^J)^{-1}$, and $\psi_L\in\lfrak$ such that $\Ad(g)\psi=\Ad(u^Jl(v^J)^{-1})\psi=\Ad(u^J)(t+\psi_L)$. From this last equality we get
\[
	\psi = \Ad(v^Jl^{-1})(t+\psi_L) = \Ad(v^J)(t+\Ad(l^{-1})\psi_L)
\]
with $\Ad(l^{-1})\psi_L\in\lfrak=\tfrak\oplus\bigoplus_{\alpha\in\Phi_J}\ufrak_\alpha$. Furthermore, $v^J\in W^J$ implies that
\[
	\Ad(v^J)\bigg(\bigoplus_{\alpha\in\Phi_J\inter\Phi_-}\ufrak_\alpha\bigg) \subseteq \bigoplus_{\alpha\in\Phi_-}\ufrak_\alpha \,,
\]
hence, as $\Ad(v^J)\Ad(l^{-1})\psi_L=\psi-\Ad(v^Jl^{-1})t\in\bfrak$, we necessarily have
\[
	\Ad(l^{-1})\psi_L \in \tfrak\oplus\bigoplus_{\alpha\in\Phi_J\inter\Phi_+}\ufrak_\alpha = \lfrak\inter\bfrak \,.
\]
This gives $(g,\psi)=\iota_{t,u^J,v^J}\big(l,\Ad(l^{-1})\psi_L\big)$ as we wanted. Since the diagram \eqref{eqDiagramImmersion} is cartesian and $\iota$ is a closed immersion, so is $\iota_{t,u^J,v^J}$ by base change.

As for statement \ref{itemLeviIsomorphism}, we had already established the inclusion $u^J(L\inter B)(u^J)^{-1}\subseteq B$ as a consequence of $u^J\in W^J$, and similarly for $v^J$. From this we easily see that $\iota_{t,u^J,v^J}$ restricts to a closed immersion of algebraic varieties
\[
	\widetilde{V}_{L,w_J}\inter q_L^{-1}(\ufrak_L) \inj \widetilde{V}_{u^Jw_J(v^J)^{-1}}\inter q^{-1}\left(\Ad(u^J)(t+\ufrak_L)\right)
\]
which we want to be an isomorphism. For this, let $(g,\psi)\in q^{-1}\left(\Ad(u^J)(t+\ufrak_L)\right)$. We know that
\[
	G = \coprod_{\substack{w^J\in W^J\\w_J\in W_J}}Bw^Jw_JB = \coprod_{w^J\in W^J}Bw^JP = \coprod_{w^J\in W^J}Bw^JU_PL \,,
\]
where the first equality comes from the Bruhat decomposition of $G$, the second from the Bruhat decomposition of $P=P_J$ and the third from the Levi decomposition of $P$ (see \cite[Thm.\ 21.91]{milne} for the last two). Hence, there are $b\in B$, $w^J\in W^J$, $u_P\in U_P$ and $l\in L$ such that $g^{-1}u^J=bw^Ju_Pl$. Changing the lift of $(g,\psi)$ in $G\times\bfrak$ if necessary, we can assume $b=1$, and by assumption there is $x\in\ufrak_L$ such that
\[
	(g,\psi) = \left(u^Jl^{-1}u_P^{-1}(w^J)^{-1},\Ad(w^Ju_Pl)(t+x)\right) \,.
\]
Since $\psi\in\bfrak$, Lemma \ref{joliCorollaire} implies that $w^Ju_P(w^J)^{-1}\in U\subset B$ and that
\[
	\psi_L\coloneqq\Ad(l)x\in\ufrak_L \,,
\]
so we actually have in $G\times^B\bfrak$
\begin{equation} \label{eqIsInImage}
	(g,\psi) = \left(u^Jl^{-1}(w^J)^{-1},\Ad(w^Jl)(t+x)\right) = \left(u^Jl^{-1}(w^J)^{-1},\Ad(w^J)(t+\psi_L)\right) \,.
\end{equation}
Finally, $l^{-1}\in (L\inter B)w_J'(L\inter B)$ for some $w_J'\in W_J$, which gives as usual $g=u^Jl^{-1}(w^J)^{-1}\in Bu^Jw_J'(w^J)^{-1}B$. If we add the additional condition that $(g,\psi)\in\widetilde{V}_{u^Jw_J(v^J)^{-1}}$, or equivalently that $g\in Bu^Jw_J(v^J)^{-1}B$, then we have $w_J'(w^J)^{-1}=w_J(v^J)^{-1}$. This forces $w_J'=w_J$ and $w^J=v^J$, thus \eqref{eqIsInImage} translates to
\[
	(g,\psi) = \iota_{t,u^J,v^J}(l^{-1},\psi_L)
\]
with $(l^{-1},\psi_L)\in\widetilde{V}_{L,w_J}$. This finishes the proof of Proposition \ref{propLeviToG}.
\end{proof}

\subsubsection*{Step 3}

The proof of the following lemma uses standard results about Jordan decompositions, which can be found in Appendix \ref{appendixJordan} below.

\begin{lem} \label{lemJordan}
Let $t\in\tfrak_J^\gen$ be a closed point and let $u^J\in W^J$. Then there is a surjective morphism given by
\begin{equation*}
	f \colon
	\begin{array}{ccc}
		U\times\left(\Ad(u^J)(t+\ufrak_L)\right) &\surj &\Ad(u^J)t+\ufrak \\
		(u,x) &\longmapsto &\Ad(u)x \,.
	\end{array}
\end{equation*}
\end{lem}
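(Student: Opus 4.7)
Set $t' \coloneqq \Ad(u^J)t$, which lies in $\tfrak$ since $u^J$ normalises $T$. The surjectivity amounts to showing that every element of $t' + \ufrak$ can be written as $\Ad(u)x$ for some $u \in U$ and some $x \in t' + \Ad(u^J)\ufrak_L$. My strategy is to reinterpret $\Ad(u^J)\ufrak_L$ intrinsically as $\ufrak \cap \zfrak_\gfrak(t')$, and then invoke a Jordan-decomposition style result (from Appendix \ref{appendixJordan}) saying that any element of $t' + \ufrak$ is $U$-conjugate into $t' + \zfrak_\ufrak(t')$.

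The first step, which is purely combinatorial, is to establish the identity
\[
\Ad(u^J)\ufrak_L \;=\; \ufrak \cap \zfrak_\gfrak(t').
\]
By Proposition \ref{propGenericTorus}\ref{itemGenericTorusLie}, $\zfrak_\gfrak(t) = \lfrak$, so $\zfrak_\gfrak(t') = \Ad(u^J)\lfrak = \tfrak \oplus \bigoplus_{\alpha \in \Phi_J} \ufrak_{u^J(\alpha)}$. Intersecting with $\ufrak$ gives $\bigoplus_{\alpha \in \Phi_J,\, u^J(\alpha) \in \Phi_+} \ufrak_{u^J(\alpha)}$. Because $u^J \in W^J$, a root $\alpha \in \Phi_J$ satisfies $u^J(\alpha) \in \Phi_+$ precisely when $\alpha \in \Phi_+$, and this root index set equals $\Phi_J \cap \Phi_+$. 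Thus the right-hand side is $\Ad(u^J)\bigoplus_{\alpha \in \Phi_J \cap \Phi_+} \ufrak_\alpha = \Ad(u^J)\ufrak_L$, as desired.

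The second step is to appeal to the Jordan-decomposition result in Appendix \ref{appendixJordan}: for any semisimple $t' \in \tfrak$ and any $y \in t' + \ufrak$, there exists $u \in U$ with
\[
\Ad(u^{-1})y \;\in\; t' + \bigl(\ufrak \cap \zfrak_\gfrak(t')\bigr).
\]
Combined with the identification from step one, this yields $x \coloneqq \Ad(u^{-1})y \in t' + \Ad(u^J)\ufrak_L = \Ad(u^J)(t + \ufrak_L)$, and $f(u,x) = \Ad(u)x = y$. Surjectivity follows. (The map is well-defined as a morphism of varieties because the target is the affine space $\Ad(u^J)t + \ufrak$, and clearly $\Ad(u)x \in \Ad(u^J)t + \ufrak$ whenever $u \in U$ and $x \in \Ad(u^J)t + \ufrak$, since $U$ stabilises $\Ad(u^J)t + \ufrak$ setwise using that $\Ad(u) - \id$ maps $\bfrak$ into $\ufrak$.)

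The main obstacle is the appeal to Jordan decomposition in Borel subalgebras in step two; this is the substantive ingredient and is exactly why Appendix \ref{appendixJordan} has been set up. Granting that, the remaining work is the bookkeeping on root spaces in step one, which is straightforward from the definition of $W^J$ and from Proposition \ref{propGenericTorus}. Surjectivity on the level of $k$-points (equivalently, closed points, as $k$ is algebraically closed) is enough because the target is an irreducible variety and the claim is geometric.
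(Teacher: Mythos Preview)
Your Step 1 is correct and is a clean reformulation: using Proposition \ref{propGenericTorus}\ref{itemGenericTorusLie} and the defining property of $W^J$, the identity $\Ad(u^J)\ufrak_L=\ufrak\cap\zfrak_\gfrak(t')$ holds exactly as you say.

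The gap is in Step 2. Appendix \ref{appendixJordan} does \emph{not} contain the statement you invoke. The closest result there is Proposition \ref{propSsDiagConjugates}, which for $y\in\bfrak$ written as $y=t'+n$ (with $t'\in\tfrak$, $n$ nilpotent) only produces $g\in(G,G)$ with $y_\semisimple=\Ad(g)t'$; it says nothing about conjugating by an element of $U$, nor about landing in $t'+\zfrak_\ufrak(t')$. Passing from ``some $g\in G$'' to ``some $u\in U$ and moreover $\Ad(u^{-1})y\in t'+\zfrak_\ufrak(t')$'' is precisely the nontrivial content of the lemma, and the paper's proof spends all its effort on exactly this: it decomposes $gu^J\in Uw^JU_PL$, uses Lemma \ref{lem3Breuil} to force $w^Ju_P(w^J)^{-1}\in U$, argues combinatorially that $w^J=u^J$, and only then lands $y_\nilp$ in $\Ad(u'u^J)\ufrak_L$.

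That said, your formulation does suggest a genuinely shorter route that bypasses Appendix \ref{appendixJordan} altogether. The claim ``every $y\in t'+\ufrak$ is $U$-conjugate into $t'+(\ufrak\cap\zfrak_\gfrak(t'))$'' can be proved directly by induction along the height filtration $\ufrak=\ufrak_{\geq1}\supset\ufrak_{\geq2}\supset\cdots$: writing $\ufrak=\ufrak^0\oplus\ufrak^+$ for the decomposition into the $0$-eigenspace and the sum of nonzero eigenspaces of $\ad(t')$, one shows that if $\Ad(u_m^{-1})y\in t'+\ufrak^0+\ufrak_{\geq m}$ then conjugating further by $\exp(z)$ with $z\in\ufrak^+\cap\ufrak_{\geq m}$ chosen so that $[t',z]$ equals the $\ufrak^+$-part at height $m$ pushes the defect into $\ufrak_{\geq m+1}$ (the cross-terms $[z,\ufrak]$ land in $\ufrak_{\geq m+1}$ automatically). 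Since the filtration terminates, this gives the required $u\in U$. If you supply this argument, your proof is complete and is in fact more elementary than the paper's, which routes through Chevalley restriction and the Bruhat--Levi decomposition; as written, however, Step 2 is an unjustified assertion.
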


\begin{proof}
The existence of the map $f$ is a direct consequence of the fact that $\Ad(u)$ sends any $t'\in\tfrak$ to $t'+\ufrak$ when $u\in U$. Thus we only have to prove the surjectivity

Let $y\in\Ad(u^J)t+\ufrak$, and consider its Jordan decomposition $y=y_\semisimple+y_\nilp$ in $\gfrak$ (see Theorem \ref{thmJordanDecomposition}). From Proposition \ref{propSsDiagConjugates}, there is $g\in G$ such that
\[
	y_\semisimple = \Ad(g)\Ad(u^J)t \,.
\]
We use the decomposition
\[
	G = \coprod_{w^J\in W^J}Bw^JP = \coprod_{w^J\in W^J}UTw^JP = \coprod_{w^J\in W^J}Uw^JU_PL \,.
\]
Let $u\in U$, $w^J\in W^J$ and $u_P\in U_P$ such that $gu^J\in uw^Ju_PL$, from the fact that $t$ is centralised by $L$ we get:
\[
	y_\semisimple = \Ad(uw^Ju_P)t \,.
\]

Now, from Proposition \ref{propSsDiagConjugates}\ref{itemComponentsInBorel}, we have
\begin{equation} \label{eqYsemisimple}
	y_\nilp\in\ufrak, \quad y_\semisimple\in y+\ufrak\subseteq\bfrak \,.
\end{equation}
Hence, $\Ad(w^Ju_P)t=\Ad(u^{-1})y_\semisimple\in\bfrak$, so Lemma \ref{lem3Breuil} implies that $w^Ju_P(w^J)^{-1}\in U$. Set $u'\coloneqq uw^Ju_P(w^J)^{-1}\in U$. We can now write
\begin{equation} \label{eqYsemisimpleFormula}
	y_\semisimple = \Ad(u'w^J)t \in \Ad(w^J)t+\ufrak \,.
\end{equation}
Also, \eqref{eqYsemisimple} and the assumption on $y$ give $y_\semisimple\in \Ad(u^J)t+\ufrak$. Together with \eqref{eqYsemisimpleFormula}, we deduce that 
\[
	\Ad(w^J)t=\Ad(u^J)t \,,
\]
hence $(u^J)^{-1}w^J\in C_G(t)$. If $(u^J)^{-1}w^J\notin W_J$, then there exists $\alpha\in\Phi_J$ such that $\beta\coloneqq(u^J)^{-1}w^J(\alpha)\notin\Phi_J$. Since $U_\alpha\subseteq C_G(t)$, we have
\[
	U_\beta = (u^J)^{-1}w^JU_\alpha(w^J)^{-1}u^J \subseteq C_G(t)
\]
so $U_\beta\subseteq C_G(t)^\circ=L$ because $U_\beta$ is connected, which is a contradiction as $\beta$ is not a root of $L$. Hence $(u^J)^{-1}w^J\in W_J$, from which we deduce $w^J=u^J$ as $W^J$ is a set of representatives of $W/W_J$.

Moreover, the commutation $[y_\semisimple,y_\nilp]=0$ together with $y_\semisimple=\Ad(u'w^J)t$ (see \eqref{eqYsemisimpleFormula}) and Proposition \ref{propGenericTorus}\ref{itemGenericTorusLie} imply $y_\nilp\in\Ad(u'w^J)\lfrak$. Since $y_\nilp\in\ufrak$ and $w^J\in W^J$, we necessarily get $y_\nilp\in\Ad(u'w^J)\ufrak_L$. Thus,
\[
	y = y_\semisimple+y_\nilp \in \Ad(u'w^J)(t+\ufrak_L) = \Ad(u')\Ad(u^J)(t+\ufrak_L)
\]
which concludes the proof.
\end{proof}

\begin{prop} \label{propLastStepLevi}
Let $t\in\tfrak_J^\gen$ be a closed point, let $u^J,v^J\in W^J$ and let $w_J\in W_J$. Write $w\coloneqq u^Jw_J(v^J)^{-1}$. Then there is a surjective morphism of algebraic varieties
\[
	\mu_{t,u^J,w_J,v^J} \colon
	\begin{array}{ccc}
		U\times\left(\widetilde{V}_{L,w_J}\inter q_L^{-1}(\ufrak_L)\right) &\surj &\widetilde{V}_w\inter q^{-1}\left(\Ad(u^J)t+\ufrak\right) \\
		\left(u,(l,\psi_L)\right) &\longmapsto &\left(uu^Jl(v^J)^{-1}B,\Ad(v^J)(t+\psi_L)\right) .
	\end{array}
\]
\end{prop}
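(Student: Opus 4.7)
The strategy is to factor $\mu_{t,u^J,w_J,v^J}$ through the isomorphism $\iota_{t,u^J,w_J,v^J}$ of Proposition \ref{propLeviToG}\ref{itemLeviIsomorphism} and to derive surjectivity from Lemma \ref{lemJordan}. The key observation is that left multiplication on the first factor defines a well-defined action of $U$ on $G\times^B\bfrak$, since it commutes with the right $B$-action used in the quotient. Unwinding definitions, $\mu_{t,u^J,w_J,v^J}(u,(l,\psi_L))$ is nothing but $u\cdot\iota_{t,u^J,v^J}(l,\psi_L)$, where $u\cdot(g,\psi)\coloneqq(ug,\psi)$; that is, $\mu_{t,u^J,w_J,v^J}$ is the composition of $\id_U\times\iota_{t,u^J,w_J,v^J}$ with this $U$-action.

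Next I would verify that $\mu_{t,u^J,w_J,v^J}$ lands in $\widetilde{V}_w\inter q^{-1}(\Ad(u^J)t+\ufrak)$. The inclusion in $\widetilde{V}_w$ follows from the stability of $BwB/B$ under left multiplication by $u\in U\subseteq B$. For the $\bfrak$-component, using that $l\in L$ fixes $t$ by Proposition \ref{propGenericTorus}\ref{itemGenericTorusLie}, one computes
\[
	q\bigl(uu^Jl(v^J)^{-1},\Ad(v^J)(t+\psi_L)\bigr) = \Ad(u)\Ad(u^J)(t+\Ad(l)\psi_L) \,.
\]
By hypothesis $\Ad(l)\psi_L\in\ufrak_L$, and since $u^J\in W^J$ maps $\Phi_J\inter\Phi_+$ into $\Phi_+$ we have $\Ad(u^J)\ufrak_L\subseteq\ufrak$. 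Combining with $\Ad(u^J)t\in\tfrak$ and the fact that $\Ad(u)$ stabilises $\ufrak$ and fixes $\tfrak$ modulo $\ufrak$, the expression lies in $\Ad(u^J)t+\ufrak$ as required.

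The heart of the argument is surjectivity, for which Lemma \ref{lemJordan} does the work. Given a closed point $(g,\psi)\in\widetilde{V}_w\inter q^{-1}(\Ad(u^J)t+\ufrak)$, Lemma \ref{lemJordan} applied to $q(g,\psi)=\Ad(g)\psi$ produces $u\in U$ and $y\in\Ad(u^J)(t+\ufrak_L)$ with $\Ad(g)\psi=\Ad(u)y$. Then the class $(u^{-1}g,\psi)$ in $G\times^B\bfrak$ satisfies $q(u^{-1}g,\psi)=y$, and has flag $u^{-1}gB\in u^{-1}BwB/B=BwB/B$ since $u^{-1}\in B$. Hence $(u^{-1}g,\psi)\in\widetilde{V}_w\inter q^{-1}(\Ad(u^J)(t+\ufrak_L))$, which by Proposition \ref{propLeviToG}\ref{itemLeviIsomorphism} is precisely the image of $\iota_{t,u^J,w_J,v^J}$. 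Writing $(u^{-1}g,\psi)=\iota_{t,u^J,v^J}(l,\psi_L)$ for some $(l,\psi_L)\in\widetilde{V}_{L,w_J}\inter q_L^{-1}(\ufrak_L)$ yields $(g,\psi)=\mu_{t,u^J,w_J,v^J}(u,(l,\psi_L))$. The only nontrivial input is Lemma \ref{lemJordan}, which is proved via Jordan decomposition exploiting the genericity $t\in\tfrak_J^\gen$; all the rest is formal manipulation of the factorisation.
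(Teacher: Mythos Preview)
Your proof is correct and follows essentially the same route as the paper: factor $\mu_{t,u^J,w_J,v^J}$ as the $U$-action composed with $\id_U\times\iota_{t,u^J,w_J,v^J}$, and deduce surjectivity from Lemma~\ref{lemJordan} together with the isomorphism of Proposition~\ref{propLeviToG}\ref{itemLeviIsomorphism}. Your extra paragraph verifying that the map lands in the target is a welcome bit of detail that the paper leaves implicit.
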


\begin{proof}
Remember that the surjectivity of a morphism of algebraic varieties can be checked on closed points (\cite[Exercise 7.4.C]{vakil}). The morphism of algebraic varieties
\begin{equation} \label{eqUnipotentGivesAll}
	\begin{array}{ccc}
		 U\times q^{-1}(\Ad(u^J)(t+\ufrak_L)) &\to &q^{-1}(\Ad(u^J)t+\ufrak) \\
		(u,(g,x)) &\longmapsto &(ug,x)
	\end{array}
\end{equation}
is surjective as a consequence of Lemma \ref{lemJordan}. Since $U\subseteq B$, for any $u\in U$ and $g\in G$, we have $g\in BwB$ if and only if $ug\in BwB$. Therefore \eqref{eqUnipotentGivesAll} induces a surjective morphism of algebraic varieties
\[
	\begin{array}{ccc}
		 U\times \left(\widetilde{V}_w\inter q^{-1}\left(\Ad(u^J)(t+\ufrak_L)\right)\right) &\to &\widetilde{V}_w\inter q^{-1}\left(\Ad(u^J)t+\ufrak\right) \\
		\left(u,(g,x)\right) &\longmapsto &(ug,x)
	\end{array} .
\]
Composing with the bijective morphism $\id\times\iota_{t,u^J,w_J,v^J}$ of Proposition \ref{propLeviToG}\ref{itemLeviIsomorphism} yields the desired morphism $\mu_{t,u^J,w_J,v^J}$. This concludes the proof.
\end{proof}

\subsubsection*{Step 4}

We now prove Theorem \ref{thmConjIsTrue}.

Let $\Phi$ be the root system of $G$ with basis $I$ and $\Phi_{ww'^{-1}}\subseteq\Phi$ the minimal generating subsystem of $ww'^{-1}$ in the sense of Definition \ref{dfnMinGenSys}. According to Proposition \ref{propCycleDecompositionCentraliser}, it is the root system of the Levi subgroup $C_G((T^{ww'^{-1}})^\circ)$. Proposition \ref{propGoodPairsMinipermutations} gives a subset $J$ of the basis $I$ of roots, and elements $u^J,v^J\in W^J$, $w'_J,w_J\in W$ satisfying
\[
	w = u^Jw_J(v^J)^{-1} \,, \quad w' = u^Jw'_J(v^J)^{-1} \,, \quad w'_J\preceq w_J
\]
and such that $u^JL(u^J)^{-1}=C_G((T^{ww'^{-1}})^\circ)$, where $L\coloneqq L_J$ is the standard Levi subgroup associated to $J$. Then $L=C_G((T^{w_J(w'_J)^{-1}})^\circ)$, with $(T^{w_J(w'_J)^{-1}})^\circ=T_J$ and $\tfrak^{w_J(w'_J)^{-1}}=\tfrak_J$ (using the notation of \S\ref{ssecLeviAdjoint}).

The variety $\tfrak^\gen_J$ is Zariski-dense in $\tfrak_J=\tfrak^{w_J(w'_J)^{-1}}$, therefore $\Ad(u^J)\tfrak^\gen_J$ is Zariski-dense in $\Ad(u^J)\tfrak_J=\tfrak^{ww'^{-1}}$. According to Lemma \ref{lemDenseIsEnough}, we thus only need to prove that $\widetilde{V}_{w'}\inj\widetilde{X}$ induces
\[
	\widetilde{V}_{w'}\inter q^{-1}(\Ad(u^J)t+\ufrak) \inj \widetilde{X}_w
\]
for all closed points $t\in\tfrak^\gen_J$. In the following, fix one such $t$.

The closed immersion $\iota_{t,u^J,v^J}$ of Proposition \ref{propLeviToG} restricts to a closed immersion
\[
	\iota_{t,u^J,v^J} \colon \widetilde{X}_L=q_L^{-1}(\lfrak\inter\bfrak) \inj \widetilde{X}=q^{-1}(\bfrak) \,.
\]
Indeed, one has to check that $\Ad(u^J)\Ad(l)\psi_L\in\bfrak$ for all $(l,\psi_L)\in L\times(\lfrak\inter\bfrak)$ such that $\Ad(l)\psi_L\in\lfrak\inter\bfrak$, which is true because $u^J\in W^J$ means that $\Ad(u^J)(\lfrak\inter\bfrak)\subseteq\bfrak$. In the same way, we can further restrict to
\[
	\iota_{t,u^J,v^J} \colon \widetilde{V}_{L,w_J} \inj \widetilde{V}_w
\]
because $u^J\in W^J$ means that $u^J(L\inter B)(u^J)^{-1}\subseteq B$ and similarly for $v^J$. Taking the Zariski closures gives a closed immersion $\iota_{t,u^J,v^J}\colon\widetilde{X}_{L,w_J}\inj\widetilde{X}_w$. There is also a morphism
\[
	\applo{U\times\widetilde{X}_w}{\widetilde{X}_w}{(u,(g,x))}{(ug,x)}
\]
which, when composed with $\id\times\iota_{t,u^J,v^J}\colon U\times\widetilde{X}_{L,w_J}\to U\times\widetilde{X}_w$, gives a morphism
\[
	\bar{\mu}_{t,u^J,w_J,v^J} \colon U\times\widetilde{X}_{L,w_J} \to \widetilde{X}_w
\]
which can be defined by a similar formula to the one for the surjective morphism $\mu_{t,u^J,w_J,v^J}$ of Proposition \ref{propLastStepLevi}.

We also have an obvious locally closed immersion
\[
	\varphi_G \colon \widetilde{V}_{w'}\inter q^{-1}(\Ad(u^J)t+\ufrak) \inj \widetilde{X}
\]
as well as a less obvious locally closed immersion
\[
	\varphi_L \colon \widetilde{V}_{L,w'_J}\inter q_L^{-1}(\ufrak_L) \inj \widetilde{X}_{L,w_J}
\]
given by theorem \ref{thmInclusionFiber0} applied to the reductive group $L$ together with the good pair condition $w'_J\preceq w_J$.

All these morphisms fit into a commutative diagram of algebraic varieties over $k$
\[
	\begin{tikzcd}
		U\times\left(\widetilde{V}_{L,w'_J}\inter q_L^{-1}(\ufrak_L)\right) \arrow[r,"\id\times\varphi_L"] \arrow[dd,"\mu_{t,u^J,w'_J,v^J}",two heads] & U\times\widetilde{X}_{L,w_J} \arrow[d,"\bar{\mu}_{t,u^J,w_J,v^J}"] \\
		 & \widetilde{X}_w \arrow[d,hook] \\
		\widetilde{V}_{w'}\inter q^{-1}(\Ad(u^J)t+\ufrak) \arrow[r,"\varphi_G"] & \widetilde{X}
	\end{tikzcd}
\]
where the surjectivity of the left map $\mu_{t,u^J,w'_J,v^J}$ (Proposition \ref{propLastStepLevi}) proves that the image of the bottom map $\varphi_G$ lands in $\widetilde{X}_w$. This finishes the proof of Theorem \ref{thmConjIsTrue}.

\section{Counter-examples for bad pairs when $G=\mathrm{GL}_n$} \label{secEquations}

In \S\ref{secProofGoodPairs}, we proved Conjecture \ref{conjLocalModel} in the case of good pairs. The present section explores the case of bad pairs for the split reductive group $G=\GL_{n/k}$ where $n\geq 1$ and $k$ is any field of characteristic $0$. From now on $G=\GL_{n/k}$, $B$ is the subgroup of upper triangular matrices in $G$, $T$ is the subgroup of diagonal matrices in $G$, $U$ is the subgroup of upper triangular matrices in $G$ whose diagonal coordinates are $1$, and $W=\Scal_n$ is the Weyl group of $G$.

\subsection{Grassmannians, the flag variety and Schubert cells} \label{ssecFlagEquations}

We recall a few well-know facts about the flag variety and its decomposition in Schubert cells from the computational point of view. We also set the notation for the rest of the section.

\begin{dfn}
For $1\leq d\leq n$, the \emph{Grassmannian} over $k$ of $d$-planes in the $n$-space is the set of linear subspaces of dimension $d$ in $k^n$. It is noted $\Gr_{d,n/k}$.
\end{dfn}

For $V\in\Gr_{d,n/k}$, let $(v_1,\ldots,v_d)$ be a basis of $V$. Then $v_1\wedge\ldots\wedge v_d$ is a non-zero element of $\bigwedge^d k^n$. Choosing a different basis would change this alternate product by a multiplicative factor (the determinant of one basis in the other), therefore the line spanned by $v_1\wedge\ldots\wedge v_d$ in $\bigwedge^d k^n$ is a well-defined element of $\Proj(\bigwedge^d k^n)$. One can also recover $V$ from this line by the property that a vector $v$ is in $V$ if and only if $v\wedge v_1\wedge\ldots\wedge v_d = 0$ in $\bigwedge^{d+1}k^n$.

\begin{dfn} \label{dfnPluckerEmbedding}
The \emph{Plücker embedding} is the map 
\[
	\applo{\Gr_{d,n/k}}{\Proj\Big(\bigwedge^d k^n\Big)}{\left\langle v_1,\ldots,v_d\right\rangle}{k\cdot(v_1\wedge\ldots\wedge v_d)}
\]
where $(v_1,\ldots,v_d)$ is any linearly independent family in $k^n$.
\end{dfn}

\begin{rem}
Proposition \ref{pluckerRelations} below together with the discussion above Definition \ref{dfnPluckerEmbedding} show that the Grassmannian is a projective variety and the Plücker embedding a closed immersion.
\end{rem}

We write any point $x\in\Proj(\bigwedge^d k^n)$ as
\[
	x=[x_{i_1\ldots i_d}]_{1\leq i_1<\ldots<i_d\leq n}
\]
using the projective coordinates along the standard basis $(e_{i_1}\wedge\ldots\wedge e_{i_d})_{1\leq i_1<\ldots<i_d\leq n}$ of $\bigwedge^d k^n$, where $(e_i)_{1\leq i\leq n}$ is the standard basis of $k^n$. It is sometimes convenient to use the variables $x_{i_1\ldots i_d}$ for any sequence $(i_1,\ldots,i_d)$ of integers between $1$ and $n$; they satisfy the relations $x_{i_{\sigma(1)}\ldots i_{\sigma(d)}}=\sgn(\sigma)x_{i_1\ldots i_d}$ for any $\sigma\in\Scal_d$ and $x_{i_1,\ldots,i_d}=0$ whenever there are $1\leq k<l\leq d$ such that $i_k=i_l$.

\begin{dfn}
The \emph{Plücker coordinates} of a point in $\Gr_{d,n/k}$ are the projective coordinates $[x_{i_1\ldots i_d}]_{1\leq i_1<\ldots<i_d\leq n}$ of its image under the Plücker embedding.
\end{dfn}

Let $(v_1,\ldots,v_d)$ be a linearly independent family in $k^n$, and for any $1\leq j\leq d$ write the coordinates of $v_j$ along the standard basis as $(v_{ij})_{1\leq i\leq n}$. Then the Plücker coordinate $x_{i_1\ldots i_d}$ of the span $\langle v_1,\ldots,v_d\rangle$ can be computed as the rank $d$ minor of the matrix $(v_{ij})_{1\leq i\leq n,1\leq j\leq n}$ along the rows $i_1$,\ldots,$i_d$ and columns $1,\ldots,d$.

\begin{prop} \label{pluckerRelations}
The Plücker coordinates satisfy the following quadratic relations, called the \emph{Plücker relations in dimension $d$}:
\begin{equation}
	\sum_{k=1}^{d+1} (-1)^k x_{i_1\ldots i_{d-1}j_k}x_{j_1\ldots\widehat{j_k}\ldots j_{d+1}} = 0
\end{equation}
where $(i_1,\ldots,i_{d-1})$ and $(j_1,\ldots,j_{d+1})$ are any increasing sequences of indices and the notation $\widehat{j}$ indicates that the index $j$ is omitted. These relations define precisely the points in $\Proj(\bigwedge^d k^n)$ that arise from the Grassmannian.
\end{prop}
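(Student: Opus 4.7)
The plan is to establish the two implications of the proposition separately: first, that any decomposable element $v_1\wedge\cdots\wedge v_d$ satisfies the stated quadratic relations; second, that any nonzero element of $\bigwedge^d k^n$ whose coordinates satisfy them is decomposable.

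For the first implication, I would fix indices $(i_1,\ldots,i_{d-1})$ and $(j_1,\ldots,j_{d+1})$ and view the identity to be proved as a polynomial identity which is multilinear and antisymmetric in $(v_1,\ldots,v_d)$. By multilinearity, it suffices to verify it when each $v_l$ is a standard basis vector $e_{k_l}$; then at most one Plücker coordinate of $v_1\wedge\cdots\wedge v_d$ is nonzero and the relation reduces to short bookkeeping on which (if any) of the two factors in each summand survives. A more elegant alternative is to identify the left-hand side as the Laplace expansion of a $(d+1)\times(d+1)$ determinant with a repeated row, which vanishes automatically.

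For the converse, given a nonzero $\omega\in\bigwedge^d k^n$ I would introduce the subspace
\[
    V_\omega\coloneqq\{v\in k^n : v\wedge\omega = 0\}\subseteq k^n .
\]
A standard argument (extend a basis of $V_\omega$ to a basis of $k^n$ and expand $\omega$ in the induced basis of $\bigwedge^d k^n$) shows that $\omega$ is decomposable if and only if $\dim V_\omega\geq d$, in which case $\omega$ is automatically a scalar multiple of $v_1\wedge\cdots\wedge v_d$ for any basis $(v_1,\ldots,v_d)$ of $V_\omega$. It thus suffices to exhibit $d$ linearly independent vectors in $V_\omega$ using the Plücker relations.

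To that end, after permuting the standard basis of $k^n$, we may assume $x_{1\ldots d}\neq 0$, and normalise so that $x_{1\ldots d} = 1$. For each $j\in\{1,\ldots,d\}$, I would set
\[
    v_j\coloneqq \sum_{i=1}^{n} (-1)^{d-j}\, x_{1\ldots\widehat{j}\ldots d\, i}\, e_i ,
\]
which gives $d$ linearly independent vectors since $v_j$ has coefficient $1$ at $e_j$ and coefficient $0$ at $e_l$ for $l\in\{1,\ldots,d\}\setminus\{j\}$. Expanding $v_j\wedge\omega$ in the standard basis of $\bigwedge^{d+1}k^n$, the coefficient of each $e_J$ with $|J|=d+1$ turns out, up to an overall sign, to be exactly the Plücker relation associated to the pair $\big((1,\ldots,\widehat{j},\ldots,d),J\big)$, and hence vanishes by hypothesis. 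The main obstacle is the sign bookkeeping in this last computation: one must carefully align the signs arising from the wedge expansion with those appearing in the Plücker relations, and verify that the family of relations produced covers precisely the required identities for every $j$ and every $J$. Once this verification is carried out, the three claims combine to give the proposition.
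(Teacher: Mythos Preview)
Your argument is correct and is essentially the standard proof of this classical fact. The paper, however, does not give a proof at all: it simply cites \cite[Thm.~1]{kleimanLaksov} and \cite[\S9.1, Lemma~1]{youngTableaux}. So there is nothing to compare at the level of strategy; you have supplied what the paper chose to outsource.

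For what it is worth, your approach matches the one in Fulton's \emph{Young Tableaux} almost exactly: normalise so that one Pl\"ucker coordinate equals $1$, write down $d$ explicit vectors whose pairwise independence is visible from their leading entries, and verify $v_j\wedge\omega=0$ by recognising each coefficient of $e_J$ as a Pl\"ucker relation. The sign bookkeeping you flag as the main obstacle is genuine but routine; your description of it is accurate. The Laplace-expansion interpretation you mention for the forward direction is also the standard one.
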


\begin{proof}
See for example \cite[Thm.\ 1]{kleimanLaksov} or \cite[\S9.1 Lemma 1]{youngTableaux}.
\end{proof}

In the same vein as Proposition \ref{pluckerRelations}, one can express the property that a subspace of $k^n$ is included into another.

\begin{prop} \label{incidenceRelations}
Let $1\leq d<d'\leq n$ be two integers. The set of pairs $(V,V')\in\Gr_{d,n/k}\times\Gr_{d',n/k}$ such that $V\subset V'$ is identified with the set of points
\[
	\left([x_{i_1,\ldots,i_d}]_{1\leq i_1<\ldots<i_d\leq n},[x'_{i_1,\ldots,i_{d'}}]_{1\leq i_1<\ldots<i_{d'}\leq n}\right)\in\Proj\Big(\bigwedge^d k^n\Big)\times\Proj\Big(\bigwedge^{d'}k^n\Big)
\]
which satisfy both sets of Plücker relations and the following additional quadratic relations:
\begin{equation}
	\sum_{k=1}^{d'+1} (-1)^k x_{i_1\ldots i_{d-1}j_k}x'_{j_1\ldots\widehat{j_k}\ldots j_{d'+1}} = 0
\end{equation}
where $(i_1,\ldots,i_{d-1})$ and $(j_1,\ldots,j_{d'+1})$ are any increasing sequences of indices. These relations are called the \emph{incidence relations in dimensions $(d,d')$}.
\end{prop}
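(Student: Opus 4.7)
The plan rests on interpreting the Plücker coordinates via decomposable multivectors together with exterior contraction. Let $(x, x')$ satisfy both sets of Plücker relations; by Proposition~\ref{pluckerRelations}, the multivectors $\xi \coloneqq \sum_I x_I\, e_I \in \bigwedge^d k^n$ and $\zeta \coloneqq \sum_J x'_J\, e_J \in \bigwedge^{d'} k^n$ are decomposable, corresponding (up to scalar) to subspaces $V \in \Gr_{d,n/k}$ and $V' \in \Gr_{d',n/k}$ respectively. The key observation is that for every $\alpha \in \bigwedge^{d-1}(k^n)^*$ the contraction $v_\alpha \coloneqq \iota_\alpha(\xi) \in k^n$ lies in $V$; conversely, writing $\xi = v_1 \wedge \cdots \wedge v_d$ and taking $\alpha$ to be appropriate wedges of covectors dual to a basis of $k^n$ extending $(v_1, \ldots, v_d)$, one sees that the $v_\alpha$ span $V$ as $\alpha$ varies.

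From this, the containment $V \subset V'$ translates cleanly into exterior algebra: $V \subset V'$ if and only if $v \wedge \zeta = 0$ for every $v \in V$ (since $V'$ is exactly $\ker(v \mapsto v \wedge \zeta)$ inside $k^n$), if and only if $v_\alpha \wedge \zeta = 0$ for every $\alpha \in \bigwedge^{d-1}(k^n)^*$. The second step is a direct coordinate computation: choosing $\alpha = e_{i_1}^* \wedge \cdots \wedge e_{i_{d-1}}^*$ one finds $v_\alpha = \sum_j \pm\, x_{i_1 \ldots i_{d-1} j}\, e_j$, and reading off the coefficient of $e_{j_1} \wedge \cdots \wedge e_{j_{d'+1}}$ in $v_\alpha \wedge \zeta$ reproduces, up to an overall sign, exactly the left-hand side of the incidence relation indexed by $((i_1,\ldots,i_{d-1}),(j_1,\ldots,j_{d'+1}))$. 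The equivalence between $V \subset V'$ and the vanishing of these coefficients for all $(I,J)$, given the Plücker relations for $x$ and $x'$, is then the statement of the proposition.

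The main technical point is sign bookkeeping in the exterior-algebra computation; the factor $(-1)^k$ in the incidence relation ultimately arises from the sign needed to insert the index $j_k$ at the front of an ordered $d'$-tuple. A secondary but easy point is justifying that the $v_\alpha$ span $V$, which reduces to the decomposability of $\xi$. Neither poses a serious obstacle: the proposition is essentially a standard fact in multilinear algebra, made explicit in the chosen projective coordinates.
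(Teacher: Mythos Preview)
Your argument is correct and is essentially the standard multilinear-algebra proof: contracting the decomposable $d$-vector $\xi$ against $(d-1)$-covectors produces vectors spanning $V$, and wedging these against $\zeta$ recovers exactly the incidence relations as coefficients. The paper itself does not give a proof but simply cites \cite[\S9.1 Lemma~2]{youngTableaux}, where the same argument appears; your sketch is therefore more explicit than the paper's treatment and matches the referenced source.
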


\begin{proof}
See \cite[\S9.1 Lemma 2]{youngTableaux}.
\end{proof}

\begin{dfn}
A \emph{flag} in $k^n$ is a strictly increasing sequence $(V_1,\ldots,V_{n-1})$ of proper subspaces of $k^n$. The \emph{flag variety over $k$} is the set of flags and it is noted $\flag_{n/k}$.
\end{dfn}

There is an obvious map $\flag_{n/k}\inj\prod_{d=1}^{n-1}\Gr_{d,n/k}$ which, when composed with the product of the Plücker embeddings, gives the embedding
\begin{equation} \label{eqFlagEmbedsInProj}
	\flag_{n/k} \inj \prod_{d=1}^{n-1}\Proj\Big(\bigwedge^d k^n\Big) \,.
\end{equation}
This realises the flag variety as the projective subvariety of $\prod_{d=1}^{n-1}\Proj(\bigwedge^d k^n)$ defined by the Plücker relations and the incidence relations (note that the Segre embedding \cite[\href{https://stacks.math.columbia.edu/tag/01WD}{Lemma 01WD}]{stacks} indeed makes $\prod_{d=1}^{n-1}\Proj(\bigwedge^d k^n)$ projective).

We identify $\flag_{n/k}$ with the algebraic group $G/B$ in the standard way. As in the first paragraph of \S\ref{ssecSpringerScheme}, we have the decomposition $G/B=\coprod_{w\in \Scal_n}BwB/B$. The following is well-known (see for example \cite[\S10.5 Exercise 11]{youngTableaux}):

\begin{prop} \label{propCellEquations}
The cell $BwB/B$ is the set of points in $G/B$ that satisfy the following equations on Plücker coordinates, for all $d\in\{1,\ldots,n-1\}$:
\begin{equation}
	x_{w(1)\ldots w(d)} \neq 0 \quad\text{and}\quad  x_{i_1\ldots i_d}=0 \quad\text{if}\quad \{i_1,\ldots,i_d\}\not\preceq\{w(1),\ldots,w(d)\}
\end{equation}
where $\preceq$ is the order defined in Definition \ref{dfnPartialOrder}.
\end{prop}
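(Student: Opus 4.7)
The plan is to establish both inclusions directly by computing Plücker coordinates in terms of matrix minors, and then using the Bruhat decomposition for the converse.

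First, I would lift $w\in\Scal_n$ to the permutation matrix $\dot w$ with $j$-th column $e_{w(j)}$. Any point of $BwB/B$ has the form $b\dot wB/B$ for some $b\in B$, corresponding to the flag whose $d$-th piece $V_d$ is spanned by $be_{w(1)},\ldots,be_{w(d)}$. By the recipe recalled just before Proposition \ref{pluckerRelations}, its Plücker coordinate $x_{i_1\ldots i_d}$ with $i_1<\ldots<i_d$ equals the $d\times d$ minor $\det(b_{i_k,w(j)})_{1\leq k,j\leq d}$ of $b\dot w$. Expanding as a sum over $\sigma\in\Scal_d$, a non-zero contribution requires $b_{i_{\sigma(k)},w(k)}\neq 0$ for every $k$, hence $i_{\sigma(k)}\leq w(k)$ for every $k$ by upper triangularity of $b$.

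The existence of such a $\sigma$ is a bipartite matching problem, which by Hall's marriage theorem is equivalent to the sorted condition: writing $w'_1<\ldots<w'_d$ for the increasing rearrangement of $w(1),\ldots,w(d)$, one has $i_l\leq w'_l$ for every $l$. This is precisely the condition $\{i_1,\ldots,i_d\}\preceq\{w(1),\ldots,w(d)\}$ of Definition \ref{dfnPartialOrder}, so $x_{i_1\ldots i_d}=0$ whenever the ordering fails. When $\{i_1,\ldots,i_d\}=\{w(1),\ldots,w(d)\}$, the only surviving term in the determinant comes from the unique matching $i_{\sigma(k)}=w(k)$, and it evaluates to $\pm\prod_k b_{w(k),w(k)}$, which is non-zero because $b$ is an invertible upper triangular matrix. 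This proves the forward inclusion.

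For the converse, I would invoke the Bruhat decomposition $G/B=\coprod_{w'\in\Scal_n}Bw'B/B$: any point $p\in G/B$ lies in some unique cell $Bw'B/B$. Applying the forward inclusion to $w'$, the non-vanishing Plücker coordinates $x_{i_1\ldots i_d}$ at $p$ satisfy $\{i_1,\ldots,i_d\}\preceq\{w'(1),\ldots,w'(d)\}$ and $x_{w'(1)\ldots w'(d)}\neq 0$. Combining with the hypothesis on $p$, the non-vanishing of $x_{w(1)\ldots w(d)}$ yields $\{w(1),\ldots,w(d)\}\preceq\{w'(1),\ldots,w'(d)\}$, and symmetrically the non-vanishing of $x_{w'(1)\ldots w'(d)}$ yields the reverse inequality. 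Hence $\{w(1),\ldots,w(d)\}=\{w'(1),\ldots,w'(d)\}$ for every $d\in\{1,\ldots,n-1\}$, and a direct induction on $d$ gives $w=w'$, so $p\in BwB/B$. The main technical ingredient is the Hall's-theorem characterisation of non-vanishing minors; the remaining steps are straightforward bookkeeping via Bruhat.
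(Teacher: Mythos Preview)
Your argument is correct. The paper does not actually supply a proof of this proposition: it simply records the statement as well known and cites \cite[\S10.5, Exercise~11]{youngTableaux}. So there is nothing to compare against beyond noting that you have filled in what the paper leaves as a reference.

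Two minor remarks on your write-up. First, for the vanishing claim you only need the easy direction of the matching equivalence: if some $\sigma$ satisfies $i_{\sigma(j)}\le w(j)$ for all $j$, then sorting both tuples gives $i_l\le w'_l$ for all $l$; this is immediate and does not require Hall's theorem. The Hall direction (existence of a matching when the sorted inequalities hold) is not used in the forward inclusion, since you are not asserting that any particular $x_{i_1\ldots i_d}$ with $\{i_1,\ldots,i_d\}\preceq\{w(1),\ldots,w(d)\}$ is non-zero. Second, your uniqueness argument for the surviving term when $\{i_1,\ldots,i_d\}=\{w(1),\ldots,w(d)\}$ is right, and can be made into one line: summing $i_{\sigma(j)}\le w(j)$ over $j$ gives equality of totals, so each inequality is an equality.
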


\subsection{Explicit equations for \texorpdfstring{$\widetilde{X}$}{X tilde} and \texorpdfstring{$\widetilde{V}_w$}{Vw tilde}} \label{ssecExplicitEquations}

We give a description of the algebraic varieties $\widetilde{X}$ and $\widetilde{V}_w$ of \S\ref{ssecMainResult} as locally closed subvarieties of a projective space defined by explicit equations.

Consider the morphism
\[
	\applo{G\times\bfrak}{G/B\times\gfrak}{(g,\psi)}{(gB,\Ad(g)\psi) .}
\]
It induces an injection $G\times^B\bfrak\inj G/B\times\gfrak$, which restricts to the isomorphism
\begin{equation} \label{defXtilde}
	\widetilde{X}\isoto\enstq{(gB,\psi)\in G/B\times\bfrak}{\Ad(g^{-1})\psi\in\bfrak}
\end{equation}
between $\widetilde{X}$ and a closed subvariety of $G/B\times\bfrak$. From now on, we use this isomorphism \eqref{defXtilde} as the definition of $\widetilde{X}$. This way, the maps $\widetilde{\pi}\colon\widetilde{X}\to G/B$ and $q\colon\widetilde{X}\to\bfrak$ also introduced in \S\ref{ssecMainResult}, are the projections on the first and second coordinate respectively. For $w\in\Scal_n$ we have the cell
\[
	\widetilde{V}_w = \enstq{(gB,\psi)\in\widetilde{X}}{g\in BwB/B}
\]
and its Zariski-closure $\widetilde{X}_w$, which give the cell decomposition $\widetilde{X}=\coprod_{w\in\Scal_n}\widetilde{V}_w$.

A point $(gB,\psi)\in \widetilde{X}$ will be identified by the projective coordinates $x_{i_1\ldots i_d}$ of the flag $gB$ seen inside $\prod_{d=1}^{n-1}\Proj(\bigwedge^d k^n)$ by \eqref{eqFlagEmbedsInProj}, and by the affine coordinates $(u_{ij})_{i\leq j}$ given by the matrix coefficients of $\psi$. We sometimes write $t_i$ instead of $u_{ii}$. We view $\psi$ as the matrix of an endomorphism $u$ of $k^n$ written relatively to the standard basis ($u$ is upper triangular). We view $g$ as a flag $(V_1,\ldots,V_{n-1})$, so that $V_d$ is the span of the first $d$ columns of $g$. 

\begin{lem} \label{lemEqStableFlagIso}
Suppose that $u$ is an isomorphism. Then $\Ad(g^{-1})\psi\in\bfrak$ if and only if, for each $1\leq d\leq n-1$, the point $x_d=\left(x_{i_1\ldots i_d}\right)_{i_1<\ldots<i_d} \in k^{\binom{n}{d}}$ is colinear to
\[
	u(x_d) \coloneqq \left(\sum_{j_1<\ldots<j_d}\Delta^{j_1\ldots j_d}_{i_1\ldots i_d}(u)x_{j_1\ldots j_d}\right)_{i_1<\ldots<i_d} \in k^{\binom{n}{d}}
\]
where $\Delta^{j_1\ldots j_d}_{i_1\ldots i_d}(u)$ is the minor of $u$ associated to rows $i_1,\ldots,i_d$ and columns $j_1,\ldots,j_d$. This amounts to $\binom{n}{d}\left(\binom{n}{d}-1\right)/2$ equations.
\end{lem}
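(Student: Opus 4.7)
The plan is to interpret the condition $\Ad(g^{-1})\psi\in\bfrak$ geometrically as a stability statement on the flag associated to $gB$, then translate it into Plücker coordinates via the Cauchy--Binet formula.

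First, since $\Ad(g^{-1})\psi = g^{-1}\psi g$, the condition $\Ad(g^{-1})\psi\in\bfrak$ is equivalent to $\psi g\in g\cdot\bfrak$, i.e.\ the $d$-th column of $\psi g$ lies in the span of the first $d$ columns of $g$ for every $d$. Writing $(V_1,\ldots,V_{n-1})$ for the flag attached to $gB$, this says exactly that $u(V_d)\subseteq V_d$ for each $d\in\{1,\ldots,n-1\}$.

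Second, I use the assumption that $u$ is an isomorphism: then $u|_{V_d}$ is injective, so $\dim u(V_d)=d$, and the inclusion $u(V_d)\subseteq V_d$ is equivalent to the equality $u(V_d)=V_d$ in $\Gr_{d,n/k}$. This equality is in turn equivalent to proportionality of the two Plücker vectors representing $V_d$ and $u(V_d)$, both of which are nonzero.

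Third, I compute the Plücker vector of $u(V_d)$. Picking a basis $(v_1,\ldots,v_d)$ of $V_d$ and writing $M=(v_{ij})$ for its coordinate matrix in the standard basis, the Plücker coordinate $x_{i_1\ldots i_d}(V_d)$ is the $d\times d$ minor of $M$ along rows $i_1,\ldots,i_d$. The basis $(u(v_1),\ldots,u(v_d))$ of $u(V_d)$ has coordinate matrix $u\cdot M$, so by the Cauchy--Binet formula,
\[
x_{i_1\ldots i_d}(u(V_d)) \;=\; \sum_{j_1<\ldots<j_d}\Delta^{j_1\ldots j_d}_{i_1\ldots i_d}(u)\,x_{j_1\ldots j_d}(V_d),
\]
which is exactly the $(i_1,\ldots,i_d)$-entry of $u(x_d)$. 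Hence the stability condition $u(V_d)=V_d$ is equivalent to colinearity of $x_d$ and $u(x_d)$ in $k^{\binom{n}{d}}$.

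Finally, colinearity of two vectors in $k^N$ (with $N=\binom{n}{d}$) is cut out by the vanishing of the $\binom{N}{2}=\binom{n}{d}(\binom{n}{d}-1)/2$ minors of size $2$ of the matrix they form, which accounts for the stated number of equations. The argument is fairly mechanical; the only step requiring a small amount of care is invoking the Cauchy--Binet formula to identify the Plücker coordinates of $u(V_d)$ with the expression defining $u(x_d)$, and using the invertibility of $u$ to pass from an inclusion of subspaces to an equality (so that the projective proportionality is nondegenerate).
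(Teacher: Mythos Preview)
Your proof is correct and follows essentially the same approach as the paper: both interpret $\Ad(g^{-1})\psi\in\bfrak$ as stability of each $V_d$ under $u$, use invertibility of $u$ to upgrade $u(V_d)\subseteq V_d$ to equality, and then identify the Pl\"ucker vector of $u(V_d)$ with $u(x_d)$ (the paper says ``a quick calculation'' where you explicitly invoke Cauchy--Binet). Your treatment is slightly more detailed, but there is no substantive difference.
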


\begin{proof}
$\Ad(g^{-1})\psi$ is the matrix of $u$ relative to the basis given by the columns of $g$. It is upper triangular if and only if each $V_d$ is stable by $u$. Since $u$ is an isomorphism, $u(V_d)$ is of dimension $d$ and we can consider its Plücker coordinates: we specifically want that $u(V_d)$ and $V_d$ have the same image in $\bigwedge^d k^n$ under the Plücker embedding. The image of $V_d$ is the line spanned by the vector
\[
	x_d=\sum_{i_1<\ldots<i_d}x_{i_1\ldots i_d}e_{i_1}\wedge\ldots\wedge e_{i_d} \quad \in \bigwedge^d k^n
\]
and a quick calculation shows that the image of $u(V_d)$ is the line spanned by
\[
	u(x_d) = \sum_{i_1<\ldots<i_d} \left(\sum_{j_1<\ldots<j_d}\Delta^{j_1\ldots j_d}_{i_1\ldots i_d}(u)x_{j_1\ldots j_d}\right) e_{i_1}\wedge\ldots\wedge e_{i_d} \quad \in \bigwedge^d k^n \,. \qedhere
\]
\end{proof}

\begin{rem}
Note that, since $u$ is upper triangular, we have $\Delta^{j_1\ldots j_d}_{i_1\ldots i_d}(u)=0$ unless $\{i_1,\ldots,i_d\}\preceq\{j_1,\ldots,j_d\}$ for the partial order of Definition \ref{dfnPartialOrder}. Also note that $\Delta^{i_1\ldots i_d}_{i_1\ldots i_d}(u)=t_{i_1}\ldots t_{i_d}$.
\end{rem}

\begin{lem} \label{lemEqStableFlag}
Let $A$ be the polynomial ring in the coordinates of $(gB/B,\psi)$, that is $A=\Z[x_{i_1\ldots i_d},u_{kl},t_m]_{1\leq d<n,\:1\leq i_1<\ldots<i_d\leq n,\:1\leq k<l\leq n,\:1\leq m\leq n}$. Let $\lambda$ be an indeterminate variable.
\begin{enumerate}[label=\emph{(\arabic*)},ref=(\arabic*)]
\item \label{itemColinearEquations}
Keep the notation of Lemma \ref{lemEqStableFlagIso}. The condition
\begin{multline} \label{eqColinearity}
	x_d=\left(x_{i_1\ldots i_d}\right)_{i_1<\ldots<i_d} \quad\text{is colinear to} \\
	(u+\lambda\id)(x_d)=\left(\sum_{j_1<\ldots<j_d}\Delta^{j_1\ldots j_d}_{i_1\ldots i_d}(u+\lambda\id)x_{j_1\ldots j_d}\right)_{i_1<\ldots<i_d}
\end{multline}
translates into $d \binom{n}{d}\left(\binom{n}{d}-1\right)/2$ equations in $A$.
\item \label{itemXtildeEquations}
The collection of all these equations in $A$ for $1\leq d\leq n-1$ defines $\widetilde{X}$ inside $G/B\times\bfrak$.
\end{enumerate}
\end{lem}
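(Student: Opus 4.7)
The plan is to analyze the colinearity minors as polynomials in $\lambda$, observing that a degree drop forces the equations to be indexed by pairs of multi-indices and powers of $\lambda$ from $0$ up to $d-1$, and then to exploit the $\lambda$-deformation to reduce statement (2) to the invertible case already handled by Lemma \ref{lemEqStableFlagIso}.

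For (1), I would first write the colinearity of $x_d$ and $(u+\lambda\id)(x_d)$ as the vanishing of the $2\times 2$ minor
\[
	\Phi_{IJ}(\lambda) := x_I\cdot[(u+\lambda\id)(x_d)]_J - x_J\cdot[(u+\lambda\id)(x_d)]_I
\]
for every unordered pair of distinct multi-indices $I=(i_1<\dots<i_d)$, $J=(j_1<\dots<j_d)$. Expanding $\Delta^{J}_{I}(u+\lambda\id)=\sum_{\sigma\in\Scal_d}\sgn(\sigma)\prod_k\bigl(u_{i_k,j_{\sigma(k)}}+\lambda\delta_{i_k,j_{\sigma(k)}}\bigr)$, the coefficient of $\lambda^d$ equals $\prod_k\delta_{i_k,j_{\sigma(k)}}$, which is nonzero only when $\sigma=\id$ and $I=J$. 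Consequently
\[
	[(u+\lambda\id)(x_d)]_I = \lambda^d x_I + (\text{terms of degree}<d\text{ in }\lambda),
\]
so the $\lambda^d$-coefficient of $\Phi_{IJ}(\lambda)$ is $x_I x_J - x_J x_I = 0$, and $\Phi_{IJ}(\lambda)$ is a polynomial in $\lambda$ of degree at most $d-1$. Requiring $\Phi_{IJ}\equiv 0$ therefore amounts to the vanishing of its $d$ coefficients (powers $\lambda^0,\dots,\lambda^{d-1}$), each of which is an element of $A$. With $\binom{\binom{n}{d}}{2}=\binom{n}{d}(\binom{n}{d}-1)/2$ unordered pairs $\{I,J\}$, this yields the claimed $d\binom{n}{d}(\binom{n}{d}-1)/2$ equations.

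For (2), the point is that the stability of a flag $(V_1,\dots,V_{n-1})$ under $u$ is insensitive to adding a multiple of the identity: $V_d$ is $u$-stable if and only if it is $(u+\lambda\id)$-stable, for any (equivalently, every) $\lambda$. Fixing $\psi$, the endomorphism $u+\lambda\id$ is invertible outside a finite set of $\lambda$, and for such $\lambda$ Lemma \ref{lemEqStableFlagIso} applies: stability of $V_d$ under $u+\lambda\id$ is equivalent to the vanishing of all minors $\Phi_{IJ}(\lambda)$ for $|I|=|J|=d$. Since this equivalence holds for an infinite cofinite set of values of $\lambda$ and the $\Phi_{IJ}$ are polynomial in $\lambda$, stability of $V_d$ under $u$ is equivalent to $\Phi_{IJ}(\lambda)=0$ as an identity in $\lambda$, which by (1) is exactly the system of $d\binom{n}{d}(\binom{n}{d}-1)/2$ equations in $A$. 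Ranging over $1\le d\le n-1$ and combining gives the stability of the full flag, which by \eqref{defXtilde} is the defining condition for $\widetilde{X}$ inside $G/B\times\bfrak$.

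The only computational subtlety is the cancellation of the $\lambda^d$-coefficient, which comes out cleanly from the minor expansion; the remainder is bookkeeping (counting pairs, summing over $d$) together with the standard density argument that a polynomial in $\lambda$ vanishing on a cofinite set vanishes identically. The $\lambda$-trick is precisely what lets us handle non-invertible $\psi$ without having to redo the projective geometry of Lemma \ref{lemEqStableFlagIso}.
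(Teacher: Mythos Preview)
Your proof is correct and follows essentially the same approach as the paper's: for (1) you identify the leading $\lambda^d$-term of each coordinate as $\lambda^d x_I$ so that the cross-product minors drop to degree $\le d-1$, and for (2) you use the $\lambda$-shift to reduce to the invertible case of Lemma~\ref{lemEqStableFlagIso} together with the fact that a polynomial vanishing on a cofinite set of $\lambda$ vanishes identically. The paper additionally remarks that the colinearity condition \eqref{eqColinearity} remains (trivially) true at the finitely many non-invertible values of $\lambda$, but your cofiniteness argument makes this observation unnecessary.
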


\begin{proof}
For all $i_1<\ldots<i_d$ and $j_1<\ldots<j_d$, the degree in $\lambda$ of $\Delta^{j_1\ldots j_d}_{i_1\ldots i_d}(u+\lambda\id)$ is at most $d-1$, unless $(j_1,\ldots,j_d)=(i_1,\ldots,i_d)$ where the degree is $d$ with leading term $\lambda^d$. Hence the degree of $\sum_{j_1<\ldots<j_d}\Delta^{j_1\ldots j_d}_{i_1\ldots i_d}(u+\lambda\id)x_{j_1\ldots j_d}$ in $\lambda$ is $d$, with leading term $x_{i_1\ldots i_d}\lambda^d$. Therefore the $\binom{n}{d}\left(\binom{n}{d}-1\right)/2$ ``cross-product'' equations in $A[\lambda]$ that express the condition \eqref{eqColinearity} are of degree $d-1$ in $\lambda$. This proves \ref{itemColinearEquations}.

As for \ref{itemXtildeEquations}, the equations defining $\widetilde{X}$ are the ones that express the condition $\Ad(g^{-1})\psi\in\bfrak$. They are equivalent to saying that
\[
	\Ad(g^{-1})(\psi+\lambda\id)\in\bfrak
\]
for all $\lambda\in k$. Enlarging $k$ if necessary, we can assume that it is algebraically closed, in particular infinite. Therefore there is a $\lambda\in k$ such that $\psi+\lambda\id$ is invertible. Applying Lemma \ref{lemEqStableFlagIso}, our condition is equivalent to \eqref{eqColinearity} for any such $\lambda$. Note that \eqref{eqColinearity} still holds true for any value of $\lambda$. Indeed, if $(u+\lambda)(V_d)$ is of dimension $<d$, then $(u+\lambda)(\sum x_{i_1\ldots i_d}e_{i_1}\wedge\ldots\wedge e_{i_d})=0$, \emph{i.e.\ }the Plücker coordinates of $(u+\lambda)(V_d)$ are zero. In particular their vector is colinear to the vector of the Plücker coordinates of $V_d$.

In the end, $\Ad(g^{-1})\psi\in\bfrak$ if and only if \eqref{eqColinearity} is true for all $\lambda\in k$. As $k$ is algebraically closed, the corresponding polynomials in $A[\lambda]$ are identically zero.
\end{proof}

\begin{prop} \label{allEquations}
Let $w\in\Scal_n$. Fix an integer $1\leq d\leq n-1$, let $A$ be the ring $\Z[x_{i_1\ldots i_d},u_{kl},t_m]_{1\leq i_1<\ldots<i_d\leq n,\:1\leq k<l\leq n,\:1\leq m\leq n}$, let $\lambda$ be an indeterminate variable. For a sequence $1\leq i_1<\ldots<i_d\leq n$, let $P_{w,i_1\ldots i_d}\in A[\lambda]$ be the polynomial
\[
	P_{w,i_1\ldots i_d}(\lambda) \coloneqq \sum_{j_1<\ldots<j_d}\Delta^{j_1\ldots j_d}_{i_1\ldots i_d}(u+\lambda\id)x_{j_1\ldots j_d} - \left(\prod_{k=1}^d(t_{w(k)}+\lambda)\right)x_{i_1\ldots i_d}
\]
and for $0\leq s\leq d-1$, let $P_{w,i_1\ldots i_d,s}\in A$ be the unique homogeneous polynomials such that
\[
	P_{w,i_1\ldots i_d}(\lambda) = \sum_{s=0}^{d-1} P_{w,i_1\ldots i_d,s}\lambda^s \,.
\]
Then, using the Plücker coordinates $(x_{i_1\ldots i_d})_{1\leq d<n,\:1\leq i_1<\ldots<i_d\leq n}$ on $G/B$ and the affine coordinates $(u_{kl},t_m)_{1\leq k<l\leq n,\:1\leq m\leq n}$ on $\bfrak=\ufrak\oplus\tfrak$, the subvariety $\widetilde{V}_w$ of $G/B\times \bfrak$ is defined by:
\begin{itemize}[--]
\item
the Plücker relations in dimension $d$, given in Proposition \ref{pluckerRelations}, for each $1\leq d\leq n-1$
\item
the incidence relations in dimension $(d,d+1)$, given in Proposition \ref{incidenceRelations}, for each $1\leq d\leq n-2$
\item
the equations for $(x_{i_1\ldots i_d})_{1\leq d<n,\:1\leq i_1<\ldots<i_d\leq n}$ to be in the cell $BwB/B$ of $G/B$, given in Proposition \ref{propCellEquations}
\item
the equations $P_{w,i_1\ldots i_d,s}=0$ for all $1\leq d\leq n-1$, $1\leq i_1<\ldots<i_d\leq n$ and $0\leq s\leq d-1$.
\end{itemize}
\end{prop}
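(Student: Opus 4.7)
The plan is to reduce to showing that, on $(BwB/B)\times\bfrak$, the condition $\Ad(g^{-1})\psi\in\bfrak$ (which defines $\widetilde{X}$ inside $G/B\times\bfrak$ by \eqref{defXtilde}) is cut out precisely by the polynomial identities $P_{w,i_1\ldots i_d}(\lambda)\equiv 0$ for all $1\le d\le n-1$ and all $1\le i_1<\cdots<i_d\le n$. Indeed, the first three sets of equations in the list isolate $BwB/B$ inside $\prod_{d=1}^{n-1}\Proj(\bigwedge^d k^n)$: Proposition \ref{pluckerRelations} and the incidence relations of Proposition \ref{incidenceRelations} cut out $G/B$, and Proposition \ref{propCellEquations} then carves out the Schubert cell. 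Combined with the affine coordinates on $\bfrak=\ufrak\oplus\tfrak$, this determines $(BwB/B)\times\bfrak$ as a locally closed subvariety of $\prod_d\Proj(\bigwedge^d k^n)\times\mathbb{A}^{n(n+1)/2}$, so only the condition $(gB,\psi)\in\widetilde{X}$ remains to be encoded.

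By Lemma \ref{lemEqStableFlag}\ref{itemXtildeEquations}, this last condition is equivalent to requiring, for each $d$, that $x_d=(x_{i_1\ldots i_d})$ and $(u+\lambda\id)(x_d)$ be colinear as polynomial vectors in $\lambda$. The equations $P_{w,i_1\ldots i_d}(\lambda)=0$ assert the stronger statement that $(u+\lambda\id)(x_d)=\prod_{k=1}^d(t_{w(k)}+\lambda)\cdot x_d$, so they clearly imply colinearity; hence one direction of the desired equivalence is immediate. For the converse, work on $BwB/B$, where Proposition \ref{propCellEquations} gives $x_{w(1)\ldots w(d)}\ne 0$ and $x_{j_1\ldots j_d}=0$ unless $\{j_1,\ldots,j_d\}\preceq\{w(1),\ldots,w(d)\}$. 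Since $u$ is upper triangular, the minor $\Delta^{j_1\ldots j_d}_{w(1)\ldots w(d)}(u+\lambda\id)$ vanishes identically unless $\{w(1),\ldots,w(d)\}\preceq\{j_1,\ldots,j_d\}$ (a standard fact: for an upper triangular matrix, a minor along rows $I$ and columns $J$ of equal size can be non-zero only when $I\preceq J$ in the order of Definition \ref{dfnPartialOrder}, since otherwise the submatrix contains a zero block of complementary size). Both constraints together leave only the diagonal term $(j_1,\ldots,j_d)=(w(1),\ldots,w(d))$ in the $(w(1)\ldots w(d))$-entry of $(u+\lambda\id)(x_d)$, yielding the value $\prod_{k=1}^d(t_{w(k)}+\lambda)\cdot x_{w(1)\ldots w(d)}$. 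Combined with colinearity of $x_d$ and $(u+\lambda\id)(x_d)$ and with the non-vanishing of $x_{w(1)\ldots w(d)}$, this forces the proportionality scalar to equal $\prod_{k=1}^d(t_{w(k)}+\lambda)$, so all $P_{w,i_1\ldots i_d}(\lambda)$ vanish identically in $\lambda$, and thus each coefficient $P_{w,i_1\ldots i_d,s}$ vanishes.

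The only non-formal step is the diagonal minor computation on the Schubert cell: once one knows that the $(w(1)\ldots w(d))$-component of $(u+\lambda\id)(x_d)$ reduces cleanly to $\prod_{k}(t_{w(k)}+\lambda)\cdot x_{w(1)\ldots w(d)}$, the rest is straightforward bookkeeping of which polynomial identities in $\lambda$ carry which information. The hard point is really combinatorial: the matching between the vanishing pattern of Plücker coordinates on $BwB/B$ and the vanishing pattern of minors of upper triangular matrices is what forces the only surviving contribution to come from the ``diagonal'' index, thereby pinning down the proportionality scalar.
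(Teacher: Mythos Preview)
Your proof is correct and follows essentially the same approach as the paper's: both arguments reduce the colinearity condition of Lemma \ref{lemEqStableFlag} on the Schubert cell by computing the $(w(1)\ldots w(d))$-entry of $(u+\lambda\id)(x_d)$ via the matching vanishing patterns of Plücker coordinates and upper-triangular minors, thereby identifying the proportionality scalar as $\prod_k(t_{w(k)}+\lambda)$. Your presentation separates the two directions of the equivalence more explicitly than the paper does, but the mathematical content is the same.
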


\begin{proof}
Lemma \ref{lemEqStableFlag} gives a desciption of $\widetilde{X}$ as a subvariety of $G/B\times\bfrak$ defined by $\sum_{d=1}^{n-1} d\binom{n}{d}\left(\binom{n}{d}-1\right)/2$ equations (which are not necessarily independent). To find the equations for $\widetilde{V}_w$, we add the ones given by Proposition \ref{propCellEquations}.

Now, $x_{j_1\ldots j_d}=0$ for $\{j_1,\ldots,j_d\}\not\preceq\{w(1),\ldots,w(d)\}$ and $\Delta^{j_1\ldots j_d}_{w(1)\ldots w(d)}(u+\lambda\id)=0$ for $\{j_1,\ldots,j_d\}\not\succeq\{w(1),\ldots,w(d)\}$. Hence the $(w(1),\ldots,w(d))$-th term of the vector $(u+\lambda\id)(x)$, as expressed in \eqref{eqColinearity}, is
\[
	\left(\prod_{k=1}^d(t_{w(k)}+\lambda)\right)x_{w(1)\ldots w(d)} \,.
\]
Knowing that $x_{w(1)\ldots w(d)}\neq0$ and that $x_{j_1\ldots j_d}=0$ for $\{j_1,\ldots,j_d\}\not\preceq\{w(1),\ldots,w(d)\}$, the colinearity condition \eqref{eqColinearity} is reduced to the following equations:
\[
	\left(\prod_{k=1}^d(t_{w(k)}+\lambda)\right)x_{i_1\ldots i_d} = \sum_{j_1<\ldots<j_d}\Delta^{j_1\ldots j_d}_{i_1\ldots i_d}(u+\lambda\id)x_{j_1\ldots j_d}
\]
where $i_1<\ldots<i_d$ is any strictly increasing sequence of indices. These are the announced equations.
\end{proof}

\begin{rem} \label{remSimplifiedEquations}
We have also seen that for each $w\in\Scal_n$, $1\leq d\leq n-1$ and $1\leq i_1<\ldots<i_d\leq n$, the polynomial $P_{w,i_1\ldots i_d}$ is a sum of terms
\[
	a_{j_1\ldots j_d}x_{j_1\ldots j_d}, \quad \{j_1,\ldots,j_d\}\succeq\{i_1,\ldots,i_d\} \,,
\]
where $a_{j_1\ldots j_d}$ is an element of $\Z[u_{kl},t_m,\lambda]_{1\leq k<l\leq n, 1\leq m\leq n}$. Also, the particular ``first term coefficient'' $a_{i_1\ldots i_d}$ is equal to $\prod_{k=1}^d(t_{i_k}+\lambda)-\prod_{k=1}^d(t_{w(k)}+\lambda)$. In particular, for $\{i_1,\ldots,i_d\}\succeq\{w(1),\ldots,w(d)\}$ the polynomial $P_{w,i_1\ldots i_d}$ is in the ideal generated by the $x_{j_1\ldots j_d}$ for $\{j_1,\ldots,j_d\}\succ\{w(1),\ldots,w(d)\}$, which are all zero on the cell $BwB/B$ of $G/B$ by Proposition \ref{propCellEquations}. Thus the associated equation is tautological on $\widetilde{V}_w$.
\end{rem}

\subsection{Taking the Zariski closure} \label{ssecZariskiClosureEquations}

For $w\in\Scal_n$, let $\widetilde{X}'_w$ denote the subvariety of $\prod_{d=1}^{n-1}\Proj(\bigwedge^d k^n)\times\mathbb{A}^{n(n+1)/2}$, with coordinate ring $A\coloneqq\Z[x_{j_1\ldots j_d},u_{kl},t_m]$, defined by all the equations listed in Proposition \ref{allEquations} except for the inequations $x_{w(1)\ldots w(d)}\neq0$ for $1\leq d\leq n-1$. It is therefore the subvariety of $G/B\times\bfrak$ that satisfies the equations $P_{w,i_1\ldots i_d,s}=0$ for all $1\leq d\leq n-1$, $1\leq i_1<\ldots<i_d\leq n$ and $0\leq s\leq d-1$. In particular, note that $\widetilde{X}'_w$ contains $\widetilde{V}_w$ by Proposition \ref{allEquations} and is Zariski-closed. Therefore $\widetilde{X}_w\subseteq \widetilde{X}'_w$.

\begin{prop} \label{conjImpliesConj1}
Let $w,w'\in\Scal_n$ such that $w'\preceq w$. Then, as algebraic varieties over $k$,
\[
	\widetilde{X}'_w\inter \widetilde{V}_{w'} = \widetilde{V}_{w'}\inter q^{-1}(\tfrak^{ww'^{-1}}+\ufrak) \,.
\]
\end{prop}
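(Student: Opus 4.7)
The plan is to compare the defining ideals of the two subvarieties inside the coordinate ring of $\widetilde{V}_{w'}$. Both sides are closed subvarieties of $\widetilde{V}_{w'}$, so it suffices to prove that they are cut out by the same ideal there. Working on the standard open affine chart of $\prod_d\Proj(\bigwedge^d k^n)$ containing $w'B/B$, where each Plücker coordinate $x_{w'(1)\ldots w'(d)}$ can be normalised to $1$, reduces the proposition to a purely algebraic identity; since $\widetilde{V}_{w'}$ is irreducible, the local statement glues to the global one.

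First I would use that $\widetilde{V}_{w'}\subseteq\widetilde{X}'_{w'}$ (by Proposition \ref{allEquations}) to replace each defining equation $P_{w,i_1\ldots i_d}=0$ coming from $\widetilde{X}'_w$ by the difference $P_{w,i_1\ldots i_d}-P_{w',i_1\ldots i_d}=0$. Reading off the definition in Proposition \ref{allEquations} gives
\[
	P_{w,i_1\ldots i_d}(\lambda)-P_{w',i_1\ldots i_d}(\lambda) \;=\; Q_d(\lambda)\,x_{i_1\ldots i_d}\,,\qquad Q_d(\lambda)\coloneqq\prod_{k=1}^d(t_{w'(k)}+\lambda)-\prod_{k=1}^d(t_{w(k)}+\lambda)\,.
\]
Since $x_{w'(1)\ldots w'(d)}$ is a unit on $\widetilde{V}_{w'}$, applying this with $\{i_1,\ldots,i_d\}=\{w'(1),\ldots,w'(d)\}$ forces every coefficient of $Q_d$ (as a polynomial in $\lambda$) to vanish; conversely, once $Q_d$ vanishes identically, the other instances are tautological. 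Thus the ideal $I_1$ defining $\widetilde{X}'_w\inter\widetilde{V}_{w'}$ inside $\widetilde{V}_{w'}$ is precisely the ideal generated by the coefficients of $Q_1,\ldots,Q_{n-1}$.

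The last step is to match $I_1$ with the ideal $I_2$ defining $\widetilde{V}_{w'}\inter q^{-1}(\tfrak^{ww'^{-1}}+\ufrak)$: writing $\psi\in\tfrak^{ww'^{-1}}+\ufrak$ in coordinates amounts to $t_i=t_{ww'^{-1}(i)}$ for all $i$, i.e.\ $t_{w'(k)}=t_{w(k)}$ for $k=1,\ldots,n$, so $I_2$ is generated by these $n$ linear forms. The inclusion $I_1\subseteq I_2$ is immediate, since modulo $I_2$ each $Q_d$ vanishes identically. For $I_2\subseteq I_1$, I would observe that $Q_d$ has degree at most $d-1$ in $\lambda$ and that its coefficient of $\lambda^{d-1}$ equals $\sum_{k=1}^d(t_{w'(k)}-t_{w(k)})$; taking successive differences as $d$ varies extracts each $t_{w'(d)}-t_{w(d)}\in I_1$ for $d=1,\ldots,n-1$, and the remaining case $d=n$ follows from the trivial relation $\sum_{k=1}^n(t_{w'(k)}-t_{w(k)})=0$. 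There is no substantial geometric obstacle: the whole argument is a polynomial bookkeeping computation, and indeed the Bruhat-order hypothesis $w'\preceq w$ intervenes only to guarantee non-emptiness.
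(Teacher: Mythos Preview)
Your argument is correct and follows essentially the same route as the paper: both reduce the extra equations defining $\widetilde{X}'_w$ on $\widetilde{V}_{w'}$ to the vanishing of $Q_d(\lambda)=\prod_k(t_{w'(k)}+\lambda)-\prod_k(t_{w(k)}+\lambda)$ for all $d$, and then identify this with the condition $t_{w'(k)}=t_{w(k)}$. The paper does the last identification pointwise via equality of multisets of roots, while you extract the $\lambda^{d-1}$ coefficient and telescope; both work, and yours actually gives the equality of ideals before reduction.

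There is one small oversight. The definition of $\widetilde{X}'_w$ keeps \emph{all} the equations of Proposition~\ref{allEquations} except the inequations, so in particular it imposes the Schubert closure equations $x_{i_1\ldots i_d}=0$ for $\{i_1,\ldots,i_d\}\not\preceq\{w(1),\ldots,w(d)\}$, not only the $P_{w,\ldots}$. You never address these, and your closing remark that the hypothesis $w'\preceq w$ ``intervenes only to guarantee non-emptiness'' is not quite right: that hypothesis is precisely what makes these closure equations automatic on $\widetilde{V}_{w'}$, since $w'\preceq w$ gives $\{w'(1),\ldots,w'(d)\}\preceq\{w(1),\ldots,w(d)\}$ and hence the $w'$-cell vanishing conditions are stronger than the $w$-closure ones. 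Without $w'\preceq w$ the left-hand side would be empty while the right-hand side typically is not, so the statement would actually fail. Inserting this one-line observation completes your proof.
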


\begin{proof}
Let $(gB/B,\psi)\in G/B\times\bfrak$ be a point in the intersection $\widetilde{X}'_w\inter \widetilde{V}_{w'}$. Its coordinates $(x_{j_1\ldots j_d},u_{kl},t_m)$ satisfy in particular $P_{w,i_1\ldots i_d}(\lambda)=0$ in $k[\lambda]$ for any $\{i_1,\ldots,i_d\}$ and $x_{i_1,\ldots,i_d}=0$ if $\{i_1,\ldots,i_d\}\not\preceq\{w'(1),\ldots,w'(d)\}$. Under these latter conditions, almost all the terms in $P_{w,w'(1)\ldots w'(d)}(\lambda)$ become zero, and the equation $P_{w,w'(1)\ldots w'(d)}(\lambda)=0$ becomes
\[
	\left(\prod_{k=1}^d(t_{w'(k)}+\lambda)-\prod_{k=1}^d(t_{w(k)}+\lambda)\right)x_{w'(1)\ldots w'(d)} = 0
\]
on $\widetilde{X}'_w$. Since we also have $x_{w'(1)\ldots w'(d)}\neq 0$, we then must have $\prod_{k=1}^d(t_{w'(k)}+\lambda)=\prod_{k=1}^d(t_{w(k)}+\lambda)$ as polynomials in $k[\lambda]$.

Also, we have by definition, for all $\{i_1,\ldots,i_d\}$,
\[
	P_{w,i_1\ldots i_d}(\lambda)-P_{w',i_1\ldots i_d}(\lambda) = \prod_{k=1}^d(t_{w(k)}+\lambda)-\prod_{k=1}^d(t_{w'(k)}+\lambda)
\]
in $A[\lambda]$. As a consequence, $P_{w,i_1\ldots i_d,s}$ and $P_{w',i_1\ldots i_d,s}$ are equal for all $s$ and $\{i_1,\ldots,i_d\}$ on $\widetilde{X}'_w$. Therefore $\widetilde{X}'_w\inter \widetilde{V}_{w'}$ is the subvariety of $\widetilde{V}_{w'}$ where $\prod_{k=1}^d(t_{w'(k)}+\lambda)=\prod_{k=1}^d(t_{w(k)}+\lambda)$ for all $1\leq d\leq n-1$. This last equality of polynomials in $k[\lambda]$ means that the multisets $\{t_{w'(1)},\ldots,t_{w'(d)}\}$ and $\{t_{w(1)},\ldots,t_{w(d)}\}$ are equal for all $1\leq d\leq n-1$. A recursion on $d$ shows that this amounts to saying $t_{w'(k)}=t_{w(k)}$ for all $1\leq k\leq d$. For $d=n-1$, this is equivalent to $(gB/B,\psi)\in q^{-1}(\tfrak^{ww'^{-1}}+\ufrak)$.
\end{proof}

Replacing $\widetilde{X}'_w$ by $\widetilde{X}_w$ in Proposition \ref{conjImpliesConj1} would yield Conjecture \ref{conjLocalModelSimple}. However we shall see in Theorem \ref{thmConjIsFalse} that this is not always true.

The strategy to find counterexamples to Conjecture \ref{conjLocalModelSimple} is to use the inequations $x_{w(1)\ldots w(d)}\neq0$ satisfied on $\widetilde{V}_w$ to simplify other equations satisfied on $\widetilde{V}_w$, by ``removing'' non-zero factors. We then get an equation that is satisfied on $\widetilde{V}_w$, hence on $\widetilde{X}_w$ as well, but not necessarily on $\tilde{X}'_w$. This is the object of the following two lemmas. Once this is done, this equation, when considered on $\widetilde{V}_{w'}$, gives a condition that turns out to be strictly more restrictive that simply being in $q^{-1}(\tfrak^{ww'^{-1}}+\ufrak)$.

\begin{lem} \label{lemIncidenceSimplified}
Let $w\in\Scal_n$, $q\in\{1,\ldots,n-2\}$ and $b\in\{w(1),\ldots,w(q+1)\}$; write $\{w(1),\ldots,w(q+1)\}=\{i_1,\ldots,i_q,b\}$. Let $\{j_1,\ldots,j_q\}$ be a subset of $\{1,\ldots,n\}$ which does not contain $b$. Assume that for any $1\leq l\leq q$ such that $j_l<b$, we have $j_l\in\{w(1),\ldots,w(q+1)\}$. Then, for any $a\in\{1,\ldots,n\}$, the equation
\[
	x_{i_1\ldots i_qa}x_{j_1\ldots j_qb} = \pm x_{i_1\ldots i_qb}x_{j_1\ldots j_qa}
\]
is satisfied on $\widetilde{V}_w$ for a certain choice of sign.
\end{lem}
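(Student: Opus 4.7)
The plan is to derive the claimed identity from the Plücker relation in dimension $q+1$ of Proposition \ref{pluckerRelations}, exploiting the hypothesis that the ``small'' $j_l$'s lie in $\{w(1),\ldots,w(q+1)\}$ to annihilate all terms other than the two that appear in the statement.

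Concretely, applying the Plücker relation in dimension $q+1$ to the $q$-tuple $(i_1,\ldots,i_q)$ and to a $(q+2)$-tuple formed from $\{j_1,\ldots,j_q,a,b\}$ (using the antisymmetric convention whenever the tuple is not strictly increasing or some indices coincide) yields a polynomial identity of the shape
\[
	\varepsilon_a\, x_{i_1\ldots i_q a}\, x_{j_1\ldots j_q b} \;+\; \varepsilon_b\, x_{i_1\ldots i_q b}\, x_{j_1\ldots j_q a} \;+\; \sum_{l=1}^{q} \varepsilon_l\, x_{i_1\ldots i_q j_l}\, y_l \;=\; 0
\]
for signs $\varepsilon_a,\varepsilon_b,\varepsilon_l\in\{\pm1\}$, where $y_l$ denotes the Plücker coordinate indexed by $\{j_1,\ldots,\widehat{j_l},\ldots,j_q,a,b\}$.

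The key step is the vanishing of $x_{i_1\ldots i_q j_l}$ on $\widetilde{V}_w$ for each $l$. If $j_l\in\{i_1,\ldots,i_q\}$, antisymmetry gives the vanishing at once. Otherwise $\{i_1,\ldots,i_q,j_l\}$ has $q+1$ distinct elements; since $b\notin\{j_1,\ldots,j_q\}$, we have $j_l\neq b$, so either $j_l>b$ or $j_l<b$. In the former case, $\{i_1,\ldots,i_q,j_l\}$ is obtained from $\{w(1),\ldots,w(q+1)\}=\{i_1,\ldots,i_q,b\}$ by replacing $b$ with a strictly larger element, hence $\{i_1,\ldots,i_q,j_l\}\not\preceq\{w(1),\ldots,w(q+1)\}$ for the partial order of Definition \ref{dfnPartialOrder}, and Proposition \ref{propCellEquations} forces $x_{i_1\ldots i_q j_l}=0$ on $\widetilde{V}_w$. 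In the latter case, the standing hypothesis gives $j_l\in\{w(1),\ldots,w(q+1)\}=\{i_1,\ldots,i_q,b\}$, which combined with $j_l\neq b$ yields $j_l\in\{i_1,\ldots,i_q\}$, contradicting the running subcase assumption. Hence this subcase does not occur.

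Substituting these vanishings into the displayed identity leaves only the two ``extreme'' terms and gives the desired equation on $\widetilde{V}_w$ with sign $\pm=-\varepsilon_b/\varepsilon_a$. I expect the main obstacle to be largely clerical: carefully tracking signs coming from the reordering of $\{j_1,\ldots,j_q,a,b\}$ into an increasing tuple, and handling separately the degenerate corner cases $a=b$, $a\in\{j_1,\ldots,j_q\}$ or $a\in\{i_1,\ldots,i_q\}$, in which the Plücker relation becomes tautological but both sides of the claim still vanish by similar applications of antisymmetry and Proposition \ref{propCellEquations}.
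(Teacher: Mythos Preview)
Your proposal is correct and follows essentially the same approach as the paper: both apply the Pl\"ucker relation in dimension $q+1$ to $(i_1,\ldots,i_q)$ and $\{j_1,\ldots,j_q,a,b\}$, and then kill the cross terms $x_{i_1\ldots i_q j_l}$ by the same dichotomy on whether $j_l<b$ (redundant index via the hypothesis) or $j_l>b$ (vanishing on $\widetilde{V}_w$ via Proposition~\ref{propCellEquations}). Your write-up is in fact slightly more explicit than the paper's about why the $j_l<b$ case forces $j_l\in\{i_1,\ldots,i_q\}$, and about the degenerate cases in $a$.
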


\begin{proof}
Consider the Plücker relation in dimension $q+1$ with indices $\{i_1,\ldots,i_q\}$ and $\{j_1,\ldots,j_q,a,b\}$ (see Proposition \ref{incidenceRelations}), which is satisfied on $\widetilde{V}_w$. Up to signs, it is of the form
\begin{equation} \label{eqIncidenceToSimplify}
	\pm x_{i_1\ldots i_qa}x_{j_1\ldots j_qb} \pm x_{i_1\ldots i_qb}x_{j_1\ldots j_qa} + \sum_{1\leq l\leq q} \pm x_{i_1\ldots i_qk_l}x_{j_1\ldots\hat{j_l}\ldots j_qab} = 0 \,.
\end{equation}
Let $1\leq l\leq q$. If $j_l<b$, then our hypothesis implies that $x_{i_1\ldots i_qj_l}=0$ because the indices are redundant. Else we have $j_l>b$, so that $\{i_1,\ldots,i_q,j_l\}\succ\{i_1,\ldots,i_q,b\}=\{w(1),\ldots,w(q+1)\}$, hence $x_{i_1\ldots i_qj_l}=0$ on $\widetilde{V}_w$ by virtue of Proposition \ref{propCellEquations}. Eliminating all those null variables in \eqref{eqIncidenceToSimplify} gives the desired equation.
\end{proof}

\begin{lem} \label{lemAdditionalEquation}
Keep the notation and assumption of Lemma \ref{lemIncidenceSimplified}. Assume further that $\{j_1,\ldots,j_q,a\}\preceq\{i_1,\ldots,i_q,b\}$. Then the equation
\begin{equation}
	\sum_{\{j_1,\ldots,j_q\}\preceq\{k_1,\ldots,k_q\}\preceq\{i_1,\ldots,i_q\}}\pm\Delta^{k_1\ldots k_qb}_{j_1\ldots j_qb}(u+\lambda\id)x_{k_1\ldots k_qa} - \left(\prod_{i=1}^{p+1}(t_{w(i)}+\lambda)\right)x_{j_1\ldots j_qa} = 0
\end{equation}
is satisfied on $\widetilde{V}_w$ for a certain choice of sign.
\end{lem}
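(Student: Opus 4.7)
The plan is to derive the desired equation from the polynomial identity $P_{w, j_1 \ldots j_q b}(\lambda) = 0$ (valid on $\widetilde{V}_w$ by Proposition \ref{allEquations}) in three stages.

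\emph{Simplification.} On $\widetilde{V}_w$, the non-vanishing terms in the sum defining $P_{w, jb}(\lambda)$ correspond to $\{l\}$ with $\{j, b\} \preceq \{l\} \preceq \{i, b\}$. A counting argument analogous to the one in the proof of Lemma \ref{lemIncidenceSimplified}, comparing $|{\cdot} \cap \{1,\ldots,b-1\}|$ and $|{\cdot} \cap \{1,\ldots,b\}|$ using $b \notin \{j\}$ and the hypothesis $|\{j\} \cap \{1,\ldots,b-1\}| \leq |\{i\} \cap \{1,\ldots,b-1\}|$ inherited from Lemma \ref{lemIncidenceSimplified}, forces $b \in \{l\}$. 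Writing $\{l\} = \{k, b\}$ with $\{j\} \preceq \{k\} \preceq \{i\}$, the identity becomes
\[
  \sum_{\{j\}\preceq\{k\}\preceq\{i\}}\pm\,\Delta^{k_1\ldots k_q b}_{j_1\ldots j_q b}(u+\lambda\id)\,x_{k_1\ldots k_q b}\;=\;\Bigl(\prod_{s=1}^{q+1}(t_{w(s)}+\lambda)\Bigr)\,x_{j_1\ldots j_q b}
\]
on $\widetilde{V}_w$.

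\emph{A generalised Plücker identity.} For every $\{k\}$ with $\{j\}\preceq\{k\}\preceq\{i\}$, I claim $x_{k_1\ldots k_q b}\cdot x_{i_1\ldots i_q a}=\pm\,x_{k_1\ldots k_q a}\cdot x_{i_1\ldots i_q b}$ on $\widetilde{V}_w$. This follows from the dimension-$(q+1)$ Plücker relation with first sequence $\{k\}$ and second sequence $\{i_1,\ldots,i_q,a,b\}$ (which has cardinality $q+2$ in the non-degenerate case $a\notin\{i\}$, as permitted by the assumption $\{j,a\}\preceq\{i,b\}$), provided each cross term $\pm\,x_{k,i_s}\cdot x_{(\{i\}\setminus\{i_s\})\cup\{a,b\}}$ vanishes on $\widetilde{V}_w$. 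I distinguish four cases: (i) if $a>i_s$, then $(\{i\}\setminus\{i_s\})\cup\{a,b\}\not\preceq\{i,b\}$, so the second factor vanishes on the cell by Proposition \ref{propCellEquations}; (ii) if $i_s\in\{k\}$, the first factor has a repeated index; (iii) if $i_s<b$ and $i_s\notin\{k\}$, then the hypothesis of Lemma \ref{lemIncidenceSimplified} gives $|\{j\}\cap\{1,\ldots,i_s-1\}|\leq s-1$, which combined with $\{j\}\preceq\{k\}\preceq\{i\}$ forces $|\{k\}\cap\{1,\ldots,i_s-1\}|\leq s-1$ and $|\{k\}\cap\{1,\ldots,i_s\}|\geq s$, whence $i_s\in\{k\}$, a contradiction; (iv) if $i_s>b$ and $i_s\notin\{k\}$, assuming $\{k,i_s\}\preceq\{i,b\}$ and counting at $m=b$ using $b\notin\{k\}$ yields $|\{k\}\cap\{1,\ldots,b-1\}|\geq|\{i\}\cap\{1,\ldots,b-1\}|+1$, whence via $\{j\}\preceq\{k\}$ one contradicts the same Lemma \ref{lemIncidenceSimplified} hypothesis, so $x_{k,i_s}=0$ on the cell.

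\emph{Conclusion.} Multiplying the simplified equation from the first stage by $x_{i_1\ldots i_q a}$ and substituting the Plücker identities above (together with Lemma \ref{lemIncidenceSimplified} itself for the $\{j\}$-term $x_{j b}\cdot x_{i a}$) to convert each product $x_{?b}\cdot x_{i a}$ into $\pm\,x_{?a}\cdot x_{i b}$, I obtain
\[
  \Bigl(\sum_{\{j\}\preceq\{k\}\preceq\{i\}}\pm\,\Delta^{k_1\ldots k_q b}_{j_1\ldots j_q b}(u+\lambda\id)\,x_{k_1\ldots k_q a}-\prod_{s=1}^{q+1}(t_{w(s)}+\lambda)\,x_{j_1\ldots j_q a}\Bigr)\,x_{i_1\ldots i_q b}=0
\]
on $\widetilde{V}_w$. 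Since $x_{i_1\ldots i_q b}=x_{w(1)\ldots w(q+1)}$ is non-zero on the Schubert cell $BwB/B$ (Proposition \ref{propCellEquations}), it is a non-zero-divisor on the reduced scheme $\widetilde{V}_w$, and cancelling it yields the claimed equation. The main obstacle is the cross-term analysis in the Plücker identity step, whose four subcases each demand a delicate counting argument leveraging the hypothesis of Lemma \ref{lemIncidenceSimplified} and the additional assumption $\{j,a\}\preceq\{i,b\}$.
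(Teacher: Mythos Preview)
Your proof is correct and follows essentially the same three-step architecture as the paper: start from $P_{w,j_1\ldots j_qb}(\lambda)=0$ on $\widetilde V_w$, establish the swap identities $x_{k_1\ldots k_qb}\,x_{i_1\ldots i_qa}=\pm x_{k_1\ldots k_qa}\,x_{i_1\ldots i_qb}$ for all $\{j\}\preceq\{k\}\preceq\{i\}$, multiply through by $x_{i_1\ldots i_qa}$, substitute, and cancel the nonvanishing factor $x_{i_1\ldots i_qb}=x_{w(1)\ldots w(q+1)}$.

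The one genuine difference lies in how the swap identity is obtained. The paper first proves a structural fact (writing $j_r<b<j_{r+1}$, one has $i_l=j_l$ for $l\le r$, so any intermediate set $\{k\}$ also satisfies $k_l=i_l$ for $l\le r$ and $k_l>b$ for $l>r$); this lets it \emph{re-apply Lemma \ref{lemIncidenceSimplified} verbatim} with $\{k\}$ in place of $\{j\}$, since the hypothesis ``$k_l<b\Rightarrow k_l\in\{i,b\}$'' is then automatic. You instead invoke the \emph{dual} Pl\"ucker relation (first sequence $\{k\}$, second sequence $\{i_1,\ldots,i_q,a,b\}$) and kill the cross terms by a four-case analysis. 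Both routes are valid; the paper's is shorter because it recycles Lemma \ref{lemIncidenceSimplified}, while yours is more self-contained and is in fact the variant alluded to in Remark \ref{remVariationAdditionalEq}. One small point worth making explicit in your write-up: the fact $b\notin\{k\}$ (used in case (iv)) is not assumed but follows from $\{j\}\preceq\{k\}\preceq\{i\}$ together with $\{j\}\cap\{1,\ldots,b-1\}\subseteq\{i\}$, by comparing counts at $m=b-1$ and $m=b$.
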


\begin{proof}
We can assume that the sequences $(i_l)_{1\leq l\leq q}$ and $(j_l)_{1\leq l\leq q}$ are in increasing order. Let $1\leq r\leq q$ be such that $j_r<b<j_{r+1}$ (setting $j_{q+1}=\infty$ if necessary). Then $\{j_1,\ldots,j_q,a\}$ has either $r$ or $r+1$ elements smaller than or equal to $b$, depending on wether $a>b$ or $a\leq b$. Because $\{j_1,\ldots,j_q,a\}\preceq\{i_1,\ldots,i_q,b\}$, this implies that $\{i_1,\ldots,i_q,b\}$ has at most $r+1$ elements smaller than or equal to $b$. However the hypothesis of Lemma \ref{lemIncidenceSimplified} implies that $\{j_1,\ldots,j_r,b\}\subseteq\{i_1,\ldots,i_q,b\}$. This forces $j_1$,\ldots,$j_r$ to be the $r$ smallest elements of $\{i_1,\ldots,i_q\}$, hence $i_l=j_l$ for all $l\leq r$.

In particular, we have $\{j_1,\ldots,j_q,b\}\preceq\{i_1,\ldots,i_q,b\}$ and any set of integers that sits between those two is of the form $\{k_1,\ldots,k_q,b\}$ with $k_l=i_l$ for $l\leq r$ and $k_l>b$ for $l>r$. We can therefore apply Lemma \ref{lemIncidenceSimplified} replacing $\{j_1,\ldots,j_q\}$ by $\{k_1,\ldots,k_q\}$; this yields, up to sign,
\begin{equation} \label{eqIncidenceSimplified}
	x_{i_1\ldots i_qa}x_{k_1\ldots k_qb} = \pm x_{i_1\ldots i_qb}x_{k_1\ldots k_qa} \,.
\end{equation}

Now, keeping the notation of Proposition \ref{allEquations}, we multiply by $x_{i_1\ldots i_qa}$ the equation $P_{w,j_1\ldots j_qb}(\lambda)=0$ (in $A[\lambda]$). From the discussion of Remark \ref{remSimplifiedEquations}, the resulting equation only has terms in $x_{k_1\ldots k_qb}x_{i_1\ldots i_qa}$ for $\{j_1,\ldots,j_q\}\preceq\{k_1,\ldots,k_q\}$. Also, because of Proposition \ref{propCellEquations}, the $x_{k_1\ldots k_qb}$ are non-zero on $\widetilde{V}_w$ only for $\{k_1,\ldots,k_q\}\preceq\{i_1,\ldots,i_q\}$. With all these simplifications, we get on $\widetilde{V}_w$ the equation
\begin{multline*}
	\sum_{\{j_1,\ldots,j_q\}\preceq\{k_1,\ldots,k_q\}\preceq\{i_1,\ldots,i_q\}}\Delta^{k_1\ldots k_qb}_{j_1\ldots j_qb}(u+\lambda\id)x_{k_1\ldots k_qb}x_{i_1\ldots i_qa} \\
	- \left(\prod_{i=1}^{q+1}(t_{w(i)}+\lambda)\right)x_{j_1\ldots j_qb}x_{i_1\ldots i_qa} = 0 \,.
\end{multline*}
Using the equations \eqref{eqIncidenceSimplified}, we get
\begin{multline*}
	\sum_{\{j_1,\ldots,j_q\}\preceq\{k_1,\ldots,k_q\}\preceq\{i_1,\ldots,i_q\}}\pm\Delta^{k_1\ldots k_qb}_{j_1\ldots j_qb}(u+\lambda\id)x_{k_1\ldots k_qa}x_{i_1\ldots i_qb} \\
	- \left(\prod_{i=1}^{q+1}(t_{w(i)}+\lambda)\right)x_{j_1\ldots j_qa}x_{i_1\ldots i_qb} = 0
\end{multline*}
which, after factorisation and simplification by $x_{i_1\ldots i_qb}$ (the simplification is valid because $x_{i_1\ldots i_qb}=x_{w(1)\ldots w(q+1)}\neq 0$ on $\widetilde{V}_w$), gives the equation we wanted.
\end{proof}

\begin{prop} \label{propAdditionalEq}
Let $w, w'\in\Scal_n$ with $w'\preceq w$. Assume that there exist integers $a<b$ and $q\in\{1,\ldots,n-2\}$ such that:
\begin{itemize}[--]
\item
$a\notin\{w(1),\ldots,w(q+1)\}$ and $a\in\{w'(1),\ldots,w'(q+1)\}$,
\item
$b\in\{w(1),\ldots,w(q+1)\}$ and $b\notin\{w'(1),\ldots,w'(q+1)\}$,
\item
for any $i\in\{w'(1),\ldots,w'(q+1)\}$ such that $i\neq a $ and $i<b$, we have $i\in\{w(1),\ldots,w(q+1)\}$.
\end{itemize}

Then the equation $t_a=t_b$ is satisfied on $\widetilde{X}_w\inter \widetilde{V}_{w'}$.
\end{prop}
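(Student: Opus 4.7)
The plan is to apply Lemma \ref{lemAdditionalEquation} with carefully chosen indices, restrict the resulting polynomial identity in $\lambda$ to $\widetilde{X}_w\inter \widetilde{V}_{w'}$ where most Plücker coordinates vanish, and combine the outcome with Proposition \ref{conjImpliesConj1} to extract $t_a=t_b$.

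First I would set $\{i_1,\ldots,i_q\}\coloneqq\{w(1),\ldots,w(q+1)\}\setminus\{b\}$ and $\{j_1,\ldots,j_q\}\coloneqq\{w'(1),\ldots,w'(q+1)\}\setminus\{a\}$ and verify the three hypotheses of Lemma \ref{lemAdditionalEquation}: $b\notin\{j_1,\ldots,j_q\}$ comes from the second bullet; for each $j_l<b$, the third bullet gives $j_l\in\{w(1),\ldots,w(q+1)\}$, hence $j_l\in\{i_1,\ldots,i_q\}$ since $j_l\neq b$; finally $\{j_1,\ldots,j_q,a\}=\{w'(1),\ldots,w'(q+1)\}\preceq\{w(1),\ldots,w(q+1)\}=\{i_1,\ldots,i_q,b\}$ from $w'\preceq w$ via Example \ref{exBruhatLineNotation}. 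Lemma \ref{lemAdditionalEquation} then produces, on $\widetilde{V}_w$ and hence on its Zariski closure $\widetilde{X}_w$, a polynomial identity in $\lambda$ of the form
\[
	\sum_{\{j_1,\ldots,j_q\}\preceq\{k_1,\ldots,k_q\}\preceq\{i_1,\ldots,i_q\}}\pm\Delta^{k_1\ldots k_qb}_{j_1\ldots j_qb}(u+\lambda\id)\,x_{k_1\ldots k_qa}=\Big(\prod_{l=1}^{q+1}(t_{w(l)}+\lambda)\Big)x_{j_1\ldots j_qa}.
\]

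Now restrict to $\widetilde{X}_w\inter \widetilde{V}_{w'}$. By Proposition \ref{propCellEquations}, the coordinate $x_{k_1\ldots k_qa}$ vanishes unless $\{k_1,\ldots,k_q,a\}\preceq\{w'(1),\ldots,w'(q+1)\}=\{j_1,\ldots,j_q,a\}$; but the summation range already imposes $\{j_1,\ldots,j_q\}\preceq\{k_1,\ldots,k_q\}$, which after adjoining the common element $a$ propagates to $\{j_1,\ldots,j_q,a\}\preceq\{k_1,\ldots,k_q,a\}$. The two sets are therefore equal and $\{k_1,\ldots,k_q\}=\{j_1,\ldots,j_q\}$; only the principal-minor term survives, which equals $\pm\prod_{l=1}^q(t_{j_l}+\lambda)(t_b+\lambda)\,x_{j_1\ldots j_qa}$ since $u+\lambda\id$ is upper triangular. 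Matching leading coefficients in $\lambda$ with the right-hand side pins the sign down to $+$. Dividing by the nowhere-vanishing coordinate $x_{j_1\ldots j_qa}=\pm x_{w'(1)\ldots w'(q+1)}$ on $\widetilde{V}_{w'}$ and cancelling the factor $(t_b+\lambda)$, which is monic in $\lambda$ and hence not a zero divisor in $k[\widetilde{X}_w\inter\widetilde{V}_{w'}][\lambda]$, yields $\prod_{l=1}^q(t_{j_l}+\lambda)=\prod_{l=1}^q(t_{i_l}+\lambda)$.

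To conclude, note that $\widetilde{X}_w\inter\widetilde{V}_{w'}\subseteq\widetilde{X}'_w\inter\widetilde{V}_{w'}$, so by the proof of Proposition \ref{conjImpliesConj1} the identity $\prod_{l=1}^{q+1}(t_{w(l)}+\lambda)=\prod_{l=1}^{q+1}(t_{w'(l)}+\lambda)$ also holds on $\widetilde{X}_w\inter\widetilde{V}_{w'}$. Rewriting this as $(t_b+\lambda)\prod_{l=1}^q(t_{i_l}+\lambda)=(t_a+\lambda)\prod_{l=1}^q(t_{j_l}+\lambda)$ and substituting the identity from the previous paragraph gives $(t_b-t_a)\prod_{l=1}^q(t_{i_l}+\lambda)=0$ in $k[\widetilde{X}_w\inter\widetilde{V}_{w'}][\lambda]$. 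Since $\prod_{l=1}^q(t_{i_l}+\lambda)$ is monic in $\lambda$, hence not a zero divisor, one obtains $t_a=t_b$. The only delicate point is the collapse of the sum in the displayed identity to its diagonal term; this is precisely what the three bulleted conditions on $(a,b,q)$ together with $w'\preceq w$ are designed to guarantee.
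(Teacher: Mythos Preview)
Your proof is correct and follows essentially the same route as the paper: apply Lemma \ref{lemAdditionalEquation} with $\{i_1,\ldots,i_q\}=\{w(1),\ldots,w(q+1)\}\setminus\{b\}$ and $\{j_1,\ldots,j_q\}=\{w'(1),\ldots,w'(q+1)\}\setminus\{a\}$, restrict to $\widetilde{V}_{w'}$ so that only the diagonal term survives, and combine with the multiset identity from (the proof of) Proposition \ref{conjImpliesConj1}. Your explicit determination of the surviving sign via the $\lambda^{q+1}$ coefficient and your handling of the cancellations as divisions by monic polynomials in $\lambda$ are slightly more careful than the paper's write-up, but the argument is the same.
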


\begin{ex}
Take $n=4$, $w=[4231]$ and $w'=[1324]$. Then $q=1$ and $(a,b)=(1,2)$ meets the condition of Proposition \ref{propAdditionalEq}, hence $\widetilde{X}_w\inter \widetilde{V}_{w'}$ satisfies $t_1=t_2$. This is stronger than for $\widetilde{X}'_w\inter \widetilde{V}_{w'}$, which by Proposition \ref{conjImpliesConj1} only satisfies $t_1=t_4$ and $t_2=t_3$ (the equations defining $\tfrak^{ww'^{-1}}$). One could also take $(a,b)=(3,4)$; this gives the equation $t_3=t_4$ on $\widetilde{X}_w\inter \widetilde{V}_{w'}$, which is equivalent to the already found $t_1=t_2$, since $t_1=t_4$ and $t_2=t_3$ are true.
\end{ex}

\begin{proof}
Write $\{j_1,\ldots,j_q\}=\{w'(1),\ldots,w'(q+1)\}\setminus\{a\}$; then the hypotheses of Lemma \ref{lemIncidenceSimplified} are satisfied. From Example \ref{exBruhatLineNotation} applied to $w'\preceq w$, we also have $\{j_1,\ldots,j_q,a\}\preceq\{i_1,\ldots,i_q,b\}$. We can therefore apply Lemma \ref{lemAdditionalEquation} to get
\[
	\sum_{\{j_1,\ldots,j_q\}\preceq\{k_1,\ldots,k_q\}\preceq\{i_1,\ldots,i_q\}}\pm\Delta^{k_1\ldots k_qb}_{j_1\ldots j_qb}(u+\lambda\id)x_{k_1\ldots k_qa} - \left(\prod_{i=1}^{p+1}(t_{w(i)}+\lambda)\right)x_{j_1\ldots j_qa} = 0
\]
which is satisfied on $\widetilde{V}_w$, hence on the Zariski closure $\widetilde{X}_w$ as well.

Now consider this equation on $\widetilde{X}_w\inter \widetilde{V}_{w'}$. From Proposition \ref{propCellEquations}, all variables $x_{k_1\ldots k_qa}$ are zero on $\widetilde{V}_{w'}$ for $\{k_1,\ldots,k_q\}\succ\{j_1,\ldots,j_q\}$, so it reduces to one term:
\[
	\left( \Delta^{j_1\ldots j_qb}_{j_1\ldots j_qb}(u+\lambda\id) - \prod_{i=1}^{p+1}(t_{w(i)}+\lambda) \right) x_{j_1\ldots j_qa} = 0 \,.
\]
Again on $\widetilde{V}_{w'}$ we can simplify by $x_{j_1\ldots j_qa}=x_{w'(1)\ldots w'(q+1)}\neq 0$. Moreover, $u$ being upper triangular gives $\Delta^{j_1\ldots j_qb}_{j_1\ldots j_qb}(u+\lambda\id)=(t_b+\lambda)\prod_{l=1}^p(t_{j_l}+\lambda)=\frac{t_b+\lambda}{t_a+\lambda}\prod_{i=1}^{p+1}(t_{w'(i)}+\lambda)$, so the equation becomes
\[
	\frac{t_b+\lambda}{t_a+\lambda}\prod_{i=1}^{p+1}(t_{w'(i)}+\lambda) = \prod_{i=1}^{p+1}(t_{w(i)}+\lambda) \,.
\]

We also know that $\prod_{i=1}^{p+1}(t_{w'(i)}+\lambda)=\prod_{i=1}^{p+1}(t_{w(i)}+\lambda)$ from the proof of Proposition \ref{conjImpliesConj1}; so we finally get $t_a+\lambda=t_b+\lambda$, which implies $t_a=t_b$.
\end{proof}

\begin{rem} \label{remVariationAdditionalEq}
The same result holds when the last condition on $a,b$ in Proposition \ref{propAdditionalEq} is replaced by the following variant: for any $j\in\{w(1),\ldots,w(q+1)\}\setminus\{b\}$ such that $j>a$, we have $j\in\{w'(1),\ldots,w'(q+1)\}$. The proof carries out in almost identical fashion, with the difference that, in the proof of Lemma \ref{lemIncidenceSimplified}, instead of using the Plücker relation with indices $\{i_1,\ldots,i_q\}$ and $\{k_1,\ldots,k_q,a,b\}$ for equation \eqref{eqIncidenceToSimplify}, we use the Plücker relation with indices $\{k_1,\ldots,k_q\}$ and $\{i_1,\ldots,i_q,a,b\}$.
\end{rem}

\begin{thm} \label{thmConjIsFalse}
Let $w,w'\in\Scal$ with $w'\preceq w$. Assume that there exist $a$, $b$ as in Proposition \ref{propAdditionalEq} such that $a$ and $b$ are not in the same orbit under $ww'^{-1}$. Then Conjecture \ref{conjLocalModel} is false for the pair $(w,w')$, that is:
\[
	\widetilde{V}_{w'}\inter q^{-1}(\tfrak^{ww'^{-1}}+\ufrak) \not\subseteq \widetilde{X}_w \,.
\]
\end{thm}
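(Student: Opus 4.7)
\textbf{Proof plan for Theorem \ref{thmConjIsFalse}.} The plan is to combine the additional equation $t_a = t_b$ satisfied on $\widetilde{X}_w \cap \widetilde{V}_{w'}$ (from Proposition \ref{propAdditionalEq}) with the fact that under the orbit hypothesis, this equation can fail on $\widetilde{V}_{w'} \cap q^{-1}(\tfrak^{ww'^{-1}}+\ufrak)$. So we just need to exhibit a point where it fails.

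First I would unwind the description of $\tfrak^{ww'^{-1}}$. Under the standard identification of $\tfrak$ with $k^n$ via the coordinates $(t_1,\ldots,t_n)$, the action of $w$ on $\tfrak$ permutes the $t_i$ according to $ww'^{-1}$, so $\tfrak^{ww'^{-1}}$ is precisely the subspace cut out by the equations $t_i = t_{(ww'^{-1})(i)}$, i.e., the subspace of tuples which are constant on each orbit of $ww'^{-1}$ acting on $\{1,\ldots,n\}$. Since by hypothesis $a$ and $b$ lie in distinct orbits, there exists a closed point $t \in \tfrak^{ww'^{-1}}$ with $t_a \neq t_b$ (for instance, set $t_i = 0$ on the orbit of $a$ and $t_i = 1$ on the orbit of $b$, and anything compatible on the other orbits).

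Next I would construct an actual closed point in $\widetilde{V}_{w'} \cap q^{-1}(t + \ufrak)$. Using the description already recalled in the proof of Lemma \ref{lemDenseIsEnough} (which is itself a combination of \cite[Proposition 2.2.1]{bhs3} and the isomorphism \eqref{eqIsoLocalModels}), the fibre of $\widetilde{\pi} \colon \widetilde{V}_{w'} \to Bw'B/B$ over $w'B$ is
\[
\enstq{(w',\psi) \in G \times^B \bfrak}{\psi \in \tfrak \oplus (\ufrak \cap \Ad(w')\ufrak)},
\]
so the point $x := (w', t)$ (taking $\psi = t$ with $t$ chosen as above and trivial nilpotent part) lies in $\widetilde{V}_{w'} \cap q^{-1}(t + \ufrak) \subseteq \widetilde{V}_{w'} \cap q^{-1}(\tfrak^{ww'^{-1}} + \ufrak)$. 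By construction its diagonal coordinates satisfy $t_a(x) \neq t_b(x)$.

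Finally, applying Proposition \ref{propAdditionalEq}, the equation $t_a = t_b$ holds identically on $\widetilde{X}_w \cap \widetilde{V}_{w'}$, hence $x \notin \widetilde{X}_w$. This exhibits a point of $\widetilde{V}_{w'} \cap q^{-1}(\tfrak^{ww'^{-1}}+\ufrak)$ not lying in $\widetilde{X}_w$, contradicting Conjecture \ref{conjLocalModelSimple} (equivalently Conjecture \ref{conjLocalModel} by Proposition \ref{conjImpliesConj2}). The only step that requires any real care is the explicit construction of the point $x$ and the verification that it lies in the correct cell; everything else is a bookkeeping combination of Proposition \ref{propAdditionalEq} and the explicit description of $\tfrak^{ww'^{-1}}$.
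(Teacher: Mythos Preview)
Your approach is exactly the paper's: exhibit a closed point of $\widetilde{V}_{w'}$ lying over $w'B$ with purely diagonal $\psi\in\tfrak^{ww'^{-1}}$ satisfying $\psi_a\neq\psi_b$, then invoke Proposition~\ref{propAdditionalEq}. There is, however, one bookkeeping slip. In the $G\times^B\bfrak$ model one has $q(w',t)=\Ad(w')t$, not $t$; so for $t\in\tfrak^{ww'^{-1}}$ the point $(w',t)$ need not lie in $q^{-1}(t+\ufrak)$ nor even in $q^{-1}(\tfrak^{ww'^{-1}}+\ufrak)$, and the diagonal coordinates relevant for Proposition~\ref{propAdditionalEq} are the entries of $\Ad(w')t$, not of $t$. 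The fix is immediate: either take $t=\Ad(w'^{-1})s$ for some $s\in\tfrak^{ww'^{-1}}$ with $s_a\neq s_b$, or---as the paper does---work directly in the model $\widetilde{X}\subset G/B\times\bfrak$ of~\eqref{defXtilde}, where $q$ is the second projection and the point is simply $(w'B,s)$.
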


\begin{proof}
Consider any point $x=(x_{i_1\ldots i_d},t_i,u_{ij})\in\prod_{d=1}^{n-1}\Proj(\bigwedge^d k^n)\times\mathbb{A}^{n(n+1)/2}$ where, for $1\leq d\leq n-1$ and $1\leq i_1<\ldots<i_d<n$, $x_{i_1\ldots i_d}\neq 0$ if and only if $\{i_1,\ldots,i_d\}=\{w'(1),\ldots,w'(d)\}$, and where $u_{ij}=0$ for all $1\leq i<j\leq n$. It is easily checked that $x\in \widetilde{V}_{w'}$, either from the equations of Proposition \ref{allEquations}, or from the fact that such a point $x$ corresponds to $(gB/B,\psi)\in G/B\times\bfrak$ where $g$ is an invertible matrix with zeroes everywhere but in positions $(w'(j),j)$ for $1\leq j\leq n$, and where $\psi\in\tfrak$.

When $a$ and $b$ are not in the same orbit under $ww'^{-1}$, it is possible to set $\psi=(t_1,\ldots,t_n)\in\tfrak^{ww'^{-1}}$ with $t_a\neq t_b$. This gives $x\in q^{-1}(\tfrak^{ww'^{-1}}+\ufrak)$, while Proposition \ref{propAdditionalEq} forces $x\notin \widetilde{X}_w\inter \widetilde{V}_{w'}$, so $x\notin\widetilde{X}_w$.
\end{proof}

\begin{rem} \label{remConjIsFalseDim}
In the situation of Theorem \ref{thmConjIsFalse}, we actually have a strict inclusion
\[
	\widetilde{X}_w\inter\widetilde{V}_{w'} \subsetneq \widetilde{V}_{w'}\inter q^{-1}(\tfrak^{ww'^{-1}}+\ufrak)
\]
since the (non strict) inclusion is already known (see Proposition \ref{conjImpliesConj1} or the proof of Proposition \ref{conjImpliesConj2}). It is also known that $\widetilde{V}_{w'}\inter q^{-1}(\tfrak^{ww'^{-1}}+\ufrak)$ is irreducible: by an argument similar to the proof of Proposition \ref{conjImpliesConj2}, this statement is equivalent to the analogous statement with $V_{w'}$ (defined in \S\ref{ssecSpringerScheme}) in place of $\widetilde{V}_{w'}$, which is \cite[Lemma 2.3.5]{bhs3}. Therefore $\widetilde{X}_w\inter\widetilde{V}_{w'}$ is a union of irreducible component which have dimension strictly smaller than the dimension of $\widetilde{V}_{w'}\inter q^{-1}(\tfrak^{ww'^{-1}}+\ufrak)$, thus
\[
	\dim\left(\widetilde{X}_w\inter\widetilde{V}_{w'}\right) < \dim\left(\widetilde{V}_{w'}\inter q^{-1}(\tfrak^{ww'^{-1}}+\ufrak)\right) \,.
\]
\end{rem}

\begin{rem} \label{remConcreteCounterExamples}
The easiest example of $w,w'$ meeting the conditions of Theorem \ref{thmConjIsFalse} is when there exist $1\leq i<j<k<l\leq n$ and $1\leq a<b<c<d\leq n$ such that:
\begin{enumerate}
\item
$(w(i),w(j),w(k),w(l))=(d,b,c,a)$.
\item
$(w'(i),w'(j),w'(k),w'(l))=(a,c,b,d)$.
\item
for all $m\in\{1,\ldots,n\}\setminus\{i,j,k,l\}$, $w(m)=w'(m)$.
\end{enumerate}
 The smallest such example is when $(w,w')$ is the first bad pair for $n=4$ (see Theorem \ref{thmBadPairPatterns}). In fact, all cases of interest (\emph{e.g.\ }when $(w,w')$ is a bad pair, see Lemma \ref{lemAdditionalEqBadPairs} below) for $n\leq 5$ meet the conditions of Theorem \ref{thmConjIsFalse}.

For $n=6$, a few cases arise where neither those conditions nor the variant of Remark \ref{remVariationAdditionalEq} are met, for example when $w=(16)(25)$ and $w'=(34)$ (using the cycles notation for elements of the symmetric group $\Scal_6$). In this specific case, the approach of the proof of Theorem \ref{thmConjIsFalse} is not adapted. Indeed, the only equations defining the Schubert cell $BwB/B$ in $G/B$ are $x_6\neq0$, $x_{56}\neq0$, $x_{356}\neq0$, $x_{3456}\neq0$, $x_{23456}\neq0$ and $x_{456}=0$. Because $x_{456}$ is the only zero variable, there are too few Plücker relations and incidence relations that have only two nonzero terms on $BwB/B$ (in the same way equation \eqref{eqIncidenceToSimplify} gives Lemma \ref{lemIncidenceSimplified}). Furthermore those few relations cannot be used to get new equations satisfied on $\widetilde{V}_w$ (and hence $\widetilde{X}_w$) that are nontrivial on $\widetilde{V}_{w'}$ (the same way the equation $t_a=t_b$ of Proposition \ref{propAdditionalEq} is), because all Plücker coordinates but $x_1,x_{12},x_{123},x_{124},x_{1234},x_{12345}$ are zero on $\widetilde{V}_{w'}$. The status of Conjecture \ref{conjLocalModel} remains unknown for this pair.
\end{rem}

Finally, the following lemma confirms that Theorem \ref{thmConjIsFalse} and Theorem \ref{thmConjIsTrue} are consistent with each other.

\begin{lem} \label{lemAdditionalEqBadPairs}
Let $(w',w)\in W$ satisfy the conditions of Theorem \ref{thmConjIsFalse}. Then it is a bad pair in the sense of \S\ref{secGoodPairs}. The same conclusion holds with the conditions of Remark \ref{remVariationAdditionalEq}.
\end{lem}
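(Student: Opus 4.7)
The plan is to apply the orbit-wise characterisation of good pairs in Proposition \ref{propGoodPairsGLn}: $(w',w)$ is bad if and only if there exist an orbit $\Omega$ of $ww'^{-1}$ and an integer $m$ such that $|\Omega\inter w'(\{1,\ldots,q+1\})\inter\{1,\ldots,m\}|>|\Omega\inter w(\{1,\ldots,q+1\})\inter\{1,\ldots,m\}|$. Throughout, I would repeatedly use the orbit-invariance identity $|\Omega\inter w(\{1,\ldots,q+1\})|=|\Omega\inter w'(\{1,\ldots,q+1\})|$, which follows because $w'w^{-1}$ bijectively maps $\Omega\inter w(\{1,\ldots,q+1\})$ onto $\Omega\inter w'(\{1,\ldots,q+1\})$.

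Assume first the hypotheses of Theorem \ref{thmConjIsFalse}. I would take $\Omega$ to be the orbit of $a$ under $ww'^{-1}$; by the non-orbit-equivalence hypothesis, $b\notin\Omega$. The third bullet of the hypotheses, combined with $a\in w'(\{1,\ldots,q+1\})\setminus w(\{1,\ldots,q+1\})$ and $a<b$, yields
\[
|w'(\{1,\ldots,q+1\})\inter\{1,\ldots,b-1\}|\leq|w(\{1,\ldots,q+1\})\inter\{1,\ldots,b-1\}|+1,
\]
while the Bruhat inequality $w'[q+1,b+1]\leq w[q+1,b+1]$ of Example \ref{exBruhatLineNotation}, combined with $b\in w(\{1,\ldots,q+1\})\setminus w'(\{1,\ldots,q+1\})$, gives the opposite inequality. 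The two bounds force the sharp set equality
\[
w(\{1,\ldots,q+1\})\inter\{1,\ldots,b-1\}=w'(\{1,\ldots,q+1\})\inter\{1,\ldots,b-1\}\setminus\{a\}.
\]
Intersecting with $\Omega$ (recall $a\in\Omega$, $b\notin\Omega$) yields $|\Omega\inter w'(\{1,\ldots,q+1\})\inter\{1,\ldots,b-1\}|=|\Omega\inter w(\{1,\ldots,q+1\})\inter\{1,\ldots,b-1\}|+1$, producing the badness witness at $m=b-1$.

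Under the variant hypothesis of Remark \ref{remVariationAdditionalEq}, a dual argument applies. I would take $\Omega$ to be the orbit of $b$ (so that $a\notin\Omega$), combine the variant inclusion with the Bruhat inequality $w'[q+1,a]\leq w[q+1,a]$ to deduce the sharp set equality
\[
w(\{1,\ldots,q+1\})\inter\{a+1,\ldots,n\}=(w'(\{1,\ldots,q+1\})\inter\{a+1,\ldots,n\})\cup\{b\},
\]
and intersect with $\Omega$; the orbit-invariance identity then transfers this into a cardinality excess on $\{1,\ldots,a\}$, yielding the badness witness at $m=a$.

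The main delicate point in both arguments is that the one-sided set inclusion coming from the pattern-type hypothesis and the one-sided cardinality bound coming from the Bruhat order $w'\preceq w$ must combine into the sharp set equalities displayed above; neither alone suffices. Once these equalities are established, the asymmetric contribution of the element ($b$ in the first case, $a$ in the second) that lies outside the chosen orbit immediately transmits a strict cardinality inequality to $\Omega$, which is precisely the witness of badness required by Proposition \ref{propGoodPairsGLn}.
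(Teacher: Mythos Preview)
Your argument is correct. The orbit-invariance identity is valid, the two sharp set equalities follow exactly as you indicate from combining the hypothesised inclusion with the Bruhat inequality, and the final step in each case produces a genuine witness $|\Omega\cap w'(\{1,\ldots,q+1\})\cap\{1,\ldots,m\}|>|\Omega\cap w(\{1,\ldots,q+1\})\cap\{1,\ldots,m\}|$ as required by Proposition~\ref{propGoodPairsGLn}.

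Your choice of orbit is the \emph{opposite} of the one in the paper: you take $\Omega$ to be the orbit of $a$ in the first case and of $b$ in the variant, whereas the paper takes the orbit of $b$ in the first case and of $a$ in the variant. The paper's route is meant to be more direct (the excluded element is automatically outside $\Omega$, so the hypothesised inclusion alone should give the strict set inclusion), but as written it yields the inequality in the wrong direction: for $(w',w)$ to be bad one needs $\Omega\cap w'(\{1,\ldots,q+1\})\not\succeq\Omega\cap w(\{1,\ldots,q+1\})$, whereas the paper's displayed strict inclusion shows $\Omega\cap w(\{1,\ldots,q+1\})\not\succeq\Omega\cap w'(\{1,\ldots,q+1\})$. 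Concretely, for $n=4$, $w=[4231]$, $w'=[1324]$, $q=1$, $(a,b)=(1,2)$, the orbit $\{2,3\}$ of $b$ gives $\Omega\cap w(\{1,2\})=\{2\}$ and $\Omega\cap w'(\{1,2\})=\{3\}$, and $\{3\}\succeq\{2\}$ holds, so this orbit furnishes no badness witness at any $d$; the orbit $\{1,4\}$ of $a$ does, at $d=2$. Your approach, which brings in the Bruhat bound $w'[q+1,b+1]\le w[q+1,b+1]$ to upgrade the one-sided inclusion to an equality, is what is needed to make the orbit of $a$ (resp.\ $b$) work, and it gives a complete proof.
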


\begin{proof}
If the pair $(w,w')$ satisfies the conditions of Theorem \ref{thmConjIsFalse}, including the one that states that $a$ and $b$ are not in the same orbit under $ww'^{-1}$, then it is a bad pair in the sense of \S\ref{secGoodPairs}. Indeed, let $\Omega$ the orbit of $b$ under $ww'^{-1}$. By assumption, $a\notin\Omega$, so the last condition of Proposition \ref{propAdditionalEq} implies that
\[
	\left(\Omega\inter w(\{1,\ldots,q+1\})\right)\inter\{1,\ldots,b\} \supsetneq \left(\Omega\inter w'(\{1,\ldots,q+1\})\right)\inter\{1,\ldots,b\}
\]
with the inequality stemming from the presence of $b$ in the left-hand side and not in the right-hand side. Therefore $\Omega\inter w(\{1,\ldots,q+1\})\succeq \Omega\inter w'(\{1,\ldots,q+1\})$ is false (see the second characterisation in Definition \ref{dfnPartialOrder}), so the pair $(w,w')$ is bad according to Proposition \ref{propGoodPairsGLn}.

In the situation of Remark \ref{remVariationAdditionalEq}, take $\Omega$ to be the orbit of $a$ under $ww'^{-1}$. Again, $b\notin \Omega$, so the condition spelled out in Remark \ref{remVariationAdditionalEq} implies that
\[
	\left(\Omega\inter w(\{1,\ldots,q+1\})\right)\inter\{a,\ldots,n\} \subsetneq \left(\Omega\inter w'(\{1,\ldots,q+1\})\right)\inter\{a,\ldots,n\}
\]
from which we get $\abs{(\Omega\inter w(\{1,\ldots,q+1\}))\inter\{a,\ldots,n\}}<\abs{(\Omega\inter w'(\{1,\ldots,q+1\}))\inter\{a,\ldots,n\}}$, or equivalently $\abs{(\Omega\inter w(\{1,\ldots,q+1\}))\inter\{1,\ldots,a-1\}}>\abs{(\Omega\inter w'(\{1,\ldots,q+1\}))\inter\{1,\ldots,a-1\}}$. As before, this means that $\Omega\inter w(\{1,\ldots,q+1\})\succeq \Omega\inter w'(\{1,\ldots,q+1\})$ is false and hence $(w,w')$ is a bad pair.
\end{proof}

\appendix

\numberwithin{thm}{section}

\section{Jordan decomposition and orbits in $\mathfrak{g}$} \label{appendixJordan}

We recall here a few well-known results about Jordan decompositions in Lie algebras. Note that Proposition \ref{propSsDiagConjugates} below is needed in \S\ref{ssecExtensionLevi}. We thank Anne Moreau for its proof, which we couldn't find precisely in the literature.

In the following, $k$ is an algbraically closed field of characteristic $0$ (so that $k$-points and closed points on algebraic varieties over $k$ are the same). For instance, there is no distinction between viewing a Lie algebra as a $k$-scheme (as we do in \S\ref{secProofGoodPairs}) or as a $k$-vector space.

\begin{dfn}
Let $G$ be an algebraic group over $k$ with Lie algebra $\gfrak$. A closed point  $x\in\gfrak$ is called \emph{semisimple} (resp.\ \emph{nilpotent}) if the following property holds. For any morphism of algebraic groups $\phi\colon G\to\GL(V)$ inducing $\diff\phi\colon\gfrak\to\gl(V)$, where $V$ is a finite-dimensional vector space over $k$, the endomorphism $\diff\phi(x)\in\gl(V)\iso\End(V)$ is diagonalisable (resp.\ nilpotent).
\end{dfn}

\begin{thm}[Jordan decomposition] \label{thmJordanDecomposition}
\begin{enumerate}[label=\emph{(\arabic*)},ref=(\arabic*)]
\item \label{itemJordanDecomposition}
Let $G$ be an algebraic group over $k$ with Lie algebra $\gfrak$. For any closed point $x\in\gfrak$, there is a unique couple $(x_\semisimple,x_\nilp)\in\gfrak^2$ with $x_\semisimple$ semisimple and $x_\nilp$ nilpotent such that $[x_\semisimple,x_\nilp]=0$ and $x=x_\semisimple+x_\nilp$. We call this the \emph{Jordan decomposition} of $x$.
\item \label{itemJordanMorphism}
The Jordan decomposition is functorial: for any morphism $\phi\colon G\to H$ of algebraic groups over $k$ inducing $\diff\phi\colon\gfrak\to\hfrak$ and any closed point $x\in\gfrak$, we have
\[
	\diff\phi(x_\semisimple)=(\diff\phi(x))_\semisimple,\quad \diff\phi(x_\nilp)=(\diff\phi(x))_\nilp \,.
\]
\end{enumerate}
\end{thm}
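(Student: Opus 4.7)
My plan is to reduce everything to the classical Jordan decomposition in $\mathfrak{gl}(V) = \End(V)$ for $V$ a finite-dimensional $k$-vector space, where the statement is pure linear algebra: any $y \in \End(V)$ admits a unique decomposition $y = y_s + y_n$ with $y_s$ diagonalizable, $y_n$ nilpotent, $[y_s,y_n]=0$, and moreover $y_s$ and $y_n$ are polynomials in $y$ without constant term (if $y$ itself is). I assume $G$ is affine (linear algebraic), so it admits a faithful finite-dimensional representation $\iota \colon G \hookrightarrow \GL(V)$ inducing a closed immersion $\mathfrak{g} \hookrightarrow \mathfrak{gl}(V)$.

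First I would construct the decomposition. Given a closed point $x \in \mathfrak{g}$, view $x \in \mathfrak{gl}(V)$ and take the classical decomposition $x = x_s + x_n$ in $\mathfrak{gl}(V)$. The crucial \emph{stability} claim is that $x_s$ and $x_n$ actually lie in the Lie subalgebra $\mathfrak{g} \subseteq \mathfrak{gl}(V)$. I would prove this using Chevalley's theorem: the closed subgroup $G \subseteq \GL(V)$ can be written as the stabilizer of a line $L \subseteq W$ in some tensor representation $W = V^{\otimes a} \otimes (V^*)^{\otimes b}$ (or a direct sum of such). Accordingly, $\mathfrak{g}$ is exactly the set of $y \in \mathfrak{gl}(V)$ whose induced action on $W$ stabilizes $L$. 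Since $x_s$ and $x_n$ are polynomials in $x$ without constant term, their induced actions on $W$ are (by standard functoriality of Jordan decomposition under tensor products, duals and direct sums) precisely the semisimple and nilpotent parts of the action of $x$ on $W$; hence they again stabilize $L$, so $x_s, x_n \in \mathfrak{g}$.

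Next I would verify that the $x_s$ and $x_n$ so constructed satisfy the intrinsic definitions of semisimple and nilpotent (i.e., the property holds for \emph{every} representation, not just $\iota$). For any morphism $\phi \colon G \to \GL(W)$, consider the product representation $\iota \oplus \phi \colon G \to \GL(V \oplus W)$, which is again a faithful representation (after possibly replacing $\iota$ by $\iota \oplus \phi$). The classical Jordan decomposition of $x$ in $\mathfrak{gl}(V \oplus W)$ is block-diagonal, with blocks $(d\iota(x))_s,(d\iota(x))_n$ on $V$ and $(d\phi(x))_s,(d\phi(x))_n$ on $W$; applying the stability claim of the previous step to the faithful representation $\iota \oplus \phi$ forces $d\phi(x_s) = (d\phi(x))_s$ and $d\phi(x_n) = (d\phi(x))_n$. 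This simultaneously proves that $x_s$ is semisimple and $x_n$ is nilpotent in the intrinsic sense, and also yields the functoriality statement \ref{itemJordanMorphism}. For uniqueness in \ref{itemJordanDecomposition}: any other intrinsic decomposition $x = y_s + y_n$ with $[y_s,y_n]=0$ would, upon applying $d\iota$, produce a commuting pair (diagonalizable) + (nilpotent) in $\mathfrak{gl}(V)$ summing to $d\iota(x)$, hence equal to $d\iota(x_s), d\iota(x_n)$ by uniqueness in $\mathfrak{gl}(V)$; injectivity of $d\iota$ then gives $y_s = x_s$ and $y_n = x_n$.

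The main obstacle I expect is the stability claim in the second paragraph, namely that the classical semisimple and nilpotent parts of $x \in \mathfrak{g} \subseteq \mathfrak{gl}(V)$ remain in $\mathfrak{g}$. The cleanest route is via Chevalley's representability theorem for closed subgroups as stabilizers of lines in tensor constructions, combined with the compatibility of classical Jordan decomposition with the operations $V \mapsto V^{\otimes a} \otimes (V^*)^{\otimes b}$ (which follows from $(y \otimes 1 + 1 \otimes y)$ decomposing into its commuting semisimple and nilpotent parts). Everything else in the proof is then bookkeeping reducing intrinsic assertions to statements inside a single faithful $\mathfrak{gl}(V)$, where classical linear algebra applies.
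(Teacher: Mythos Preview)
Your proposal is correct and is precisely the standard argument; the paper itself does not give an independent proof but simply cites the relevant sections of Tauvel--Yu for existence and functoriality, together with the same faithful-representation trick you use for uniqueness. One small remark: your functoriality argument as written only treats morphisms $\phi\colon G\to\GL(W)$, whereas the statement is for arbitrary $\phi\colon G\to H$; this is easily fixed by composing with a faithful representation of $H$ and using injectivity of its differential, but you should say so explicitly.
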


\begin{proof}
The existence of a functorial Jordan decomposition is a combination of 23.6.2, 23.6.3 and 23.6.4 of \cite{tauvelYu}. The uniqueness property in \ref{itemJordanDecomposition} is given by the fact that for any algebraic group $G$, there exists a finite-dimensional vector space $V$ with a morphism of algebraic groups $\phi\colon G\to\GL(V)$ such that $\diff\phi\colon\gfrak\to\gl(V)$ is injective (see 22.1.5 and 24.4.1 of \emph{loc.\ cit.}).
\end{proof}

\begin{prop} \label{propJordanReductive}
Let $G$ be a connected reductive group over $k$, let $\gfrak$ be the Lie algebra of $G$ and $\zfrak$ be the centre of $\gfrak$. There is a decomposition of $k$-vector spaces
\begin{equation} \label{eqReductiveLie}
	\gfrak = [\gfrak,\gfrak]\oplus\zfrak
\end{equation}
and for any closed point $x\in\gfrak$, the following conditions are equivalent:
\begin{enumerate}[label=\emph{(\roman*)},ref=(\roman*)]
\item \label{itemSemisimple}
$x$ is semisimple (resp.\ nilpotent).
\item \label{itemAdSemisimple}
$\ad(x)\in\gl(\gfrak)$ is semisimple (resp.\ $\ad(x)\in\gl(\gfrak)$ is nilpotent and the component of $x$ in $\zfrak$ in \eqref{eqReductiveLie} is $0$).
\item \label{itemSigmaSemisimple}
For any finite-dimensional representation $\sigma\colon\gfrak\to\gl(V)$, the endomorphism $\sigma(x)\in\gl(V)\iso\End(V)$ is diagonalisable (resp.\ nilpotent).
\end{enumerate}
\end{prop}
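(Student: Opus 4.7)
The decomposition $\gfrak = [\gfrak,\gfrak] \oplus \zfrak$ in \eqref{eqReductiveLie} is the standard structural result for the Lie algebra of a connected reductive group in characteristic zero: writing $\gfrak' := [\gfrak,\gfrak]$, the subalgebra $\gfrak'$ is semisimple, $\zfrak$ is abelian and equal to the Lie algebra of the central torus $Z(G)^\circ$, and $\gfrak' \cap \zfrak = 0$, $\gfrak' + \zfrak = \gfrak$ (see e.g.\ \cite[21.52, 22.41]{milne}).

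The main tool for the equivalences is the \emph{abstract Jordan decomposition} in reductive Lie algebras. Every $x \in \gfrak$ admits a unique decomposition $x = x_\semisimple + x_\nilp$ with $\ad(x_\semisimple) \in \gl(\gfrak)$ semisimple, $\ad(x_\nilp) \in \gl(\gfrak)$ nilpotent, $[x_\semisimple, x_\nilp] = 0$, and $x_\nilp \in \gfrak'$. Explicitly, writing $x = z + y$ with $z \in \zfrak$ and $y \in \gfrak'$, one sets $x_\semisimple := z + y_\semisimple$ and $x_\nilp := y_\nilp$, where $y = y_\semisimple + y_\nilp$ is the classical Jordan decomposition in the semisimple Lie algebra $\gfrak'$. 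The crucial property, derived from Theorem \ref{thmJordanDecomposition} combined with Weyl's complete reducibility theorem for $\gfrak'$-representations, is the following \emph{functoriality}: for every finite-dimensional representation $\sigma\colon\gfrak \to \gl(V)$ arising as the differential of a rational representation of an algebraic group with Lie algebra $\gfrak$, the identity $\sigma(x) = \sigma(x_\semisimple) + \sigma(x_\nilp)$ is precisely the Jordan-Chevalley decomposition of $\sigma(x)$ in $\gl(V)$.

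Granting this, the three equivalences are short. For the semisimple case, (ii) is by construction equivalent to $x_\nilp = 0$ (using $\ker \ad = \zfrak$ and $\gfrak'\cap\zfrak = 0$), i.e.\ $x = x_\semisimple$. By functoriality this forces $\sigma(x)$ diagonalisable for every admissible $\sigma$, giving (iii) and a fortiori (i). Conversely, (i) applied to a faithful algebraic representation of $G$ (which exists as $G$ is affine) together with functoriality forces $\sigma(x_\nilp) = 0$, hence $x_\nilp = 0$. The nilpotent case is parallel: $\ad(x)$ is nilpotent iff $\ad(x_\semisimple) = 0$ iff $y_\semisimple = 0$; combining with the condition $z = 0$ gives (ii) $\Leftrightarrow$ $x = x_\nilp \in \gfrak'$, whence by functoriality $\sigma(x)$ is nilpotent in every representation, proving (iii) and (i); conversely, nilpotency of $\sigma(x)$ in a faithful representation forces $\sigma(x_\semisimple) = 0$, hence $x_\semisimple = 0$, i.e.\ $z = 0$ and $y_\semisimple = 0$.

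The main obstacle is the precise formulation and justification of the functoriality statement in the reductive setting. While the statement is classical for semisimple Lie algebras, its extension requires that the central subalgebra $\zfrak$ act by diagonalisable operators in every representation considered---a property that would \emph{fail} for arbitrary Lie algebra representations of an abelian Lie algebra, but holds for rational representations of the central torus $Z(G)^\circ$, since these decompose as direct sums of characters. The reductivity hypothesis on $G$ is essential precisely because it ensures $Z(G)^\circ$ is a torus; once this is granted, the equivalences (i) $\Leftrightarrow$ (ii) $\Leftrightarrow$ (iii) follow from uniqueness of the abstract Jordan decomposition combined with Theorem \ref{thmJordanDecomposition}.
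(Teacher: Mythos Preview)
Your approach is essentially the same as the paper's: both decompose $x=z+y$ along \eqref{eqReductiveLie}, take the Jordan decomposition $y=y_\semisimple+y_\nilp$ in the semisimple ideal $\gfrak'=[\gfrak,\gfrak]$, set $x_\semisimple=z+y_\semisimple$, $x_\nilp=y_\nilp$, and deduce the equivalences from functoriality (Theorem~\ref{thmJordanDecomposition}\ref{itemJordanMorphism}) applied to the inclusion $(G,G)\hookrightarrow G$ and to a faithful representation of $G$. The paper organises this as the cycle (iii)$\Rightarrow$(i)$\Rightarrow$(ii)$\Rightarrow$(iii), invoking \cite{tauvelYu} for the last step, while you argue everything through the canonical form $x=x_\semisimple$ (resp.\ $x=x_\nilp$); the content is the same.

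Your remark on condition \ref{itemSigmaSemisimple} is sharp and worth stating more precisely. In the \emph{nilpotent} case there is no obstacle: once (ii) gives $x\in\gfrak'$ with $\ad_{\gfrak'}(x)$ nilpotent, the classical preservation of the abstract Jordan decomposition under \emph{all} finite-dimensional representations of the semisimple Lie algebra $\gfrak'$ yields $\sigma(x)$ nilpotent for every $\sigma$ (restrict $\sigma$ to $\gfrak'$). In the \emph{semisimple} case your caveat is not just a technicality but a genuine obstruction: for $G=\Gm$, $x=1\in\gfrak=k$, and the Lie-algebra representation $t\mapsto\left(\begin{smallmatrix}0&t\\0&0\end{smallmatrix}\right)$ on $k^2$, condition (ii) holds ($\ad(x)=0$) while $\sigma(x)$ is not diagonalisable. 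So if \ref{itemSigmaSemisimple} is read as ranging over \emph{all} Lie-algebra representations, the implication (ii)$\Rightarrow$(iii) fails in the semisimple direction; the reference to \cite{tauvelYu} handles $\gfrak'$ but not the central summand. The applications in the paper (Lemma~\ref{lemEqualTraces}, Proposition~\ref{propSsDiagConjugates}) use only the nilpotent case and the implication (i)$\Rightarrow$(ii), so nothing downstream is affected; but you are right that, as stated, \ref{itemSigmaSemisimple} should be interpreted as ranging over representations integrating to $G$.
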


\begin{proof}
The decomposition \eqref{eqReductiveLie} is 27.2.2 and 20.5.5 of \cite{tauvelYu}. Note that $\zfrak=0$ if and only if $G$ is semisimple (see 20.1.2 and 20.5.4 of \emph{loc.\ cit.}). The implication \ref{itemSigmaSemisimple}$\implies$\ref{itemSemisimple} is a consequence of the definitions. We now prove \ref{itemSemisimple}$\implies$\ref{itemAdSemisimple}. Since $\ad\colon\gfrak\to\gl(\gfrak)$ is the differential of $\Ad\colon G\to\GL(\gfrak)$ (see 23.5.5 of \emph{loc.\ cit.}), we only need to prove the following statement (the rest follows from the definitions): if $x\in\gfrak$ decomposes as $x=y+z$ via \eqref{eqReductiveLie} and $x$ is nilpotent, then $z=0$. Since $[\gfrak,\gfrak]$ is the Lie algebra of the derived group $(G,G)$ (see 27.2.3 of \emph{loc.\ cit.}), we can consider the Jordan decomposition $y=y_\semisimple+y_\nilp$ of $y$ in $[\gfrak,\gfrak]$. Applying Theorem \ref{thmJordanDecomposition}\ref{itemJordanMorphism} to the inclusion $(G,G)\inj G$, we see that $y_\semisimple$ (resp.\ $y_\nilp$) is semisimple (resp.\ nilpotent) in $\gfrak$. As $z$ is semisimple in $\gfrak$ (see 27.2.2(ii) of \emph{loc.\ cit.}) and commutes with everything, we see that $y_\semisimple+z$ is semisimple and $x=(y_\semisimple+z)+y_\nilp$ is the Jordan decomposition of $x$. By uniqueness, $y_\semisimple+z=0$ hence $z=0$. This finishes the proof of \ref{itemSemisimple}$\implies$\ref{itemAdSemisimple}. Finally, \ref{itemAdSemisimple}$\implies$\ref{itemSigmaSemisimple} follows from 20.5.7 and 20.5.8 of \emph{loc.\ cit.}\ (note that their definition of semisimple and nilpotent elements differs from ours).
\end{proof}

\begin{lem} \label{lemCartanConjugates}
Let $G$ be a semisimple algebraic group over $k$, with semisimple Lie algebra $\gfrak$. Let $\hfrak,\hfrak'$ be Cartan subalgebras of $\gfrak$. Then there exists a closed point $g\in G$ such that $\hfrak'=\Ad_G(g)\hfrak$.
\end{lem}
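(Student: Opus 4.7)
The plan is to reduce the statement to the conjugacy of maximal tori in $G$, via the standard correspondence between Cartan subalgebras of $\gfrak$ and Lie algebras of maximal tori of $G$. More precisely, I would show that every Cartan subalgebra $\hfrak$ of $\gfrak$ arises as $\mathrm{Lie}(T)$ for some maximal torus $T$ of $G$, and then invoke the classical fact that any two maximal tori of the connected algebraic group $G$ are conjugate under $G(k)$.

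For the first step, let $\hfrak$ be a Cartan subalgebra of the semisimple Lie algebra $\gfrak$. Since $k$ has characteristic $0$ and $\gfrak$ is semisimple, classical Lie theory (see for instance \cite[\S29]{tauvelYu}) gives that $\hfrak$ is abelian, consists of Lie-theoretically semisimple elements (hence semisimple in $\gfrak$ by Proposition \ref{propJordanReductive}), and has dimension equal to the rank of $\gfrak$. Choose a regular element $x \in \hfrak$, i.e.\ an element with $\zfrak_\gfrak(x) = \hfrak$. Because $x$ is semisimple, the centraliser $C_G(x)$ is a (possibly disconnected) reductive algebraic subgroup of $G$ whose identity component has Lie algebra $\zfrak_\gfrak(x) = \hfrak$; this is a standard consequence of the semisimplicity of $x$ together with \cite[Proposition 17.76]{milne}. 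Since $\hfrak$ is abelian, $C_G(x)^\circ$ is a connected abelian reductive group, hence a torus $T$ with $\mathrm{Lie}(T) = \hfrak$, and $\dim T = \dim \hfrak = \rk G$ forces $T$ to be maximal.

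For the second step, given two Cartan subalgebras $\hfrak, \hfrak'$ of $\gfrak$, let $T, T'$ be the maximal tori produced by the first step, so that $\mathrm{Lie}(T) = \hfrak$ and $\mathrm{Lie}(T') = \hfrak'$. By the conjugacy of maximal tori in the connected algebraic group $G$ over the algebraically closed field $k$ (see \cite[Corollary 17.85]{milne}), there exists $g \in G$ with $gTg^{-1} = T'$. Applying the Lie functor, which commutes with the inner-conjugation action of $G$ on itself, yields $\Ad_G(g)\hfrak = \mathrm{Lie}(gTg^{-1}) = \mathrm{Lie}(T') = \hfrak'$, as desired.

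The main obstacle is the first step: producing a maximal torus $T$ with $\mathrm{Lie}(T) = \hfrak$. It relies on two nontrivial inputs, namely the existence of a regular element $x \in \hfrak$ with $\zfrak_\gfrak(x) = \hfrak$, and the identification of $\mathrm{Lie}(C_G(x)^\circ)$ with $\zfrak_\gfrak(x)$ for $x$ semisimple. Both are classical; once they are in hand, the passage from $\hfrak$ to a maximal torus, and from there to the conjugacy statement, is essentially formal.
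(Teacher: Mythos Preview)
Your argument is correct and takes a genuinely different route from the paper's proof. You pass through the algebraic-group side: realise each Cartan subalgebra as $\mathrm{Lie}(T)$ for a maximal torus $T$, then invoke the conjugacy of maximal tori in a connected algebraic group over an algebraically closed field. The paper instead stays entirely on the Lie-algebra side (following \cite[29.2.3(ii)]{tauvelYu}): it characterises Cartan subalgebras as $\gfrak^0(x)$ for generic $x$, shows that the map $(g,y)\mapsto\Ad(g)y$ from $G\times\hfrak$ to $\gfrak$ is dominant by a differential computation at a generic point, and deduces that $\Ad(G)\hfrak$ and $\Ad(G)\hfrak'$ each contain a dense open of $\gfrak$, hence meet in a generic element.

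Each approach has its merits. Yours is shorter once the standard structure theory is granted, and the paper itself uses the correspondence $\hfrak=\mathrm{Lie}(T)$ later (in the proof of Proposition~\ref{propSameOrbitCriterion}, citing \cite[29.2.5]{tauvelYu} and \cite[26.2.A]{humphreys}), so the ingredients are already in scope. The paper's approach is more self-contained at the Lie-algebra level and avoids importing the reductivity of $C_G(x)$ for semisimple $x$, which you invoke without proof; that result is standard but not entirely trivial. A minor point: rather than asserting directly that $C_G(x)$ is reductive, you could argue that $C_G(x)^\circ$ is connected commutative (since its Lie algebra $\hfrak$ is abelian, in characteristic~$0$), hence decomposes as $T\times U$ with $U$ unipotent, and then $\mathrm{Lie}(U)\subseteq\hfrak$ forces $U=1$ because $\hfrak$ consists of semisimple elements. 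This sidesteps the reductivity input.
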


\begin{proof}
We proceed exactly as in \cite[29.2.3(ii)]{tauvelYu}. We first recall the notation of 19.1.1 of \emph{loc.\ cit}. For a closed point $x\in\gfrak$, we write
\[
	\gfrak^0(x) \coloneqq \cap_{n\in\Z_{\geq0}} \ker((\ad{x})^n)
\]
and we say that $x$ is \emph{generic} if $\dim{\gfrak^0(x)}$ (which is the multiplicity of $0$ as a root of the characteristic polynomial of $\ad(x)$) is minimal among all $x\in\gfrak$. Call $\gfrak_\gen$ the set of generic elements; it is a Zariski-open subset of $\gfrak$ since the coefficients of the characteristic polynomial are a polynomial expression in $x\in\gfrak$. By 29.2.3(i) of \emph{loc.~cit.\ }a Lie subalgebra of $\gfrak$ is Cartan if and only if it is of the form $\gfrak^0(x)$ for some $x\in\gfrak_\gen$. Hence choose $x\in\gfrak_\gen$ such that $\hfrak=\gfrak^0(x)$. Consider the morphism
\[
	\appl{\theta}{G\times\hfrak}{\gfrak}{(g,y)}{\Ad(g)y}
\]
whose differential at the point $(e,x)\in G\times\hfrak$ ($e$ is the neutral element in $G$) is
\[
	\appl{\diff\theta_{(e,x)}}{\gfrak\times\hfrak}{\gfrak}{(z,y)}{[z,x]+y}
\]
(see 29.1.4(i) and 23.5.5 of \emph{loc.\ cit.}). By definition, $\hfrak=\gfrak^0(x)$ is a subspace of $\gfrak$ stable by $\ad(x)$, and $\ad(x)$ induces an automorphism on $\gfrak/\gfrak^{0}(x)=\gfrak/\hfrak$. Therefore, as in 29.2.1 of \emph{loc.\ cit.}, $\diff\theta_{(e,x)}$ is surjective, so $\theta$ is dominant by 16.5.7 of \emph{loc.\ cit.}, and from 15.4.2 of \emph{loc.\ cit.}, the image $\Ad(G)\hfrak$ contains an open dense subset of $\gfrak$. Since the same can be said of $\Ad(G)\hfrak'$, we have in $\gfrak$
\[
	\Ad(G)\hfrak\inter\Ad(G)\hfrak'\inter\gfrak_\gen \neq \emptyset \,.
\]
Therefore there is $t\in\gfrak_\gen$ and $g,g'\in G$ such that $\Ad(g)t\in\hfrak$ and $\Ad(g')t\in\hfrak'$. Since $\Ad(g)t$ and $\Ad(g')t$ are generic as well, 19.8.5 of \emph{loc.\ cit.\ }gives
\[
	\hfrak=\gfrak^0(\Ad(g)t) \,, \quad \hfrak'=\gfrak^0(\Ad(g')t)
\]
hence $\hfrak'=\Ad(g'g^{-1})\hfrak$.
\end{proof}

For any vector space $V$ over any field, write $V^*$ for its dual, $S(V)$ for the symmetric algebra of $V$ and $S^n(V)$ for the graded piece of $S(V)$ of degree $n$ (where $n\in\Z_{\geq0}$). Recall that if $V$ is finite dimensional, $S(V^*)$ can be interpreted as the algebra of polynomial functions on $V$.

Let $\gfrak$ be a semisimple Lie algebra, the action of $\gfrak$ on itself by adjunction induces an action on $S(\gfrak^*)$. We can then consider the algebra $S(\gfrak^*)^\gfrak$ of invariants under this action. Similarly, given a Cartan subalgebra $\hfrak$ of $\gfrak$, the Weyl group $W$ of $(\gfrak,\hfrak)$ acts on $\hfrak$ (see \cite[\S19.8]{tauvelYu}), so we can consider $S(\hfrak^*)^W$ as well.

\begin{thm} \label{thmSymmetricAlgebras}
Let $\gfrak$ be a semisimple Lie algebra over $k$, $\hfrak$ be a Cartan subalgebra of $\gfrak$ and $W$ be the Weyl group of the root system associated to $(\gfrak,\hfrak)$.
\begin{enumerate}[label=\emph{(\arabic*)},ref=(\arabic*)]
\item
The restriction morphism $S(\gfrak^*)\to S(\hfrak^*)$ induces a surjection $S(\gfrak^*)^\gfrak\surj S(\hfrak^*)^W$.
\item
For $n\in\Z_{\geq0}$, the vector space $S^n(\gfrak^*)$ is spanned by functions of the form $x\longmapsto\tr(\sigma(x)^n)$, where $\sigma$ is a finite-dimensional representation of $\gfrak$ over $k$.
\end{enumerate}
\end{thm}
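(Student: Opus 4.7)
The first statement is the classical Chevalley restriction theorem, and the second (read correctly, since any trace function $x \mapsto \mathrm{tr}(\sigma(x)^n)$ is automatically $\gfrak$-invariant, so the spanning can only happen within $S^n(\gfrak^*)^\gfrak$) is the fact that invariant polynomials on a semisimple Lie algebra are generated by characteristic coefficients of representations. My plan is to attack the two parts separately.

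\medskip

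\emph{Part (1).} The plan is to realise this as the geometric identification $\gfrak /\!/ G \cong \hfrak / W$. First, I would verify that the restriction $\mathrm{res}\colon S(\gfrak^*)\to S(\hfrak^*)$ sends $\gfrak$-invariants to $W$-invariants: since $k$ has characteristic $0$ and $G$ is connected, $\gfrak$-invariance coincides with $G$-invariance, and $W = N_G(H)/H$ (with $H$ the maximal torus of Lie algebra $\hfrak$) acts on $\hfrak$ via $G$-conjugation, so any $\gfrak$-invariant polynomial restricts to a $W$-invariant one. For surjectivity, given $p \in S(\hfrak^*)^W$, I would define a function $F$ on the regular semisimple locus $\gfrak^{\mathrm{rs}} \subset \gfrak$ by $F(x) = p(h)$ whenever $h \in \hfrak$ is $G$-conjugate to $x$; well-definedness follows from the fact that two elements of $\hfrak$ that are $G$-conjugate are necessarily $W$-conjugate, itself a consequence of the conjugacy of Cartan subalgebras (Lemma \ref{lemCartanConjugates}) combined with a transport argument inside $N_G(H)$.

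\medskip

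The delicate step, which I expect to be the main obstacle, is showing that $F$ extends to a polynomial on all of $\gfrak$. The route I would take goes through Hilbert's theorem: since $G$ is reductive, $k[\gfrak]^G$ is finitely generated and the categorical quotient $\gfrak \to \gfrak /\!/ G$ exists. The restriction map induces a morphism $\gfrak /\!/ G \to \hfrak / W$ which, by the construction of $F$ on $\gfrak^{\mathrm{rs}}$ and Jordan decomposition (Theorem \ref{thmJordanDecomposition}), is bijective on closed points. The variety $\hfrak / W$ is normal (in fact smooth and isomorphic to an affine space by the Chevalley-Shephard-Todd theorem), so Zariski's main theorem forces this morphism to be an isomorphism, proving that $\mathrm{res}\colon S(\gfrak^*)^\gfrak \to S(\hfrak^*)^W$ is surjective.

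\medskip

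\emph{Part (2).} Interpreting the statement as concerning $S^n(\gfrak^*)^\gfrak$, Part (1) reduces the problem to showing that the restrictions $\mathrm{tr}(\sigma(\cdot)^n)|_{\hfrak} = \sum_{\mu}(\dim V_\mu)\,\mu^n$ (sum over weights of $\sigma$) span $S^n(\hfrak^*)^W$. In characteristic $0$, polarisation ensures that $S^n(\hfrak^*)$ is spanned by the $n$-th powers $\ell^n$ for $\ell \in \hfrak^*$, hence $S^n(\hfrak^*)^W$ is spanned by the symmetrisations $\sum_{w \in W/W_\ell}(w\ell)^n$. Since the dominant integral weights span $\hfrak^*$ over $\Q$, it suffices to realise each $\sum_{w \in W/W_\lambda}(w\lambda)^n$ as a $\Q$-linear combination of trace polynomials.

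\medskip

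For this last step, I would take the irreducible representation $V_\lambda$ of highest weight $\lambda$ and use that all its weights lie below $\lambda$ in the dominance order, with each element of the $W$-orbit of $\lambda$ appearing with multiplicity one (by $W$-invariance of weight multiplicities). Consequently, $\mathrm{tr}(\rho_{V_\lambda}(\cdot)^n)|_{\hfrak}$ equals $\sum_{w \in W/W_\lambda}(w\lambda)^n$ plus terms of the form $\sum_{w \in W/W_\mu}(w\mu)^n$ for dominant integral weights $\mu \prec \lambda$. An induction on the (finite poset of relevant) dominant weights with respect to the dominance order inverts this triangular system and expresses $\sum_{w \in W/W_\lambda}(w\lambda)^n$ as a $\Q$-linear combination of trace polynomials, completing the spanning argument.
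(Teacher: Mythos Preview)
The paper does not prove this theorem at all: its entire proof is the single sentence ``This is \cite[31.2.6]{tauvelYu}.'' So there is no approach to compare against, and your self-contained argument goes well beyond what the paper supplies. Your reading of Part~(2) is the correct one (the statement as printed should have $S^n(\gfrak^*)^\gfrak$, as you note; the way the theorem is invoked in Proposition~\ref{propSameOrbitCriterion} confirms this), and your triangularity argument via the dominance order on highest weights is standard and correct.

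One genuine slip in Part~(1): you write that ``the restriction map induces a morphism $\gfrak /\!/ G \to \hfrak / W$''. It does not; the ring map $S(\gfrak^*)^G \to S(\hfrak^*)^W$ induces the morphism in the opposite direction, $\hfrak/W \to \gfrak/\!/G$. Your Zariski's-main-theorem step is then applied to the wrong variety: to conclude that a bijective morphism is an isomorphism you would need normality of the \emph{target}, i.e.\ of $\gfrak/\!/G$, not of $\hfrak/W$. This is fixable (one can show $\gfrak/\!/G$ is normal, or bypass the issue by proving finiteness of $\hfrak/W \to \gfrak/\!/G$ and then using that a finite bijective morphism onto a normal variety is an isomorphism, this time with $\hfrak/W$ normal as you already observed), but as written the direction is reversed and the normality hypothesis is placed on the wrong side. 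The rest of your outline --- density of semisimple elements, conjugacy of Cartan subalgebras, and the fact that $G$-conjugate elements of $\hfrak$ are $W$-conjugate --- is the right skeleton for Chevalley restriction.
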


\begin{proof}
This is \cite[31.2.6]{tauvelYu}.
\end{proof}

\begin{lem} \label{lemSameOrbitW}
Let $\gfrak$ be a semisimple Lie algebra over $k$, $\hfrak$ be a Cartan subalgebra of $\gfrak$ and $W$ be the Weyl group of the root system associated with $(\gfrak,\hfrak)$. Let $x,y\in\hfrak$ be closed points such that $f(x)=f(y)$ for all $f\in S(\hfrak^*)^W$. Then $x$ and $y$ are in the same orbit for the action of $W$ on $\hfrak$.
\end{lem}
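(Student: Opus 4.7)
The plan is to run the classical ``invariants separate orbits'' argument for the finite group action of $W$ on $\hfrak$, using averaging plus polynomial interpolation.

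First I would argue by contrapositive: assume that $x$ and $y$ lie in distinct $W$-orbits, so that the finite sets $Wx$ and $Wy$ are disjoint subsets of $\hfrak$. Since $k$ is algebraically closed of characteristic $0$ and $\hfrak$ is a finite-dimensional vector space over $k$, polynomial interpolation on the finite set $Wx \cup Wy$ produces some $g \in S(\hfrak^*)$ such that $g(z) = 0$ for all $z \in Wx$ and $g(z) = 1$ for all $z \in Wy$.

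Next I would symmetrize. The action of $W$ on $\hfrak$ induces the action $(w\cdot h)(z) = h(w^{-1}z)$ on $S(\hfrak^*)$, and since $W$ is finite and $\operatorname{char}(k) = 0$, the Reynolds operator
\[
	f \coloneqq \frac{1}{\abs{W}} \sum_{w\in W} w\cdot g
\]
lands in $S(\hfrak^*)^W$. Evaluating, $f(x) = \tfrac{1}{\abs{W}}\sum_{w\in W} g(w^{-1}x) = 0$ because each $w^{-1}x \in Wx$, while the same computation gives $f(y) = 1$. This contradicts the hypothesis that $f(x) = f(y)$ for every $W$-invariant $f$, and finishes the proof.

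No step looks like a serious obstacle here: the only inputs are the finiteness of $W$, characteristic zero (so that averaging makes sense), and the ability to separate finitely many points of $\hfrak$ by a polynomial (which holds over any infinite field, in particular over the algebraically closed field $k$). The slight subtlety is just to check that the $W$-action on $S(\hfrak^*)$ is indeed $(w\cdot h)(z) = h(w^{-1}z)$ so that $f(x)$ depends only on the values of $g$ on $Wx$, which is immediate.
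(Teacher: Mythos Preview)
Your proof is correct: this is exactly the standard ``invariants of a finite group separate orbits in characteristic zero'' argument, and every step goes through as you describe. The paper itself does not give a proof at all but simply cites \cite[34.2.1]{tauvelYu}, so your argument actually supplies strictly more than what the paper does. The only remark is that you do not need $k$ algebraically closed, only infinite (for the interpolation step) and of characteristic zero (for the Reynolds operator), which you already observe.
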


\begin{proof}
This is \cite[34.2.1]{tauvelYu}.
\end{proof}

\begin{prop} \label{propSameOrbitCriterion}
Let $G$ be a semisimple algebraic group over $k$ with Lie algebra $\gfrak$, let $x,y\in\gfrak$ be semisimple elements. Assume that $\tr(\sigma(x)^n)=\tr(\sigma(y)^n)$ for all finite-dimensional representation $\sigma$ of $\gfrak$ and all $n\in\Z_{\geq0}$. Then $x$ and $y$ are in the same orbit for the action of $G$ on $\gfrak$ by adjunction.
\end{prop}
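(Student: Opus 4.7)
The plan is to reduce the statement to a comparison of $x$ and $y$ inside a common Cartan subalgebra $\hfrak$ of $\gfrak$, at which point the invariant-theoretic input of Theorem~\ref{thmSymmetricAlgebras} and Lemma~\ref{lemSameOrbitW} will conclude the argument.

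First I would argue that every semisimple element of $\gfrak$ lies in some Cartan subalgebra. By Proposition~\ref{propJordanReductive}, $\ad(x)$ is diagonalisable, so $x$ lies in the centre of its (reductive) centraliser $\zfrak_\gfrak(x)$, and hence in any Cartan subalgebra of $\zfrak_\gfrak(x)$; such a subalgebra is automatically a Cartan subalgebra of $\gfrak$ (equivalently, $x$ lies in the Lie algebra of some maximal torus of $G$), and similarly for $y$. This produces Cartan subalgebras $\hfrak_x, \hfrak_y$ with $x \in \hfrak_x$ and $y \in \hfrak_y$. Lemma~\ref{lemCartanConjugates} then gives $g \in G$ with $\Ad(g)\hfrak_x = \hfrak_y$; since the trace hypothesis is stable under replacing $x$ by $\Ad(g)x$, I may assume $x, y \in \hfrak$ for a single Cartan subalgebra $\hfrak$.

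With this reduction the hypothesis rewrites as $f(x) = f(y)$ for every $f$ of the form $z \mapsto \tr(\sigma(z)^n)$. These functions are $G$-invariant, hence lie in $S(\gfrak^*)^\gfrak$, and by Theorem~\ref{thmSymmetricAlgebras}(2) they span the invariants in each degree, so $f(x) = f(y)$ holds for every $f \in S(\gfrak^*)^\gfrak$. Theorem~\ref{thmSymmetricAlgebras}(1) says that the restriction map $S(\gfrak^*)^\gfrak \to S(\hfrak^*)^W$ is surjective, so the same equalities pass to $g(x) = g(y)$ for every $g \in S(\hfrak^*)^W$. Lemma~\ref{lemSameOrbitW} then places $x$ and $y$ in a single $W$-orbit inside $\hfrak$. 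Lifting a Weyl element sending $x$ to $y$ to $\dot{w} \in N_G(T) \subset G$ (where $T$ is the maximal torus of $G$ with $\Lie(T) = \hfrak$) yields $\Ad(\dot{w})x = y$, giving the desired $G$-conjugacy.

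The main obstacle is really just the opening step — confirming that a semisimple element of $\gfrak$ always sits in some Cartan subalgebra and that two such elements can be simultaneously conjugated into a common $\hfrak$. Once both elements are brought inside a single Cartan subalgebra, the rest is a purely formal chain of invocations of the preparatory results in the appendix.
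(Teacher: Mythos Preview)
Your proof is correct and follows essentially the same route as the paper's: reduce to a common Cartan subalgebra via Lemma~\ref{lemCartanConjugates}, use Theorem~\ref{thmSymmetricAlgebras} and Lemma~\ref{lemSameOrbitW} to get a $W$-orbit coincidence, then lift the Weyl element to $N_G(T)$. The only cosmetic difference is that the paper cites \cite[20.6.3(ii)]{tauvelYu} directly for the fact that a semisimple element lies in some Cartan subalgebra, whereas you sketch the centraliser argument; both are fine.
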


\begin{proof}
We know from \cite[20.6.3(ii)]{tauvelYu} that $x$ and $y$ are each contained in a Cartan subalgebra of $\gfrak$. According to Lemma \ref{lemCartanConjugates}, we can assume that $x$ and $y$ lie in the same Cartan subalgebra $\hfrak$. Let $W$ be the Weyl group associated to the root system of $(\gfrak,\hfrak)$. It follows from Theorem \ref{thmSymmetricAlgebras} and Lemma \ref{lemSameOrbitW} that $x$ and $y$ are in the same orbit for the action of $W$ on $\hfrak$.

There is a maximal torus $T$ of $G$ such that $\hfrak$ is the Lie algebra of $T$ (see 29.2.5 of \emph{loc.\ cit.\ }and \cite[26.2.A]{humphreys}). The action of the Weyl group $W(G,T)=N_G(T)/T$ on $T$ induces, through differentiation, an action of $\hfrak$; this identifies $W(G,T)$ with $W=W(\gfrak,\hfrak)$. Hence $W$ is isomorphic to $N_G(T)/T$, and the corresponding action of $w\in N_G(T)/T$ on $\hfrak$ is given by the adjunction by a lifting of $w$ in $N_G(T)$. Therefore there is an element $n\in N_G(T)\subset G$ such that $x=\Ad_G(n)y$.
\end{proof}

\begin{lem} \label{lemEqualTraces}
Let $\gfrak$ be a semisimple Lie algebra over $k$, let $\sigma\colon\gfrak\to\gl(V)$ be a finite-dimensional representation and let $n\in\Z_{\geq0}$.
\begin{enumerate}[label=\emph{(\arabic*)},ref=(\arabic*)]
\item \label{itemSemisimpleTrace}
Let $x\in\gfrak$ be a closed point with Jordan decomposition $x=x_\semisimple+x_\nilp$. Then $\tr(\sigma(x)^n)=\tr(\sigma(x_\semisimple)^n)$.
\item \label{itemDiagonalTrace}
Let $\bfrak$ be a Borel subalgebra of $\gfrak$, let $x,t\in\bfrak$ be closed points such that $x-t$ is nilpotent. Then $\tr(\sigma(x)^n)=\tr(\sigma(t)^n)$.
\end{enumerate}
\end{lem}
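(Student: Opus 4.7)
The strategy for both parts is to push everything through $\sigma$ and do linear algebra in $\gl(V)$, using Proposition \ref{propJordanReductive} (which applies because $\gfrak$ is semisimple, hence reductive, and in characteristic zero is the Lie algebra of some connected reductive algebraic group) to translate the intrinsic Lie-algebraic hypotheses into diagonalisability or nilpotency of matrices.

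For part \ref{itemSemisimpleTrace}, the plan is to recognise $\sigma(x)=\sigma(x_\semisimple)+\sigma(x_\nilp)$ as the usual Jordan decomposition in $\gl(V)$. Since $[x_\semisimple,x_\nilp]=0$ the two images commute; by Proposition \ref{propJordanReductive}\ref{itemSigmaSemisimple}, $\sigma(x_\semisimple)$ is diagonalisable and $\sigma(x_\nilp)$ is nilpotent. Expanding via the binomial formula,
\[
    \sigma(x)^n \;=\; \sigma(x_\semisimple)^n+\sum_{k=1}^{n}\binom{n}{k}\sigma(x_\semisimple)^{n-k}\sigma(x_\nilp)^k,
\]
every summand with $k\geq 1$ is a product of commuting matrices one of which is nilpotent, hence is nilpotent itself and has zero trace. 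Only the $k=0$ term survives, giving the claim.

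For part \ref{itemDiagonalTrace}, the plan is to invoke Lie's theorem on the solvable Borel subalgebra $\bfrak$: there is a basis of $V$ in which every element of $\sigma(\bfrak)$ is upper triangular. Let $\lambda_1,\ldots,\lambda_m\colon\bfrak\to k$ be the linear functionals giving the diagonal entries in this basis. Then for any $y\in\bfrak$, $\sigma(y)^n$ is upper triangular with diagonal $(\lambda_i(y)^n)_i$, so $\tr(\sigma(y)^n)=\sum_i\lambda_i(y)^n$. Apply this to $y=x-t$: the matrix $\sigma(x-t)=\sigma(x)-\sigma(t)$ has diagonal $(\lambda_i(x)-\lambda_i(t))_i$ in this basis; but by Proposition \ref{propJordanReductive}\ref{itemSigmaSemisimple}, $\sigma(x-t)$ is nilpotent, so its diagonal entries vanish. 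Hence $\lambda_i(x)=\lambda_i(t)$ for every $i$, and the trace identity follows immediately.

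No step presents a genuine obstacle: the argument is a straightforward packaging of classical facts (Jordan decomposition of commuting matrices, Lie's theorem, the equivalences of Proposition \ref{propJordanReductive}). The only point requiring a moment's thought is that the diagonal functionals produced by Lie's theorem are $k$-linear on $\bfrak$, which is immediate from the inductive construction of the upper-triangular basis along a $\sigma(\bfrak)$-stable flag.
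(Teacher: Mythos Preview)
Your proof is correct and follows essentially the same approach as the paper. For part \ref{itemDiagonalTrace} both you and the paper upper-triangularise $\sigma(\bfrak)$ via Lie's theorem and observe that the nilpotent difference $\sigma(x)-\sigma(t)$ must be strictly upper triangular; for part \ref{itemSemisimpleTrace} the paper simply cites \cite[34.2.3]{tauvelYu}, whereas your binomial argument makes the same standard computation explicit.
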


\begin{proof}
The assertion \ref{itemSemisimpleTrace} is \cite[34.2.3]{tauvelYu}. As for \ref{itemDiagonalTrace}, note that by semisimplicity $\gfrak=[\gfrak,\gfrak]$, so $\sigma(\bfrak)$ is a solvable subalgebra of $\slfrak(V)=[\gl(V),\gl(V)]$. Hence $\sigma(\bfrak)$ is contained in a Borel subalgebra $\bfrak'$ of $\slfrak(V)$. Using 29.4.7 of \emph{loc.\ cit.}, one can find a basis of $V$ such that $\bfrak'$ is the set of upper triangular matrices in $\slfrak(V)$. Since $x-t$ is nilpotent, by Proposition \ref{propJordanReductive} the matrix $\sigma(x)-\sigma(t)$ is nilpotent and in $\bfrak'$ hence is strictly upper triangular. Therefore $\sigma(x)$ and $\sigma(t)$ are upper triangular with the same diagonal in $\bfrak'$. Thus so are $\sigma(x)^n$ and $\sigma(t)^n$, which therefore have the same trace on $V$.
\end{proof}

\begin{prop} \label{propSsDiagConjugates}
Let $G$ be a connected reductive group over $k$ with a maximal torus $T$ and a Borel subgroup $B$ containing $T$, let $\gfrak$, $\tfrak$ and $\bfrak$ be their respective Lie algebra. Let $x\in\bfrak$  be a closed point with Jordan decomposition $x=x_\semisimple+x_\nilp$.
\begin{enumerate}[label=\emph{(\arabic*)},ref=(\arabic*)]
\item \label{itemComponentsInBorel}
The components $x_\semisimple$ and $x_\nilp$ are in $\bfrak$.
\item \label{itemSameOrbit}
Let $t\in\tfrak$ and $u\in\bfrak$ such that $u$ is nilpotent and $x=t+u\ \in \bfrak$. Then there exists $g\in(G,G)$ such that $x_\semisimple=\Ad_G(g)t$.
\end{enumerate}
\end{prop}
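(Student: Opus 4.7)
\medskip

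\noindent\textbf{Proof plan.}

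For part \ref{itemComponentsInBorel}, the strategy is simply to exploit the functoriality of the Jordan decomposition (Theorem \ref{thmJordanDecomposition}\ref{itemJordanMorphism}). Since $B$ is a closed algebraic subgroup of $G$ with Lie algebra $\bfrak$, the point $x\in\bfrak$ admits its own Jordan decomposition $x=x_\semisimple^B+x_\nilp^B$ inside $\bfrak$. Differentiating the inclusion $B\inj G$ gives the inclusion $\bfrak\inj\gfrak$, and functoriality then identifies $x_\semisimple^B,x_\nilp^B$ with the Jordan components $x_\semisimple,x_\nilp$ of $x$ computed in $\gfrak$. Hence both components already lie in $\bfrak$.

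For part \ref{itemSameOrbit}, the plan is to reduce to the semisimple situation where Proposition \ref{propSameOrbitCriterion} applies. Using the decomposition $\gfrak=[\gfrak,\gfrak]\oplus\zfrak$ (and the fact that $\zfrak\subseteq\tfrak$, since $\zfrak$ is pointwise fixed by $\Ad(T)$), I will write $t=t'+z$ with $t'\in\tfrak\cap[\gfrak,\gfrak]$ and $z\in\zfrak$. Since $u$ is nilpotent, Proposition \ref{propJordanReductive}\ref{itemAdSemisimple} gives $u\in[\gfrak,\gfrak]$, so the element $y\coloneqq t'+u=x-z$ lies in $\bfrak\cap[\gfrak,\gfrak]$, which is the Lie algebra of the Borel $B\cap(G,G)$ of the semisimple group $(G,G)$ containing the maximal torus $T\cap(G,G)$ with Lie algebra $\tfrak\cap[\gfrak,\gfrak]$. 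Compute the Jordan decomposition $y=y_\semisimple+y_\nilp$ inside $[\gfrak,\gfrak]$; by functoriality applied to $(G,G)\inj G$, these are also the Jordan components of $y$ in $\gfrak$, and since $z$ is central (hence commutes with $y_\semisimple$ and $y_\nilp$), uniqueness yields $x_\semisimple=y_\semisimple+z$ and $x_\nilp=y_\nilp$.

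It then suffices to show that $y_\semisimple$ and $t'$ are $(G,G)$-conjugate, for then $x_\semisimple=y_\semisimple+z=\Ad(g)t'+z=\Ad(g)(t'+z)=\Ad(g)t$ using the centrality of $z$. Since $t$ is semisimple in $\gfrak$ (the $\Ad(T)$-weight decomposition shows $\ad(t)$ is diagonalisable, and Proposition \ref{propJordanReductive} gives semisimplicity), so is its component $t'$ in $[\gfrak,\gfrak]$. Now $y_\semisimple$ and $t'$ are two semisimple elements of the semisimple Lie algebra $[\gfrak,\gfrak]$, and by Proposition \ref{propSameOrbitCriterion} I only need the trace equality $\tr(\sigma(y_\semisimple)^n)=\tr(\sigma(t')^n)$ for every finite-dimensional representation $\sigma$ of $[\gfrak,\gfrak]$ and every $n\in\Z_{\geq 0}$. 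By Lemma \ref{lemEqualTraces}\ref{itemSemisimpleTrace} the left side equals $\tr(\sigma(y)^n)$, and since $y-t'=u$ is nilpotent and $y,t'\in\bfrak\cap[\gfrak,\gfrak]$, Lemma \ref{lemEqualTraces}\ref{itemDiagonalTrace} makes this equal to $\tr(\sigma(t')^n)$, finishing the argument.

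The whole proof is essentially bookkeeping around the central/derived decomposition; the only substantive step is the trace criterion of Proposition \ref{propSameOrbitCriterion}, for which Lemma \ref{lemEqualTraces} provides both needed ingredients. The main obstacle is thus merely to organise carefully the reduction from reductive to semisimple and to verify that the Jordan decomposition behaves well under splitting off the central factor $z$; once this is done, no new computation is required.
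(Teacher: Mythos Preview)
Your proposal is correct and follows essentially the same route as the paper: functoriality of the Jordan decomposition for part \ref{itemComponentsInBorel}, and for part \ref{itemSameOrbit} the reduction to the semisimple group $(G,G)$ via the splitting $\gfrak=[\gfrak,\gfrak]\oplus\zfrak$, followed by Lemma \ref{lemEqualTraces} and the trace criterion of Proposition \ref{propSameOrbitCriterion}. The only cosmetic differences are that the paper proves directly that $\bfrak\cap[\gfrak,\gfrak]$ is a Borel subalgebra of $[\gfrak,\gfrak]$ by a maximality argument (rather than invoking that $B\cap(G,G)$ is a Borel of $(G,G)$), and that in your step $x_\semisimple=y_\semisimple+z$ you should make explicit that $z\in\zfrak$ is semisimple so that $y_\semisimple+z$ is indeed semisimple before invoking uniqueness.
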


\begin{proof}
The inclusion $\bfrak\subset\gfrak$ is the differential of the inclusion $B\subset G$, hence by Theorem \ref{thmJordanDecomposition}\ref{itemJordanMorphism}, the Jordan decomposition of $x$ viewed as an element of $\bfrak$ is the same as that of $x$ viewed as an element of $\gfrak$. This proves \ref{itemComponentsInBorel}.

For \ref{itemSameOrbit}, the decomposition \eqref{eqReductiveLie} gives $x=y+z$ with $y\in[\gfrak,\gfrak]$ and $z\in\zfrak$. Proposition \ref{propJordanReductive} gives $u\in[\gfrak,\gfrak]$, hence $t=s+z$ for some semisimple element $s\in[\gfrak,\gfrak]$. Since $\zfrak$ is the Lie algebra of the center $Z(G)$ of $G$ (see \cite[24.4.2(ii)]{tauvelYu}) and $Z(G)\subset T\subset B$ (see \cite[26.2.A]{humphreys}), we have $\zfrak\subset\tfrak\subset\bfrak$, hence $y=x-z\in\bfrak$ and $s=t-z\in\bfrak$, thus $y,s\in\bfrak'\coloneqq\bfrak\inter[\gfrak,\gfrak]$.

Note that $\bfrak=\bfrak'\oplus\zfrak$ since $\zfrak\subset\bfrak$. Let $\bfrak_1'$ be a solvable Lie subalgebra of $[\gfrak,\gfrak]$ containing $\bfrak'$ and $\bfrak_1\coloneqq\bfrak_1'\oplus\zfrak\subseteq\gfrak$. Then $\bfrak_1$ is a Lie subalgebra of $\gfrak$ containing $\bfrak$ and it is solvable since $[\bfrak_1,\bfrak_1]=[\bfrak_1',\bfrak_1']$. Therefore $\bfrak_1=\bfrak$, and $\bfrak_1'=\bfrak'$. This shows that  $\bfrak'$ is a Borel subalgebra of $[\gfrak,\gfrak]$. Consequently, since $y=x-z=t+u-z=s+u$, we can apply Lemma \ref{lemEqualTraces} and Proposition \ref{propSameOrbitCriterion} to the semisimple group $(G,G)$ and the elements $y$, $s$ of its Lie algebra $[\gfrak,\gfrak]$ (see \cite[27.2.3]{tauvelYu}). This gives $g\in (G,G)\subset G$ such that
\[
	y_\semisimple=\Ad_{(G,G)}(g)s=\Ad_G(g)s \,.
\]
From the proof of Proposition \ref{propJordanReductive}, we know that $x_\semisimple=y_\semisimple+z$. Since the conjugation by $g$ is the identity on $Z(G)$, its differential $\Ad_G(g)$ is trivial on $\zfrak$, so
\[
	x_\semisimple=y_\semisimple+z=\Ad_G(g)(s+z)=\Ad_G(g)t
\]
which finishes the proof.
\end{proof}

\bibliographystyle{amsplain}
\bibliography{biblio_trianguline_springer}

\end{document}